\definecolor{ao}{rgb}{0.0, 0.5, 0.0}
\newcommand{\R}{\mathbb{R}}
\newcommand{\Z}{\mathbb{Z}}
\newcommand{\Q}{\mathbb{Q}}
\newcommand{\V}{\mathsf{V}}
\newcommand{\ba}{\mathbf{a}}
\newcommand{\bD}{\mathbf{D}}
\newcommand{\bT}{\mathbf{T}}
\newcommand{\sE}{\mathsf{E}}
\newcommand{\WD}{\mathfrak{D}}
\newcommand{\tin}{\textnormal{in}}
\newcommand{\tout}{\textnormal{out}}
\newcommand{\trace}{\textrm{tr}}
\newcommand{\calA}{\mathcal{A}}
\newcommand{\calB}{\mathcal{B}}
\newcommand{\calI}{\mathcal{I}}
\newcommand{\calG}{\mathscr{G}}
\newcommand{\calS}{\mathcal{S}}
\newcommand{\calP}{\mathscr{P}}
\newcommand{\id}{\operatorname{id}}
\newcommand{\Hom}{\operatorname{Hom}}
\newcommand{\Aut}{\operatorname{Aut}}
\newcommand{\TAut}{\operatorname{TAut}}
\newcommand{\divcc}{\operatorname{div}}
\newcommand{\gr}{\operatorname{gr}}
\newcommand{\rp}{\mathring{\rho}}
\newcounter{dummy} \numberwithin{dummy}{section}
 \newtheorem{theorem}[dummy]{Theorem}
\newtheorem{thm}[dummy]{Theorem}
\newtheorem{prop}[dummy]{Proposition}
\newtheorem{lemma}[dummy]{Lemma}
\newtheorem{cor}[dummy]{Corollary}
\newtheorem*{thm*}{Theorem}
\newtheorem*{prop*}{Proposition}
\theoremstyle{definition}
\newtheorem{definition}[dummy]{Definition}
\newtheorem{cons}[dummy]{Construction}
\newtheorem{example}[dummy]{Example}
\newtheorem{remark}[dummy]{Remark}
\newtheorem{notation}[dummy]{Notation}
\numberwithin{equation}{section}
\newcommand{\gt}{\mathsf{GT}}
\newcommand{\grt}{\mathsf{GRT}}
\newcommand{\kv}{\mathsf{KV}}
\newcommand{\krv}{\mathsf{KRV}}
\newcommand{\PaCD}{\mathsf{PaCD}}
\newcommand{\PaB}{\mathsf{PaB}}
\newcommand{\lie}{\widehat{\mathfrak{lie}}}
\newcommand{\tder}{\mathfrak{tder}}
\newcommand{\cyc}{\mathrm{cyc}}
\newcommand{\ass}{\mathfrak{ass}}
\newcommand{\ucrossing}{\raisebox{-.5mm}{
\begin{tikzpicture}[scale=.15]
\draw[thick, ->](1,-1)--(-1.1,1.1);
\draw[thick](-1,-1)--(-.2,-.2);
\draw[thick] (.2,.2)--(.95,.95);
\draw[ thick,->] (.2,.2)--(1.1,1.1);
\draw[line width= 2 mm, white](.4,-.4)--(-.4,.4);
\draw[thick](1,-1)--(-.95,.95);
\end{tikzpicture}}
}
\newcommand{\ocrossing}{\raisebox{-.5mm}{
\begin{tikzpicture}[scale=.15]
\draw[ thick,->](-1,-1)--(1.1,1.1);
\draw[thick](1,-1)--(.2,-.2);
\draw[thick] (-.2,.2)--(-.95,.95);
\draw[ thick,->] (-.2,.2)--(-1.1,1.1);
\draw[line width= 2 mm, white](-.4,-.4)--(.4,.4);
\draw[thick](-1,-1)--(.95,.95);
\end{tikzpicture}}
}
\newcommand{\pv}{\raisebox{-.5mm}{
\begin{tikzpicture}[scale=.15]
\draw[red](-.09,-.09)--(-1,-1);
\draw[ thick,->](0,-.08)--(0,1.1);
\draw[thick](1, -1)--(0,0)--(0,.93);
\end{tikzpicture}}
}
\newcommand{\negvertex}{\raisebox{-.5mm}{
\begin{tikzpicture}[scale=.15]
\draw[thick](0,-1)--(0,0)--(-.95, .95);
\draw[thick,->](0,0)--(-1.1, 1.1);
\draw[thick](.5,.5)--(.95,.95);
\draw[ thick, ->](.5,.5)--(1.1,1.1);
\end{tikzpicture}}
}
\newcommand{\vertex}{\raisebox{-.5mm}{
\begin{tikzpicture}[scale=.15]
\draw[thick, ->](0,-.09)--(0,1.1);
\draw[thick](1, -1)--(.5,-.5);
\draw[thick](0,.9)--(0,0)--(-1,-1);
\end{tikzpicture}}
}
\newsavebox\cvtikzbox
\sbox\cvtikzbox{{\raisebox{-.5mm}{
\begin{tikzpicture}[scale=.15]
\draw[thick](0,-.09)--(0,1.1);
\draw[thick](1, -1)--(.25,-.25);
\draw[thick](0,.9)--(0,0)--(-1,-1);
\filldraw[fill=black, draw=black] (1,-1) circle (3mm) ;
\end{tikzpicture}}
}}
\newcommand{\cvtikz}{\usebox\cvtikzbox}
\newcommand{\ppv}{\raisebox{-.5mm}{
\begin{tikzpicture}[scale=.15]
\draw[red,->](0,0)--(0,1);
\draw[red](1, -1)--(0,0);
\draw[red](0,0)--(-1,-1);
\end{tikzpicture}}
}
\newcommand{\rarrow}{\raisebox{-1mm}{
\begin{tikzpicture}[scale=.2]
\draw[ thick](-1,-1)--(-1,1);
\draw[thick, ->](-1,-.09)--(-1,1.1);
\draw[ thick](1.5,-1)--(1.5,1);
\draw[thick, ->](1.5,-.09)--(1.5,1.1);
\draw[thick, dotted] (-1,0)--(1.5,0);
\draw[->] (1.49,0)--(1.5,0);
\end{tikzpicture}}
}
\newcommand{\chord}{\raisebox{-.5mm}{
\begin{tikzpicture}[scale=.15]
\draw[thick](-1,-1)--(-1,1);
\draw[-](-1,-.09)--(-1,1.1);
\draw[thick](1.5,-1)--(1.5,1);
\draw[thick, -](1.5,-.09)--(1.5,1.1);
\draw[thick,blue,dotted] (-1,0)--(1.5,0);
\draw[-] (1.49,0)--(1.5,0);
\end{tikzpicture}}
}
\newcommand{\rightarrowp}{\raisebox{-1mm}{
\begin{tikzpicture}[scale=.2]
\draw[thick,](-1,-1)--(-1,1);
\draw[thick, ->](-1,-.09)--(-1,1.1);
\draw[red,->](1.5,-1)--(1.5,1);
\draw[thick, dotted,] (-1,0)--(1.5,0);
\draw[->] (1.49,0)--(1.5,0);
\end{tikzpicture}}
}
\newcommand{\tcap}{\raisebox{-.5mm}{
\begin{tikzpicture}[scale=.15]
\draw[thick](0,-1)--(0,1);
\filldraw[fill=black, draw=black] (0,1) circle (3mm) ;
\end{tikzpicture}}
}
\newcommand{\bcap}{\raisebox{-.5mm}{
\begin{tikzpicture}[scale=.15]
\draw[thick](0,-1)--(0,1);
\filldraw[fill=black, draw=black] (0,-1) circle (3mm) ;
\end{tikzpicture}}
}
\newcommand{\pcv}{\raisebox{-1mm}{
\begin{tikzpicture}[scale=.2]
\draw[red](-.09,-.09)--(-1,-1);
\draw[thick](0,-.08)--(0,1.1);
\draw[thick,->](0,0)--(0,1);
\draw[thick](1, -1)--(0,0)--(0,.93);
\filldraw[fill=black, draw=black] (1,-1) circle (3mm) ;
\end{tikzpicture}}
}
\newcommand{\wf}{\mathsf{wF}}
\newcommand{\hatwf}{\widehat{\wf}}
\newcommand{\arrows}{\mathscr{A}}
\newcommand{\A}{\arrows}
\newcommand{\apr}{\overrightarrow{pr}}
\newcommand{\im}{\operatorname{im}}
\title[Comparing $\grt$ and $\krv$]{A knot-theoretic approach to comparing the Grothendieck-Teichm\"{u}ller and Kashiwara-Vergne groups}
\author[Z. Dancso]{Zsuzsanna Dancso}
\address{School of Mathematics and Statistics\\ The University of Sydney\\ Sydney, NSW, Australia}
\email{zsuzsanna.dancso@sydney.edu.au}
\author[T. Hogan]{Tamara Hogan}
\address{School of Mathematics and Statistics \\ The University of Melbourne \\ Melbourne, Victoria, Australia}
\email{hogant@student.unimelb.edu.au}
\author[M. Robertson]{Marcy Robertson}
\address{School of Mathematics and Statistics \\ The University of Melbourne \\ Melbourne, Victoria, Australia}
\email{marcy.robertson@unimelb.edu.au}
\subjclass[2020]{Primary 18M85, 57K16; Secondary 18M15, 18M60}
\begin{document}
\maketitle

\begin{abstract} 
Homomorphic expansions are combinatorial invariants of knotted objects, which are universal in the sense that all finite-type (Vassiliev) invariants factor through them. Homomorphic expansions are also important as bridging objects between low-dimensional topology and quantum algebra. For example, homomorphic expansions of parenthesised braids are in one-to-one correspondence with Drinfel'd associators (Bar-Natan 1998), and homomorphic expansions of $w$-foams are in one-to-one correspondence with solutions to the Kashiwara-Vergne (KV) equations (Bar-Natan and the first author, 2017). The sets of Drinfel'd associators and KV solutions are both bi-torsors, with actions by the pro-unipotent Grothendieck-Teichm\"{u}ller and Kashiwara-Vergne groups, respectively. The above correspondences are in fact maps of bi-torsors (Bar-Natan 1998, and the first and third authors with Halacheva 2022).

There is a deep relationship between Drinfel'd associators and KV equations—discovered by Alekseev, Enriquez and Torossian in the 2010s—including an explicit formula constructing KV solutions in terms of associators, and an injective map $\rho:\grt_1 \to \krv$.
This paper is a topological/diagrammatic study of the image of the Grothendieck-Teichm\"{u}ller groups in the Kashiwara-Vergne symmetry groups, using the fact that both parenthesised braids and $w$-foams admit respective finite presentations as an operad and as a tensor category (circuit algebra or prop). 
\end{abstract}

\tableofcontents

\section{Introduction}

Given a class of knotted objects, $\mathcal{K}$, an \emph{expansion}, or {\em universal finite type invariant}, is an assignment $Z:\widehat{\mathcal{K}}\rightarrow \mathcal{A}$ taking 
values in a vector space, $\mathcal{A}$, which is graded with 
respect to a \emph{Vassiliev filtration} of $\mathcal{K}$, and satisfies a universality condition (Definition~\ref{def: expansion}). Expansions are called \emph{homomorphic} if they respect any operations defined on $\mathcal{K}$, such as braid composition, knot connected sum or strand doubling. One of the best known examples of a homomorphic expansion is the Kontsevich integral (\cite{MR1237836}, \cite{Invariants_Book}, \cite{Dancso_Kont}) which is universal in the sense that any finite type invariant of oriented links factors through it.

In practice, constructing homomorphic expansions is difficult and, in many cases (e.g. for the planar algebra of tangles), they do not exist. The construction can be simplified, however, when $\mathcal{K}$ is finitely presented as an algebraic structure. Examples of this include parenthesised braids viewed as an operad or prop (Section~\ref{sec: braids};\cite{BNGT}), welded tangles as a tensor category (Section~\ref{sec:foams};\cite[Section 2]{BND:WKO1}) and so forth. When this is the case, the Vassiliev filtration of $\mathcal{K}$ coincides with the $I$-adic filtration by powers of the augmentation ideal, and finding a homomorphic expansion is equivalent to solving a set of equations in the associated graded space $\mathcal{A}$, which admits a diagrammatic description as vector spaces spanned by Jacobi or Feynman diagrams (Section~\ref{section:PACD} and Definition~\ref{def:wJacobiDiag}).

\medskip 
Homomorphic expansions also have deep connections to quantum algebra. For example, using the description of parenthesised braids as a tensor category, Bar-Natan \cite{BNGT} showed that there is a bijection between Drinfel'd associators and homomorphic expansions of parenthesised braids. Similarly, work of Bar-Natan and the first author showed that homomorphic expansions of a four-dimensional class of knotted objects called {\em welded foams} are in bijection with solutions to the {\em Kashiwara-Vergne equations} (see Section~\ref{sec:foams} or \cite[Theorem 4.11]{BND:WKO2}).

This paper is part of an ongoing study of the relationship between the homomorphic expansions of parenthesised braids and welded foams, informed by equivalent phenomena in quantum algebra. Informally, the Kashiwara-Vergne (KV) problem (\cite{KV78}) asks when one can simplify the expression:
\begin{align}
\label{eq:bchdef}
    e^xe^y = e^{x+y + \frac{1}{2}[x,y] + \frac{1}{12}[x,[x,y]] + \dots},
\end{align} 
where $[x,y]=xy-yx$. The Lie series in the exponent is called the Baker-Campbell-Hausdorff series \cite{HallLie} of $x$ and $y$, and is denoted by $\mathfrak{bch}(x,y)$. A solution to the Kashiwara-Vergne equations allows one to replace $\mathfrak{bch}(x,y)$ with a convergent power series and adjoint operations~\cite{KV78}. In practice, this problem transforms into a question about when \emph{tangential}, or basis-conjugating, automorphisms of free Lie algebras (Section~\ref{sec:tder}) satisfy two equations (\cite[Section 5.3]{AT12}). 

Drinfel'd observed in \cite[Section 4]{Drin90} that elements of the Lie algebras of infinitesimal braids (also known as Drinfel'd-Kohno Lie algebras, Section~\ref{section:PACD}) induce tangential derivations of the free Lie algebras (e.g. Example~\ref{example: t}; Proposition 3.11 of \cite{AT12}). In a series of papers 
(\cite{AT12}, \cite{ATE10}), Alekseev, Enriquez and Torossian exploited this observation to show that   
solutions of the Kashiwara-Vergne equations can be explicitly constructed from Drinfel'd associators.

In \cite[Section 4]{BND:WKO2}, Bar-Natan and the first author showed that KV solutions are in bijection with homomorphic expansions of welded foams.
The explicit construction of KV solutions from Drinfel'd associators can be carried out on the topological side by explicitly constructing homomorphic expansions of welded foams from expansions of parenthesised braids (\cite[Theorem 1.1]{BND:WKO3}, Section~\ref{sec:PaCDtoA}).


\medskip

The relationship between Drinfel'd associators and KV solutions described by Alekseev, Enriquez and Torossian extends to an intricate 
relationship between the symmetry groups of these objects: the prounipotent Grothendieck--Teichm\"{u}ller groups 
($\gt$ and $\grt_1$) and Kashiwara-Vergne groups ($\kv$ and $\krv$), respectively. The sets of 
Drinfel'd associators and KV solutions are bi-torsors under the actions of these symmetry groups (Section~\ref{section:PACD} and Section~\ref{sec:KV groups}, respectively). In particular, Alekseev and Torossian \cite[Theorem 4.6]{AT12} describe a map  $\rho: \grt_1 \to \krv$. It is a still open conjecture to show that this map is nearly an isomorphism, i.e. that $\krv \cong \grt_1\times \mathbb{K}$ (\cite[Section 4]{AT12}).

Several authors have demonstrated that the Grothendieck--Teichm\"{u}ller groups, $\gt$ and $\grt_1$ are isomorphic to automorphism groups of (prounipotently completed) parenthesised braids and their associated graded, 
parenthesised \emph{chord diagrams} (\cite{BNGT,tamarkin1998proof, FresseVol1, Will_GT}): 
\[\Aut(\widehat{\PaB}) \cong \gt \quad \text{and} \quad \Aut(\PaCD)\stackrel{\tau}{\cong} \grt_1.\] Subsequently, 
the first and last authors, with Halacheva \cite{DHR21}, identified the Kashiwara-Vergne groups, $\kv$ and $\krv$, as automorphisms of (prounipotently completed) welded foams and their associated graded prop \cite{DHR1} of {\em 
arrow diagrams}, \[\Aut_{v}(\widehat{\wf}) \cong \kv \quad \text{and} \quad \Aut_{v}(\arrows)\stackrel{\Theta}{\cong} \krv.\]  This paper builds on these results and those of \cite{BND:WKO2, DHR1, DHR21} to construct an explicit topologically-based interpretation of the Alekseev-Torossian (AT) map (\cite[Section 4]{AT12}; Construction~\ref{cons:buckle})
\[\rho:\grt_1 \to \krv.\] Explicitly, we construct a map $\rp:\Aut(\PaCD)\rightarrow\Aut_v(\arrows)$ which agrees with the Alekseev-Torossian map: 

\begin{thm*}[Theorem~\ref{thm:main} and Corollary~\ref{rp_grouphomo}]
The following is a commutative diagram of prounipotent groups: 
\[\begin{tikzcd}
	\Aut({\PaCD}) && \Aut_v(\arrows) \\
	\\
	\grt_1 && \krv.
	\arrow["\rp", from=1-1, to=1-3]
	\arrow["\rho"', from=3-1, to=3-3]
	\arrow["\tau"', "\cong", swap, from=1-1, to=3-1]
	\arrow["\Theta", swap, "\cong"', from=1-3, to=3-3]
\end{tikzcd}\] 
\end{thm*}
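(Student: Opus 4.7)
The plan is to build $\rp$ from the diagrammatic pushforward that sits behind the construction of Section~\ref{sec:PaCDtoA} and then prove commutativity by comparing the resulting map on generators with the Alekseev--Torossian buckle formula.

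First, I would promote the passage from parenthesised chord diagrams to arrow diagrams used in Section~\ref{sec:PaCDtoA} (which on the non-graded side produces homomorphic expansions of welded foams from those of parenthesised braids) to a structural map on the level of associated graded objects, compatible with doubling, orientation reversal, and vertex insertion. Any $\alpha \in \Aut(\PaCD)$ is then pushed forward to $\rp(\alpha)$ by specifying its action on the generators of $\arrows$ (cap, wen, vertex, single arrow) in terms of chord diagrams obtained via the doubling/cabling procedure, and verifying that the output respects the defining relations of $\arrows$ and preserves the vertex structure so that $\rp(\alpha) \in \Aut_v(\arrows)$. Because $\rp$ is produced from a natural pushforward along a diagrammatic structural map, the identity $\rp(\alpha \beta) = \rp(\alpha)\rp(\beta)$ is automatic, giving Corollary~\ref{rp_grouphomo}; the prounipotence statement follows from the grading and the fact that $\rp$ respects the filtration by chord/arrow degree.

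For commutativity, I would evaluate both compositions on a generic $\alpha \in \Aut(\PaCD)$. Since $\tau$ and $\Theta$ are isomorphisms, each automorphism is determined by its value on the parenthesised associator (for $\tau$) or on the vertex/$R$-matrix generators of welded foams (for $\Theta$). The composition $\Theta \circ \rp$ applied to $\alpha$ reads off the KRV element that records how $\rp(\alpha)$ modifies the vertex generators of $\arrows$; the composition $\rho \circ \tau$ is given by the explicit buckle formula of Construction~\ref{cons:buckle} applied to $\tau(\alpha) \in \grt_1$. The identification of these two outputs reduces to the observation that under the doubling/cabling correspondence the associator in $\PaCD$ maps to precisely the chord-diagram datum whose tangential-automorphism realisation is the buckle, so the GRT action on the associator and the KRV action on the vertex coincide after translation.

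The main obstacle is the bridge between the topological/diagrammatic $\rp$ and the Lie-theoretic $\rho$: unpacking how the GRT action on chord diagrams translates, through the doubling and vertex-insertion identities of welded foams, into the tangential automorphism encoded by the buckle. Concretely, this requires a careful match between the combinatorics of chord diagrams on parenthesised strands and the tangential derivation formulas in $\tder$ that underlie the AT construction, including checking that the boundary identifications and orientation conventions agree. Once this dictionary is in place, the rest of the diagram-chase proceeds formally, using that both $\tau$ and $\Theta$ are characterised by their effect on the single generator that $\rho$ directly manipulates.
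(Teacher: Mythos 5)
Your outline points in roughly the right direction, but as written it has genuine gaps at exactly the places where the paper has to work hardest.

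First, the claim that $\rp(\alpha\beta)=\rp(\alpha)\rp(\beta)$ is ``automatic'' because $\rp$ is a pushforward along a structural map is not correct. The functor $\alpha:\PaCD\to\arrows$ is injective on morphisms but its image is \emph{not} preserved by the operations of $\arrows$, so an automorphism of $\PaCD$ does not functorially induce one of $\arrows$; there is no naturality to appeal to. Moreover, composition in $\Aut(\PaCD)$ corresponds on associators to the twisted product $\Psi_1(t^{12},t^{23})\,\Psi_2\bigl(t^{12},\Psi_1^{-1}t^{23}\Psi_1\bigr)$, not to multiplication, and matching this with composition on the arrow-diagram side requires Drinfel'd's identity $[X,\psi(Z,X)]+[Y,\psi(Z,Y)]=0$ for $X+Y+Z=0$ (Lemma~\ref{lem:Drinfeld}); this is the content of Lemma~\ref{lem:ranti} and it is a computation, not a formality. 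Second, you never address why the candidate image is actually an element of $\Aut_v(\arrows)$: one must exhibit a pair $(N,C)$ satisfying the equations \eqref{R4'}, \eqref{U'}, \eqref{C'} of Proposition~\ref{prop:SimplifiedEqns}. The cap value $C$ is not produced by any pushforward at all --- it has to be solved for --- and the unitarity equation \eqref{U'} involves wheels and the divergence/Jacobian cocycle; the paper settles this either by invoking the Alekseev--Torossian theorem that $\rho$ lands in $\krv$ together with the isomorphism $\Theta$, or by the direct verification in Appendix~\ref{sec:DirectProof}. ``Verifying that the output respects the defining relations'' names the problem rather than solving it.

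Third, the commutativity of the square is not an ``observation'': it is the main computation of the paper (Lemma~\ref{lem:Nxy}). One has to choose the specific skeleton element $\beta$, apply $F$, close up via $\alpha$, run the stripping process (cap, two unzips, two punctures), transport through the isomorphism $\varphi$ of Lemma~\ref{lem:EK}, and use the TF/TC relations plus the inversion and hexagon relations of $\grt_1$ to see that the surviving conjugators are exactly $\Psi(-x-y,x)$ and $\Psi(-x-y,y)$. Your final paragraph correctly identifies this dictionary as ``the main obstacle,'' but the proposal ends there; since that obstacle is the theorem, the argument is incomplete as it stands.
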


In particular, we identify a subgroup of arrow diagram automorphisms isomorphic to $\grt_1$. This is a further step towards understanding the relationship between automorphisms of chord diagrams and arrow diagrams. Moreover, these automorphisms preserve the homomorphic expansions of $w$-foams which come from parenthesised braids. 

\subsection{Structure and Conventions}
Throughout this paper we assume that the reader is familiar with basic notations of category theory, in the form of linear tensor categories, as well as some basics around the theory of (coloured) operads. Some review of these details are provided in Appendix~\ref{sec: circuit algebras} as well as some basic definitions of (wheeled) props.  

Sections \ref{sec: braids} and \ref{section:PACD} provide an expository review of homomorphic expansions of \emph{parenthesised braids}, \[\begin{tikzcd}\widehat{\mathbb{Q}[\PaB]}\arrow[r, "Z"] & \PaCD, \end{tikzcd}\] and their relationship to the graded Grothendeick-Teichm\"{u}ller group, $\grt_1$. Throughout, we use the fact that the class of parenthesised braids, $\PaB$ and their associated graded {parenthesised chord diagrams} $\PaCD$, can be finitely presented as \emph{tensor categories} or \emph{props} as in \cite{BNGT}, \cite{Joyal_Street}, \cite{ATE10} and \cite{tamarkin1998proof}.  Readers conversant with this material may be more familiar with the presentation of parenthesised braids and chord diagrams as operads, as in \cite{FresseVol1}, \cite{merkGT}, etc., but the language of props leads to a more natural discussion of relationships between parenthesised braids and welded foams. 

Section~\ref{sec:foams} introduces a class of four-dimensional tangles called \emph{welded foams}. Welded foams, hereafter called $w$-\emph{foams}, are a simultaneous generalisation of $w$-braids (aka loop braids, \cite{FRRwBraids}, \cite{Damiani_Loop_braid,Baez_loop_braid_groups}) and classical tangles. Topologically, they are knotted ribbon tubes and ribbon tori in $\mathbb{R}^{4}$. In \cite{BND:WKO2}, Bar-Natan and the first author introduced the notion of a \emph{circuit algebra} to encode the algebraic structure of $w$-foams and construct their homomorphic expansions. Later, in a paper with Halacheva, the first and third author showed that circuit algebras are themselves a type of tensor category called a \emph{wheeled prop} \cite{DHR1}. This algebraic structure is described in full detail in \cite[Sections 2 and 4]{DHR1}, and also in abbreviated form in \cite{BND:WKO2, DHR21}. While it provides foundations for this paper, it does not play a major role in the proof of our main theorem, and thus we push the details to Appendix~\ref{sec: circuit algebras}. 

The construction of the map $\rp:\Aut(\PaCD)\rightarrow\Aut_v(\arrows)$ appears in Section~\ref{sec:map}. We then go through the technical proof that this map is an injective group homomorphism and agrees with the Alekseev-Torossian map $\rho:\grt_{1}\rightarrow \krv$. In particular, we conclude that $\Aut(\PaCD)$ can be seen as a subgroup of $\Aut_{v}(\arrows)$. Appendix~\ref{sec:DirectProof} gives an alternative, more constructive, proof of the same result.

\subsection{Acknowledgements}
This paper was prepared in honour of Ezra Getzler on the occasion of his 60th birthday.  Much of this work follows his influential work using operads and props to study geometric problems and quantum algebra.

\medskip

The third author acknowledges the support of Australian Research Council Future Fellowship FT210100256. 

\section{Preliminaries: Topology, Lie algebras, and Expansions}\label{sec: expansions}
In this section we give a short review of how invariants called {\em homomorphic expansions} provide a bridge between knot theory, quantum algebra and Lie theory.  Expansions in knot theory are inspired by an idea from group theory, which aims to study a group ring through its associated graded ring. More explicitly, given an abstract group $G$, the complete \emph{associated graded algebra} of the group ring $\mathbb Q[G]$ is the $\mathbb{Q}$-algebra $$A(G) :=\text{gr}(\mathbb{Q}[G]) = \prod_{m\geq 0} \calI^m/\calI^{m+1},$$ where $\calI=\text{ker}\left(\mathbb{Q}[G]\xrightarrow{\epsilon} \mathbb{Q}\right)$ is the augmentation ideal. 
\begin{definition}
An \emph{expansion} of the group $G$ is a filtered algebra isomorphism \[Z : \widehat{\mathbb{Q}[G]} \rightarrow A(G).\] 
\end{definition}
Here the hat notation denotes the pro-unipotent completion of the group-ring $\mathbb{Q}[G]$. An equivalent statement is to say that the induced map $$\text{gr}(Z): \left(\text{gr}(\widehat{\mathbb{Q}[G]} )= A(G)\right) \rightarrow \left(\gr (A(G)) = A(G)\right)$$ is the identity of $A(G)$.  The latter condition is saying that the degree $m$ piece of $Z$, restricted to $\calI^m$, is the projection onto $\calI^m/\calI^{m+1}$. 

Replacing groups by classes of knotted objects, this idea extends to provide useful knot invariants. This is particularly true when the class of knotted objects $\mathcal K$ can be finitely presented as a tensor category. Examples include braids, viewed as a prop \cite{Joyal_Street, BNGT}, (virtual) tangles, viewed as a ribbon category \cite{Brochier, Joyal_Street, Turaev, FY}, and welded tangles, viewed as a wheeled prop \cite{BND:WKO2,DHR21}. In the last example, the topological analogue of the group ring, $\Q \mathcal K$, is the linear tensor category whose morphism spaces are vector spaces spanned by elements of $\mathcal K$. The category $\Q\mathcal{K}$ is filtered by powers of the augmentation ideal $\calI=\{\sum c_iK_i | c_i\in \Q, K_i \in \mathcal{K}, \sum c_i=0 \}$. As with the examples from group theory, the complete associated graded structure is the direct product $\mathcal A:= \prod_{m\geq 0} \calI^m /\calI^{m+1}$. Further details on this type of filtration can be found in \cite[Section 2.1.1]{BNGT} or \cite[Section 2.3]{DHR21}.

\begin{definition}\cite[Definition 2.5]{BND:WKO2} \label{def: expansion}
Let $\mathcal{K}$ denote a class of knotted objects finitely presented as a tensor category (possibly with extra structure). A {\em homomorphic expansion} of $\mathcal{K}$ is a filtered linear isomorphism\footnote{\cite[Definition 2.5]{BND:WKO2} uses the equivalent statement ``$\gr(Z)=\id_{\mathcal{A}}$'', which avoids the pro-unipotent completion on the left.} $Z: \widehat{\Q\mathcal{K}} \to \mathcal{A}$ which intertwines all operations on $\mathcal{K}$ with their associated graded counterparts on $\mathcal{A}$.
\end{definition}

When a class of knotted objects $\mathcal K$ is finitely presented as a tensor category, defining a homomorphic expansion reduces to defining a functor at the values of the generators of $\mathcal K$ in $\mathcal A$. These values are subject to the relations of $\mathcal K$, which define a set of equations in $\mathcal A$. This seemingly simple exercise can build powerful bridges between knot invariants and algebra. We review two examples of interest:
\begin{enumerate}
\item When $\mathcal{K}$ is {\em parenthesised braids}, the equations which determine a homomorphic expansion turn out to be the {\em pentagon} and {\em hexagon} equations which define {\em Drinfel'd associators}. Hence, homomorphic expansions of parenthesised braids are in bijection with Drinfel'd associators (Theorem~\ref{thm:BN_Ass}). This bijection turns out to be an isomorphism of torsors: the expansion-preserving automorphism groups of (completed) parenthesised braids and their associated graded structure are identified with the symmetry groups of Drinfel'd associators, the Grothendieck-Teichm\"{u}ller groups (\cite{BN:NAT,BNGT, tamarkin1998proof, FresseVol1}, etc.). 

\item When $\mathcal K$ is the class of {\em welded foams}, the equations which determine homomorphic expansions are the {\em Kashiwara-Vergne} equations of Lie theory (Section~\ref{sec:FoamExpansions}). Thus, homomorphic expansions for welded foams are in bijection with Kashiwara-Vergne solutions (\cite[Theorem 4.9]{BND:WKO2}). This bijection can also be promoted to an isomorphism of torsors: the expansion preserving automorphism groups of (completed) welded foams, and their associated graded structure, are isomorphic to the Kashiwara-Vergne symmetry groups \cite[Theorems 5.12 and 5.16]{DHR21}.
\end{enumerate}

This preliminary section is dedicated to a review of these two examples.

\subsection{Braids}\label{sec: braids}
The braid group on $n$-strands is the fundamental group of the space of unordered configurations of $n$ points in the complex plane. By Alexander's Theorem, every link can be represented as a braid closure, and thus braids are an important tool in the study of knots and links. The braid group has a finite presentation 
\[\mathsf{B}_n=\left<\beta_1,\ldots,\beta_{n-1}\mid 
\beta_i\beta_j=\beta_j\beta_i \ \text{when} \ |i-j|\geq 2; \  
\beta_i\beta_{i+1}\beta_i =\beta_{i+1}\beta_i\beta_{i+1}\right>.\] where $\beta_i$ is braid on $n$-strands which crosses the $i$-th strand over the $(i+1)$-th strand. 
For any $n\geq 1$, there is a homomorphism to the symmetric group \begin{equation*}\label{projection_to_Sigma}
    \begin{tikzcd}\mathsf{B}_n \arrow[r, "\pi"] & \mathcal{S}_n \end{tikzcd}
\end{equation*} where the map $\pi$ sends the braid generator $\beta_i$ to the transposition $(i, i+1)$ in the symmetric group $\mathcal{S}_n$. 

The category of \emph{parenthesised braids} with $n$-strands, $\PaB(n)$, is a category whose objects are the set of \emph{parenthesised} permutations in $n$-letters and whose morphisms are braids on $n$-strands.  Composition of morphisms in this category is given by ``stacking'' braids with compatible parenthesisations, as pictured in Figure~\ref{fig:pabcomp}.  Every morphism $\beta$ in $\PaB(n)$ has an underlying \emph{skeleton}: this is the data of the source and target parenthesised permutations (which also specifies the underlying permutation $\pi(\beta)$ of $\beta$). For each skeleton $s$ we write $\PaB(s)$ for the set of braids with skeleton $s$.

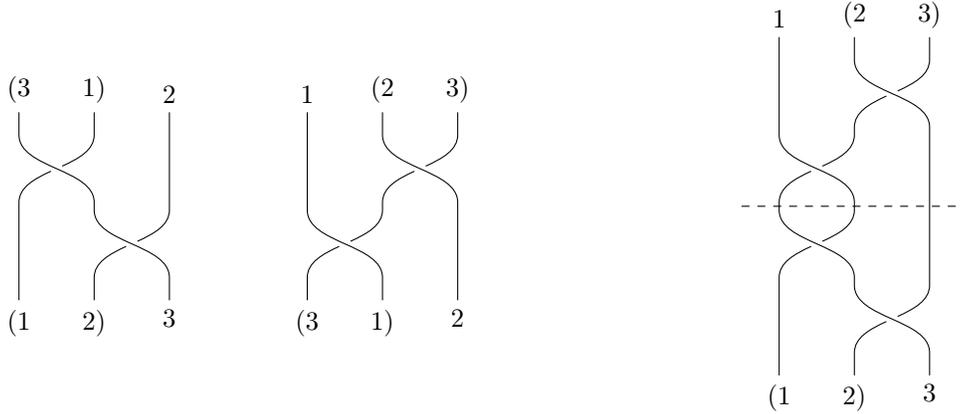
\begin{figure}[h]
    \centering
    \begin{subfigure}[c]{0.3\textwidth}
    \centering
    \begin{tikzpicture}
    \pic[name prefix = braid] at (0,0) {braid={s_1 s_2}};
    \node[at=(braid-1-s),above] {(3};
    \node[at=(braid-2-s),above] {1)};
    \node[at=(braid-3-s),above] {2};
    \node[at=(braid-1-e),below] {3};
    \node[at=(braid-2-e),below] {(1};
    \node[at=(braid-3-e),below] {2)};
    \end{tikzpicture}
    \end{subfigure}
        \centering
    \begin{subfigure}[c]{0.3\textwidth}
        \begin{tikzpicture}
    \pic[name prefix = braid] at (0,0) {braid={s_2 s_1}};
    \node[at=(braid-1-s),above] {1};
    \node[at=(braid-2-s),above] {(2};
    \node[at=(braid-3-s),above] {3)};
    \node[at=(braid-1-e),below] {1)};
    \node[at=(braid-2-e),below] {2};
    \node[at=(braid-3-e),below] {(3};
    \end{tikzpicture}
    \end{subfigure}
    \begin{subfigure}[c]{0.3\textwidth}
    \centering
    \begin{tikzpicture}
    \pic[braid/every floor/.style={draw=black,dashed},
    name prefix = braid] at (0,0) {braid={s_2 s_1 s_1 s_2}};
    \node[at=(braid-1-s),above] {1};
    \node[at=(braid-2-s),above] {(2};
    \node[at=(braid-3-s),above] {3)};
    \node[at=(braid-1-e),below] {(1};
    \node[at=(braid-2-e),below] {2)};
    \node[at=(braid-3-e),below] {3};
    \draw[dashed] (-0.5,-2.25)--(2.5,-2.25); 
    \end{tikzpicture}
    \end{subfigure}
    \caption{Two elements of $\PaB(3)$ and their stacking composition. We draw all diagrams in this paper as going from bottom to top, e.g. the braid on the far left goes from $(12)3$ to $(31)2$.}
    \label{fig:pabcomp}
\end{figure}

The collection of all parenthesised braids is a freely generated symmetric monoidal category, where the tensor product functor $\PaB(s)\otimes \PaB(t)\rightarrow \PaB(s \cup t)$ is given by the concatenation of parenthesised permutations $s$ and $t$ (e.g. \cite[Example 1, pg 55]{Joyal_Street}). On morphisms, this tensor product is the juxtaposition of braids — visually, placing the parenthesised braids next to each other. One often abuses notation and writes \[\PaB = \coprod_{s\in\mathcal{S}}\PaB(s)\] to describe this phenomenon.  

The category of parenthesised braids is \emph{finitely presented} in the sense that every morphism can be written as a combination of tensors, categorical compositions, strand deletions and strand doubling of two generators: \[\overcrossing\in \Hom_{\mathsf{PaB}(2)}((12), (21)) \quad \text{and} \quad \Associator\in \Hom_{\mathsf{PaB}(3)}((12)3,1(23)).\] These generating morphisms satisfy the \emph{pentagon} and  two \emph{hexagon} equations of braided monoidal categories. For further details on this categorical structure, we recommend \cite[Section 2.1, Claim 2.6]{BNGT} or \cite[Example 1, pg 56]{Joyal_Street}.

\begin{remark}
Many readers of this article will be familiar with the fact that the collection of parenthesised braids $\{\PaB(n)\}$, $n\geq 1$, is a groupoid model for the operad of little $2$-discs (see, for example, \cite[Chapter 6]{FresseVol1}, \cite{merkGT}). The tensor category described here is the prop associated to this operad. We have chosen this presentation because it allows for a more natural discussion of expansions of both braids and $w$-foams in the same language. 
\end{remark}

\subsection{The associated graded structure: the infinitesimal braid algebra}\label{section:PACD}
The analogue of the group ring for parenthesised braids is the linear extension of $\PaB$, $$\mathbb{Q}[\PaB]= \bigsqcup_{s\in\mathcal{S}}\mathbb{Q}[\PaB](s).$$ Here $\mathbb{Q}[\PaB](s)$ is a vector space spanned by parenthesised braids with skeleton $s$, i.e. elements are linear combinations of morphisms $B = a_1\beta_1+a_2\beta_2 + \ldots$ where $\beta_i$ is a braid with skeleton $s$ and $a_i\in\mathbb{Q}$. 

The complete associated graded space of $\PaB$ is the direct product of the successive quotients of  the powers of the augmentation ideal \[\calA:= \Pi _{n=0}^\infty \calI^n/\calI^{n+1}.\] The associated graded space of $\PaB$ has several (isomorphic) models, the most common of which is the tensor category $\PaCD$, whose morphism spaces, $$\PaCD:=\bigsqcup_{s\in\mathcal{S}}\PaCD(s),$$ are completed graded vector spaces spanned by parenthesised \emph{chord diagrams}.

Chord diagrams are most concisely defined as elements of the universal enveloping algebras of the {\em infinitesimal braid} Lie algebras. The Lie algebra of infinitesimal braids on $n$ strands, $\mathfrak{t}_n$, $n\geq 2$, is the quotient of the free Lie algebra $\lie_{\frac{n(n-1)}{2}}$ generated by the symbols $\{t^{i,j} = t^{j,i}\}_{1\leq i< j\leq n}$ subject to the relations \[ [t^{i,j} , t^{k,l}] = 0  \quad \text{and} \quad[t^{i,j} , t^{i,k} + t^{k,j}] = 0 \] whenever $i,j,k,l$ are distinct. The completed universal enveloping algebra, $\widehat{U}(\mathfrak{t}_n)$, can be viewed as a category of (non-parenthesised) \emph{chord diagrams} on $n$ strands, $\mathsf{CD}(n)$. This category has one object and is enriched in (completed, filtered, co-associative) co-algebras (e.g. \cite[Section 6.4]{merkGT}, \cite[Chapter 10.2]{FresseVol1}). Morphisms of $\mathsf{CD}(n)$ are depicted as chord diagrams on $n$ vertical strands, where $t^{i,j}$ is represented by a chord between strands $i$ and $j$. Composition of morphisms is depicted by ``stacking'' the chord diagrams. The category of \emph{parenthesised chord diagrams} on $n$-strands, $\mathsf{PaCD}(n)$, is obtained by formally replacing the single object of the category $\mathsf{CD}(n)$ with the set of parenthesised words of length $n$ with distinct letters. A parenthesised chord diagram is shown on the right-hand side of Figure~\ref{fig:pacdexample}.

An alternative combinatorial description of morphisms in $\PaCD$, which has many computational advantages, is as a vector space of {\em Jacobi diagrams}. Roughly speaking, a Jacobi diagram is a graph with trivalent and univalent vertices, the latter of which are arranged along the braid skeleton, as on the left side of Figure~\ref{fig:pacdexample}.These diagrams are subject to a set of relations (STU, IHX and AS), which encode the familiar relations of Lie algebras. For a more comprehensive introduction, we recommend the survey article \cite{BN_Survey_Knot_Invariants} or the book \cite{Invariants_Book} for further details. 

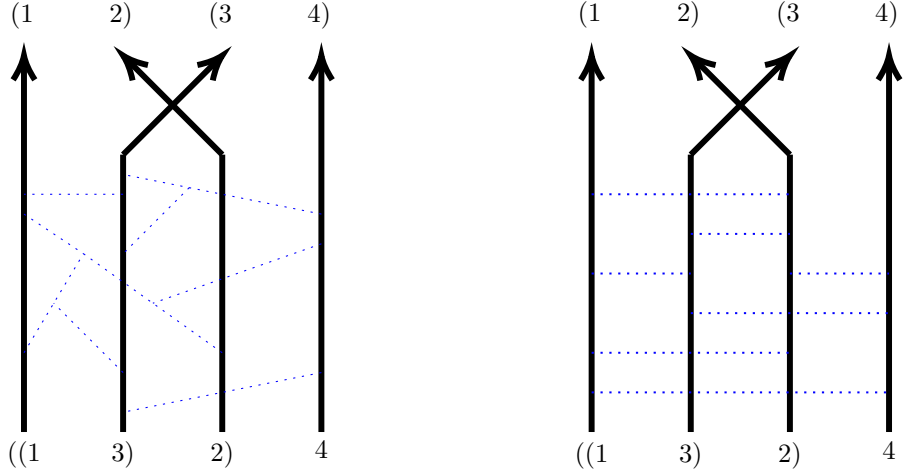
\begin{figure}[h]
    \centering
\begin{subfigure}[c]{0.45\textwidth}
    \centering
\[\begin{tikzpicture}[x=0.75pt,y=0.75pt,yscale=-1,xscale=1]

\draw [line width=2.25]    (100,240) -- (100,54) ;
\draw [shift={(100,50)}, rotate = 90] [color={rgb, 255:red, 0; green, 0; blue, 0 }  ][line width=2.25]    (12.24,-5.49) .. controls (7.79,-2.58) and (3.71,-0.75) .. (0,0) .. controls (3.71,0.75) and (7.79,2.58) .. (12.24,5.49)   ;
\draw [line width=2.25]    (150,240) -- (150,100) ;
\draw [line width=2.25]    (250,240) -- (250,54) ;
\draw [shift={(250,50)}, rotate = 90] [color={rgb, 255:red, 0; green, 0; blue, 0 }  ][line width=2.25]    (12.24,-5.49) .. controls (7.79,-2.58) and (3.71,-0.75) .. (0,0) .. controls (3.71,0.75) and (7.79,2.58) .. (12.24,5.49)   ;
\draw [line width=2.25]    (200,240) -- (200,100) ;
\draw [line width=2.25]    (150,100) -- (197.17,52.83) ;
\draw [shift={(200,50)}, rotate = 135] [color={rgb, 255:red, 0; green, 0; blue, 0 }  ][line width=2.25]    (12.24,-5.49) .. controls (7.79,-2.58) and (3.71,-0.75) .. (0,0) .. controls (3.71,0.75) and (7.79,2.58) .. (12.24,5.49)   ;
\draw [line width=2.25]    (200,100) -- (152.83,52.83) ;
\draw [shift={(150,50)}, rotate = 45] [color={rgb, 255:red, 0; green, 0; blue, 0 }  ][line width=2.25]    (12.24,-5.49) .. controls (7.79,-2.58) and (3.71,-0.75) .. (0,0) .. controls (3.71,0.75) and (7.79,2.58) .. (12.24,5.49)   ;
\draw [color={rgb, 255:red, 0; green, 0; blue, 255 }  ,draw opacity=1 ] [dash pattern={on 0.84pt off 2.51pt}]  (100,200) -- (130,150) ;
\draw [color={rgb, 255:red, 0; green, 0; blue, 255 }  ,draw opacity=1 ] [dash pattern={on 0.84pt off 2.51pt}]  (150,210) -- (115,175) ;
\draw [color={rgb, 255:red, 0; green, 0; blue, 255 }  ,draw opacity=1 ] [dash pattern={on 0.84pt off 2.51pt}]  (100,130) -- (130,150) ;
\draw [color={rgb, 255:red, 0; green, 0; blue, 255 }  ,draw opacity=1 ] [dash pattern={on 0.84pt off 2.51pt}]  (130,150) -- (200,200) ;
\draw [color={rgb, 255:red, 0; green, 0; blue, 255 }  ,draw opacity=1 ] [dash pattern={on 0.84pt off 2.51pt}]  (250,145) -- (165,175) ;
\draw [color={rgb, 255:red, 0; green, 0; blue, 255 }  ,draw opacity=1 ] [dash pattern={on 0.84pt off 2.51pt}]  (250,210) -- (150,230) ;
\draw [color={rgb, 255:red, 0; green, 0; blue, 255 }  ,draw opacity=1 ] [dash pattern={on 0.84pt off 2.51pt}]  (250,130) -- (233.33,126.67) -- (216.67,123.33) -- (200,120) -- (183.33,116.67) -- (166.67,113.33) -- (150,110) ;
\draw [color={rgb, 255:red, 0; green, 0; blue, 255 }  ,draw opacity=1 ] [dash pattern={on 0.84pt off 2.51pt}]  (183.33,116.67) -- (150,150) ;
\draw [color={rgb, 255:red, 0; green, 0; blue, 255 }  ,draw opacity=1 ] [dash pattern={on 0.84pt off 2.51pt}]  (150,120) -- (100,120) ;

\draw (99,37.6) node [anchor=south] [inner sep=0.75pt]    {$( 1$};
\draw (149,37.6) node [anchor=south] [inner sep=0.75pt]    {$2)$};
\draw (199,37.6) node [anchor=south] [inner sep=0.75pt]    {$( 3$};
\draw (249,37.6) node [anchor=south] [inner sep=0.75pt]    {$4)$};
\draw (100,243.4) node [anchor=north] [inner sep=0.75pt]    {$(( 1$};
\draw (150,243.4) node [anchor=north] [inner sep=0.75pt]    {$3)$};
\draw (200,243.4) node [anchor=north] [inner sep=0.75pt]    {$2)$};
\draw (250,243.4) node [anchor=north] [inner sep=0.75pt]    {$4$};

\end{tikzpicture}
\]
    \end{subfigure}
    \begin{subfigure}[c]{0.45\textwidth}
    \centering
\tikzset{every picture/.style={line width=0.75pt}} 
\[\begin{tikzpicture}[x=0.75pt,y=0.75pt,yscale=-1,xscale=1]

\draw [line width=2.25]    (100,240) -- (100,54) ;
\draw [shift={(100,50)}, rotate = 90] [color={rgb, 255:red, 0; green, 0; blue, 0 }  ][line width=2.25]    (12.24,-5.49) .. controls (7.79,-2.58) and (3.71,-0.75) .. (0,0) .. controls (3.71,0.75) and (7.79,2.58) .. (12.24,5.49)   ;
\draw [line width=2.25]    (150,240) -- (150,100) ;
\draw [line width=2.25]    (250,240) -- (250,54) ;
\draw [shift={(250,50)}, rotate = 90] [color={rgb, 255:red, 0; green, 0; blue, 0 }  ][line width=2.25]    (12.24,-5.49) .. controls (7.79,-2.58) and (3.71,-0.75) .. (0,0) .. controls (3.71,0.75) and (7.79,2.58) .. (12.24,5.49)   ;
\draw [line width=2.25]    (200,240) -- (200,100) ;
\draw [line width=2.25]    (150,100) -- (197.17,52.83) ;
\draw [shift={(200,50)}, rotate = 135] [color={rgb, 255:red, 0; green, 0; blue, 0 }  ][line width=2.25]    (12.24,-5.49) .. controls (7.79,-2.58) and (3.71,-0.75) .. (0,0) .. controls (3.71,0.75) and (7.79,2.58) .. (12.24,5.49)   ;
\draw [line width=2.25]    (200,100) -- (152.83,52.83) ;
\draw [shift={(150,50)}, rotate = 45] [color={rgb, 255:red, 0; green, 0; blue, 0 }  ][line width=2.25]    (12.24,-5.49) .. controls (7.79,-2.58) and (3.71,-0.75) .. (0,0) .. controls (3.71,0.75) and (7.79,2.58) .. (12.24,5.49)   ;
\draw [color={rgb, 255:red, 0; green, 0; blue, 255 }  ,draw opacity=1 ] [dash pattern={on 0.84pt off 2.51pt}]  (100,120) -- (200,120) ;
\draw [color={rgb, 255:red, 0; green, 0; blue, 255 }  ,draw opacity=1 ] [dash pattern={on 0.84pt off 2.51pt}]  (150,140) -- (200,140) ;
\draw [color={rgb, 255:red, 0; green, 0; blue, 255 }  ,draw opacity=1 ] [dash pattern={on 0.84pt off 2.51pt}]  (100,160) -- (150,160) ;
\draw [color={rgb, 255:red, 0; green, 0; blue, 255 }  ,draw opacity=1 ] [dash pattern={on 0.84pt off 2.51pt}]  (150,180) -- (250,180) ;
\draw [color={rgb, 255:red, 0; green, 0; blue, 255 }  ,draw opacity=1 ] [dash pattern={on 0.84pt off 2.51pt}]  (100,200) -- (200,200) ;
\draw [color={rgb, 255:red, 0; green, 0; blue, 255 }  ,draw opacity=1 ] [dash pattern={on 0.84pt off 2.51pt}]  (100,220) -- (250,220) ;
\draw [color={rgb, 255:red, 0; green, 0; blue, 255 }  ,draw opacity=1 ] [dash pattern={on 0.84pt off 2.51pt}]  (200,160) -- (250,160) ;

\draw (99,37.6) node [anchor=south] [inner sep=0.75pt]    {$( 1$};
\draw (149,37.6) node [anchor=south] [inner sep=0.75pt]    {$2)$};
\draw (199,37.6) node [anchor=south] [inner sep=0.75pt]    {$( 3$};
\draw (249,37.6) node [anchor=south] [inner sep=0.75pt]    {$4)$};
\draw (100,243.4) node [anchor=north] [inner sep=0.75pt]    {$(( 1$};
\draw (150,243.4) node [anchor=north] [inner sep=0.75pt]    {$3)$};
\draw (200,243.4) node [anchor=north] [inner sep=0.75pt]    {$2)$};
\draw (250,243.4) node [anchor=north] [inner sep=0.75pt]    {$4$};

\end{tikzpicture}\]
    \end{subfigure}
    \caption{On the right is an example of an element of $\PaCD(4)$. The skeleton is depicted by thick black strands and the elements of $U(\mathfrak{t}_4)$ are depicted by dotted blue chords. On the left is a Jacobi diagram on the same skeleton.}
    \label{fig:pacdexample}
\end{figure}

Homomorphic expansions of braids are functors from the tensor category $\PaB$ to its associated graded $\calA = \Pi _{n=0}^\infty \calI^n/\calI^{n+1} \cong \PaCD$. The filtration on $\PaB$ given by powers of the augmentation ideal turns out to agree with the Vassiliev filtration of knot theory, and thus homomorphic expansions agree with universal finite type invariants, such as the Konstevich integral (\cite{MR1237836}, \cite{MO:KontsevichIntegral}, \cite{Dancso_Kont}, etc.)

Explicitly, homomorphic expansions of parenthesised braids $Z:\widehat{\mathbb{Q}[\PaB]}\rightarrow \PaCD$ are isomorphisms of tensor categories determined by the values $Z$ takes on the generating morphisms:\[Z(\overcrossing)=e^{\frac{1}{2}t^{12}} \quad \text{and} \quad Z(\Associator)= \Phi(t^{12},t^{23}),\]  where $\Phi(x,y)\in U(\lie_2)$. These values must necessarily satisfy the following equations, arising from the relations of $\PaB$: 
\begin{equation}\label{inversion}
\Phi(x,y)=\Phi(y,x)^{-1},
\end{equation} 
\begin{equation}\label{hexagon} 
\Phi(t^{12},t^{23})e^{-\frac{1}{2}t^{23}}\Phi(t^{13},t^{23})^{-1}e^{-\frac{1}{2}t^{13}}\Phi(t^{13},t^{12})e^{-\frac{1}{2}(t^{13}+t^{23})} =1, \ \text{in $U(\mathfrak{t}_3)$}
\end{equation} 
\begin{equation}\label{pentagon}  
\Phi(t^{13}+t^{23}, t^{34})\Phi(t^{12},t^{23}+t^{24}) =\Phi(t^{12},t^{23})\Phi(t^{12}+t^{13}, t^{24}+t^{34})\Phi(t^{23},t^{34}), \ \text{in $U(\mathfrak{t}_4)$.}
\end{equation} \emph{Drinfel'd associators} are elements $\Phi(x,y)\in U(\lie_2)$ which satisfy these same equations, leading to the following theorem.

\begin{theorem}\label{thm:BN_Ass}\cite[Proposition 3.4]{BNGT}
There is a one-to-one correspondence between homomorphic expansions of parenthesised braids and the set of Drinfel'd associators. 
\end{theorem}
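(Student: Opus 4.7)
The plan is to exploit the fact that $\PaB$ is finitely presented as a symmetric monoidal category, with two generators — the crossing $R$ of skeleton $(12)\to(21)$ and the associator $\alpha$ of skeleton $(12)3\to 1(23)$ — subject to three families of relations (the pentagon and the two hexagons). Because a homomorphic expansion $Z:\widehat{\mathbb{Q}[\PaB]}\to \PaCD$ is, in particular, a symmetric monoidal functor with $\gr(Z)=\id$, it is entirely determined by the pair $(Z(R), Z(\alpha))\in \PaCD(2)\times \PaCD(3)$. The theorem therefore reduces to a bijection between admissible such pairs and Drinfel'd associators $\Phi(x,y)\in U(\lie_2)$.

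For the forward direction, first I would pin down the shape of $Z(R)$ and $Z(\alpha)$ using $\gr(Z)=\id$. The underlying permutations of $R$ and $\alpha$ fix the degree-zero parts of $Z(R)$ and $Z(\alpha)$ to be the skeletal swap and the identity, respectively, and higher-order corrections live in the completed chord-diagram algebras on 2 and 3 strands. A normalisation/gauge argument — using the $\mathbb{Z}/2$-symmetry of the crossing and the fact that the positive-degree part of $\mathsf{CD}(2)$ is the completed polynomial algebra $\Q[[t^{12}]]$ — forces $Z(R) = \sigma^{12}\, e^{t^{12}/2}$. Similarly, since the subalgebra of $\mathfrak{t}_3$ generated by $t^{12}$ and $t^{23}$ is free (by the 4T-relations of Drinfel'd--Kohno), $Z(\alpha)$ is of the form $\Phi(t^{12}, t^{23})$ for some $\Phi$ with constant term $1$. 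The pentagon and two hexagons of $\PaB$, applied to $Z$, then translate, after cancelling underlying permutations, into exactly equations \eqref{inversion}--\eqref{pentagon}.

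For the backward direction, given any $\Phi(x,y)$ satisfying the associator equations, define $Z$ on generators by the formulas above and extend as a symmetric monoidal functor by the universal property of the free presentation of $\PaB$. Well-definedness of $Z$ on the whole category is equivalent to it carrying each of the three defining $\PaB$-relations to an identity in $\PaCD$, which is precisely what the associator axioms guarantee. The leading term $1$ of $\Phi$ ensures $\gr(Z)=\id$, and invertibility of $\gr(Z)$ promotes $Z$ to a filtered isomorphism on the pro-unipotent completion.

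The main obstacle is making the normalisation for $Z(R)$ and the translation of $\PaB$-relations fully rigorous. The hexagons are the subtlest step, since they involve strand-doubling and one must carefully track how the exponential factor $e^{t^{12}/2}$ in $Z(R)$ is transformed into combinations of $t^{ij}$ with three indices after doubling, and verify that the resulting expression is \emph{exactly} the hexagon identity in $\widehat{U}(\mathfrak{t}_3)$ rather than something weaker. The pentagon-to-pentagon translation, by contrast, is essentially tautological once $Z(\alpha)$ has been identified with $\Phi(t^{12},t^{23})$, and the inversion equation \eqref{inversion} follows from the relation $\alpha \cdot \alpha^{\mathrm{op}} = \id$ together with the symmetry of $\PaCD$.
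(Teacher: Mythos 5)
Your proposal follows essentially the same route as the paper, which does not prove this theorem itself but cites \cite[Proposition 3.4]{BNGT} and sketches the argument in the surrounding text: a homomorphic expansion is determined by its values on the two generators, $\gr(Z)=\id$ together with the relations of $\PaB$ force $Z(\overcrossing)=e^{t^{12}/2}$ and $Z(\Associator)=\Phi(t^{12},t^{23})$ with $\Phi$ satisfying equations \eqref{inversion}--\eqref{pentagon}, and conversely any such $\Phi$ defines an expansion by the universal property of the finite presentation. Your identification of the hexagons (which involve strand doubling, hence the cabling rule $t^{12}\mapsto t^{12}+t^{13}$) as the delicate translation is also the right place to focus.

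One step is not justified as written: you deduce that $Z(\Associator)$ has the form $\Phi(t^{12},t^{23})$ from the fact that the subalgebra of $\widehat{U}(\mathfrak{t}_3)$ generated by $t^{12}$ and $t^{23}$ is free. Freeness gives you that $\Phi(x,y)\in U(\lie_2)$ is \emph{uniquely determined} once you know $Z(\Associator)$ lies in that subalgebra, but it does not give membership: a priori $Z(\Associator)$ is an arbitrary group-like element of $\widehat{U}(\mathfrak{t}_3)$, which decomposes as the free algebra on $t^{12},t^{23}$ tensored with the centre spanned by $c=t^{12}+t^{13}+t^{23}$, so the value could carry a central factor $e^{\lambda c}$ (and a stray degree-one term). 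The standard way to kill these is the compatibility of $Z$ with strand deletion: deleting any strand of $\Associator$ yields an identity braid, so the corresponding degeneracy of $Z(\Associator)$ must be $1$ in $\mathsf{CD}(2)$, which forces $\lambda=0$ and the vanishing of linear terms. With that repair the argument is complete and agrees with the cited proof.
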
 

\medskip
\medskip

The (prounipotent) Grothendieck--Teichm\"uller groups, $\gt$ and $\grt_1$, defined by Drinfel'd in \cite[Section 4, 5]{Drin90}, are known to act freely and transitively on the set of all Drinfel'd associators. It was shown by Bar-Natan \cite[Propositions 4.5, 4.8]{BNGT} that $\grt_1$ is isomorphic to the (expansion-preserving) automorphism group of $\PaCD$. It was later understood that the same result can be concisely presented using operads in \cite{tamarkin1998proof}, \cite{Will_GT} and \cite[Theorem 10.3.6]{FresseVol1}.

\begin{definition}\label{def:grt1}\cite{Drin90}
The graded Grothendieck--Teichm\"uller group $\grt_1$ is the collection of group-like elements, $\Psi(x, y) = e^{\psi(x,y)}$ of the completed universal enveloping (Hopf) algebra $U(\widehat{\mathfrak{lie}}_2)$ 

\begin{equation}\label{eq:grtinversion}
\Psi(x,y)=\Psi^{-1}(y,x),
\end{equation} 
\begin{equation}\label{eq:grthexagon} 
\Psi(x,y)\Psi(y,z)\Psi(z,x) =1 \ \text{for} \ x + y + z = 0,
\end{equation} 
\begin{equation}\label{eq:grtpentagon}  
\Psi(t^{13}+t^{23}, t^{34})\Psi(t^{12},t^{23}+t^{24}) =\Psi(t^{12},t^{23})\Psi(t^{12}+t^{13}, t^{24}+t^{34})\Psi(t^{23},t^{34})\ \text{in} \ U(\mathfrak{t}_4).
\end{equation} 
 
\medskip 
 \end{definition} 

Automorphisms of $\PaCD$ which preserve expansions by post-composition are characterised by the fact that they fix both the skeleta of chord diagrams and the morphism $\chord\in  \Hom_{\mathsf{PaCD}(2)}((12),(12))$. We slightly abuse notation and write $\Aut(\PaCD)$ for the group of {\em expansion preserving} automorphisms of $\PaCD$.  The following theorem can be found in \cite[Proposition 4.5, Proposition 4.8]{BNGT}, \cite{tamarkin1998proof} or \cite[Theorem 10.3.6]{FresseVol1}. See also \cite[Section 6.7]{merkGT} for a nice expository treatment.

\begin{theorem}\label{thm: GRT is Aut} 
There is an isomorphism of groups $\tau: \Aut(\mathsf{PaCD})\to \grt_1$. 
\end{theorem}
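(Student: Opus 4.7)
The plan is to exploit the finite presentation of $\PaCD$ as a tensor category. Since an expansion-preserving automorphism $\alpha \in \Aut(\PaCD)$ by definition fixes all skeleta and the chord $\chord = t^{12}$, it is determined by its value on the remaining generating morphism $\Associator \in \Hom_{\PaCD(3)}((12)3, 1(23))$. I would first identify the relevant morphism space: because $\Associator$ carries no chords, the morphism space $\Hom_{\PaCD(3)}((12)3, 1(23))$ is in natural bijection with $\widehat{U}(\mathfrak{t}_3)$ via left-multiplication against the trivial skeleton-change. Write $\alpha(\Associator) = \Psi \cdot \Associator$ for a unique $\Psi \in \widehat{U}(\mathfrak{t}_3)$. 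Two facts should be established at this stage: $\Psi$ is group-like, because $\alpha$ is an automorphism of a Hopf/tensor category and $\Associator$ is group-like; and $\Psi$ lies in the image of the inclusion $\widehat{U}(\widehat{\mathfrak{lie}}_2) \hookrightarrow \widehat{U}(\mathfrak{t}_3)$, $x \mapsto t^{12}, y \mapsto t^{23}$, because the strand-cabling/removal operations that $\alpha$ commutes with force $\Psi$ to involve only chords between strands $1,2$ and $2,3$. Set $\tau(\alpha) := \Psi(x,y)$.

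Next I would verify that $\Psi$ satisfies the defining equations of $\grt_1$. Applying $\alpha$ to each of the three relations defining $\PaCD$ (the unitarity/inversion relation, the two hexagons, and the pentagon), and using that $\alpha$ fixes every $t^{ij}$ (because these come from cablings of the fixed $\chord$), yields equations~\eqref{eq:grtinversion}, \eqref{eq:grthexagon}, and~\eqref{eq:grtpentagon} for $\Psi$ essentially by cancellation of the half-chord exponentials $e^{\pm t^{ij}/2}$ that appear symmetrically on both sides of the $\PaCD$ relations. In particular, the two hexagon relations of $\PaCD$ collapse to the single $\grt_1$ hexagon $\Psi(x,y)\Psi(y,z)\Psi(z,x)=1$ (with $x+y+z=0$) after using inversion.

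For the group homomorphism property, I would compute $(\alpha_1 \circ \alpha_2)(\Associator) = \alpha_1(\Psi_2 \cdot \Associator) = \Psi_2 \cdot \Psi_1 \cdot \Associator$, confirming that $\tau$ intertwines composition of automorphisms with the group operation on $\grt_1$ (up to the standard convention on order). Injectivity is immediate: if $\Psi = 1$, then $\alpha$ fixes both generators of $\PaCD$ and therefore is the identity. For surjectivity, given $\Psi \in \grt_1$, define $\alpha$ on generators by $\alpha(\chord) = \chord$ and $\alpha(\Associator) = \Psi \cdot \Associator$, and extend by functoriality and symmetric monoidality. The fact that $\alpha$ respects the defining relations of $\PaCD$ follows by running the translation in the previous paragraph in reverse: the $\grt_1$ equations on $\Psi$ are precisely the equations needed for this extension to be well-defined.

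The main obstacle is the bookkeeping in the translation between the relations of $\PaCD$ and the equations defining $\grt_1$, particularly the appearance of the half-chord exponentials in the $\PaCD$ hexagons and their disappearance in the $\grt_1$ hexagon (which needs the inversion identity). A secondary subtlety, which I would need to handle carefully, is the argument that $\Psi$ must be group-like and must live in the two-variable subalgebra; this uses the compatibility of $\alpha$ with strand deletion, cabling, and comultiplication on $\PaCD$, and is really where the tensor-category (as opposed to plain category) structure is essential.
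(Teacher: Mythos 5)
Your proposal follows the same route as the paper's (sketched) proof: an expansion-preserving automorphism is pinned down by $F(\Associator)=\Psi(t^{12},t^{23})$, the defining relations of $\PaCD$ translate into the defining equations of $\grt_1$, and $\tau(F)=\Psi$. The identification of the morphism space, the group-likeness of $\Psi$, the restriction to the two-variable subalgebra, and the reduction of the two hexagons to the single $\grt_1$ hexagon are all consistent with the standard argument the paper cites.

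However, your verification of the homomorphism property contains a genuine error. You compute $(\alpha_1\circ\alpha_2)(\Associator)=\alpha_1(\Psi_2\cdot\Associator)=\Psi_2\cdot\Psi_1\cdot\Associator$, which tacitly assumes $\alpha_1\bigl(\Psi_2(t^{12},t^{23})\bigr)=\Psi_2(t^{12},t^{23})$. That is false: only those chords that are directly expressible through the monoidal structure in the given parenthesisation are fixed (e.g.\ $t^{12}=\chord\otimes\id$ on $(12)3$, or the cabled chord $t^{13}+t^{23}$); the remaining chord must be transported across the associativity morphism and is therefore conjugated by $\Psi_1$. The correct composition law --- quoted later in this paper from \cite[Proposition 5.1]{BNGT} --- is
\[
(F_1 \cdot F_2)(\Associator)= \Psi_1(t^{12},t^{23})\,\Psi_2\left(t^{12},\, \Psi_1^{-1}(t^{12},t^{23})\, t^{23}\, \Psi_1(t^{12},t^{23})\right),
\]
which is exactly the twisted (Ihara-type) multiplication on $\grt_1$, not the naive product $\Psi_2\Psi_1$ in $\widehat{U}(\mathfrak{t}_3)$; with the naive product your $\tau$ would fail to be a homomorphism onto $\grt_1$ with its standard group structure. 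The same caveat applies to your blanket claim that $\alpha$ ``fixes every $t^{ij}$'': this must be checked parenthesisation by parenthesisation when deriving the pentagon and hexagon equations (there the relevant chords do turn out to be the monoidal ones, so that part of your argument survives). The remaining steps --- injectivity, surjectivity, and the extraction of the $\grt_1$ equations --- are sound.
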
 

\begin{proof}[Proof (sketch)]
The key idea in the proof is that an automorphism $F$ which preserves homomorphic expansions is determined by its values on the generators of $\mathsf{PaCD}$. Since expansion preserving automorphisms fix the value of the morphism $\chord$, an automorphism $F$ is uniquely determined by the value $F(\Associator)$, which can be written as a power series in chords $t^{12}$ and $t^{23}$, namely $F(\Associator)=\Psi(t^{12},t^{23})$. Understanding the equations imposed by the defining relations of $\mathsf{PaCD}$ leads precisely to the defining equations of $\grt_1$, and thus $\Psi \in \grt_1$. The isomorphism $\tau$ is defined by $\tau(F)=\Psi$.
\end{proof}

Since homomorphic expansions of parenthesised braids are isomorphisms of tensor categories, one can conclude that for each homomorphic expansion, $Z:\widehat{\mathbb{Q}[\PaB]}\rightarrow \PaCD$, there is an isomorphism $$\Aut(\widehat{\mathbb{Q}[\PaB]})\overset{Z}{\cong} \Aut(\PaCD).$$ From this one can deduce the following (e.g. \cite[Proposition 5.3]{BNGT}, \cite[Proposition 11.3.5]{FresseVol1}): 

\begin{cor}
There is an isomorphism of prounipotent groups: \[\gt_1\overset{Z}{\cong}\Aut(\widehat{\mathbb{Q}[\PaB]}).\]
\end{cor}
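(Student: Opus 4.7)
The plan is to transport the isomorphism $\tau: \Aut(\PaCD) \cong \grt_1$ from Theorem~\ref{thm: GRT is Aut} across the homomorphic expansion $Z$, using that $Z$ is an isomorphism of filtered tensor categories by Definition~\ref{def: expansion}.

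First, I would use $Z$ to produce a conjugation isomorphism on automorphism groups. Since $Z:\widehat{\mathbb{Q}[\PaB]}\to \PaCD$ is an isomorphism of tensor categories, the assignment $F \mapsto Z \circ F \circ Z^{-1}$ defines a group isomorphism
\[c_Z : \Aut(\widehat{\mathbb{Q}[\PaB]}) \xrightarrow{\cong} \Aut(\PaCD),\]
which is exactly the displayed isomorphism stated just before the corollary. A small check is needed to confirm that $c_Z$ restricts to an isomorphism of the expansion-preserving subgroups: automorphisms fixing the braid skeleta and the generator $\overcrossing$ on the left correspond to automorphisms fixing the skeleta and the chord $\chord$ on the right, which follows from $Z(\overcrossing)=e^{t^{12}/2}$ together with functoriality of conjugation.

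Second, I would compose $c_Z$ with $\tau$ to obtain a group isomorphism $\tau \circ c_Z : \Aut(\widehat{\mathbb{Q}[\PaB]}) \xrightarrow{\cong} \grt_1$. The pro-unipotent group $\gt_1$ in the statement is the non-graded analogue of $\grt_1$, defined by equations obtained from (\ref{eq:grtinversion})--(\ref{eq:grtpentagon}) after conjugation by an associator $\Phi$; a classical computation of Drinfel'd~\cite{Drin90} (see also \cite[Proposition 5.3]{BNGT}) then yields the identification $\gt_1 \cong \grt_1$ via any homomorphic expansion $Z$. Composing this with $(\tau \circ c_Z)^{-1}$ gives the desired isomorphism $\gt_1 \xrightarrow[Z]{\cong} \Aut(\widehat{\mathbb{Q}[\PaB]})$.

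Finally, I would check the pro-unipotent structure. Both groups carry compatible descending filtrations --- the one on $\Aut(\widehat{\mathbb{Q}[\PaB]})$ induced by powers of the augmentation ideal $\calI$, and the one on $\grt_1$ by total chord degree --- and every map in the composition preserves these filtrations by construction, so the isomorphism is one of pro-unipotent groups. The main obstacle is purely bookkeeping: matching the various notions of ``expansion-preserving,'' filtered, and pro-unipotent structure across the three groups $\Aut(\widehat{\mathbb{Q}[\PaB]})$, $\Aut(\PaCD)$, and the two versions of the Grothendieck-Teichm\"uller group. All of the substantive algebraic content is already packaged into Theorem~\ref{thm: GRT is Aut} and into the statement that $Z$ is an isomorphism of tensor categories.
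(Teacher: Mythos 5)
Your proposal is correct and follows essentially the same route as the paper, which itself only records the conjugation isomorphism $\Aut(\widehat{\mathbb{Q}[\PaB]})\overset{Z}{\cong}\Aut(\PaCD)$ and then cites \cite[Proposition 5.3]{BNGT} and \cite[Proposition 11.3.5]{FresseVol1} for the deduction; you have simply made the chain $\Aut(\widehat{\mathbb{Q}[\PaB]})\cong\Aut(\PaCD)\cong\grt_1\cong\gt_1$ explicit. The only slight imprecision is your description of $\gt_1$ as \emph{defined} by conjugating the $\grt_1$ equations --- it has its own intrinsic defining equations, and the identification with $\grt_1$ via an associator is Drinfel'd's theorem --- but since you invoke exactly that cited result, the argument stands.
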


\subsection{Welded foams}\label{sec:foams}

Welded braid groups \cite{FRRwBraids}, also known as loop braid groups \cite{Damiani_Loop_braid,Baez_loop_braid_groups}, are generalisations of the classical braid groups. They arise in various contexts in the literature, such as in the study of the configuration spaces of (Euclidean) circles in 3 space, as automorphisms of free groups, string-like defects in 4D BF theory, or as braided ribbon tubes in $\mathbb{R}^4$. A good survey of these ideas can be found in \cite{Damiani_Loop_braid} or the introduction to \cite{BND:WKO1}. 

Welded foams, hereafter referred to as $w$-\emph{foams}, are a class of four-dimensional knotted objects extending $w$-braids in several ways. They can be thought of as tangles of ribbon tubes, ribbon tori and (one-dimensional) strings in $\mathbb{R}^4$. As this paper deals almost entirely with the graded side, the topology of $w$-foams plays a limited role, primarily as motivation. We present a brief overview here for context, and refer the reader to \cite[Sections 3 and 4]{BND:WKO2} for more detail. 

We follow the usual convention for presenting classes of tangles and describe $w$-foams using a Reidemeister, or diagrammatic, description (Definition~\ref{def:wfoams}). 
Specifically, as with parenthesised braids, $w$-foams have a presentation as a type of tensor category, called a two-coloured \emph{wheeled prop}: this is a rigid tensor category freely generated by two objects (cf. \cite[Definition 6.1]{DHR1}).  Morphisms in $w$-foams are all compositions of \emph{crossings},  \emph{vertices} ($\vertex$, $\negvertex$) and \emph{caps} ($\tcap$, $\bcap$) and subject to relations (of the Reidemeister flavour): 

\begin{definition}\label{def:wfoams}
The class of $w$-foams is the two-coloured \emph{wheeled prop}\footnote{In \cite{BND:WKO3} and \cite{BNDS_Duflo}, $\wf$ is denoted $\widetilde{wTF}$.}:
\begin{equation*}
\wf = \left\langle \ocrossing, \ucrossing, \vertex, \negvertex, \pv, \ppv,  \bcap \, \ \Bigg{|}\, R1^f, R2, R3, OC, CP \right\rangle.    
\end{equation*}
By convention and for brevity, generators are only shown with strands oriented upwards, but they are understood to come in all possible strand orientations.
\end{definition}

An example of a $w$-foam depicted as tangle diagram is shown in Figure~\ref{fig:wfoam}.  In this diagram, tubes are drawn as thick black lines and strings as thin red lines. Each generator, circled with dashed lines, stands for a specific topological feature embedded in $\R^4$: crossings are tubes threaded through each other as in Figure~\ref{fig:4DCrossing};  \emph{vertices} represent tubes merging, $\vertex$, or splitting, $\negvertex$, and the tube-string vertex $\pv$ represents a string attached to the side of a tube. For more detail on vertices, see around \cite[Figure 16]{BND:WKO2}. A cap, denoted by a capped black strand $\bcap$, stands for a tube capped off by a disk.  The crossings in Figure~\ref{fig:wfoam} which are \emph{not} circled are virtual or \emph{wiring diagram} crossings, as shown in Figure~\ref{fig:4DCrossing}. 

\begin{figure}[h]
\[\begin{tikzpicture}[x=0.75pt,y=0.75pt,yscale=-1,xscale=1]

\draw [color={rgb, 255:red, 255; green, 0; blue, 0 }  ,draw opacity=1 ]   (208.58,216.86) .. controls (213.44,210.56) and (217.78,201.56) .. (219.11,198.22) ;
\draw  [draw opacity=0] (300,141.5) .. controls (300,174.78) and (286.51,204.9) .. (264.71,226.71) -- (179.5,141.5) -- cycle ; \draw   (300,141.5) .. controls (300,174.78) and (286.51,204.9) .. (264.71,226.71) ;  
\draw  [draw opacity=0] (94.29,56.29) .. controls (116.1,34.49) and (146.22,21) .. (179.5,21) -- (179.5,141.5) -- cycle ; \draw   (94.29,56.29) .. controls (116.1,34.49) and (146.22,21) .. (179.5,21) ;  
\draw  [draw opacity=0] (179.5,21) .. controls (179.5,21) and (179.5,21) .. (179.5,21) .. controls (212.78,21) and (242.9,34.49) .. (264.71,56.29) -- (179.5,141.5) -- cycle ; \draw   (179.5,21) .. controls (179.5,21) and (179.5,21) .. (179.5,21) .. controls (212.78,21) and (242.9,34.49) .. (264.71,56.29) ;  
\draw  [draw opacity=0] (264.71,56.29) .. controls (286.51,78.1) and (300,108.22) .. (300,141.5) -- (179.5,141.5) -- cycle ; \draw   (264.71,56.29) .. controls (286.51,78.1) and (300,108.22) .. (300,141.5) ;  
\draw  [draw opacity=0][dash pattern={on 4.5pt off 4.5pt}] (162.87,92.6) .. controls (159.1,102.09) and (148.34,106.73) .. (138.84,102.96) -- (145.68,85.77) -- cycle ; \draw  [dash pattern={on 4.5pt off 4.5pt}] (162.87,92.6) .. controls (159.1,102.09) and (148.34,106.73) .. (138.84,102.96) ;  
\draw  [draw opacity=0][dash pattern={on 4.5pt off 4.5pt}] (128.49,78.93) .. controls (128.49,78.93) and (128.49,78.93) .. (128.49,78.93) .. controls (132.26,69.44) and (143.02,64.8) .. (152.51,68.57) -- (145.68,85.77) -- cycle ; \draw  [dash pattern={on 4.5pt off 4.5pt}] (128.49,78.93) .. controls (128.49,78.93) and (128.49,78.93) .. (128.49,78.93) .. controls (132.26,69.44) and (143.02,64.8) .. (152.51,68.57) ;  
\draw  [draw opacity=0][dash pattern={on 4.5pt off 4.5pt}] (152.51,68.57) .. controls (162.01,72.35) and (166.64,83.11) .. (162.87,92.6) -- (145.68,85.77) -- cycle ; \draw  [dash pattern={on 4.5pt off 4.5pt}] (152.51,68.57) .. controls (162.01,72.35) and (166.64,83.11) .. (162.87,92.6) ;  
\draw  [draw opacity=0][dash pattern={on 4.5pt off 4.5pt}] (237.67,71.5) .. controls (237.67,81.72) and (229.38,90) .. (219.17,90) -- (219.17,71.5) -- cycle ; \draw  [dash pattern={on 4.5pt off 4.5pt}] (237.67,71.5) .. controls (237.67,81.72) and (229.38,90) .. (219.17,90) ;  
\draw  [draw opacity=0][dash pattern={on 4.5pt off 4.5pt}] (219.17,90) .. controls (219.17,90) and (219.17,90) .. (219.17,90) .. controls (208.95,90) and (200.67,81.72) .. (200.67,71.5) -- (219.17,71.5) -- cycle ; \draw  [dash pattern={on 4.5pt off 4.5pt}] (219.17,90) .. controls (219.17,90) and (219.17,90) .. (219.17,90) .. controls (208.95,90) and (200.67,81.72) .. (200.67,71.5) ;  
\draw  [draw opacity=0][dash pattern={on 4.5pt off 4.5pt}] (200.67,71.5) .. controls (200.67,71.5) and (200.67,71.5) .. (200.67,71.5) .. controls (200.67,61.28) and (208.95,53) .. (219.17,53) -- (219.17,71.5) -- cycle ; \draw  [dash pattern={on 4.5pt off 4.5pt}] (200.67,71.5) .. controls (200.67,71.5) and (200.67,71.5) .. (200.67,71.5) .. controls (200.67,61.28) and (208.95,53) .. (219.17,53) ;  
\draw  [draw opacity=0][dash pattern={on 4.5pt off 4.5pt}] (219.17,53) .. controls (229.38,53) and (237.67,61.28) .. (237.67,71.5) -- (219.17,71.5) -- cycle ; \draw  [dash pattern={on 4.5pt off 4.5pt}] (219.17,53) .. controls (229.38,53) and (237.67,61.28) .. (237.67,71.5) ;  
\draw  [draw opacity=0][dash pattern={on 4.5pt off 4.5pt}] (194.33,134.17) .. controls (194.33,144.38) and (186.05,152.67) .. (175.83,152.67) -- (175.83,134.17) -- cycle ; \draw  [dash pattern={on 4.5pt off 4.5pt}] (194.33,134.17) .. controls (194.33,144.38) and (186.05,152.67) .. (175.83,152.67) ;  
\draw  [draw opacity=0][dash pattern={on 4.5pt off 4.5pt}] (175.83,152.67) .. controls (175.83,152.67) and (175.83,152.67) .. (175.83,152.67) .. controls (165.62,152.67) and (157.33,144.38) .. (157.33,134.17) -- (175.83,134.17) -- cycle ; \draw  [dash pattern={on 4.5pt off 4.5pt}] (175.83,152.67) .. controls (175.83,152.67) and (175.83,152.67) .. (175.83,152.67) .. controls (165.62,152.67) and (157.33,144.38) .. (157.33,134.17) ;  
\draw  [draw opacity=0][dash pattern={on 4.5pt off 4.5pt}] (157.33,134.17) .. controls (157.33,134.17) and (157.33,134.17) .. (157.33,134.17) .. controls (157.33,123.95) and (165.62,115.67) .. (175.83,115.67) -- (175.83,134.17) -- cycle ; \draw  [dash pattern={on 4.5pt off 4.5pt}] (157.33,134.17) .. controls (157.33,134.17) and (157.33,134.17) .. (157.33,134.17) .. controls (157.33,123.95) and (165.62,115.67) .. (175.83,115.67) ;  
\draw  [draw opacity=0][dash pattern={on 4.5pt off 4.5pt}] (175.83,115.67) .. controls (186.05,115.67) and (194.33,123.95) .. (194.33,134.17) -- (175.83,134.17) -- cycle ; \draw  [dash pattern={on 4.5pt off 4.5pt}] (175.83,115.67) .. controls (186.05,115.67) and (194.33,123.95) .. (194.33,134.17) ;  
\draw [line width=2.25]    (194.33,134.17) .. controls (183.22,132.33) and (175.89,133.67) .. (157.33,134.17) ;
\draw [line width=2.25]    (208.58,184.81) .. controls (197.22,170.33) and (174.56,172.33) .. (175.83,152.67) ;
\draw [shift={(195.36,174.72)}, rotate = 206.87] [fill={rgb, 255:red, 0; green, 0; blue, 0 }  ][line width=0.08]  [draw opacity=0] (8.57,-4.12) -- (0,0) -- (8.57,4.12) -- cycle    ;
\draw [line width=2.25]    (175.83,129.17) .. controls (175.89,124) and (175.89,125.67) .. (175.83,115.67) ;
\draw [line width=2.25]    (175.83,152.67) .. controls (175.89,147.5) and (175.89,147.17) .. (175.83,137.17) ;
\draw [line width=2.25]    (219.17,90) .. controls (210.33,126.78) and (172.33,51.44) .. (152.51,68.57) ;
\draw [line width=2.25]    (162.87,92.6) .. controls (176.33,106.11) and (135.67,128.11) .. (157.33,134.17) ;
\draw [line width=2.25]    (179.5,21) .. controls (177.89,39.67) and (174.56,91.67) .. (175.83,115.67) ;
\draw [shift={(176.48,68.54)}, rotate = 272.77] [fill={rgb, 255:red, 0; green, 0; blue, 0 }  ][line width=0.08]  [draw opacity=0] (8.57,-4.12) -- (0,0) -- (8.57,4.12) -- cycle    ;
\draw [line width=2.25]    (200.67,71.5) .. controls (189.56,69.67) and (211.22,131) .. (194.33,134.17) ;
\draw [line width=2.25]    (237.67,71.5) .. controls (254.56,73) and (225.22,37) .. (219.17,53) ;
\draw [line width=2.25]    (300,141.5) .. controls (281.22,146.33) and (252.56,69.67) .. (264.71,56.29) ;
\draw [shift={(268.05,98.37)}, rotate = 70.18] [fill={rgb, 255:red, 0; green, 0; blue, 0 }  ][line width=0.08]  [draw opacity=0] (8.57,-4.12) -- (0,0) -- (8.57,4.12) -- cycle    ;
\draw [line width=2.25]    (162.87,92.6) .. controls (153.22,86.79) and (145.92,85.32) .. (128.49,78.93) ;
\draw [line width=2.25]    (147.16,81.05) .. controls (149.12,76.27) and (148.87,77.89) .. (152.51,68.57) ;
\draw [line width=2.25]    (219.17,53) .. controls (219,76.11) and (219.67,79.44) .. (219.17,90) ;
\draw [line width=2.25]    (215.17,71.5) .. controls (210.44,71.33) and (205.78,70.67) .. (200.67,71.5) ;
\draw [line width=2.25]    (237.67,71.5) .. controls (232.94,71.33) and (228.28,70.67) .. (223.17,71.5) ;
\draw  [line width=2.25]  (231.44,128.67) .. controls (245.11,128) and (240.11,138.33) .. (243.44,144) .. controls (246.78,149.67) and (277.11,145.33) .. (271.78,158) .. controls (266.44,170.67) and (241.78,173.33) .. (231.11,170) .. controls (220.44,166.67) and (217.78,129.33) .. (231.44,128.67) -- cycle ;
\draw  [draw opacity=0] (264.71,226.71) .. controls (242.9,248.51) and (212.78,262) .. (179.5,262) -- (179.5,141.5) -- cycle ; \draw   (264.71,226.71) .. controls (242.9,248.51) and (212.78,262) .. (179.5,262) ;  
\draw  [draw opacity=0] (179.5,262) .. controls (146.22,262) and (116.1,248.51) .. (94.29,226.71) -- (179.5,141.5) -- cycle ; \draw   (179.5,262) .. controls (146.22,262) and (116.1,248.51) .. (94.29,226.71) ;  
\draw  [draw opacity=0] (94.29,226.71) .. controls (72.49,204.9) and (59,174.78) .. (59,141.5) -- (179.5,141.5) -- cycle ; \draw   (94.29,226.71) .. controls (72.49,204.9) and (59,174.78) .. (59,141.5) ;  
\draw  [draw opacity=0] (59,141.5) .. controls (59,141.5) and (59,141.5) .. (59,141.5) .. controls (59,108.22) and (72.49,78.1) .. (94.29,56.29) -- (179.5,141.5) -- cycle ; \draw   (59,141.5) .. controls (59,141.5) and (59,141.5) .. (59,141.5) .. controls (59,108.22) and (72.49,78.1) .. (94.29,56.29) ;  
\draw  [draw opacity=0][dash pattern={on 4.5pt off 4.5pt}] (129.29,173.71) .. controls (122.92,181.69) and (111.28,183) .. (103.29,176.63) .. controls (100.66,174.52) and (98.75,171.85) .. (97.61,168.92) -- (114.83,162.17) -- cycle ; \draw  [dash pattern={on 4.5pt off 4.5pt}] (129.29,173.71) .. controls (122.92,181.69) and (111.28,183) .. (103.29,176.63) .. controls (100.66,174.52) and (98.75,171.85) .. (97.61,168.92) ;  
\draw  [draw opacity=0][dash pattern={on 4.5pt off 4.5pt}] (97.61,168.92) .. controls (95.28,162.97) and (96.1,155.98) .. (100.37,150.63) .. controls (104.64,145.28) and (111.28,142.92) .. (117.6,143.87) -- (114.83,162.17) -- cycle ; \draw  [dash pattern={on 4.5pt off 4.5pt}] (97.61,168.92) .. controls (95.28,162.97) and (96.1,155.98) .. (100.37,150.63) .. controls (104.64,145.28) and (111.28,142.92) .. (117.6,143.87) ;  
\draw  [draw opacity=0][dash pattern={on 4.5pt off 4.5pt}] (117.6,143.87) .. controls (120.71,144.34) and (123.74,145.6) .. (126.37,147.71) .. controls (134.36,154.08) and (135.67,165.72) .. (129.29,173.71) -- (114.83,162.17) -- cycle ; \draw  [dash pattern={on 4.5pt off 4.5pt}] (117.6,143.87) .. controls (120.71,144.34) and (123.74,145.6) .. (126.37,147.71) .. controls (134.36,154.08) and (135.67,165.72) .. (129.29,173.71) ;  
\draw  [draw opacity=0][dash pattern={on 4.5pt off 4.5pt}] (236.33,200.83) .. controls (236.33,211.05) and (228.05,219.33) .. (217.83,219.33) .. controls (214.46,219.33) and (211.3,218.43) .. (208.58,216.86) -- (217.83,200.83) -- cycle ; \draw  [dash pattern={on 4.5pt off 4.5pt}] (236.33,200.83) .. controls (236.33,211.05) and (228.05,219.33) .. (217.83,219.33) .. controls (214.46,219.33) and (211.3,218.43) .. (208.58,216.86) ;  
\draw  [draw opacity=0][dash pattern={on 4.5pt off 4.5pt}] (208.58,216.86) .. controls (203.05,213.66) and (199.33,207.68) .. (199.33,200.83) .. controls (199.33,193.99) and (203.05,188.01) .. (208.58,184.81) -- (217.83,200.83) -- cycle ; \draw  [dash pattern={on 4.5pt off 4.5pt}] (208.58,216.86) .. controls (203.05,213.66) and (199.33,207.68) .. (199.33,200.83) .. controls (199.33,193.99) and (203.05,188.01) .. (208.58,184.81) ;  
\draw  [draw opacity=0][dash pattern={on 4.5pt off 4.5pt}] (208.58,184.81) .. controls (211.3,183.23) and (214.46,182.33) .. (217.83,182.33) .. controls (228.05,182.33) and (236.33,190.62) .. (236.33,200.83) -- (217.83,200.83) -- cycle ; \draw  [dash pattern={on 4.5pt off 4.5pt}] (208.58,184.81) .. controls (211.3,183.23) and (214.46,182.33) .. (217.83,182.33) .. controls (228.05,182.33) and (236.33,190.62) .. (236.33,200.83) ;  
\draw [line width=2.25]    (236.33,200.83) .. controls (230.44,200.22) and (221.11,199.56) .. (219.11,198.22) .. controls (217.11,196.89) and (215.44,193.56) .. (208.58,184.81) ;
\draw [line width=2.25]    (129.29,173.71) .. controls (123.78,170.22) and (116.83,163.5) .. (114.83,162.17) .. controls (112.83,160.83) and (115.78,154.89) .. (117.6,143.87) ;
\draw [line width=2.25]    (97.61,168.92) .. controls (102.44,165.89) and (100.11,167.89) .. (110.83,163.17) ;
\draw [line width=2.25]    (59,141.5) .. controls (73.89,135) and (81.22,181.67) .. (97.61,168.92) ;
\draw [shift={(81.23,162.9)}, rotate = 236.86] [fill={rgb, 255:red, 0; green, 0; blue, 0 }  ][line width=0.08]  [draw opacity=0] (8.57,-4.12) -- (0,0) -- (8.57,4.12) -- cycle    ;
\draw [line width=2.25]    (94.29,226.71) .. controls (109.89,212.33) and (140.56,184.33) .. (129.29,173.71) ;
\draw [shift={(122.59,197.89)}, rotate = 129.88] [fill={rgb, 255:red, 0; green, 0; blue, 0 }  ][line width=0.08]  [draw opacity=0] (8.57,-4.12) -- (0,0) -- (8.57,4.12) -- cycle    ;
\draw [color={rgb, 255:red, 255; green, 0; blue, 0 }  ,draw opacity=1 ]   (179.5,262) .. controls (187.22,253.67) and (190.56,241.67) .. (208.58,216.86) ;
\draw [shift={(195.81,235.97)}, rotate = 120.75] [fill={rgb, 255:red, 255; green, 0; blue, 0 }  ,fill opacity=1 ][line width=0.08]  [draw opacity=0] (7.14,-3.43) -- (0,0) -- (7.14,3.43) -- cycle    ;
\draw  [draw opacity=0][dash pattern={on 4.5pt off 4.5pt}] (138.84,102.96) .. controls (138.84,102.96) and (138.84,102.96) .. (138.84,102.96) .. controls (138.84,102.96) and (138.84,102.96) .. (138.84,102.96) .. controls (129.35,99.18) and (124.71,88.43) .. (128.49,78.93) -- (145.68,85.77) -- cycle ; \draw  [dash pattern={on 4.5pt off 4.5pt}] (138.84,102.96) .. controls (138.84,102.96) and (138.84,102.96) .. (138.84,102.96) .. controls (138.84,102.96) and (138.84,102.96) .. (138.84,102.96) .. controls (129.35,99.18) and (124.71,88.43) .. (128.49,78.93) ;  
\draw [line width=2.25]    (264.71,226.71) .. controls (261.89,215) and (259.22,202.33) .. (236.33,200.83) ;
\draw [line width=2.25]    (94.29,56.29) .. controls (97.5,71.17) and (109.5,87.17) .. (112.5,92.17) ;
\draw [shift={(98.36,68.13)}, rotate = 63.73] [fill={rgb, 255:red, 0; green, 0; blue, 0 }  ][line width=0.08]  [draw opacity=0] (8.57,-4.12) -- (0,0) -- (8.57,4.12) -- cycle    ;
\draw [line width=2.25]    (112.5,92.17) .. controls (105,107.67) and (116.33,130.11) .. (117.6,143.87) ;
\draw [shift={(110.34,110.6)}, rotate = 78.47] [fill={rgb, 255:red, 0; green, 0; blue, 0 }  ][line width=0.08]  [draw opacity=0] (8.57,-4.12) -- (0,0) -- (8.57,4.12) -- cycle    ;
\draw [line width=2.25]    (138.84,102.96) .. controls (140.81,98.18) and (140.93,97.87) .. (144.57,88.55) ;
\draw [line width=2.25]    (128.49,78.93) .. controls (119.5,80.67) and (116.5,84.67) .. (112.5,92.17) ;
\draw [line width=2.25]    (112.5,92.17) .. controls (118,105.17) and (136,113.67) .. (138.84,102.96) ;
\draw  [fill={rgb, 255:red, 0; green, 0; blue, 0 }  ,fill opacity=1 ] (222.13,145.33) -- (226.75,152.33) -- (217.5,152.33) -- cycle ;

\end{tikzpicture}\]
    \caption{An example of a circuit algebra composition of five generators in $\wf$.}
    \label{fig:wfoam}
\end{figure}
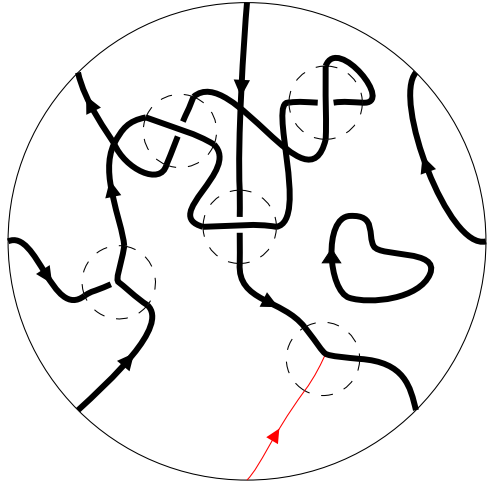

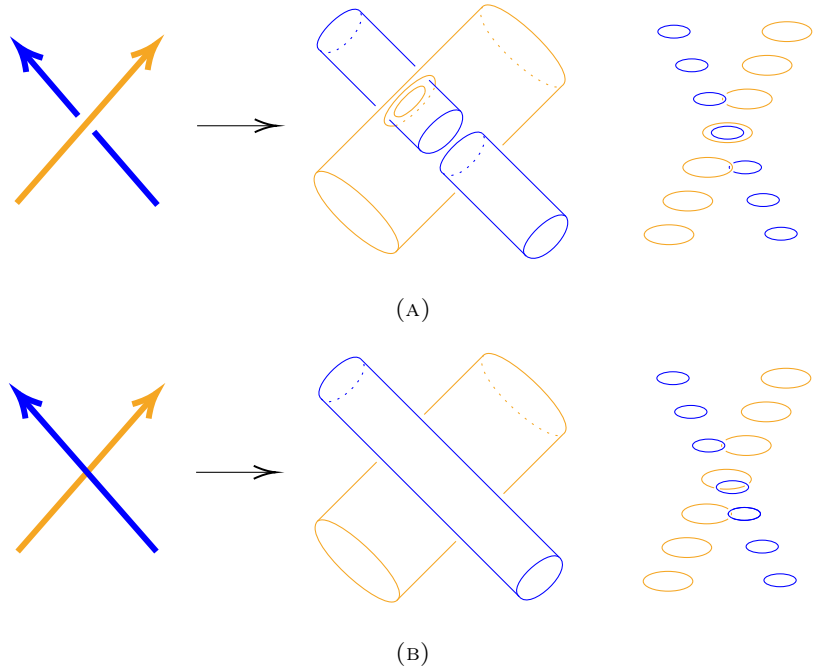
\begin{figure}
    \centering
    \begin{subfigure}[c]{\textwidth}
    \centering
\[\begin{tikzpicture}[x=0.75pt,y=0.75pt,yscale=-1,xscale=1]

\draw [color={rgb, 255:red, 0; green, 0; blue, 255 }  ,draw opacity=1 ][line width=2.25]    (128,143) -- (160,180) ;
\draw [color={rgb, 255:red, 0; green, 0; blue, 255 }  ,draw opacity=1 ][line width=2.25]    (120,135) -- (92.6,103.04) ;
\draw [shift={(90,100)}, rotate = 49.4] [color={rgb, 255:red, 0; green, 0; blue, 255 }  ,draw opacity=1 ][line width=2.25]    (12.24,-5.49) .. controls (7.79,-2.58) and (3.71,-0.75) .. (0,0) .. controls (3.71,0.75) and (7.79,2.58) .. (12.24,5.49)   ;
\draw [color={rgb, 255:red, 245; green, 166; blue, 35 }  ,draw opacity=1 ][line width=2.25]    (156.37,102.01) -- (89,179) ;
\draw [shift={(159,99)}, rotate = 131.19] [color={rgb, 255:red, 245; green, 166; blue, 35 }  ,draw opacity=1 ][line width=2.25]    (12.24,-5.49) .. controls (7.79,-2.58) and (3.71,-0.75) .. (0,0) .. controls (3.71,0.75) and (7.79,2.58) .. (12.24,5.49)   ;
\draw [color={rgb, 255:red, 0; green, 0; blue, 255 }  ,draw opacity=1 ]   (304,164) -- (346.05,205.82) ;
\draw [color={rgb, 255:red, 0; green, 0; blue, 255 }  ,draw opacity=1 ]   (324,144) -- (366.05,185.82) ;
\draw [color={rgb, 255:red, 245; green, 166; blue, 35 }  ,draw opacity=1 ]   (310.05,173.82) -- (280.05,203.82) ;
\draw [color={rgb, 255:red, 245; green, 166; blue, 35 }  ,draw opacity=1 ]   (294.05,109.82) -- (240.05,163.82) ;
\draw [color={rgb, 255:red, 245; green, 166; blue, 35 }  ,draw opacity=1 ]   (364.05,119.82) -- (334.05,149.82) ;
\draw [color={rgb, 255:red, 245; green, 166; blue, 35 }  ,draw opacity=1 ]   (324.05,79.82) -- (294.05,109.82) ;
\draw [color={rgb, 255:red, 245; green, 166; blue, 35 }  ,draw opacity=1 ]   (324.05,79.82) .. controls (332.05,72.82) and (374.05,112.82) .. (364.05,119.82) ;
\draw [color={rgb, 255:red, 245; green, 166; blue, 35 }  ,draw opacity=1 ]   (240.05,163.82) .. controls (248.05,156.82) and (290.05,196.82) .. (280.05,203.82) ;
\draw [color={rgb, 255:red, 245; green, 166; blue, 35 }  ,draw opacity=1 ] [dash pattern={on 0.84pt off 2.51pt}]  (324.05,79.82) .. controls (316.05,89.82) and (356.05,125.82) .. (364.05,119.82) ;
\draw [color={rgb, 255:red, 245; green, 166; blue, 35 }  ,draw opacity=1 ]   (240.05,163.82) .. controls (232.05,173.82) and (272.05,209.82) .. (280.05,203.82) ;
\draw [color={rgb, 255:red, 0; green, 0; blue, 255 }  ,draw opacity=1 ]   (241.99,101.99) -- (270,130) ;
\draw [color={rgb, 255:red, 0; green, 0; blue, 255 }  ,draw opacity=1 ]   (261.99,81.99) -- (290,110) ;
\draw [color={rgb, 255:red, 0; green, 0; blue, 255 }  ,draw opacity=1 ]   (293.05,152.82) .. controls (286,148.67) and (307,127.67) .. (313.05,132.82) ;
\draw [color={rgb, 255:red, 0; green, 0; blue, 255 }  ,draw opacity=1 ]   (293.05,152.82) .. controls (301,158.67) and (318,139.67) .. (313.05,132.82) ;
\draw [color={rgb, 255:red, 0; green, 0; blue, 255 }  ,draw opacity=1 ]   (304,164) .. controls (296.95,159.85) and (317.95,138.85) .. (324,144) ;
\draw [color={rgb, 255:red, 0; green, 0; blue, 255 }  ,draw opacity=1 ] [dash pattern={on 0.84pt off 2.51pt}]  (304,164) .. controls (311.95,169.85) and (328.95,150.85) .. (324,144) ;
\draw [color={rgb, 255:red, 0; green, 0; blue, 255 }  ,draw opacity=1 ]   (241.99,101.99) .. controls (234.94,97.84) and (255.94,76.84) .. (261.99,81.99) ;
\draw [color={rgb, 255:red, 0; green, 0; blue, 255 }  ,draw opacity=1 ] [dash pattern={on 0.84pt off 2.51pt}]  (241.99,101.99) .. controls (249.94,107.84) and (266.94,88.84) .. (261.99,81.99) ;
\draw [color={rgb, 255:red, 0; green, 0; blue, 255 }  ,draw opacity=1 ]   (346.05,205.82) .. controls (339,201.67) and (360,180.67) .. (366.05,185.82) ;
\draw [color={rgb, 255:red, 0; green, 0; blue, 255 }  ,draw opacity=1 ]   (346.05,205.82) .. controls (354,211.67) and (371,192.67) .. (366.05,185.82) ;
\draw [color={rgb, 255:red, 245; green, 166; blue, 35 }  ,draw opacity=1 ]   (280,140) .. controls (259.25,143.25) and (302.75,99.75) .. (300,120) ;
\draw [color={rgb, 255:red, 245; green, 166; blue, 35 }  ,draw opacity=1 ] [dash pattern={on 0.84pt off 2.51pt}]  (280,140) .. controls (287.5,137) and (297.25,127.5) .. (300,120) ;
\draw [color={rgb, 255:red, 0; green, 0; blue, 255 }  ,draw opacity=1 ]   (297,117) -- (313.05,132.82) ;
\draw [color={rgb, 255:red, 0; green, 0; blue, 255 }  ,draw opacity=1 ]   (277,137) -- (293.05,152.82) ;
\draw [color={rgb, 255:red, 245; green, 166; blue, 35 }  ,draw opacity=1 ]   (282.82,135.38) .. controls (270.25,137.34) and (296.59,111.01) .. (294.92,123.27) ;
\draw [color={rgb, 255:red, 245; green, 166; blue, 35 }  ,draw opacity=1 ]   (282.82,135.38) .. controls (287.36,133.56) and (293.26,127.81) .. (294.92,123.27) ;
\draw    (180,140) -- (218,140) ;
\draw [shift={(220,140)}, rotate = 180] [color={rgb, 255:red, 0; green, 0; blue, 0 }  ][line width=0.75]    (10.93,-3.29) .. controls (6.95,-1.4) and (3.31,-0.3) .. (0,0) .. controls (3.31,0.3) and (6.95,1.4) .. (10.93,3.29)   ;
\draw  [color={rgb, 255:red, 245; green, 166; blue, 35 }  ,draw opacity=1 ] (405.5,195) .. controls (405.5,192.24) and (411.1,190) .. (418,190) .. controls (424.9,190) and (430.5,192.24) .. (430.5,195) .. controls (430.5,197.76) and (424.9,200) .. (418,200) .. controls (411.1,200) and (405.5,197.76) .. (405.5,195) -- cycle ;
\draw  [color={rgb, 255:red, 245; green, 166; blue, 35 }  ,draw opacity=1 ] (415,178) .. controls (415,175.24) and (420.6,173) .. (427.5,173) .. controls (434.4,173) and (440,175.24) .. (440,178) .. controls (440,180.76) and (434.4,183) .. (427.5,183) .. controls (420.6,183) and (415,180.76) .. (415,178) -- cycle ;
\draw  [color={rgb, 255:red, 245; green, 166; blue, 35 }  ,draw opacity=1 ] (435,143.5) .. controls (435,140.74) and (440.6,138.5) .. (447.5,138.5) .. controls (454.4,138.5) and (460,140.74) .. (460,143.5) .. controls (460,146.26) and (454.4,148.5) .. (447.5,148.5) .. controls (440.6,148.5) and (435,146.26) .. (435,143.5) -- cycle ;
\draw  [draw opacity=0] (445.1,127.15) .. controls (445.04,126.94) and (445,126.72) .. (445,126.5) .. controls (445,126.28) and (445.04,126.06) .. (445.1,125.85) -- (457.5,126.5) -- cycle ; \draw  [color={rgb, 255:red, 245; green, 166; blue, 35 }  ,draw opacity=1 ] (445.1,127.15) .. controls (445.04,126.94) and (445,126.72) .. (445,126.5) .. controls (445,126.28) and (445.04,126.06) .. (445.1,125.85) ;  
\draw  [color={rgb, 255:red, 245; green, 166; blue, 35 }  ,draw opacity=1 ] (455,109.5) .. controls (455,106.74) and (460.6,104.5) .. (467.5,104.5) .. controls (474.4,104.5) and (480,106.74) .. (480,109.5) .. controls (480,112.26) and (474.4,114.5) .. (467.5,114.5) .. controls (460.6,114.5) and (455,112.26) .. (455,109.5) -- cycle ;
\draw  [color={rgb, 255:red, 245; green, 166; blue, 35 }  ,draw opacity=1 ] (465,92.5) .. controls (465,89.74) and (470.6,87.5) .. (477.5,87.5) .. controls (484.4,87.5) and (490,89.74) .. (490,92.5) .. controls (490,95.26) and (484.4,97.5) .. (477.5,97.5) .. controls (470.6,97.5) and (465,95.26) .. (465,92.5) -- cycle ;
\draw  [color={rgb, 255:red, 0; green, 0; blue, 255 }  ,draw opacity=1 ] (439.38,143.5) .. controls (439.38,141.71) and (443.01,140.25) .. (447.5,140.25) .. controls (451.99,140.25) and (455.63,141.71) .. (455.63,143.5) .. controls (455.63,145.29) and (451.99,146.75) .. (447.5,146.75) .. controls (443.01,146.75) and (439.38,145.29) .. (439.38,143.5) -- cycle ;
\draw  [draw opacity=0] (448.56,161.69) .. controls (448.44,161.47) and (448.38,161.24) .. (448.38,161) .. controls (448.38,160.76) and (448.44,160.53) .. (448.56,160.31) -- (456.5,161) -- cycle ; \draw  [color={rgb, 255:red, 0; green, 0; blue, 255 }  ,draw opacity=1 ] (448.56,161.69) .. controls (448.44,161.47) and (448.38,161.24) .. (448.38,161) .. controls (448.38,160.76) and (448.44,160.53) .. (448.56,160.31) ;  
\draw  [color={rgb, 255:red, 245; green, 166; blue, 35 }  ,draw opacity=1 ] (425,161) .. controls (425,158.24) and (430.6,156) .. (437.5,156) .. controls (444.4,156) and (450,158.24) .. (450,161) .. controls (450,163.76) and (444.4,166) .. (437.5,166) .. controls (430.6,166) and (425,163.76) .. (425,161) -- cycle ;
\draw  [color={rgb, 255:red, 0; green, 0; blue, 255 }  ,draw opacity=1 ] (457.38,177.5) .. controls (457.38,175.71) and (461.01,174.25) .. (465.5,174.25) .. controls (469.99,174.25) and (473.63,175.71) .. (473.63,177.5) .. controls (473.63,179.29) and (469.99,180.75) .. (465.5,180.75) .. controls (461.01,180.75) and (457.38,179.29) .. (457.38,177.5) -- cycle ;
\draw  [color={rgb, 255:red, 0; green, 0; blue, 255 }  ,draw opacity=1 ] (466.38,194.5) .. controls (466.38,192.71) and (470.01,191.25) .. (474.5,191.25) .. controls (478.99,191.25) and (482.63,192.71) .. (482.63,194.5) .. controls (482.63,196.29) and (478.99,197.75) .. (474.5,197.75) .. controls (470.01,197.75) and (466.38,196.29) .. (466.38,194.5) -- cycle ;
\draw  [color={rgb, 255:red, 0; green, 0; blue, 255 }  ,draw opacity=1 ] (430.38,126.5) .. controls (430.38,124.71) and (434.01,123.25) .. (438.5,123.25) .. controls (442.99,123.25) and (446.63,124.71) .. (446.63,126.5) .. controls (446.63,128.29) and (442.99,129.75) .. (438.5,129.75) .. controls (434.01,129.75) and (430.38,128.29) .. (430.38,126.5) -- cycle ;
\draw  [color={rgb, 255:red, 0; green, 0; blue, 255 }  ,draw opacity=1 ] (421.38,109.5) .. controls (421.38,107.71) and (425.01,106.25) .. (429.5,106.25) .. controls (433.99,106.25) and (437.63,107.71) .. (437.63,109.5) .. controls (437.63,111.29) and (433.99,112.75) .. (429.5,112.75) .. controls (425.01,112.75) and (421.38,111.29) .. (421.38,109.5) -- cycle ;
\draw  [color={rgb, 255:red, 0; green, 0; blue, 255 }  ,draw opacity=1 ] (412.38,92.5) .. controls (412.38,90.71) and (416.01,89.25) .. (420.5,89.25) .. controls (424.99,89.25) and (428.63,90.71) .. (428.63,92.5) .. controls (428.63,94.29) and (424.99,95.75) .. (420.5,95.75) .. controls (416.01,95.75) and (412.38,94.29) .. (412.38,92.5) -- cycle ;
\draw  [draw opacity=0] (450.49,158.81) .. controls (451.98,158.16) and (454.12,157.75) .. (456.5,157.75) .. controls (460.99,157.75) and (464.63,159.21) .. (464.63,161) .. controls (464.63,162.79) and (460.99,164.25) .. (456.5,164.25) .. controls (454.12,164.25) and (451.98,163.84) .. (450.49,163.19) -- (456.5,161) -- cycle ; \draw  [color={rgb, 255:red, 0; green, 0; blue, 255 }  ,draw opacity=1 ] (450.49,158.81) .. controls (451.98,158.16) and (454.12,157.75) .. (456.5,157.75) .. controls (460.99,157.75) and (464.63,159.21) .. (464.63,161) .. controls (464.63,162.79) and (460.99,164.25) .. (456.5,164.25) .. controls (454.12,164.25) and (451.98,163.84) .. (450.49,163.19) ;  
\draw  [draw opacity=0] (447.11,123.72) .. controls (449.36,122.38) and (453.17,121.5) .. (457.5,121.5) .. controls (464.4,121.5) and (470,123.74) .. (470,126.5) .. controls (470,129.26) and (464.4,131.5) .. (457.5,131.5) .. controls (453.17,131.5) and (449.36,130.62) .. (447.11,129.28) -- (457.5,126.5) -- cycle ; \draw  [color={rgb, 255:red, 245; green, 166; blue, 35 }  ,draw opacity=1 ] (447.11,123.72) .. controls (449.36,122.38) and (453.17,121.5) .. (457.5,121.5) .. controls (464.4,121.5) and (470,123.74) .. (470,126.5) .. controls (470,129.26) and (464.4,131.5) .. (457.5,131.5) .. controls (453.17,131.5) and (449.36,130.62) .. (447.11,129.28) ;

\end{tikzpicture}\]
\caption{}
    \end{subfigure}
        \begin{subfigure}[c]{\textwidth}
    \centering
\[\begin{tikzpicture}[x=0.75pt,y=0.75pt,yscale=-1,xscale=1]

\draw [color={rgb, 255:red, 245; green, 166; blue, 35 }  ,draw opacity=1 ][line width=2.25]    (137.37,83.01) -- (70,160) ;
\draw [shift={(140,80)}, rotate = 131.19] [color={rgb, 255:red, 245; green, 166; blue, 35 }  ,draw opacity=1 ][line width=2.25]    (12.24,-5.49) .. controls (7.79,-2.58) and (3.71,-0.75) .. (0,0) .. controls (3.71,0.75) and (7.79,2.58) .. (12.24,5.49)   ;
\draw [color={rgb, 255:red, 0; green, 0; blue, 255 }  ,draw opacity=1 ][line width=2.25]    (72.63,83.01) -- (140,160) ;
\draw [shift={(70,80)}, rotate = 48.81] [color={rgb, 255:red, 0; green, 0; blue, 255 }  ,draw opacity=1 ][line width=2.25]    (12.24,-5.49) .. controls (7.79,-2.58) and (3.71,-0.75) .. (0,0) .. controls (3.71,0.75) and (7.79,2.58) .. (12.24,5.49)   ;
\draw [color={rgb, 255:red, 0; green, 0; blue, 255 }  ,draw opacity=1 ]   (223.05,82.82) -- (323.05,182.82) ;
\draw [color={rgb, 255:red, 0; green, 0; blue, 255 }  ,draw opacity=1 ]   (243.05,62.82) -- (343.05,162.82) ;
\draw [color={rgb, 255:red, 0; green, 0; blue, 255 }  ,draw opacity=1 ]   (223.05,82.82) .. controls (217,77.33) and (237,57.33) .. (243.05,62.82) ;
\draw [color={rgb, 255:red, 0; green, 0; blue, 255 }  ,draw opacity=1 ]   (323.05,182.82) .. controls (316,177.33) and (337,158.33) .. (343.05,162.82) ;
\draw [color={rgb, 255:red, 0; green, 0; blue, 255 }  ,draw opacity=1 ]   (323.05,182.82) .. controls (329,190.33) and (351,169.33) .. (343.05,162.82) ;
\draw [color={rgb, 255:red, 245; green, 166; blue, 35 }  ,draw opacity=1 ]   (291.05,154.82) -- (261.05,184.82) ;
\draw [color={rgb, 255:red, 245; green, 166; blue, 35 }  ,draw opacity=1 ]   (251.05,114.82) -- (221.05,144.82) ;
\draw [color={rgb, 255:red, 245; green, 166; blue, 35 }  ,draw opacity=1 ]   (345.05,100.82) -- (315.05,130.82) ;
\draw [color={rgb, 255:red, 245; green, 166; blue, 35 }  ,draw opacity=1 ]   (305.05,60.82) -- (275.05,90.82) ;
\draw [color={rgb, 255:red, 245; green, 166; blue, 35 }  ,draw opacity=1 ]   (305.05,60.82) .. controls (313.05,53.82) and (355.05,93.82) .. (345.05,100.82) ;
\draw [color={rgb, 255:red, 245; green, 166; blue, 35 }  ,draw opacity=1 ]   (221.05,144.82) .. controls (229.05,137.82) and (271.05,177.82) .. (261.05,184.82) ;
\draw [color={rgb, 255:red, 245; green, 166; blue, 35 }  ,draw opacity=1 ] [dash pattern={on 0.84pt off 2.51pt}]  (305.05,60.82) .. controls (297.05,70.82) and (337.05,106.82) .. (345.05,100.82) ;
\draw [color={rgb, 255:red, 245; green, 166; blue, 35 }  ,draw opacity=1 ]   (221.05,144.82) .. controls (213.05,154.82) and (253.05,190.82) .. (261.05,184.82) ;
\draw    (160,120) -- (198,120) ;
\draw [shift={(200,120)}, rotate = 180] [color={rgb, 255:red, 0; green, 0; blue, 0 }  ][line width=0.75]    (10.93,-3.29) .. controls (6.95,-1.4) and (3.31,-0.3) .. (0,0) .. controls (3.31,0.3) and (6.95,1.4) .. (10.93,3.29)   ;
\draw [color={rgb, 255:red, 0; green, 0; blue, 255 }  ,draw opacity=1 ] [dash pattern={on 0.84pt off 2.51pt}]  (223.05,82.82) .. controls (229,90.33) and (251,69.33) .. (243.05,62.82) ;
\draw  [color={rgb, 255:red, 245; green, 166; blue, 35 }  ,draw opacity=1 ] (385.5,175) .. controls (385.5,172.24) and (391.1,170) .. (398,170) .. controls (404.9,170) and (410.5,172.24) .. (410.5,175) .. controls (410.5,177.76) and (404.9,180) .. (398,180) .. controls (391.1,180) and (385.5,177.76) .. (385.5,175) -- cycle ;
\draw  [color={rgb, 255:red, 245; green, 166; blue, 35 }  ,draw opacity=1 ] (395,158) .. controls (395,155.24) and (400.6,153) .. (407.5,153) .. controls (414.4,153) and (420,155.24) .. (420,158) .. controls (420,160.76) and (414.4,163) .. (407.5,163) .. controls (400.6,163) and (395,160.76) .. (395,158) -- cycle ;
\draw  [draw opacity=0] (421.3,127.84) .. controls (417.54,126.98) and (415,125.36) .. (415,123.5) .. controls (415,120.74) and (420.6,118.5) .. (427.5,118.5) .. controls (434.4,118.5) and (440,120.74) .. (440,123.5) .. controls (440,124.22) and (439.62,124.9) .. (438.94,125.52) -- (427.5,123.5) -- cycle ; \draw  [color={rgb, 255:red, 245; green, 166; blue, 35 }  ,draw opacity=1 ] (421.3,127.84) .. controls (417.54,126.98) and (415,125.36) .. (415,123.5) .. controls (415,120.74) and (420.6,118.5) .. (427.5,118.5) .. controls (434.4,118.5) and (440,120.74) .. (440,123.5) .. controls (440,124.22) and (439.62,124.9) .. (438.94,125.52) ;  
\draw  [draw opacity=0] (425.1,107.15) .. controls (425.04,106.94) and (425,106.72) .. (425,106.5) .. controls (425,106.28) and (425.04,106.06) .. (425.1,105.85) -- (437.5,106.5) -- cycle ; \draw  [color={rgb, 255:red, 245; green, 166; blue, 35 }  ,draw opacity=1 ] (425.1,107.15) .. controls (425.04,106.94) and (425,106.72) .. (425,106.5) .. controls (425,106.28) and (425.04,106.06) .. (425.1,105.85) ;  
\draw  [color={rgb, 255:red, 245; green, 166; blue, 35 }  ,draw opacity=1 ] (435,89.5) .. controls (435,86.74) and (440.6,84.5) .. (447.5,84.5) .. controls (454.4,84.5) and (460,86.74) .. (460,89.5) .. controls (460,92.26) and (454.4,94.5) .. (447.5,94.5) .. controls (440.6,94.5) and (435,92.26) .. (435,89.5) -- cycle ;
\draw  [color={rgb, 255:red, 245; green, 166; blue, 35 }  ,draw opacity=1 ] (445,72.5) .. controls (445,69.74) and (450.6,67.5) .. (457.5,67.5) .. controls (464.4,67.5) and (470,69.74) .. (470,72.5) .. controls (470,75.26) and (464.4,77.5) .. (457.5,77.5) .. controls (450.6,77.5) and (445,75.26) .. (445,72.5) -- cycle ;
\draw  [color={rgb, 255:red, 0; green, 0; blue, 255 }  ,draw opacity=1 ] (422.38,127.5) .. controls (422.38,125.71) and (426.01,124.25) .. (430.5,124.25) .. controls (434.99,124.25) and (438.63,125.71) .. (438.63,127.5) .. controls (438.63,129.29) and (434.99,130.75) .. (430.5,130.75) .. controls (426.01,130.75) and (422.38,129.29) .. (422.38,127.5) -- cycle ;
\draw  [draw opacity=0] (428.56,141.69) .. controls (428.44,141.47) and (428.38,141.24) .. (428.38,141) .. controls (428.38,140.76) and (428.44,140.53) .. (428.56,140.31) -- (436.5,141) -- cycle ; \draw  [color={rgb, 255:red, 0; green, 0; blue, 255 }  ,draw opacity=1 ] (428.56,141.69) .. controls (428.44,141.47) and (428.38,141.24) .. (428.38,141) .. controls (428.38,140.76) and (428.44,140.53) .. (428.56,140.31) ;  
\draw  [color={rgb, 255:red, 0; green, 0; blue, 255 }  ,draw opacity=1 ] (437.38,157.5) .. controls (437.38,155.71) and (441.01,154.25) .. (445.5,154.25) .. controls (449.99,154.25) and (453.63,155.71) .. (453.63,157.5) .. controls (453.63,159.29) and (449.99,160.75) .. (445.5,160.75) .. controls (441.01,160.75) and (437.38,159.29) .. (437.38,157.5) -- cycle ;
\draw  [color={rgb, 255:red, 0; green, 0; blue, 255 }  ,draw opacity=1 ] (446.38,174.5) .. controls (446.38,172.71) and (450.01,171.25) .. (454.5,171.25) .. controls (458.99,171.25) and (462.63,172.71) .. (462.63,174.5) .. controls (462.63,176.29) and (458.99,177.75) .. (454.5,177.75) .. controls (450.01,177.75) and (446.38,176.29) .. (446.38,174.5) -- cycle ;
\draw  [color={rgb, 255:red, 0; green, 0; blue, 255 }  ,draw opacity=1 ] (410.38,106.5) .. controls (410.38,104.71) and (414.01,103.25) .. (418.5,103.25) .. controls (422.99,103.25) and (426.63,104.71) .. (426.63,106.5) .. controls (426.63,108.29) and (422.99,109.75) .. (418.5,109.75) .. controls (414.01,109.75) and (410.38,108.29) .. (410.38,106.5) -- cycle ;
\draw  [color={rgb, 255:red, 0; green, 0; blue, 255 }  ,draw opacity=1 ] (401.38,89.5) .. controls (401.38,87.71) and (405.01,86.25) .. (409.5,86.25) .. controls (413.99,86.25) and (417.63,87.71) .. (417.63,89.5) .. controls (417.63,91.29) and (413.99,92.75) .. (409.5,92.75) .. controls (405.01,92.75) and (401.38,91.29) .. (401.38,89.5) -- cycle ;
\draw  [color={rgb, 255:red, 0; green, 0; blue, 255 }  ,draw opacity=1 ] (392.38,72.5) .. controls (392.38,70.71) and (396.01,69.25) .. (400.5,69.25) .. controls (404.99,69.25) and (408.63,70.71) .. (408.63,72.5) .. controls (408.63,74.29) and (404.99,75.75) .. (400.5,75.75) .. controls (396.01,75.75) and (392.38,74.29) .. (392.38,72.5) -- cycle ;
\draw  [draw opacity=0] (430.49,138.81) .. controls (431.98,138.16) and (434.12,137.75) .. (436.5,137.75) .. controls (440.99,137.75) and (444.63,139.21) .. (444.63,141) .. controls (444.63,142.79) and (440.99,144.25) .. (436.5,144.25) .. controls (434.12,144.25) and (431.98,143.84) .. (430.49,143.19) -- (436.5,141) -- cycle ; \draw  [color={rgb, 255:red, 0; green, 0; blue, 255 }  ,draw opacity=1 ] (430.49,138.81) .. controls (431.98,138.16) and (434.12,137.75) .. (436.5,137.75) .. controls (440.99,137.75) and (444.63,139.21) .. (444.63,141) .. controls (444.63,142.79) and (440.99,144.25) .. (436.5,144.25) .. controls (434.12,144.25) and (431.98,143.84) .. (430.49,143.19) ;  
\draw  [draw opacity=0] (427.11,103.72) .. controls (429.36,102.38) and (433.17,101.5) .. (437.5,101.5) .. controls (444.4,101.5) and (450,103.74) .. (450,106.5) .. controls (450,109.26) and (444.4,111.5) .. (437.5,111.5) .. controls (433.17,111.5) and (429.36,110.62) .. (427.11,109.28) -- (437.5,106.5) -- cycle ; \draw  [color={rgb, 255:red, 245; green, 166; blue, 35 }  ,draw opacity=1 ] (427.11,103.72) .. controls (429.36,102.38) and (433.17,101.5) .. (437.5,101.5) .. controls (444.4,101.5) and (450,103.74) .. (450,106.5) .. controls (450,109.26) and (444.4,111.5) .. (437.5,111.5) .. controls (433.17,111.5) and (429.36,110.62) .. (427.11,109.28) ;  
\draw  [color={rgb, 255:red, 0; green, 0; blue, 255 }  ,draw opacity=1 ] (428.38,141) .. controls (428.38,139.21) and (432.01,137.75) .. (436.5,137.75) .. controls (440.99,137.75) and (444.63,139.21) .. (444.63,141) .. controls (444.63,142.79) and (440.99,144.25) .. (436.5,144.25) .. controls (432.01,144.25) and (428.38,142.79) .. (428.38,141) -- cycle ;
\draw  [draw opacity=0] (428.33,143.5) .. controls (426.17,144.99) and (422.13,146) .. (417.5,146) .. controls (410.6,146) and (405,143.76) .. (405,141) .. controls (405,138.24) and (410.6,136) .. (417.5,136) .. controls (422.13,136) and (426.17,137.01) .. (428.33,138.5) -- (417.5,141) -- cycle ; \draw  [color={rgb, 255:red, 245; green, 166; blue, 35 }  ,draw opacity=1 ] (428.33,143.5) .. controls (426.17,144.99) and (422.13,146) .. (417.5,146) .. controls (410.6,146) and (405,143.76) .. (405,141) .. controls (405,138.24) and (410.6,136) .. (417.5,136) .. controls (422.13,136) and (426.17,137.01) .. (428.33,138.5) ;  
\draw  [draw opacity=0] (429.9,140.35) .. controls (429.96,140.56) and (430,140.78) .. (430,141) .. controls (430,141.22) and (429.96,141.44) .. (429.9,141.65) -- (417.5,141) -- cycle ; \draw  [color={rgb, 255:red, 245; green, 166; blue, 35 }  ,draw opacity=1 ] (429.9,140.35) .. controls (429.96,140.56) and (430,140.78) .. (430,141) .. controls (430,141.22) and (429.96,141.44) .. (429.9,141.65) ;  
\draw  [draw opacity=0] (436.75,126.87) .. controls (434.46,127.87) and (431.16,128.5) .. (427.5,128.5) .. controls (426.33,128.5) and (425.2,128.44) .. (424.13,128.32) -- (427.5,123.5) -- cycle ; \draw  [color={rgb, 255:red, 245; green, 166; blue, 35 }  ,draw opacity=1 ] (436.75,126.87) .. controls (434.46,127.87) and (431.16,128.5) .. (427.5,128.5) .. controls (426.33,128.5) and (425.2,128.44) .. (424.13,128.32) ;

\end{tikzpicture}\]
\caption{}
    \end{subfigure}
    \caption{(A): A positive crossing represents a tube threaded through another tube in $\mathbb \R^4$. In the {\em broken surface diagram} a surface is broken where it lies below the other in the fourth coordinate. (B): Wiring diagram crossings (also known as {\em virtual crossings}) represent tubes crossing in front of one another, with no interaction.}
    \label{fig:4DCrossing}
\end{figure}

\begin{figure}
\begin{subfigure}[b]{0.3\textwidth}
\[\begin{tikzpicture}
\draw[thick] (0,1) ellipse (.5cm and .1cm);
\draw[thick, dashed] (0,0) ellipse (.5cm and .1cm);
\begin{scope}
    \clip (-1,0) rectangle (1,-.6);
    \draw[thick] (0,0) ellipse (.5cm and .1cm);
\end{scope}
\draw[thick, dashed] (1,-1) ellipse (.5cm and .1cm);
\begin{scope}
    \clip (0,-1) rectangle (2,-1.6);
    \draw[thick] (1,-1) ellipse (.5cm and .1cm);
\end{scope}
\draw[thick, dashed] (-1,-1) ellipse (.5cm and .1cm);
\begin{scope}
    \clip (-2,-1) rectangle (0,-1.6);
    \draw[thick] (-1,-1) ellipse (.5cm and .1cm);
\end{scope}
\draw[thick] (-.5,1)--(-.5,0)--(-1.5,-1);
\draw[thick] (.5,1)--(.5,0)--(1.5,-1);
\draw[thick] (-.5,-1)--(.5,0);

\draw[thick] (-.1,-.4)--(-.5,0);
\draw[thick] (.1,-.6)--(.5,-1);

 \end{tikzpicture}\]
\end{subfigure}
\raisebox{2mm}{\begin{subfigure}[b]{0.3\textwidth}
\[\begin{tikzpicture}
\draw[thick] (-1,1) ellipse (.5cm and .1cm);
\draw[thick] (-1.5,1)to [in=140, out=270] (-.29,-.5);
\draw[thick] (-.5,1)to [in=130, out=270] (.7,-.49);
\draw[thick, dashed] (.2,-.5) ellipse (.5cm and .1cm);
\begin{scope}
    \clip (-1,-.5) rectangle (1,-1);
    \draw[thick] (.2,-.5) ellipse (.5cm and .1cm);
\end{scope}

\draw[red] (-1.5,-.7)to [in=230, out=30] (-.6,.2);
\draw[red, fill=red] (-.6,.2) circle (.05cm);
 \end{tikzpicture}\]
\end{subfigure}}
    \caption{Examples of the type of  vertices permitted in $w$-foams.}
    \label{fig:flying tubes}
\end{figure}
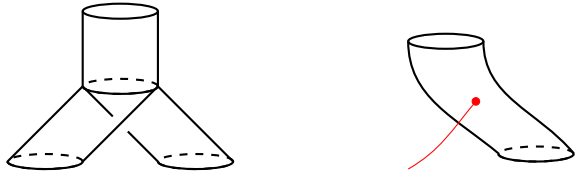

The wheeled prop of $w$-foams is equipped with several {\em auxiliary operations} in addition to circuit algebra compositions. These are {\em orientation switches}, {\em unzips} (doubling a tube along framing), and {\em punctures} (puncture and retract a 2-dimensional tube onto a 1-dimensional string). We suppress the details here (see \cite[Section 4.1.3]{BND:WKO2} and \cite[Section 2.3]{BND:WKO3} for more), but we present a full description of the associated graded versions of these operations in Definition~\ref{def: arrow diagram circuit alg}.


As with braids, every $w$-foam has an underlying $w$-foam \emph{skeleton}. The skeleton of a $w$-foam is the combinatorial data of its features, the \emph{vertices, caps and strands}, and how they are connected, but not the embedding (i.e. crossing) information. In other words, the skeleton of a $w$-foam is obtained by ``flattening'' all of its crossings. The collection of all $w$-foam skeleta is itself a tensor category, as follows: 

\begin{figure}[h]
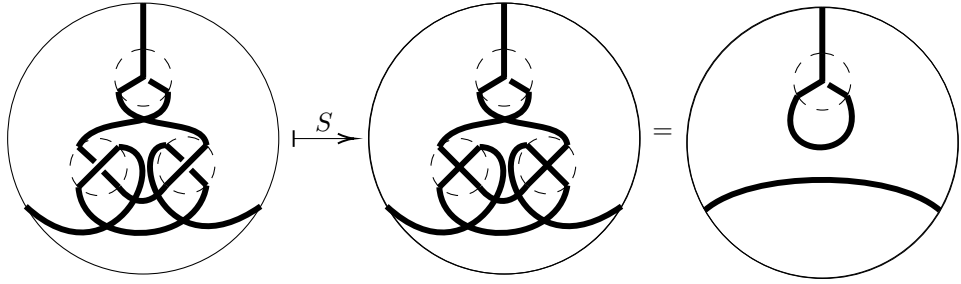

\centering
\[
\]
    \caption{An example of the projection from a $w$-foam to its skeleton.}
    \label{fig:wfoamskeleton}
\end{figure}

\begin{remark}
The {\em wheeled prop} of $w$-foam skeleta is the free wheeled prop given by:
 \[\mathcal{S}=\left<\vertex , \pv, \ppv,
\bcap \right>\] 
 Here generators are understood to come in all possible orientations, and skeleta are equipped with the \emph{orientation switch}, {\em unzip}, and {\em puncture} operations. There is a skeleton projection map $S: \wf \to \mathcal{S}$, given by $S(\overcrossing)=S(\undercrossing)=\virtualcrossing$ (see Figure~\ref{fig:wfoamskeleton}).
\end{remark}

\subsection{The associated graded structure, tangential derivations, and cyclic words}
The analogue of the group ring for $w$-foams is the linear extension of the circuit algebra $\wf$, where morphisms are defined by: $$\mathbb{Q} [\wf] = \bigsqcup_{s\in \mathcal{S}} \mathbb{Q} [\wf](s).$$ Here $\mathbb{Q}[\wf](s)$ is the $\mathbb{Q}$-vector space of formal linear combinations of $w$-foams with skeleton $s\in\calS$. That is, $\mathbb{Q}[\wf](s)$ includes sums of the form $\sum_{S(T_i)=s}\alpha_iT_i,$ $T_i\in \wf(s)$.  
The associated graded structure of $\wf$ is the direct product of successive quotients of powers of its augmentation ideal\footnote{Note, this equivalent to defining the pro-unipotent, or rational, completion of the wheeled prop of $w$-foams. For more details on this, see \cite[Section 2.3]{DHR21}.} \[\calA:=\Pi_0^\infty \calI^n/\calI^{n+1}.\] 
This associated graded $\calA$ is a tensor category whose morphisms have a presentation in terms of a type of generalised Jacobi diagrams which we call \emph{arrow diagrams}. In analogy with chord diagrams, arrow diagrams are closely related to the elements of universal enveloping algebras of Lie algebras of {\em tangential derivations}, and as we will see, homomorphic expansions of $w$-foams translate to solutions to the {\em Kashiwara-Vergne} equations.

\subsubsection{Tangential derivations and automorphisms}\label{sec:tder}
Let $\lie_n$ denote degree completion of the \emph{free Lie algebra} on $n$-letters, $x_1,\ldots, x_n$.
A derivation $u:\lie_n\rightarrow \lie_n$ is a linear map satisfying 
$u([a,b])=[u(a),b]+ [a,u(b)].$  A \textit{tangential derivation} is a derivation $\mathbf a =(a_1,\dots,a_n)$, $a_i\in \mathfrak{lie}_n$, for which $\mathbf{a}(x_i) = [x_i,a_i]$ for $i=1,\dots,n$ (\cite[Definition 3.2, 3.5]{AT12}).  Tangential derivations of $\lie_n$ form a Lie algebra, denoted $\mathfrak{tder}_n$, with bracket $[\mathbf{a},\mathbf{b}](x_i)=\mathbf{a}(\mathbf{b}(x_i))-\mathbf{b}(\mathbf{a}(x_i))=\mathbf{a}([x_i,b_i])-\mathbf{b}([x_i,a_i])$ (\cite[Proposition 3.4]{AT12}). The Lie group of tangential derivations is called \emph{tangential automorphisms}, \[\TAut_n=\exp(\mathfrak{tder}_n).\] These are basis conjugating, or tangential, automorphisms of $\exp(\mathfrak{lie}_n)$. 

\begin{example}\label{example: t}
An important example of a tangential derivation is the \emph{braid derivation} $\mathbf{t}^{1,2} = (y,x) \in \mathfrak{tder}_2$. By definition, $\mathbf{t}(x) = [x,y]$ and $\mathbf{t}(y) = [y,x]$.  It follows that $\mathbf{t}(x+y) = [x,y] + [y,x] = 0$. (In general, the set of derivations with this property is called {\em special}, and denoted $\mathfrak{sder}_n$.) 

The analogously defined derivations $\mathbf{t}^{i,j}$ generate a Lie subalgebra of $\tder_n$ isomorphic to the Lie algebra of infinitesimal braids $\mathfrak{t}_n$ \cite[Proposition 3.11]{AT12}.
\end{example}

A solution to the {\em generalised Kashiwara-Vergne problem} is a tangential automorphism $F$ of $\lie_2$, which satisfies the two {\em KV equations} \cite[Section 5.3]{AT12}:
\begin{multline}\label{eq:SolKV}\tag{KV}
\text{SolKV}:= \Big\{F\in \TAut_2  \Big|\, F(e^xe^y)=e^{x+y}, \ \text{and} \\ 
   J(F)=\trace\Big(r(x+y)-r(x)-r(y)\Big) \ \text{for some} \ r \in u^2\mathbb{Q}[[u]] \ \Big\}.
\end{multline}

To explain the meaning of the second KV equation, note first that, as a completed Hopf algebra, the universal enveloping algebra of $\lie_n$ can be identified with the degree completed free associative algebra $\widehat{\mathfrak{ass}}_n:=\mathbb{Q}\left<\left<x_1,\ldots,x_n\ \right>\right>$. 
The (graded) vector space of \emph{cyclic words}\footnote{What we call $\operatorname{cyc}_n$ is denoted by $\mathfrak{tr}_n$ in \cite{AT12}.} is the linear quotient  $$\cyc_n := \widehat{\mathfrak{ass}}_n/\left<ab-ba\mid \forall a,b\in\widehat{\mathfrak{ass}}_n\right> = \widehat{\mathfrak{ass}}_n/[\widehat{\mathfrak{ass}}_n,\widehat{\mathfrak{ass}}_n].$$  
Thus, there is a natural trace map $\operatorname{tr}: \widehat{\mathfrak{ass}}_n \to \cyc_n$.

\begin{example} \cite[Example 2.1]{AT12}
The vector space $\operatorname{cyc}_1$ is isomorphic to the space $x\mathbb{K}[[x]]$ of formal power series with no constant term. It is spanned by $\operatorname{tr}(x^k)$ for $k \geq 1$. 
\end{example}

The natural action of $\tder_n$ on $\widehat{\mathfrak{ass}}_n$ induces an action of $\tder_n$ on $\cyc_n$ via the trace map. The \emph{divergence} cocycle $\operatorname{div}: \mathfrak{tder}_n \to \operatorname{cyc}_n$ is defined by
\begin{equation}
    u = (a_1,\dots,a_n) \mapsto \sum_{k=1}^n \operatorname{tr}(x_k(\partial_ka_k)),
\end{equation}
where the operator $\partial_k$ acts on $\lie_n \subseteq \widehat{\ass}_n$ by deleting words that do not end in $x_k$, and deleting the letter $x_k$ from words that do. Divergence satisfies the 1-cocycle property $\divcc([u,v])=u\cdot \divcc(v) - v\cdot \divcc(u)$ and thus extends this trace action to a representation of $\tder_n$ on $\cyc_n$. This representation integrates to give a representation of tangential automorphisms on cyclic words. 

\begin{definition}\cite[Section 5.1]{AT12} The non-commutative Jacobian map 
\begin{align*}
    J: \mathsf{TAut}_n \to \operatorname{cyc}_n
\end{align*}
is defined by setting
\begin{align}
    J(1) = 0\;, \quad \frac{d}{dt}\Bigr\rvert_{t=0}j\left(e^{tu}g \right) = \operatorname{div}(u) + u \cdot J(g)
    \end{align}
for $g \in \mathsf{TAut}_n$ and $u \in \mathfrak{tder}_n$. 
\end{definition}

In summary, solutions to the generalised Kashiwara-Vergne problem are tangential automorphisms of $\lie_2$ which take $F(e^xe^y)=e^{x+y}$ and whose Jacobian representation satisfies a particular constraint in $\cyc_1$. We are now ready to describe the morphisms in the tensor category of arrow diagrams which capture similar behaviour.

\subsubsection{Arrow diagrams}\label{sec:arrow diagrams}
 The associated graded structure of $w$-foams is a tensor category, $\arrows$, whose morphisms are combinatorially described as \emph{arrow diagrams}. In analogy with the relationship between chord diagrams and the Lie algebras of infinitesimal braids, certain morphism spaces in $\arrows$ are closely related to tangential automorphisms:  \[\arrows(\uparrow_n)\cong U(\tder_n\oplus \mathfrak{a}_n\ltimes \cyc_n).\] 

Morphisms in the two-coloured wheeled prop of arrow diagrams can be formally described as linear combinations of {\em $w$-Jacobi diagrams} on $w$-foam skeleta.
\begin{definition}\label{def:wJacobiDiag}
A $w$-\emph{Jacobi diagram} on a $w$-foam skeleton $s\in\calS$ is a (possibly infinite) linear combination of directed uni-trivalent {\em arrow graphs} satisfying the following conditions: 
\begin{enumerate}
\item trivalent arrow vertices are equipped with a cyclic orientation;
\item each trivalent arrow vertex is incident to two incoming and one outgoing arrow edge;
\item univalent vertices are attached to the chosen skeleton $s$, considered up to orientation preserving diffeomorphism of $s$. (That is, only the attachment order matters, not the specific locations along $s$.) 
\end{enumerate}

Such diagrams are considered up to the STU, TC, VI, CP, TF and RI relations shown in Figure~\ref{fig:JacobiRelns}. The {\em degree} of a $w$-Jacobi diagram is half the number of (trivalent and univalent) vertices of the arrow graph.
\end{definition}

\begin{figure}
\[ 
\begin{tikzpicture}[scale=.75]

\draw[ ultra thick,->](-.75,0)--(.75,0);
\draw[thick,dotted,blue,->](0,1)--(0,0);
\draw[thick,dotted,blue,->](-.75,1.75)--(0,1);
\draw[thick,dotted,blue,->](.75,1.75)--(0,1);

\node[] at (1.3,1){$=$};
\node[] at (1.3,0.5)[font=\footnotesize] {STU1};

\begin{scope}[xshift=2.5cm]
\draw[ultra thick,->](-.75,0)--(.75,0);
\draw[thick,dotted,blue,->](-.75,1.75)--(-.25,0);
\draw[thick,dotted,blue,->](.75,1.75)--(.25,0);

\node[] at (1.4,1){$-$};
\end{scope}

\begin{scope}[xshift=5cm]
\draw[ultra thick,->](-.75,0)--(.75,0);
\draw[thick,dotted,blue,->](-.75,1.75)--(.25,0);
\draw[thick,dotted,blue,->](.75,1.75)--(-.25,0);
\end{scope}

\begin{scope}[xshift=8cm]
\draw[ultra thick,->](-.75,0)--(.75,0);
\draw[thick,dotted,blue,<-](0,1)--(0,0);
\draw[thick,dotted,blue,->](-.75,1.75)--(0,1);
\draw[thick,dotted,blue,<-](.75,1.75)--(0,1);

\node[] at (1.3,1){$=$};
\node[] at (1.3,0.5)[font=\footnotesize] {STU2};

\begin{scope}[xshift=2.5cm]
\draw[ultra thick,->](-.75,0)--(.75,0);
\draw[thick,dotted,blue,->](-.75,1.75)--(-.25,0);
\draw[thick,dotted,blue,<-](.75,1.75)--(.25,0);

\node[] at (1.4,1){$-$};

\end{scope}

\begin{scope}[xshift=5cm]
\draw[ultra thick,->](-.75,0)--(.75,0);
\draw[thick,dotted,blue,->](-.75,1.75)--(.25,0);
\draw[thick,dotted,blue,<-](.75,1.75)--(-.25,0);
\end{scope}

\end{scope}

\begin{scope}[xshift=16cm]
\draw[ultra thick,->](-.75,0)--(.75,0);
\draw[thick,dotted,blue,<-](-.75,1.75)--(-.25,0);
\draw[thick,dotted,blue,<-](.75,1.75)--(.25,0);

\node[] at (1.4,1){$=$};
\node[] at (1.35,0.5)[font=\footnotesize] {TC};

\begin{scope}[xshift=2.5cm]
\draw[ultra thick,->](-.75,0)--(.75,0);
\draw[thick,dotted,blue,<-](-.75,1.75)--(.25,0);
\draw[thick,dotted,blue,<-](.75,1.75)--(-.25,0);
\end{scope}

\end{scope}
\end{tikzpicture}
\]

\[\begin{tikzpicture}[scale=.75]

\node at (-.75,0){$\pm$};
\draw[thick](-.75,-1)--(0,0)--(0,1)(.75,-1)--(0,0);
\draw[thick,->,blue,dotted] (0,.9)to [out=225, in=0] (-.75,.75);

\node[xshift=1.4cm] at (-.75,0){$\pm$};
\draw[thick,xshift=2cm](-.75,-1)--(0,0)--(0,1)(.75,-1)--(0,0);
\draw[thick,->,blue,dotted,xshift=2cm] (-.25,-.25)to [out=100, in=0] (-.75,.75);

\node[xshift=3cm] at (-.75,0){$\pm$};
\draw[thick,xshift=4cm](-.75,-1)--(0,0)--(0,1)(.75,-1)--(0,0);
\draw[thick,->,dotted,blue,xshift=4cm] (.4,-.5)to [out=190, in=0] (-.75,.75);

\node[]at (5,0){$=$ 0, };
\node[]at (4.9,0.5) [font=\footnotesize]{VI };
\node[]at (6.5,0){ and };

\node[xshift=6.5cm] at (-.75,0){$\pm$};
\draw[thick,xshift=9cm](-.75,-1)--(0,0)--(0,1)(.75,-1)--(0,0);
\draw[thick,->,dotted,blue,xshift=9cm](-.75,.75) to [out=0, in=190] (0,.9);

\node[xshift=8cm] at (-.75,0){$\pm$};
\draw[thick,xshift=11cm](-.75,-1)--(0,0)--(0,1)(.75,-1)--(0,0);
\draw[thick,->,dotted,blue,xshift=11cm](-.75,.75) to [out=0, in=110] (-.25,-.25);

\node[xshift=9.4cm] at (-.75,0){$\pm$};
\draw[thick,xshift=13cm](-.75,-1)--(0,0)--(0,1)(.75,-1)--(0,0);
\draw[thick,->,dotted,blue,xshift=13cm](-.75,.75) to [out=0, in=200] (.4,-.5);

\node[]at (14,0){$=$ 0 };
\node[]at (13.9,0.5)[font=\footnotesize]{VI};

\end{tikzpicture}\]

\[\begin{tikzpicture}[scale=.75]

\draw[ultra thick,->] (-1,-1)--(-1,1);
\draw[thick, dotted,blue](-1,-.5) to[out=0, in=270] (0,0);
\draw[thick,dotted,blue,->](0,0) to[out=90, in=360](-1,.5);

\node[] at (.4,0.1) {$=$};
\node[] at (.4,0.6)[font=\footnotesize] {RI};

\begin{scope}[xshift= 2cm]
\draw[ultra thick,->] (-1,-1)--(-1,1);
\draw[thick, dotted,blue,->](0,0) to[out=270, in=3600] (-1,-.5);
\draw[thick, dotted,blue](-1,.5) to[out=0, in=90](0,0);
\end{scope}


\begin{scope}[xshift=5cm]
\draw[fill=black] (-1,-.95) circle (.17cm);
\draw[ultra thick,] (-1,-1)--(-1,1);
\draw[thick,dotted,blue,<-](-1,0) --(.4,0);

\node[] at (.95,0.1) {$=$ 0};
\node[] at (1,0.6) [font=\footnotesize]{CP};
\end{scope}

\begin{scope}[xshift=9cm]
\draw[red] (-1,-1)--(-1,1);
\draw[thick,dotted,blue,->](-1,0) --(.4,0);

\node[] at (.95,0.1) {$=$ 0};
\node[] at (1,0.6) [font=\footnotesize]{TF};
\end{scope}

\end{tikzpicture}\]

\[ 
 \begin{tikzpicture}[scale=.75]
 \draw[thick,dotted,blue,->](-1,1)--(-.02,.02);
 \draw[thick,dotted,blue,->](1,1)--(0.02,0.02);
 \draw[thick,dotted,blue,->](0,0)--(0,-1);

 \node[] at (1.5,0){$=$ \ \ $-$};
 \node[] at (1.4,0.5)[font=\footnotesize] {AS};

 \begin{scope}[xshift=3cm]
  \draw[thick,dotted,blue,->](-1,1)to [out=360,in=15](.02,.02);
 \draw[thick,dotted,blue,->](1,1)to [out=180,in=160] (-0.02,0.02);
 \draw[thick,dotted,blue,->](0,0)--(0,-1);
 \end{scope}

 \begin{scope}[xshift=6.5cm]

  \draw[thick,dotted,blue,->](-1,1)--(-.02,.02);
 \draw[thick,dotted,blue,->](1,1)--(0.02,0.02);
 \draw[thick,dotted,blue,->](0,0)--(0,-.5);
  \draw[thick,dotted,blue,->](-1.01,-1.4)--(-.02,-.6);
   \draw[thick,dotted,blue,->](.02,-.6)--(1.01,-1.4);
     \node[] at (1.25,0){$=$};
     \node[] at (1.25,0.5)[font=\footnotesize] {IHX};

 \end{scope}
  \begin{scope}[xshift=9.5cm]

  \draw[thick,dotted,blue,->](-1,1)--(-.51,-.23);
 \draw[thick,dotted,blue,->](1,1)--(.52,-.23);
 \draw[thick,dotted,blue,->](-.5,-.25)--(.5,-.25);
  \draw[thick,dotted,blue,->](-1,-1.4)--(-.51,-.28);
   \draw[thick,dotted,blue,->](.5,-.28)--(1.01,-1.4);
        \node[] at (1.65,-.2){$-$};

 \end{scope}

  \begin{scope}[xshift=12.5cm]

  \draw[thick,dotted,blue,->](-1,1)--(.7,-.7);
 \draw[thick,dotted,blue,->](1,1)--(-.7,-.7);

 \draw[thick,dotted,blue,->](-.75,-.75)--(.7,-.75);

  \draw[thick,dotted,blue,->](-1.25,-1.25)--(-.75,-.75);
   \draw[thick,dotted,blue,->](.75,-.75)--(1.25,-1.25);
 \end{scope}

\end{tikzpicture}\]

\caption{The relations for $w$-Jacobi diagrams.}\label{fig:JacobiRelns}
\end{figure}
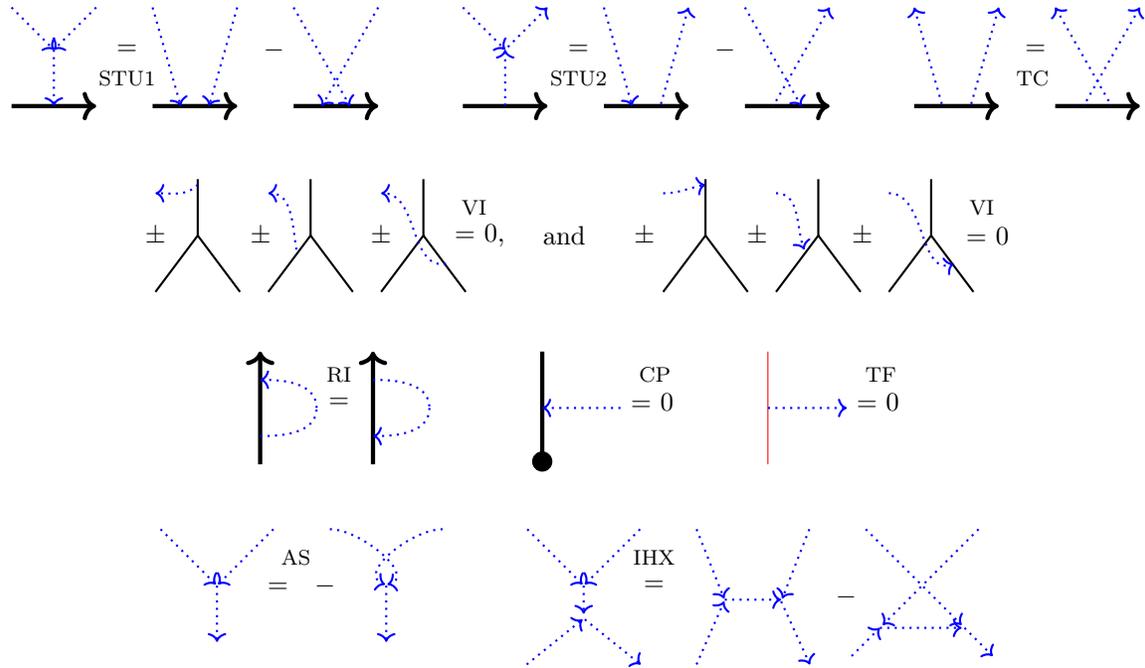

The STU relations for arrow diagrams are the oriented version of the classical STU relation of Jacobi diagrams, and ensures that the trivalent vertices ``behave as Lie brackets\footnote{The relationship between the STU relations and Lie brackets can be made precise using the notion of weight systems, which is not required for this paper. See for example \cite[Section 2.4]{BN_Survey_Knot_Invariants}}''. In particular, the STU relation implies the AS (anti-symmetry) and IHX relations also shown in Figure~\ref{fig:JacobiRelns}. 
An example of a $w$-Jacobi diagram is shown in Figure~\ref{fig: examples of diagrams}. In figures the oriented uni-trivalent graph (arrow graph) is drawn in dashed lines, the $w$-foam skeleton has tubes shown as thick black lines and strings as thin red lines. 

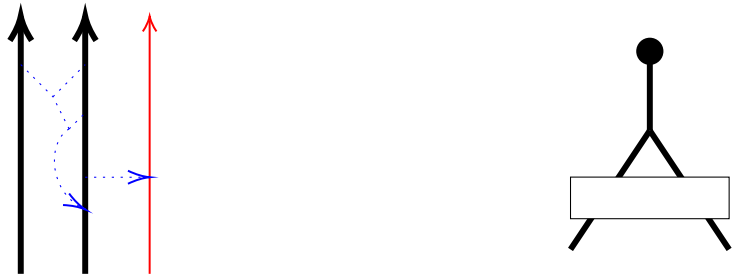
\begin{figure}[h]
\centering
    \begin{subfigure}[c]{0.45\textwidth}
    \centering
\[\begin{tikzpicture}[x=0.75pt,y=0.75pt,yscale=-1,xscale=1]

\draw [line width=2.25]    (70,44) -- (70,170) ;
\draw [shift={(70,40)}, rotate = 90] [color={rgb, 255:red, 0; green, 0; blue, 0 }  ][line width=2.25]    (12.24,-5.49) .. controls (7.79,-2.58) and (3.71,-0.75) .. (0,0) .. controls (3.71,0.75) and (7.79,2.58) .. (12.24,5.49)   ;
\draw [line width=2.25]    (102.5,44) -- (102.5,170) ;
\draw [shift={(102.5,40)}, rotate = 90] [color={rgb, 255:red, 0; green, 0; blue, 0 }  ][line width=2.25]    (12.24,-5.49) .. controls (7.79,-2.58) and (3.71,-0.75) .. (0,0) .. controls (3.71,0.75) and (7.79,2.58) .. (12.24,5.49)   ;
\draw [color={rgb, 255:red, 255; green, 0; blue, 0 }  ,draw opacity=1 ][line width=0.75]    (135,42) -- (135,170) ;
\draw [shift={(135,40)}, rotate = 90] [color={rgb, 255:red, 255; green, 0; blue, 0 }  ,draw opacity=1 ][line width=0.75]    (7.65,-3.43) .. controls (4.86,-1.61) and (2.31,-0.47) .. (0,0) .. controls (2.31,0.47) and (4.86,1.61) .. (7.65,3.43)   ;
\draw [color={rgb, 255:red, 0; green, 0; blue, 255 }  ,draw opacity=1 ] [dash pattern={on 0.84pt off 2.51pt}]  (102.5,121.25) -- (133,121.25) ;
\draw [shift={(135,121.25)}, rotate = 180] [color={rgb, 255:red, 0; green, 0; blue, 255 }  ,draw opacity=1 ][line width=0.75]    (10.93,-3.29) .. controls (6.95,-1.4) and (3.31,-0.3) .. (0,0) .. controls (3.31,0.3) and (6.95,1.4) .. (10.93,3.29)   ;
\draw [color={rgb, 255:red, 0; green, 0; blue, 255 }  ,draw opacity=1 ] [dash pattern={on 0.84pt off 2.51pt}]  (70,64.38) -- (86.25,80.63) ;
\draw [color={rgb, 255:red, 0; green, 0; blue, 255 }  ,draw opacity=1 ] [dash pattern={on 0.84pt off 2.51pt}]  (86.25,80.63) -- (102.5,64.38) ;
\draw [color={rgb, 255:red, 0; green, 0; blue, 255 }  ,draw opacity=1 ] [dash pattern={on 0.84pt off 2.51pt}]  (94.38,96.88) -- (102.5,88.75) ;
\draw [color={rgb, 255:red, 0; green, 0; blue, 255 }  ,draw opacity=1 ] [dash pattern={on 0.84pt off 2.51pt}]  (94.38,96.88) -- (86.25,80.63) ;
\draw [color={rgb, 255:red, 0; green, 0; blue, 255 }  ,draw opacity=1 ] [dash pattern={on 0.84pt off 2.51pt}]  (94.38,96.88) .. controls (83.74,102.79) and (83.42,127.43) .. (100.83,136.69) ;
\draw [shift={(102.5,137.5)}, rotate = 208] [color={rgb, 255:red, 0; green, 0; blue, 255 }  ,draw opacity=1 ][line width=0.75]    (10.93,-3.29) .. controls (6.95,-1.4) and (3.31,-0.3) .. (0,0) .. controls (3.31,0.3) and (6.95,1.4) .. (10.93,3.29)   ;

\end{tikzpicture}
\]
    \end{subfigure}
\begin{subfigure}[c]{0.45\textwidth}
    \centering
\[\begin{tikzpicture}[x=0.75pt,y=0.75pt,yscale=-1,xscale=1]

\draw [line width=2.25]    (90,60) -- (90,100) ;
\draw [shift={(90,60)}, rotate = 90] [color={rgb, 255:red, 0; green, 0; blue, 0 }  ][fill={rgb, 255:red, 0; green, 0; blue, 0 }  ][line width=2.25]      (0, 0) circle [x radius= 5.36, y radius= 5.36]   ;
\draw [line width=2.25]    (90,100) -- (50,160) ;
\draw [line width=2.25]    (90,100) -- (130,160) ;
\draw  [fill={rgb, 255:red, 255; green, 255; blue, 255 }  ,fill opacity=1 ] (50,123.5) -- (130,123.5) -- (130,144.5) -- (50,144.5) -- cycle ;

\end{tikzpicture}
\]
    \end{subfigure}
\caption{On the left is an example of a $w$-Jacobi diagram. Sometimes, we denote an arbitrary $w$-Jacobi diagram located on a certain part of a skeleton by a box, as shown on the right. } \label{fig: examples of diagrams} 
\end{figure}

We have the following finitely presented tensor category of arrow diagrams: 

\begin{definition}\label{def: arrow diagram circuit alg}
The wheeled prop of \emph{arrow diagrams} is:
\[\arrows := \left\langle \rarrow, \rightarrowp, \bcap, \vertex,\negvertex, \pv,\ppv \mid \text{4T, TC, VI, CP, TF, RI}.\right\rangle\] Here the non-skeleton generators are {\em arrows}, drawn in dotted lines. As before, all generators are understood to occur in all possible strand orientations. The 4T relation is shown in Figure~\ref{fig:ArrowRelns}. The {\em degree} of an arrow diagram is given by the number of arrows.
\end{definition}

Morphisms of $\arrows$ are simply called {\em arrow diagrams}. There is an isomorphism between morphism spaces of $\arrows$ and $w$-Jacobi diagrams on the same skeleton, analogous to the classical isomorphism between chord diagrams and Jacobi diagrams \cite[Theorem 6]{BN_Survey_Knot_Invariants}, sometimes called the ``Bracket-Rise Theorem''. In short, arrow diagrams are naturally also $w$-Jacobi diagrams, and this map turns out to descend to an isomorphism. In particular, the STU relations imply the 4T relation. For more detail see \cite[Around Theorem 3.4]{DHR21}, \cite[Theorem 3.13]{BND:WKO2} and \cite[Theorem 3.13]{BND:WKO1}.

\begin{figure}[h]

\[\begin{tikzpicture}[scale=.75]

\draw[ultra thick,->] (-1,-1)--(-1,1);
\draw[ultra thick,->] (0,-1)--(0,1);
\draw[thick,->] (1,-1)--(1,1);
\draw[dotted, thick,blue,->](-1,.5)--(-.07,.5);
\draw[dotted,thick,blue,->] (0,-.5)--(1,-.5);

\node[] at (1.75,0){+};

\begin{scope}[xshift=3.5cm]
\draw[ultra thick,->] (-1,-1)--(-1,1);
\draw[ultra thick,->] (0,-1)--(0,1);
\draw[thick,->] (1,-1)--(1,1);
\draw[dotted,thick,blue,->](-1,.5)--(1.,.5);
\draw[dotted,thick,blue,->] (0,-.5)--(1,-.5);
\node[] at (1.75,0){$=$};
\node[] at (1.75,0.6)[font=\footnotesize]{4T};

\end{scope}

\begin{scope}[xshift=7cm]
\draw[ultra thick,->] (-1,-1)--(-1,1);
\draw[ultra thick,->] (0,-1)--(0,1);
\draw[thick,->] (1,-1)--(1,1);
\draw[dotted,thick,blue,->](0,.5)--(1.,.5);
\draw[dotted,thick,blue,->] (-1,-.5)--(0,-.5);
\node[] at (1.75,0){+};
\end{scope}

\begin{scope}[xshift=10.5cm]
\draw[ultra thick,->] (-1,-1)--(-1,1);
\draw[ultra thick,->] (0,-1)--(0,1);
\draw[thick,->] (1,-1)--(1,1);
\draw[dotted,thick,blue,->](0,.5)--(1.,.5);
\draw[dotted,thick,blue,->] (-1,-.5)--(1,-.5);
\end{scope}
\end{tikzpicture}\]

\caption{The 4T relation for arrow diagrams.}\label{fig:ArrowRelns}
\end{figure}
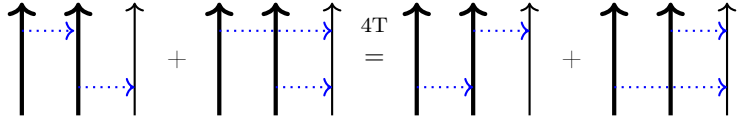

Arrow diagrams are equipped with the following \emph{operations} beyond the wheeled prop structure: 
\begin{enumerate}
	\item  The \emph{orientation switch} $S_e: \arrows(s) \rightarrow \arrows(S_e(s))$ reverses the direction of a strand $e$ of $s$ and multiplies each arrow diagram by $$(-1)^{\#(\text{arrow heads and tails ending on }e)}.$$
	\item The \emph{adjoint} operation $A_e: \arrows(s) \rightarrow \arrows(A_e(s))$ reverses the direction of a strand $e$ in $s$ and multiplies each arrow diagram by
	$$(-1)^{\#(\text{arrow heads on } e)}.$$
	\item The \emph{unzip} (for a strand $e$ ending in two vertices) and \emph{disc unzip} (for a strand $e$ ending in a vertex and a cap) operations are both denoted $u_e: \arrows(s) \rightarrow \arrows(u_e(s))$. The operation doubles the strand $e$, removes the vertex (vertices) at the end(s) of $e$, and maps each arrow ending on $e$ to a sum of two new arrows, each of which ends on one of the two new strands created. This is shown in Figure~\ref{fig:unzip_arrow}
	\begin{figure}[h]
	    \centering
	    \[\begin{tikzpicture}[x=0.75pt,y=0.75pt,yscale=-1,xscale=1]

\draw [line width=2.25]    (44,70) -- (58,91) ;
\draw [line width=2.25]    (58,91) -- (72,70) ;
\draw [line width=2.25]    (58,119) -- (58,91) ;
\draw [line width=2.25]    (44,140) -- (58,119) ;
\draw [line width=2.25]    (58,119) -- (72,140) ;
\draw [color={rgb, 255:red, 0; green, 0; blue, 255 }  ,draw opacity=1 ] [dash pattern={on 0.84pt off 2.51pt}]  (32,105) -- (58,105) ;
\draw [shift={(30,105)}, rotate = 0] [color={rgb, 255:red, 0; green, 0; blue, 255 }  ,draw opacity=1 ][line width=0.75]    (10.93,-3.29) .. controls (6.95,-1.4) and (3.31,-0.3) .. (0,0) .. controls (3.31,0.3) and (6.95,1.4) .. (10.93,3.29)   ;
\draw    (90,111) -- (128,111) ;
\draw [shift={(130,111)}, rotate = 180] [color={rgb, 255:red, 0; green, 0; blue, 0 }  ][line width=0.75]    (10.93,-3.29) .. controls (6.95,-1.4) and (3.31,-0.3) .. (0,0) .. controls (3.31,0.3) and (6.95,1.4) .. (10.93,3.29)   ;
\draw [shift={(90,111)}, rotate = 180] [color={rgb, 255:red, 0; green, 0; blue, 0 }  ][line width=0.75]    (0,5.59) -- (0,-5.59)   ;
\draw [line width=2.25]    (168,140) -- (168,70) ;
\draw [line width=2.25]    (196,70) -- (196,140) ;
\draw [color={rgb, 255:red, 0; green, 0; blue, 255 }  ,draw opacity=1 ] [dash pattern={on 0.84pt off 2.51pt}]  (142,105) -- (168,105) ;
\draw [shift={(140,105)}, rotate = 0] [color={rgb, 255:red, 0; green, 0; blue, 255 }  ,draw opacity=1 ][line width=0.75]    (10.93,-3.29) .. controls (6.95,-1.4) and (3.31,-0.3) .. (0,0) .. controls (3.31,0.3) and (6.95,1.4) .. (10.93,3.29)   ;
\draw [line width=2.25]    (268,140) -- (268,70) ;
\draw [line width=2.25]    (296,70) -- (296,140) ;
\draw [color={rgb, 255:red, 0; green, 0; blue, 255 }  ,draw opacity=1 ] [dash pattern={on 0.84pt off 2.51pt}]  (242,105) -- (296,105) ;
\draw [shift={(240,105)}, rotate = 0] [color={rgb, 255:red, 0; green, 0; blue, 255 }  ,draw opacity=1 ][line width=0.75]    (10.93,-3.29) .. controls (6.95,-1.4) and (3.31,-0.3) .. (0,0) .. controls (3.31,0.3) and (6.95,1.4) .. (10.93,3.29)   ;

\draw (110,107.6) node [anchor=south] [inner sep=0.75pt]    {$u$};
\draw (222.5,100) node    {$+$};
\end{tikzpicture}\]
	    \caption{Unzip acting on an arrow tail ending on the unzipped strand. Unzip behaves similarly for arrow heads. }
	    \label{fig:unzip_arrow}
	\end{figure}
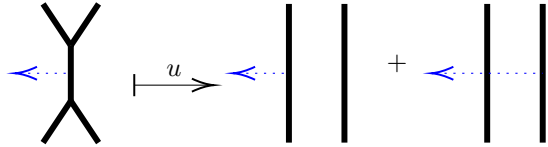

	\item The \emph{puncture} operation $p_e: \arrows(s) \to \arrows(p_e(s))$ punctures the strand $e$, turning it into a string (drawn as a red line). Any arrow diagram that has an arrow tail ending on $e$ is sent to zero.
	\item The \emph{deletion} operation $d_e: \arrows(s) \rightarrow \arrows(d_e(s))$ (for long strands $e$ not incident to any skeleton vertex) deletes $e$, and an arrow diagram with any arrow ending—head or tail—on $e$ is sent to 0.
\end{enumerate}

\begin{remark}
These operations capture structure in $\arrows$ which is not encoded by the operad of wiring diagrams (Definition~\ref{def: wiring diagram}). This is analogous to the type of operations that arise in the category of parenthesised braids, $\PaB$, such as strand deletion and strand doubling.  
\end{remark}



\subsubsection{Arrow diagrams and tangential derivations}
\label{ellmap}
At each skeleton $s\in\calS$, the $\mathbb{Q}$-vector space $\arrows(s)$ can be equipped with a co-unital, co-associative co-product:
 \[\begin{tikzcd}\Delta: \arrows(s)\arrow[r]& \arrows(s)\otimes\arrows(s)\end{tikzcd}\] 
which sends an arrow diagram $\bD\in\arrows(s)$ to the sum of all ways of distributing the connected components of the arrow graph of $\bD$ between the two copies of the skeleton.

\begin{definition}
An arrow diagram $\bD\in\arrows(s)$ is said to be \emph{primitive} if \[\Delta(\bD) = \bD\otimes 1 + 1 \otimes \bD.\] 
\end{definition}

The vector space of primitive arrow diagrams on a skeleton $s$ is denoted by $\calP(s)$, and $\calP=\sqcup_{s\in\calS} \calP(s)$. As it turns out, a single arrow diagram is primitive if, and only if, its arrow graph is connected. Therefore, $\calP$ is spanned by {\em trees} and {\em wheels}: a \emph{tree} is an arrow diagram whose arrow graph is connected and has no cycles (on the right of Figure~\ref{fig: tder arrow diagrams}), and a \emph{wheel} is an arrow diagram whose arrow graph is an $n$-cycle with $n$ ``spokes'' all oriented towards the cycle, as shown in Figure~\ref{fig:wheel example} \cite[Proof of Theorem 3.16]{BND:WKO2}. 

Let $\calP(\uparrow_n)$ denote primitive arrow diagrams on a skeleton of $n$ black strands (that is, $n$ tubes).
As mentioned earlier, $\calP(\uparrow_n)$ is closely related to the lie algebras $\mathfrak{tder}_n$ and $\cyc_n$, which is made precise in the following proposition. 

\begin{prop}\label{prop:SES}\cite[Proposition 3.19]{BND:WKO2}
There is a split short exact sequence of Lie algebras:

\begin{equation}
    \label{eq:primitiveshortexact}
    \begin{tikzcd}
	0 & {\cyc_n} & {\calP(\uparrow_n)} & {\mathfrak{tder}_n \oplus \mathfrak{a}_n} & 0
	\arrow[from=1-1, to=1-2]
	\arrow[hook, from=1-2, to=1-3]
	\arrow["pr", two heads, from=1-3, to=1-4]
	\arrow[from=1-4, to=1-5]
	\arrow["\ell", bend left = 33, dashed, from=1-4, to=1-3]
\end{tikzcd}
\end{equation} 
\end{prop}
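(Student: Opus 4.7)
The plan is to identify $\calP(\uparrow_n)$ explicitly, then construct both maps in the sequence and verify exactness. Since primitive elements in a cocommutative Hopf algebra are spanned by connected diagrams, and on $\uparrow_n$ (no caps, no strings, no skeleton vertices) the relations $\mathrm{CP}$, $\mathrm{TF}$, $\mathrm{VI}$, $\mathrm{RI}$ are inert, $\calP(\uparrow_n)$ is spanned by connected arrow graphs. A standard reduction using the $\mathrm{STU}$ (equivalently $\mathrm{4T}$) relation together with an Euler characteristic count shows that every such graph is a linear combination of trees (no cycles) and wheels (a single oriented cycle with inward spokes), the two families already highlighted in the excerpt.

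I would define $\iota \colon \cyc_n \hookrightarrow \calP(\uparrow_n)$ by sending $\operatorname{tr}(x_{i_1}\cdots x_{i_k})$ to the $k$-spoke wheel whose spokes terminate on strands $i_1,\dots,i_k$ in cyclic order, and define $pr$ by sending wheels to zero and reading each tree as a pair $(i,w)\in\{1,\dots,n\}\times \lie_n$, where $i$ is the strand carrying the unique outgoing arrow head and $w$ is the iterated Lie bracket obtained by interpreting each internal trivalent vertex via $\mathrm{STU}$. This identifies the tree subspace with $\lie_n^{\oplus n}$, which splits canonically as $\tder_n\oplus \mathfrak{a}_n$ because the kernel of the map $(a_1,\dots,a_n)\mapsto [x_\bullet,a_\bullet]$ is exactly the abelian subspace spanned by the tuples $(0,\dots,x_i,\dots,0)$. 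Well-definedness of the tree-to-Lie-word map uses that $\mathrm{AS}$ and $\mathrm{IHX}$, both consequences of $\mathrm{STU}$, correspond to antisymmetry and the Jacobi identity; injectivity of $\iota$ is immediate from the cyclic symmetry of wheels, and exactness in the middle follows from the tree/wheel decomposition. The splitting $\ell$ is then defined by choosing a Hall basis of $\lie_n$, expressing each $a_i$ in this basis, and converting each basis monomial to its corresponding tree with outgoing head on strand $i$.

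The main obstacle is verifying that $\iota$ and $pr$ are \emph{Lie algebra} homomorphisms, i.e.\ that the commutator bracket on $\calP(\uparrow_n)$ induced by stacking in $\arrows(\uparrow_n)$ matches the brackets on $\cyc_n$ and $\tder_n$. For $pr$ this reduces to a case analysis: resolving the commutator of two trees using $\mathrm{STU}$ produces precisely the trees representing $[\mathbf{a},\mathbf{b}](x_i)=\mathbf{a}([x_i,b_i])-\mathbf{b}([x_i,a_i])$, plus a wheel correction that records the divergence cocycle $\divcc\colon \tder_n\to\cyc_n$. Since wheels are killed by $pr$, this correction does not obstruct the homomorphism property, but it does show that $\ell$ is generally only a splitting of vector spaces and that the Lie structure on $\calP(\uparrow_n)$ is that of the extension $\cyc_n\rtimes(\tder_n\oplus\mathfrak{a}_n)$ twisted by $\divcc$.
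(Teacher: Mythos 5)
Your overall strategy---primitives are the connected diagrams, connected diagrams reduce via STU to trees and wheels, wheels realise $\cyc_n$ and trees realise $\lie_n^{\oplus n}\cong \tder_n\oplus\mathfrak{a}_n$ through the head-strand/Lie-word dictionary---is the same skeleton as the argument in \cite{BND:WKO2}, and your constructions of the inclusion, of $pr$ and of $\ell$ match Remark~\ref{remark:l}. Two small caveats: injectivity of the wheels map is not ``immediate from the cyclic symmetry''; one must rule out a wheel being a combination of trees modulo STU, which requires a weight-system or normal-form argument. Also RI is not inert on $\uparrow_n$: it is exactly the relation identifying the two orientations of the short arrows that span $\mathfrak{a}_n$.

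The genuine gap is your final paragraph. You conclude that the bracket of two trees carries ``a wheel correction that records the divergence cocycle,'' so that $\ell$ is only a splitting of vector spaces and the extension is twisted by $\divcc$. This contradicts the statement being proved: a \emph{split} short exact sequence of Lie algebras is precisely a semidirect product, the paper asserts in Remark~\ref{remark:l} that $\ell$ is a Lie algebra map, and Proposition~\ref{prop:milnor_moore} ($\arrows(\uparrow_n)\cong\hat{U}(\tder_n\oplus\mathfrak{a}_n\ltimes\cyc_n)$) depends on the kernel being an abelian ideal acted on by a genuine Lie subalgebra, not on a $2$-cocycle extension. With the ``heads attached below tails'' convention, the STU corrections arising when computing $[\ell(\ba),\ell(\bb)]$ graft the root of one tree onto a leaf of a \emph{different} connected component, so they produce trees (again in lower normal form) representing exactly the $\tder_n$-bracket; no wheels appear, and this is precisely the content one must verify to prove the proposition. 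The divergence enters elsewhere: it measures the discrepancy between the lower and upper head-placement conventions---equivalently the interaction of $\ell$ with the adjoint operations, as in Proposition~\ref{prop:J}---and it governs the action of $\tder_n$ on the abelian ideal $\cyc_n$. As written, your argument stops short of (indeed, denies) the splitting that the proposition claims.
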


Here, $\mathfrak{a}_n$ denotes the $n$-dimensional abelian Lie algebra given by tuples of the form $(\alpha_1 x_1, \alpha_2 x_2,...,\alpha_n x_n)$, for $\alpha_1,...,\alpha_n \in \Q$. This Lie algebra arises as the kernel of the natural map $u:(\mathfrak{lie}_n)^{\oplus n} \to \mathfrak{tder}_n$, which sends a tuple $\mathbf{a}=(a_1,...,a_n)$ to the derivation $x_i\mapsto [x_i,a_i]$.  

Cyclic words embed into $\calP(\uparrow_n)$, as {\em wheels}, as follows. Label the skeleton strands with the letters $x_1,...,x_n$. Given a cyclic word $w$ of length $k$ in $\cyc_n$, associate to it a {\em $k$-wheel} as in Figure~\ref{fig:wheel example}, oriented counter-clockwise, with ``spokes'' (arrow tails) attached to the skeleton according to the letters of $w$. The order in which arrow tails are attached on each skeleton strand does not matter due to the TC relation.

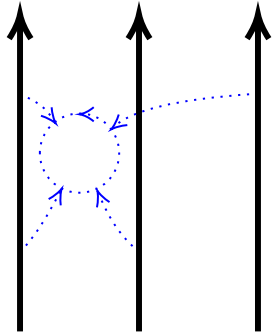
\begin{figure}[h]
\tikzset{every picture/.style={line width=0.75pt}} 

\[\begin{tikzpicture}[x=0.75pt,y=0.75pt,yscale=-1,xscale=1]

\draw [color={rgb, 255:red, 0; green, 0; blue, 255 }  ,draw opacity=1 ] [dash pattern={on 0.84pt off 2.51pt}]  (60,80) .. controls (67.99,84.23) and (71.56,85.22) .. (76.95,93.35) ;
\draw [shift={(78,95)}, rotate = 231.59] [color={rgb, 255:red, 0; green, 0; blue, 255 }  ,draw opacity=1 ][line width=0.75]    (7.65,-3.43) .. controls (4.86,-1.61) and (2.31,-0.47) .. (0,0) .. controls (2.31,0.47) and (4.86,1.61) .. (7.65,3.43)   ;
\draw [color={rgb, 255:red, 0; green, 0; blue, 255 }  ,draw opacity=1 ] [dash pattern={on 0.84pt off 2.51pt}]  (60,160) .. controls (67.68,151.84) and (73.06,143.22) .. (80.11,129.71) ;
\draw [shift={(81,128)}, rotate = 117.35] [color={rgb, 255:red, 0; green, 0; blue, 255 }  ,draw opacity=1 ][line width=0.75]    (7.65,-3.43) .. controls (4.86,-1.61) and (2.31,-0.47) .. (0,0) .. controls (2.31,0.47) and (4.86,1.61) .. (7.65,3.43)   ;
\draw [line width=2.25]    (60,200) -- (60,44) ;
\draw [shift={(60,40)}, rotate = 90] [color={rgb, 255:red, 0; green, 0; blue, 0 }  ][line width=2.25]    (12.24,-5.49) .. controls (7.79,-2.58) and (3.71,-0.75) .. (0,0) .. controls (3.71,0.75) and (7.79,2.58) .. (12.24,5.49)   ;
\draw [color={rgb, 255:red, 0; green, 0; blue, 255 }  ,draw opacity=1 ] [dash pattern={on 0.84pt off 2.51pt}]  (120,160) .. controls (110.88,151.84) and (105.91,145.06) .. (99.77,130.82) ;
\draw [shift={(99,129)}, rotate = 67.25] [color={rgb, 255:red, 0; green, 0; blue, 255 }  ,draw opacity=1 ][line width=0.75]    (7.65,-3.43) .. controls (4.86,-1.61) and (2.31,-0.47) .. (0,0) .. controls (2.31,0.47) and (4.86,1.61) .. (7.65,3.43)   ;
\draw [line width=2.25]    (120,200) -- (120,44) ;
\draw [shift={(120,40)}, rotate = 90] [color={rgb, 255:red, 0; green, 0; blue, 0 }  ][line width=2.25]    (12.24,-5.49) .. controls (7.79,-2.58) and (3.71,-0.75) .. (0,0) .. controls (3.71,0.75) and (7.79,2.58) .. (12.24,5.49)   ;
\draw  [color={rgb, 255:red, 0; green, 0; blue, 255 }  ,draw opacity=1 ][dash pattern={on 0.84pt off 2.51pt}] (70,110) .. controls (70,98.95) and (78.95,90) .. (90,90) .. controls (101.05,90) and (110,98.95) .. (110,110) .. controls (110,121.05) and (101.05,130) .. (90,130) .. controls (78.95,130) and (70,121.05) .. (70,110) -- cycle ;
\draw  [color={rgb, 255:red, 0; green, 0; blue, 255 }  ,draw opacity=1 ] (97.17,86.83) .. controls (94.78,88.82) and (92.39,90.02) .. (90,90.42) .. controls (92.39,90.81) and (94.78,92.01) .. (97.17,94) ;
\draw [color={rgb, 255:red, 0; green, 0; blue, 255 }  ,draw opacity=1 ] [dash pattern={on 0.84pt off 2.51pt}]  (180,80) .. controls (157.8,81.45) and (120.24,84.76) .. (107.31,96.67) ;
\draw [shift={(106,98)}, rotate = 321.04] [color={rgb, 255:red, 0; green, 0; blue, 255 }  ,draw opacity=1 ][line width=0.75]    (7.65,-3.43) .. controls (4.86,-1.61) and (2.31,-0.47) .. (0,0) .. controls (2.31,0.47) and (4.86,1.61) .. (7.65,3.43)   ;
\draw [line width=2.25]    (180,200) -- (180,44) ;
\draw [shift={(180,40)}, rotate = 90] [color={rgb, 255:red, 0; green, 0; blue, 0 }  ][line width=2.25]    (12.24,-5.49) .. controls (7.79,-2.58) and (3.71,-0.75) .. (0,0) .. controls (3.71,0.75) and (7.79,2.58) .. (12.24,5.49)   ;

\end{tikzpicture}\]
\caption{The image of $\trace(x^2yz)$ in $\mathcal{P}(\uparrow_3)$. We use the convention that wheels are oriented counterclockwise, unless otherwise marked.}\label{fig:wheel example}
\end{figure}

The map $pr$ sends wheels to zero. Given a single tree $\bT \in\calP (\uparrow_n)$, $pr(\bT) \in \tder_n \oplus \mathfrak{a}_n$ as follows. 
\begin{enumerate}
    \item If $\bT$ is a short arrow on strand $i$, then $pr(\bT)=x_i\in \mathfrak{a}_n$.
    \item Otherwise, $pr(\bT)\in \tder_n$. Label the skeleton strands with $x_1, x_2,...,x_n$.  $\bT$ determines a Lie word $w_\bT \in \mathfrak{lie}_n$ by reading the generator corresponding to the skeleton strand of each leaf (tail) of the tree, and combining these with brackets corresponding to each trivalent vertex. 
    If the \emph{head} (root) of the tree is on strand $i$, then $pr(\bT)(x_i)=[x_i,w_\bT]$, and $pr(T)(x_j)=0$ for all $j\neq i$. See Figure~\ref{fig: tder arrow diagrams} for two examples.
\end{enumerate}


\begin{figure}[h]

\[\begin{tikzpicture}[x=0.75pt,y=0.75pt,yscale=-.75,xscale=.75]

\draw [<-][line width=1.5]    (75,82) -- (75,225) ;
\draw [<-] [line width=1.5]    (115,82) -- (115,225) ;
\draw [<-][line width=1.5]    (243,82) -- (243,225) ;
\draw [<-][line width=1.5]    (283,82) -- (283,224.41) ;
\draw [->][color={rgb, 255:red, 0; green, 0; blue, 255 }  ,draw opacity=1 ][line width=0.75]  [dotted]  (75,150) -- (115,150) ;
\draw [<-][color={rgb, 255:red, 0; green, 0; blue, 255 }  ,draw opacity=1 ][line width=0.75]  [dotted]  (243,150) -- (283,150) ;
\draw [<-][line width=1.5]    (433,82) -- (433,225) ;
\draw [<-][line width=1.5]    (473,82) -- (473,225) ;
\draw [<-][line width=1.5]    (512,82) -- (512,225) ;
\draw  [color={rgb, 255:red, 0; green, 0; blue, 255 }  ,draw opacity=1 ] [dotted]  (435,92) -- (453,116) ;
\draw [->][color={rgb, 255:red, 0; green, 0; blue, 255 }  ,draw opacity=1 ] [dotted]  (512,103) .. controls (460,140) and (420,159) .. (471,183) ;
\draw [->][color={rgb, 255:red, 0; green, 0; blue, 255 }  ,draw opacity=1 ] [dotted]  (470,100) .. controls (457,113) and (440,117) .. (461,141) ;

\draw (169,133.4) node [anchor=north west][inner sep=0.75pt]    {$+$};

\end{tikzpicture}\]
\caption{The arrow diagram on the left maps by $pr$ to the element $(y,x)\in\tder_2$ from Example~\ref{example: t}. The tree on the right maps to $(0,[[x,y],z],0)\in\tder_3$.
}\label{fig: tder arrow diagrams}
\end{figure}
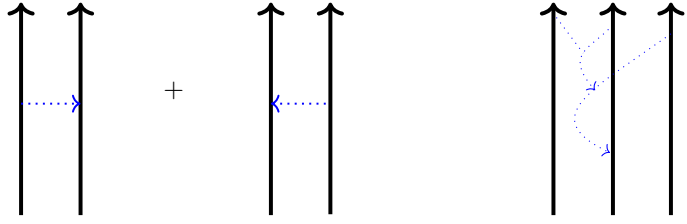

\begin{remark}\label{remark:l}The map $pr$ in \eqref{eq:primitiveshortexact} admits a section $\ell: \mathfrak{tder}_n \oplus \mathfrak{a}_n \to \calP(\uparrow_n)$. For $x_i\in \mathfrak{a}_n$, define $\ell(x_i)$ to be a short arrow on strand $i$, pointing down (though this is equivalent to a short arrow pointing up via the RI relation). For $d\in\tder_n$, write $d=(a_1,...,a_n)$ where $d(x_i)=[x_i,a_i]$. Choose $a_i$ so that the coefficient of the linear term $x_i$ in $a_i$ is 0. To each lie word in $a_i$, associate a binary tree oriented towards a single head, with tails labelled by the letters $x_1,...,x_n$. In $\calP(\uparrow_n)$ attach all tails to the strand labelled by the appropriate letter, and attach the head to strand $i$, below all the tails.
See Figure~\ref{fig:exampleupper} for an example. The element $\ell(d)$ is the sum of all the trees arising from $a_1,...,a_n$ in $\calP(\uparrow_n)$. The reader can check that $\ell$ is a Lie algebra map. We denote the restriction of $\ell$ to $\tder_2$ by $\ell$ also.

Note that the order in which arrow tails (leaves) of each tree are attached along a strand doesn't matter, due to the TC (Tails Commute) relation. However, the attachment position of the head (root) of the tree—in relation to the tails on the same strand—does make a difference. For the map $\ell$, we choose to attach tree heads below tails, but, for example, one could choose to attach the heads at the top instead. These two maps are not the same, but their difference lies in the image of $\cyc_n$, by STU relations. 
\end{remark}

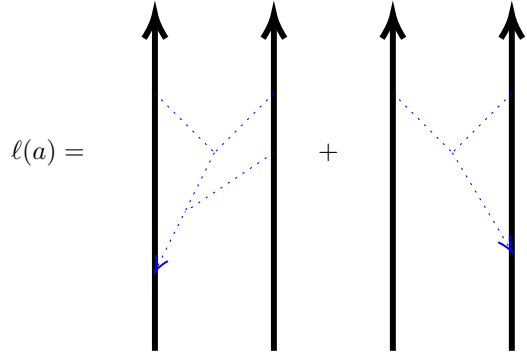
\begin{figure}[h]
\[\begin{tikzpicture}[x=0.75pt,y=0.75pt,yscale=-1,xscale=1]

\draw [line width=2.25]    (130,220) -- (130,54) ;
\draw [shift={(130,50)}, rotate = 90] [color={rgb, 255:red, 0; green, 0; blue, 0 }  ][line width=2.25]    (12.24,-5.49) .. controls (7.79,-2.58) and (3.71,-0.75) .. (0,0) .. controls (3.71,0.75) and (7.79,2.58) .. (12.24,5.49)   ;
\draw [line width=2.25]    (190,220) -- (190,54) ;
\draw [shift={(190,50)}, rotate = 90] [color={rgb, 255:red, 0; green, 0; blue, 0 }  ][line width=2.25]    (12.24,-5.49) .. controls (7.79,-2.58) and (3.71,-0.75) .. (0,0) .. controls (3.71,0.75) and (7.79,2.58) .. (12.24,5.49)   ;
\draw [color={rgb, 255:red, 0; green, 0; blue, 255 }  ,draw opacity=1 ] [dash pattern={on 0.84pt off 2.51pt}]  (130,90) -- (160,120) ;
\draw [color={rgb, 255:red, 0; green, 0; blue, 255 }  ,draw opacity=1 ] [dash pattern={on 0.84pt off 2.51pt}]  (190,90) -- (160,120) ;
\draw [color={rgb, 255:red, 0; green, 0; blue, 255 }  ,draw opacity=1 ] [dash pattern={on 0.84pt off 2.51pt}]  (190,120) -- (145,150) ;
\draw [color={rgb, 255:red, 0; green, 0; blue, 255 }  ,draw opacity=1 ] [dash pattern={on 0.84pt off 2.51pt}]  (160,120) -- (130.89,178.21) ;
\draw [shift={(130,180)}, rotate = 296.57] [color={rgb, 255:red, 0; green, 0; blue, 255 }  ,draw opacity=1 ][line width=0.75]    (7.65,-3.43) .. controls (4.86,-1.61) and (2.31,-0.47) .. (0,0) .. controls (2.31,0.47) and (4.86,1.61) .. (7.65,3.43)   ;
\draw [line width=2.25]    (250,220) -- (250,54) ;
\draw [shift={(250,50)}, rotate = 90] [color={rgb, 255:red, 0; green, 0; blue, 0 }  ][line width=2.25]    (12.24,-5.49) .. controls (7.79,-2.58) and (3.71,-0.75) .. (0,0) .. controls (3.71,0.75) and (7.79,2.58) .. (12.24,5.49)   ;
\draw [line width=2.25]    (310,220) -- (310,54) ;
\draw [shift={(310,50)}, rotate = 90] [color={rgb, 255:red, 0; green, 0; blue, 0 }  ][line width=2.25]    (12.24,-5.49) .. controls (7.79,-2.58) and (3.71,-0.75) .. (0,0) .. controls (3.71,0.75) and (7.79,2.58) .. (12.24,5.49)   ;
\draw [color={rgb, 255:red, 0; green, 0; blue, 255 }  ,draw opacity=1 ] [dash pattern={on 0.84pt off 2.51pt}]  (250,90) -- (280,120) ;
\draw [color={rgb, 255:red, 0; green, 0; blue, 255 }  ,draw opacity=1 ] [dash pattern={on 0.84pt off 2.51pt}]  (310,90) -- (280,120) ;
\draw [color={rgb, 255:red, 0; green, 0; blue, 255 }  ,draw opacity=1 ] [dash pattern={on 0.84pt off 2.51pt}]  (280,120) -- (308.97,168.29) ;
\draw [shift={(310,170)}, rotate = 239.04] [color={rgb, 255:red, 0; green, 0; blue, 255 }  ,draw opacity=1 ][line width=0.75]    (7.65,-3.43) .. controls (4.86,-1.61) and (2.31,-0.47) .. (0,0) .. controls (2.31,0.47) and (4.86,1.61) .. (7.65,3.43)   ;

\draw (217.5,120) node    {$+$};
\draw (76,120) node    {$\ell ( a) =$};

\end{tikzpicture}\]
    \caption{An example of $\ell(\ba)$ for $\ba = ([[x,y],y],[x,y])\in \tder_2$. }
    \label{fig:exampleupper}
\end{figure}

 Restricting our attention to arrow diagrams on the skeleton of $n$ vertical strands enabled us to identify $\tder_n$ and $\cyc_n$ within $\calP$. In fact, arrow diagrams on $n$-strands possess more structure: there is an associative, unital product on each vector space $\arrows(\uparrow_n)$ given by stacking: \[\begin{tikzcd}
    \arrows(\uparrow_n)\otimes \arrows(\uparrow_n)\arrow[rr, "stack"]&& \arrows(\uparrow_n)
\end{tikzcd}\] This product is compatible with the co-product, making $\arrows(\uparrow_n)$ into a Hopf algebra. The following fact is established in \cite[Section 3.2]{BND:WKO2}:

\begin{prop}\label{prop:milnor_moore}
The stacking product and arrow diagram co-product make $\arrows(\uparrow_n)$ into a Hopf algebra, and, by the Milnor-Moore theorem \cite{MR174052}, $\arrows(\uparrow_n)$ is isomorphic to the completed universal enveloping algebra over its primitive elements. In particular, \[\arrows(\uparrow_n)\overset{\Upsilon}{\cong} \hat{U}(\tder_n\oplus \mathfrak{a}_n\ltimes \cyc_n).\]
\end{prop}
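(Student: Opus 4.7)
The plan is to apply a completed version of the Milnor--Moore theorem to $\arrows(\uparrow_n)$, once we have equipped it with a graded, connected, cocommutative Hopf algebra structure. The argument breaks into three tasks: (i) verify the Hopf algebra axioms, (ii) invoke Milnor--Moore to identify $\arrows(\uparrow_n)$ with the completed enveloping algebra on its primitives, and (iii) identify the Lie algebra of primitives via the split short exact sequence in Proposition~\ref{prop:SES}.

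For (i), I would first observe that the stacking product is associative with unit the empty diagram, and is well-defined on the quotient by 4T, TC, VI, CP, TF and RI because each of these relations is local in the vertical direction and hence unaffected by juxtaposing a diagram above or below. The co-product $\Delta$, distributing connected components of the arrow graph between the two tensor factors, is manifestly coassociative and cocommutative, and is well-defined on relations since each relation is a local identity supported on a single connected component (so the other components appear identically in each term of $\Delta$ of each term of the relation). The key compatibility $\Delta \circ m = (m\otimes m)\circ(\id\otimes \mathrm{swap}\otimes \id)\circ (\Delta\otimes\Delta)$ holds because the connected components of a stacked diagram $D_1\cdot D_2$ are precisely the disjoint union of those of $D_1$ and $D_2$: arrow graphs on the two sides of the stacking line do not interact. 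Grading by arrow count makes $\arrows(\uparrow_n)$ a connected graded bialgebra over $\Q$, and such bialgebras automatically carry an antipode.

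For (ii), since $\arrows(\uparrow_n)$ is a connected, cocommutative, graded Hopf algebra over $\Q$ (in each finite degree), the classical Milnor--Moore theorem \cite{MR174052} provides, level by level, a natural Hopf algebra isomorphism $U(\calP(\uparrow_n))\xrightarrow{\cong} \arrows(\uparrow_n)$, where the Lie bracket on $\calP(\uparrow_n)$ is the commutator of the stacking product. Taking the inverse limit over the degree filtration upgrades this to a completed isomorphism $\hat U(\calP(\uparrow_n))\xrightarrow{\cong}\arrows(\uparrow_n)$. For (iii), the split short exact sequence of Proposition~\ref{prop:SES}, together with the section $\ell$ from Remark~\ref{remark:l}, gives a Lie algebra identification $\calP(\uparrow_n) \cong \tder_n \oplus \mathfrak{a}_n \ltimes \cyc_n$, with $\cyc_n$ (spanned by wheels) acting as an abelian ideal and $\tder_n\oplus\mathfrak{a}_n$ (spanned by trees) acting via $pr$. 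Composing with the Milnor--Moore isomorphism yields the desired $\Upsilon$.

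The main obstacle is the bialgebra compatibility together with the check that the relations are well-defined on both the product and the co-product simultaneously. The co-product is the subtler side: one must confirm that moves such as STU, which can merge two previously disjoint connected components into one (or conversely split one component into two), still yield identities after distributing components, and that the section $\ell$ is genuinely a Lie algebra map so that the induced map from the enveloping algebra agrees with the Hopf structure on $\arrows(\uparrow_n)$ up to the $\cyc_n$-ambiguity described in Remark~\ref{remark:l}. Once this is in hand, (ii) and (iii) assemble formally.
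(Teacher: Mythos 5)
Your proposal is correct and matches the intended argument: the paper gives no proof of its own, deferring to \cite[Section 3.2]{BND:WKO2}, which proceeds exactly as you do --- the stacking product and component-distributing coproduct make $\arrows(\uparrow_n)$ a connected graded cocommutative Hopf algebra over $\Q$, Milnor--Moore identifies its degree completion with $\hat{U}$ of its primitives, and the split exact sequence of Proposition~\ref{prop:SES} with the section $\ell$ identifies those primitives with $\tder_n\oplus\mathfrak{a}_n\ltimes\cyc_n$. The one verification you flag but do not carry out --- that $\Delta$ descends through STU even when the relation changes the number of connected components --- does go through: in the cross-terms where the two arrows of the right-hand STU terms land in different tensor factors, the two resulting diagrams are identical (only attachment order within each factor matters) and so cancel in the difference.
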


As a consequence of Proposition~\ref{prop:milnor_moore}, it makes sense to define \emph{group-like} arrow diagrams as exponentials of primitive elements, formally interpreted as power series: $e^\bD= \sum_{k=0}^\infty \frac{\bD^k}{k!}$, where $\bD\in \calP(\uparrow_n)$.

\begin{notation}
We will write $\calG(\uparrow_n)$ for the group of group-like elements of $\arrows(\uparrow_n)$. 
\end{notation}

In contrast to chord diagrams, arrow diagrams on an arbitrary skeleton do not usually assemble into a Hopf algebra. However, the following lemma is a straightforward application of the $\textnormal{VI}$ relation:  

\begin{lemma}\label{lemma: VI}\cite[Lemma 4.7]{BND:WKO2}
There is a canonical isomorphism of vector spaces \[\arrows(\vertex)\cong\arrows(\uparrow_2) \overset{\Upsilon}{\cong}  \hat{U}(\tder_n\oplus \mathfrak{a}_n\ltimes \cyc_n).\]  Therefore, the Hopf algebra structure on $\arrows(\uparrow_2)$ can be pulled back along this isomorphism to give a Hopf algebra structure on $\arrows(\vertex).$
\end{lemma}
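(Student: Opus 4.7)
The plan is to construct mutually inverse linear maps between $\arrows(\vertex)$ and $\arrows(\uparrow_2)$ using the $\textnormal{VI}$ relation to move arrow endpoints across the trivalent vertex. The underlying skeleton $\vertex$ has three edges (two incoming ``legs'' and one outgoing top strand), and the $\textnormal{VI}$ relation asserts precisely that, for any arrow endpoint (head or tail) adjacent to the vertex, the sum of the three configurations obtained by sliding that endpoint across the vertex to each of the three edges vanishes. In particular, any arrow endpoint on the top strand can be rewritten as minus the sum of the two corresponding endpoints on the two legs.

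First, I would define a map $\Phi:\arrows(\vertex)\to\arrows(\uparrow_2)$ as follows. Given an arrow diagram $D\in\arrows(\vertex)$, iteratively apply $\textnormal{VI}$ at each endpoint lying on the top strand, starting from the endpoint closest to the vertex, to push all endpoints off the top strand and onto the two legs. Once the top strand is empty, cut the vertex and replace $\vertex$ with $\uparrow_2$, producing an element of $\arrows(\uparrow_2)$. For well-definedness, I need to check that (i) pushing endpoints in different orders yields the same result in $\arrows(\uparrow_2)$ — this follows from the $\textnormal{TC}$ relation on the legs, since after multiple applications of $\textnormal{VI}$ the resulting tails commute and the heads are handled by the $\textnormal{STU}$/$\textnormal{4T}$ consistency that $\textnormal{VI}$ enforces — and (ii) $\Phi$ respects the defining relations ($\textnormal{4T}$, $\textnormal{TC}$, $\textnormal{VI}$, $\textnormal{RI}$), which is straightforward because these relations are local and pushing does not interfere with regions away from the vertex.

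Next, I would define the inverse map $\Psi:\arrows(\uparrow_2)\to\arrows(\vertex)$ as the tautological map that places an arrow diagram on $\uparrow_2$ onto the two legs of $\vertex$, leaving the top strand empty; this is clearly well-defined since all relations imposed on $\arrows(\uparrow_2)$ also hold for arrow diagrams on the legs of $\vertex$. By construction $\Phi\circ\Psi=\id$ since $\Psi$ produces diagrams with empty top strand, on which $\Phi$ acts trivially. For $\Psi\circ\Phi=\id$, I would argue that the iterated $\textnormal{VI}$ moves used to define $\Phi$ are equalities in $\arrows(\vertex)$, so $\Psi\Phi(D)=D$ holds tautologically in $\arrows(\vertex)$.

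The main obstacle is the well-definedness of $\Phi$: one must verify that the different orders in which one can slide endpoints across the vertex yield the same reduced diagram, and that no hidden inconsistency arises from interactions between $\textnormal{VI}$, $\textnormal{STU}$ (implicit in $\textnormal{4T}$), and $\textnormal{TC}$. Once this canonical linear isomorphism $\arrows(\vertex)\cong\arrows(\uparrow_2)$ is established, the Hopf algebra structure on $\arrows(\vertex)$ is defined by transport of structure via $\Phi$, and the identification with $\widehat U(\tder_n\oplus\mathfrak{a}_n\ltimes\cyc_n)$ is immediate from Proposition~\ref{prop:milnor_moore}.
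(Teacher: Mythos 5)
Your construction is the standard one: the paper offers no argument of its own here beyond citing \cite[Lemma 4.7]{BND:WKO2} and observing that the lemma is ``a straightforward application of the $\textnormal{VI}$ relation,'' and the cited proof is precisely your pair of maps --- the tautological inclusion of $\arrows(\uparrow_2)$ onto the two incoming edges of $\vertex$, inverted by sliding all arrow endings off the outgoing edge using $\textnormal{VI}$. Your verification of $\Phi\circ\Psi=\id$ and $\Psi\circ\Phi=\id$ is correct as stated.

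The step carrying all the content is the one you flag yourself, namely that $\Phi$ descends to the quotient, and the justification you offer is not yet a proof. Two points. First, there is no ordering ambiguity to resolve: since heads do not commute, the only admissible procedure is the one you describe (always resolve the endpoint currently adjacent to the vertex), so $\Phi$ is unambiguously defined on the free span of diagrams, and the only question is whether it annihilates each defining relation of $\arrows(\vertex)$. Relations supported away from the outgoing edge are killed trivially; a $\textnormal{VI}$ relation is killed because $\Phi$ sends its outgoing-edge term, by construction, to minus the signed sum of the images of the two leg terms; $\textnormal{TC}$ and $\textnormal{RI}$ relations on the outgoing edge are killed because the pushing procedure preserves the relative order of the endpoints involved and the cross-leg terms agree on the nose. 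The genuinely nontrivial case --- which your phrase ``the heads are handled by the STU/4T consistency that VI enforces'' does not actually address --- is a $\textnormal{4T}$ relation with heads on the outgoing edge: there one must note that $\Phi$ implements the substitution $a^{i,\mathrm{out}}\mapsto a^{i,L}+a^{i,R}$ and expand the relation bilinearly into $\textnormal{4T}$ relations on the legs together with the (trivial) commutativity of arrows ending at four distinct positions. Writing out that computation, or citing it directly from \cite{BND:WKO2}, is what is missing; the rest of the argument, including the transport of the Hopf structure and the appeal to Proposition~\ref{prop:milnor_moore}, is fine.
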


\begin{remark}\label{remark:tree-like arrow diagrams}
The projection  $pr: \calP(\uparrow_n) \to \tder_n\oplus \mathfrak{a}_n$ induces a Hopf algebra map $pr: \arrows(\uparrow_n) \rightarrow \hat{U}(\tder_n \oplus \mathfrak{a}_n)$. Regarding $\cyc_n$ as an abelian Lie algebra, we also have an induced inclusion $\hat{U}(\cyc_n)\hookrightarrow \arrows(\uparrow_n)$. 
Denote by $\arrows(\uparrow_n)^{tr}$ the quotient $\arrows/\im(\hat{U}(\cyc_n))$, we call this quotient the \emph{tree-level} arrow diagrams. We write $\apr$ for the quotient map $\arrows(\uparrow_n) \to \arrows(\uparrow_n)^{tr}$, and for $\bD \in \A(\uparrow_n)$ we denote $\apr(\bD)$ by $\bD^{tr}$. Note that the map $pr: \arrows(\uparrow_n)\to \hat{U}(\tder_n \oplus \mathfrak{a}_n)$ descends to an isomorphism $\arrows(\uparrow_n)^{tr}\cong\hat{U}(\tder_n \oplus \mathfrak{a}_n)$.
\end{remark}

Next, we identify the space of arrow diagrams on one or more capped strands.
Arrow diagrams with an arrow head adjacent to a cap vanish by the $\textnormal{CP}$ relation. When the skeleton consists only of capped strands, arrow heads can be eliminated entirely by successive applications of $\textnormal{STU}$ and $\textnormal{CP}$ relations to bring them adjacent to the cap(s). In turn, arrow diagrams with no arrow heads can be expressed as a linear combination of only {\em wheels}, which are encoded as cyclic words. This leads to the following lemma, which is a straightforward generalisation of \cite[Lemma 4.6]{BND:WKO2}.

\begin{lemma}\label{cyclic_lemma}
There is an isomorphism of graded vector spaces $\arrows(\bcap_n) \cong \cyc_n/\cyc_n^1$. Here $\cyc_n^1$ denotes the degree one component of cyclic words. 
Cyclic words in one letter are power series, in particular,
$ \arrows(\bcap_1)\cong\mathbb Q[[\xi]]/\langle\xi \rangle,$ where the quotient is understood linearly.
\end{lemma}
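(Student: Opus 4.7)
The plan is to reduce any arrow diagram in $\arrows(\bcap_n)$ to a linear combination of wheels, identify wheels with cyclic words, and observe that the degree one cyclic words must be quotiented out. This generalises \cite[Lemma 4.6]{BND:WKO2} from one strand to $n$ strands. First, I would define a map $\Phi\colon \cyc_n/\cyc_n^1 \to \arrows(\bcap_n)$ sending $\trace(x_{i_1}\cdots x_{i_k})$, for $k\geq 2$, to the $k$-wheel oriented counter-clockwise with spokes attached in cyclic order to strands $i_1,\ldots,i_k$. This is well defined: cyclic permutation symmetry of $\trace$ matches the rotational symmetry of the wheel, and the $\textnormal{TC}$ relation makes the order in which multiple spokes attach to a single strand immaterial. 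A hypothetical ``$1$-wheel'' would be a single arrow with head adjacent to a cap, killed by $\textnormal{CP}$, which forces $\cyc_n^1$ into the kernel.

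The harder direction is showing that every arrow diagram on $\bcap_n$ is a linear combination of such wheels. The key manoeuvre is iterated application of $\textnormal{STU}$ to slide any arrow head along a capped strand toward its cap; once adjacent to the cap, $\textnormal{CP}$ kills the diagram. What remains after this reduction consists of diagrams all of whose arrow heads sit at internal trivalent vertices. In any such arrow graph, following the unique outgoing edge at each trivalent vertex gives a deterministic forward walk which, by finiteness, must enter a cycle. The edges not on the cycle then form trees whose roots merge into the cycle and whose leaves are tails on the skeleton -- ``tree-shaped spokes''. Using $\textnormal{IHX}$ and $\textnormal{AS}$ (consequences of $\textnormal{STU}$) together with $\textnormal{TC}$, each tree-shaped spoke reduces, up to sign, to a sum of pure single-arrow spokes, yielding a linear combination of pure wheels. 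This gives surjectivity of $\Phi$; injectivity follows because the wheel encoding recovers the cyclic word up to exactly the $\textnormal{TC}$ ambiguity, which matches the defining relation on $\cyc_n$.

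For the single-strand case, $\cyc_1 \cong \mathbb{Q}[[\xi]]$ with $\xi = \trace(x)$, and the linear quotient by the degree one span $\cyc_1^1 = \mathbb{Q}\cdot \xi$ yields $\mathbb{Q}[[\xi]]/\langle\xi\rangle$ as stated. The main obstacle is the ``wheelification'' step: showing that, after pushing heads off the skeleton, all residual tree-shaped spokes collapse to pure single-arrow spokes. This is the technical heart of the argument and follows the strategy of \cite[Lemma 4.6]{BND:WKO2}, with the only genuinely new bookkeeping being which strand each tail attaches to -- a combinatorial cost absorbed cleanly by $\textnormal{TC}$, which makes the attachment data exactly equivalent to recording a cyclic word in the letters $x_1,\ldots,x_n$.
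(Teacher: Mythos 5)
Your overall route is the same as the paper's: eliminate arrow heads from the capped skeleton by repeated $\textnormal{STU}$ and $\textnormal{CP}$, then identify the surviving head-free diagrams with wheels encoding cyclic words. The paper's entire argument is the two sentences preceding the lemma together with the remark that follows it, so your write-up is if anything more detailed; the ``wheelification'' of tree-shaped spokes is the content of the reduction of primitives to trees and pure wheels in \cite[Theorem 3.16]{BND:WKO2}, and your functional-graph description of that step is fine.

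There is, however, one concrete error: your explanation of why $\cyc_n^1$ is quotiented out. You write that a $1$-wheel ``would be a single arrow with head adjacent to a cap, killed by $\textnormal{CP}$.'' A $1$-wheel is not a single arrow with a head on the skeleton: it is a closed arrow loop with one tail-spoke, so it has no arrow head on the skeleton and $\textnormal{CP}$ does not apply to it directly. What $\textnormal{STU}$ gives is that the $1$-wheel equals (up to sign) the difference of the two orderings of the short arrow $a^{ii}$; $\textnormal{CP}$ kills the ordering with the head adjacent to the cap, leaving the $1$-wheel equal to $\pm$ the other ordering, which is not killed by $\textnormal{CP}$ or $\textnormal{STU}$. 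It is the $\textnormal{RI}$ relation that identifies the two orderings and hence forces the $1$-wheel to vanish --- this is precisely what Remark~\ref{rmk: wheels power series} states. As written, your argument leaves the degree-one wheels alive and the kernel computation fails; the fix is simply to invoke $\textnormal{RI}$ at this point. (A smaller shared imprecision: neither your surjectivity claim nor the lemma's statement accounts for disjoint unions of wheels, i.e.\ products of traces, which are head-free diagrams on $\bcap_n$ but are not single cyclic words; the paper glosses over this as well.)
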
 

\begin{remark}\label{rmk: wheels power series}
In Lemma~\ref{cyclic_lemma} the reason for taking the quotient by the degree one component in  is the RI relation on arrow diagrams forces a degree 1 wheel (i.e. a wheel with one "spoke") to vanish. 
\end{remark}
Since $\cyc_n\cong \trace(\mathbb Q\langle\langle x_1,x_2,...,x_n \rangle\rangle)$, we also have, for example $\arrows(\bcap_2)\cong \trace(\mathbb{Q}\langle\langle \xi, \zeta \rangle\rangle) /\langle \xi, \zeta\rangle$, where the quotient is again linear.

\begin{remark} \label{remark:omega}
There is a well-defined surjective linear map $\omega: \A(\bcap_n) \to \mathbb{Q}[x_1,\cdots,x_n]$ which sends an arrow diagram—expressed in a wheels form, that is, with arrow heads eliminated—to the product of "$x_i$ to the power of the number of tails on strand $i$" for $i=1,...,n$.
\end{remark}

The Jacobian map $J: \TAut_n \to \cyc_n$ also has a diagrammatic interpretation\footnote{Note that our $e^J$ is denoted $J$ in \cite{BND:WKO2}.}, captured by the \emph{adjoint} operation $A_e$. For $\mathbf{D}\in\arrows(\uparrow_n)$ let $\mathbf{D}^*$ denote $A_1...A_n(\mathbf{D})$, that is, the adjoint operation applied to every strand of $\mathbf{D}$. 

\begin{prop}\label{prop:J}\cite[Proposition 3.27]{BND:WKO2}
For any $\mathbf{u}\in\tder_n$, 
$$e^{\ell(\mathbf{u})}\left(e^{\ell(\mathbf{u})}\right)^* = e^{-J(e^\mathbf{u})}.$$
\end{prop}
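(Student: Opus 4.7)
The plan is to compare both sides inside the Hopf algebra $\arrows(\uparrow_n) \overset{\Upsilon}{\cong} \hat U(\tder_n \oplus \mathfrak{a}_n \ltimes \cyc_n)$ (Proposition~\ref{prop:milnor_moore}). Both factors on the left are group-like, so their product is group-like, and so is the right-hand side. It therefore suffices to prove two claims: (i) the element $e^{\ell(\mathbf u)}(e^{\ell(\mathbf u)})^*$ lies in the sub-group $\exp(\cyc_n) \subseteq \calG(\uparrow_n)$ of ``pure wheel'' exponentials, and (ii) its logarithm equals $-J(e^{\mathbf u})$.

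For (i) we project to the tree-level quotient $\arrows(\uparrow_n)^{tr} \cong \hat U(\tder_n \oplus \mathfrak{a}_n)$ of Remark~\ref{remark:tree-like arrow diagrams}. Each summand $T$ of $\ell(\mathbf u)$ is a tree with a unique head. Applying the adjoint operation $A_1\cdots A_n$ reverses the orientation of every strand and multiplies by $(-1)^{\#\text{heads}} = -1$; after using the STU and TC relations to re-normalise the positions at which the tails and the (now swapped) head are attached, the resulting arrow graph coincides with $T$ modulo the image of $\cyc_n$. Thus $\apr\bigl(\ell(\mathbf u)^*\bigr) = -\apr\bigl(\ell(\mathbf u)\bigr)$, which gives $\apr\bigl(e^{\ell(\mathbf u)}(e^{\ell(\mathbf u)})^*\bigr) = e^{\mathbf u}\cdot e^{-\mathbf u} = 1$ in $\hat U(\tder_n\oplus\mathfrak{a}_n)$. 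By Proposition~\ref{prop:SES} the kernel of $\apr$ on primitive elements is exactly $\cyc_n$, so $\log\bigl(e^{\ell(\mathbf u)}(e^{\ell(\mathbf u)})^*\bigr) \in \cyc_n$ as required.

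For (ii), define $w(\mathbf u) := -\log\bigl(e^{\ell(\mathbf u)}(e^{\ell(\mathbf u)})^*\bigr) \in \cyc_n$, which depends only on $e^{\mathbf u} \in \TAut_n$. Since $w(0) = 0 = J(1)$, and since $J$ is characterised on $\TAut_n$ by the infinitesimal cocycle identity
\[
\frac{d}{dt}\Bigr\rvert_{t=0} J\bigl(e^{t\mathbf a}\, e^{\mathbf u}\bigr) = \divcc(\mathbf a) + \mathbf a \cdot J(e^{\mathbf u}),
\]
it suffices to verify that $w$ satisfies the same identity. The term $\mathbf a \cdot w(\mathbf u)$ drops out of functoriality of the $\tder_n$-action on the wheel subspace $\cyc_n$, which is compatible with the stacking multiplication on $\arrows(\uparrow_n)$ and with the adjoint operation. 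The $\divcc(\mathbf a)$ term is the main content: differentiating $e^{\ell(t\mathbf a)} \cdot e^{\ell(\mathbf u)} (e^{\ell(\mathbf u)})^*(e^{\ell(t\mathbf a)})^*$ at $t=0$ produces, modulo trees, the wheel obtained by pairing every head of $\ell(\mathbf a)$ sitting on a strand $k$ with a tail on strand $k$ via the STU relation; writing $\mathbf a = (a_1,\dots,a_n)$ with $\mathbf a(x_k) = [x_k, a_k]$, this head–tail contraction yields exactly $\sum_k \trace(x_k\, \partial_k a_k) = \divcc(\mathbf a)$.

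The main obstacle is the bookkeeping in this last step: one must carefully track the orientations reversed by the $A_e$'s, the associated $(-1)^{\#\text{heads}}$ signs, the attachment positions of tails along each strand (which matter only up to the TC relation), and the STU contractions that convert trees-near-arrows into wheels, in order to identify the resulting cyclic word with the divergence and not with a transposed or differently-signed variant. Once this calculation is carried out, claims (i) and (ii) together give $e^{\ell(\mathbf u)}(e^{\ell(\mathbf u)})^* = e^{-w(\mathbf u)} = e^{-J(e^{\mathbf u})}$.
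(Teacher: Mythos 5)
The paper itself does not prove this statement—it is quoted from \cite[Proposition 3.27]{BND:WKO2}—so there is no in-text proof to compare against; measured against the argument in the cited source, your sketch takes essentially the same route: show the product is group-like with trivial tree-level projection (hence a pure wheel exponential via the split exact sequence), then identify its logarithm with $-J$ by checking the defining cocycle property of $J$, with the divergence arising from the STU corrections that appear when the adjoint moves a tree's head past its tails. The one step you defer, namely that $\ell(\mathbf{a})+\ell(\mathbf{a})^{*}$ equals $-\operatorname{div}(\mathbf{a})$ as a wheel, is indeed the computational heart of the proposition, but you have correctly identified both the mechanism (head--tail contraction on each strand $k$) and the answer ($\sum_k\operatorname{tr}(x_k\,\partial_k a_k)$), and the surrounding formal scaffolding (anti-automorphism property of $*$, $\ell$ being a Lie map, uniqueness of solutions to the cocycle equation on a prounipotent group) is all in order.
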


\subsection{Homomorphic Expansions of \texorpdfstring{$w$}{w}-Foams}\label{sec:FoamExpansions} 
A map of wheeled props $Z:\wf\rightarrow\A$ is \emph{skeleton preserving} if it restricts to a map of graded vector spaces $Z(s): \wf[s]\rightarrow\A[s]$ for each $s \in\calS$. Homomorphic expansions of $w$-foams are isomorphisms of tensor categories $Z:\mathbb{Q}[\hatwf]\rightarrow \arrows$, which are skeleton-preserving and compatible with all operations (including operations external to the wheeled prop structure, such as punctures and unzips) \cite[Definition 2.4, 2.5]{BND:WKO2}. A homomorphic expansion is {\em group-like} if the $Z$-values of generators of $\wf$ are group-like: that is, exponentials of primitive elements in $\arrows$. 

By finite generation, a homomorphic expansion $Z:\mathbb{Q}[\widehat{\wf}]\rightarrow \A$ is determined by its values on the generators: \[\ocrossing, \quad \ucrossing, \quad \vertex, \quad \negvertex, \quad \pv, \quad \ppv, \quad \text{and} \quad \bcap.\] The values $Z$ takes on these generators are subject to equations given by the relations in $\wf$, as well as equations given by the compatibility with the extra operations: orientation switch, adjoint, unzip, puncture and strand deletion. A careful analysis of the equations leads to the following:

\begin{prop}\label{prop:puncturedexps} A group-like homomorphic expansion $Z:\mathbb{Q}[\widehat{\wf}]\to \arrows$ is uniquely determined by the values
\[Z(\ocrossing) = e^{\rightarrowdiagram} = R \in \A(\uparrow_2), \ \quad Z(\vertex)=e^\nu=V \in\A(\vertex)\cong\A(\uparrow_2) \quad \text{and} \] 
\[Z(\bcap) =e^c=C  \in \A(\bcap)\cong \mathbb Q[[\xi]]/\langle \xi \rangle, \] 
which are subject to the following equations: 
\begin{align*}
    V^{12}R^{(12)3} &= R^{23}R^{13}V^{12} \tag{R4} \quad\text{ in }\quad\A(\uparrow_3) \\
    V \cdot A_2A_2(V) &=  1\quad\text{ in }\quad \A(\uparrow_2)\tag{U}\\
    C^{12}(V^{12})^{-1} &= C^1C^2 \quad\text{ in }\quad \A(\bcap_2)\tag{C}. 
\end{align*}
\end{prop}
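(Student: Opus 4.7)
The approach is to first show that a group-like homomorphic expansion $Z$ is determined by the three values $R, V, C$, and then to show that the equations R4, U, C are both necessary and sufficient. Since $Z$ is a functor of wheeled props, it is determined by its values on the seven generators of $\wf$. The first step is to eliminate four of the seven, using the Reidemeister relations of $\wf$ together with compatibility with the auxiliary operations. Specifically: the relation R2 forces $Z(\ucrossing) = R^{-1}$; compatibility with orientation switch on one strand, combined with the $\textnormal{VI}$ relation on the target side, identifies $Z(\negvertex)$ as the ``inverse'' of $V$ in $\arrows(\vertex)\cong \arrows(\uparrow_2)$, up to the involution $A_2A_2$; and compatibility with the puncture operation, applied to the black-black vertex $\vertex$, determines the values of the mixed vertices $\pv$ and $\ppv$ in terms of $V$ (the resulting red strand in $\arrows$ kills arrow tails ending on it via the TF relation). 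Together with the requirement that the $Z$-values of generators with non-upward orientations be obtained by orientation switch, this shows $Z$ is completely determined by $R, V, C$.

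The second step is to derive the three equations from relations in $\wf$. The equation R4 corresponds to the mixed Reidemeister~3 move in which a crossing is pushed past a trivalent vertex in $\arrows(\uparrow_3)$; this is the only Reidemeister relation in $\wf$ whose translation is non-trivial once one fixes $Z(\ocrossing)=R$ and $Z(\vertex)=V$ and uses that $R$ has an exponential form. The equation U comes from requiring compatibility of $Z$ with the unzip operation: unzipping the middle strand in the composition $\vertex \circ \negvertex$ produces a pair of parallel strands (the identity), and the translation of this in $\arrows$ is exactly $V \cdot A_2 A_2(V) = 1$. The equation C comes from the relation obtained by capping one end of a vertex, using the CP relation on the target: the two capped strands can be separated, giving $C^{12}(V^{12})^{-1} = C^1 C^2$.

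The third step is to check that \emph{no other} relation or operation-compatibility in $\wf$ imposes further constraints. The relations R1$^f$, R2 and OC become trivial in $\arrows$ directly (R2 by construction; OC via the TC relation for arrow tails; R1$^f$ via RI after noting that $R$ has no degree-zero part). The classical R3 relation is implied by R4 together with OC by a standard argument involving pushing a strand past a vertex. Compatibility with orientation switch and with adjoint is automatic once these auxiliary operations are used to \emph{define} the values on the non-default generators. Compatibility with further unzips and punctures follows from U and from the behaviour of the puncture operation on $V$, respectively, together with the RI and TF relations. The group-like hypothesis ensures all intermediate expressions live in the exponential setting of $\calG(\uparrow_n)$, so Proposition~\ref{prop:milnor_moore} and Lemma~\ref{lemma: VI} can be used freely.

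The main obstacle is the bookkeeping in the third step: one must systematically verify that every relation in $\wf$ and every auxiliary-operation compatibility either translates to a consequence of \{R4, U, C\} or is automatically satisfied in $\arrows$. A careful but essentially diagrammatic case analysis is required, analogous to the argument in \cite[Section 4]{BND:WKO2}. Once this is done, the proposition follows by setting $Z$ on the generators as prescribed and extending by freeness of the wheeled prop presentation.
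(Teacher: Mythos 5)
Your plan is sound and would work, but it takes a genuinely different (and much heavier) route than the paper. The paper does not re-derive the classification of expansions from the generators and relations of $\wf$: it cites \cite[Section 4.3, Theorem 4.9]{BND:WKO2}, where exactly the analysis you outline in your steps (1)--(3) is carried out for the puncture-free sub-circuit-algebra $\overline{\wf}$, and then reduces the present Proposition to that result by a restriction/unique-extension argument --- every punctured generator of $\wf$ is the image of an unpunctured one under the puncture operation, with which $Z$ must commute, so expansions of $\wf$ and of $\overline{\wf}$ are in bijection. Your proposal instead proposes to verify everything directly: eliminating the redundant generators, translating each relation, and checking that no further constraints arise. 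That buys self-containedness at the cost of redoing the substantial diagrammatic case analysis that the paper deliberately outsources; your step (3) \emph{is} the content of the cited theorem, and you correctly flag it as the main labour. Two small imprecisions to fix if you execute the plan: the equation (C) does not come from a \emph{relation} in $\wf$ obtained by ``capping one end of a vertex'' but from compatibility of $Z$ with the \emph{disc unzip} operation applied to the capped outgoing strand of a vertex (just as (U) comes from compatibility with unzip, which you state correctly); and the claim that R3 follows from R4 together with OC should rather be that R3 translates, for the forced value $R=e^{\rightarrowdiagram}$, into an identity that holds by the 4T/TC relations. Neither affects the viability of the approach.
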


This proposition is a minor generalisation of \cite[Section 4.3]{BND:WKO2} and a direct consequence of \cite[Theorem 2.6]{BND:WKO3}. As the latter paper has not yet appeared, we summarise the key points of the proof below. See also the discussion in \cite[Section 4]{DHR21}.  

\begin{proof}[Proof (Sketch).]
The main result (Theorem~4.9, building on Theorem~3.30) of \cite{BND:WKO2} is a classification of homomorphic expansions for a class of $w$-foams
\begin{equation*}
\overline{\wf} = \left\langle {\ocrossing, \ucrossing, \vertex, \negvertex, \bcap} \,\Big{|}\, R1^f, R2, R3, OC, CP \right\rangle,    
\end{equation*}
where the operations are orientation switches and unzips, but no \emph{punctures}. In particular, given any group-like homomorphic expansion $\overline{Z}$ of $\overline{\wf}$ the $\overline{Z}$-values of the generators $\ocrossing$, $\vertex$ and $\bcap$ uniquely determine the values $\overline{Z}(\ucrossing)$ and $\overline{Z}(\negvertex)$. For example, the second Reidemeister relation, R2, says that $\overline{Z}(\ocrossing \cdot \ucrossing)=\overline{Z}(\ocrossing)\cdot \overline{Z}(\ucrossing)=1$. In \cite[Section 4.3]{BND:WKO2} it is shown that $\overline{Z}$ is a homomorphic expansion of $\overline{wf}$ if and only if $\overline{Z}(\ocrossing)$, $\overline{Z}(\ucrossing)$ and $\overline{Z}(\vertex)$ satisfy the equations (R4) (U) and (C). 

We deduce the Proposition by constructing a bijection between homomorphic expansions of $\overline{\wf}$ and homomorphic expansions of $\wf$, via restriction and unique extension.
Using the definition of circuit algebra as an algebra over the operad of wiring diagrams (Definition~\ref{def: circut algebra}), it is relatively straightforward to check that $\overline{\wf}$ is a sub-circuit algebra of $\wf$. It follows that any homomorphic expansion $Z:\widehat{\wf}\to \arrows$ restricts to a homomorphic expansion of $\overline{\wf}$. 

Conversely, any homomorphic expansion $\overline{Z}$ of $\overline{\wf}$ uniquely extends to a homomorphic expansion of $\wf$, because all generators of $\wf$ with punctured strands can be obtained by applying the puncture operation to a strand of a generator of $\overline{\wf}$ and any homomorphic expansion $Z$ of $\wf$ is required to commute with the puncture operation. Thus, the $Z$-values of the punctured generators are uniquely determined by $\overline{Z}$.
\end{proof}

The isomorphism $\Upsilon: \arrows(\uparrow_2)\rightarrow \widehat{U}(\cyc_2\rtimes(\tder_2\oplus \mathfrak{a}_2))$ from Lemma~\ref{lemma: VI} identifies the value $Z(\vertex)\in\arrows(\uparrow_2)$ with $$\Upsilon(Z(\vertex))=e^be^\nu \in  \widehat{U}(\cyc_2 \rtimes (\tder_2\oplus \mathfrak{a}_2)),$$ where $b\in \cyc_2$ and $\nu \in \tder_2 \oplus \mathfrak a _2$. 
In \cite[Theorem 4.9]{BND:WKO2}, KV solutions are shown to be in bijection with families of group-like homomorphic expansions of $\wf$ which are parameterised by the $\mathfrak{a}_2$ component of $\nu$.  
\begin{definition}\label{def:vsmall}
A homomorphic expansion is \emph{$v$-small} if the $\mathfrak{a}_2$ component of $\log \Upsilon(Z(\vertex))$ is zero.
\end{definition}

\begin{theorem}\cite[Theorem 4.9]{BND:WKO2}\cite[Theorem 2.6]{BND:WKO3}\label{expansions are solKV}
There is a one-to-one correspondence between the set of $v$-small, group-like homomorphic expansions $Z:\wf\xrightarrow{} \A$, and Kashiwara-Vergne solutions. 
\end{theorem}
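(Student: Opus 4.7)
The plan is to reduce the theorem to an unpacking of the three equations $(R4)$, $(U)$, $(C)$ of Proposition~\ref{prop:puncturedexps} using the algebraic identifications developed in Section~\ref{sec:arrow diagrams}. By Proposition~\ref{prop:puncturedexps}, a group-like homomorphic expansion $Z$ is uniquely determined by a triple $(R,V,C)$ with $R=e^{\rightarrowdiagram}$ forced (up to the usual normalisations from $R1^f$, $R2$, $R3$, $OC$), so the data reduces to $V\in\A(\vertex)$ and $C\in\A(\bcap_1)$. Using the isomorphism $\Upsilon:\A(\vertex)\xrightarrow{\cong}\hat U(\mathfrak{tder}_2\oplus\mathfrak{a}_2\ltimes\cyc_2)$ of Lemma~\ref{lemma: VI}, I would write $\Upsilon(V)=e^{b}e^{\nu}$ with $b\in\cyc_2$ and $\nu\in\mathfrak{tder}_2\oplus\mathfrak{a}_2$. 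The $v$-small hypothesis (Definition~\ref{def:vsmall}) kills the $\mathfrak{a}_2$ component, so $\nu\in\mathfrak{tder}_2$, and via $\ell$ of Remark~\ref{remark:l} the tangential automorphism $F:=e^{\nu}\in\mathsf{TAut}_2$ is the candidate KV solution. Similarly Lemma~\ref{cyclic_lemma} presents $C=e^{c}$ with $c\in\xi^2\Q[[\xi]]$, and I would define $r(u)\in u^2\Q[[u]]$ to be the series corresponding (via $\omega$ of Remark~\ref{remark:omega}) to $c$.

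Next I would analyse $(R4)$ by projecting to $\A(\uparrow_3)^{tr}$ along $\overrightarrow{pr}$ of Remark~\ref{remark:tree-like arrow diagrams}. Under $pr\colon \A(\uparrow_3)^{tr}\cong \hat U(\mathfrak{tder}_3\oplus\mathfrak{a}_3)$, the factor $R^{(12)3}$ becomes $e^{x+y}$ acting on the third strand, while $R^{23}R^{13}$ becomes $e^{y}e^{x}$. The element $V^{12}$ acts as $F$ on strands $1,2$, so $(R4)$ collapses at tree level precisely to the first KV equation $F(e^{x}e^{y})=e^{x+y}$. The key conceptual point to verify is that the only obstruction beyond the tree level is a wheel part, which will be absorbed by $b$ and $c$ in the next step. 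The unitarity equation $(U)$ translates, via the adjoint operation $A_e$, to the statement $\nu^* = -\nu$ modulo wheels — i.e., compatibility of $F$ with the involution needed to formulate the Jacobian normalisation.

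The heart of the argument is the cap equation $(C)$: $C^{12}(V^{12})^{-1}=C^{1}C^{2}$ in $\A(\bcap_2)$. I would eliminate all arrow heads using STU and CP (Lemma~\ref{cyclic_lemma}), so that $(V^{12})^{-1}$ contributes its ``wheel shadow'', which by Proposition~\ref{prop:J} is exactly $e^{J(F)}$ after combining with the symmetrisation forced by $(U)$. The equation then becomes, in $\cyc_2/\cyc_2^{1}$,
\[
J(F) \;=\; \mathrm{tr}\bigl(r(x+y)-r(x)-r(y)\bigr),
\]
which is the second KV equation, with $r$ the power series read off from $C$. Conversely, given an arbitrary KV solution $F$ and admissible $r$, reversing these steps produces a unique $v$-small group-like triple $(R,V,C)$ satisfying $(R4)$, $(U)$, $(C)$. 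The main technical obstacle is the bookkeeping in the last step: keeping track of the wheel corrections contributed by $b$ relative to those contributed by $c$, and checking that the $v$-small constraint is precisely what makes the $F\leftrightarrow V$ correspondence well-defined (rather than one-parameter). This is exactly the content of \cite[Theorem 4.9]{BND:WKO2} and \cite[Theorem 2.6]{BND:WKO3}, and the proof would conclude by invoking these results for the detailed verification.
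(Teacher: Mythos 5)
Your proposal is correct and follows essentially the same route as the paper's own (sketch) proof: identify $Z(\vertex)$ with a tangential automorphism via $\Upsilon$ (with $v$-smallness removing the $\mathfrak{a}_2$ ambiguity) and $Z(\bcap)$ with a power series, extract the first KV equation from (R4) at tree level, extract the second KV equation from (U) and (C) via the diagrammatic Jacobian of Proposition~\ref{prop:J}, and defer the detailed bookkeeping to \cite[Theorem 4.9]{BND:WKO2}. The only cosmetic difference is that the paper's sketch makes the restriction/unique-extension step between $\overline{\wf}$ and $\wf$ explicit, whereas you absorb it into the invocation of Proposition~\ref{prop:puncturedexps}, which is where the paper proves it anyway.
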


\begin{proof}[Proof (sketch)]
In \cite{BND:WKO2} Theorem~4.9 states this for $v$-small, group-like homomorphic expansions of $\overline{\wf}$. The main steps in this proof identify the value $Z(\vertex)$ with a tangential automorphism $e^\nu\in \hat{U}(\tder_2)$ and $Z(\bcap)$ with a power series $e^c\in\Z(\bcap)\in u\mathbb{Q}[[u]]$. The equation R4 forces the value $Z(\vertex)=e^\nu$ to satisfy the first KV equation \eqref{eq:SolKV}, and the equations U and C of Proposition~\ref{prop:puncturedexps} force the Jacobian of $e^\nu$ to satisfy the second KV equation.

By unique extension and restriction (as in Proposition \ref{prop:puncturedexps}), homomorphic expansions for $\overline{\wf}$ are in bijection with homomorphic expansions for $\wf$, and this correspondence preserves the $v$-small property. The statement then follows. 
\end{proof}

\subsection{Automorphisms of Arrow Diagrams and the KV groups}\label{sec:KV groups}

The Kashiwara-Vergne symmetry groups, $\kv$ and $\krv$, act freely and transitively on the set of all KV solutions. The Kashiwara-Vergne group $\kv$ (\cite[after Proposition 7]{ATE10}) is defined as follows:
\begin{multline}
\kv:= \Big\{(a, \sigma)\in \TAut_2(\mathbb{Q})\times u^2\mathbb{Q}[[u]] \Big| \\ 
a(e^{x}e^{y})= e^{x}e^{y} \ \text{and} \  J(a)=\trace\Big(\sigma(\mathfrak{bch}(x,y))-\sigma(x)-\sigma(y)\Big)\Big\}.
\end{multline} The \emph{graded} Kashiwara-Vergne group $\krv$ is the group corresponding to a Lie subalgebra of the tangential derivations of $\lie_2$ which vanish on the sum of the generators (\cite[Remark 50]{ATE10}, \cite[Sec.\ 4.1]{AT12}). Explicitly,
\begin{multline}\label{eq:krvdef}
\krv:= \Big\{(\alpha, s)\in \TAut_2(\mathbb{Q})\times u^2\mathbb{Q}[[u]] \Big| \\ 
\alpha(e^{x+y})=e^{x+y} \ \text{and} \  J(\alpha)=\trace\Big(s(x+y)-s(x)-s(y)\Big)\Big\}. 
\end{multline}
The left action of $\kv$ on SolKV is given by right composition with the inverse in $\TAut_2$, $a\cdot F=F\circ a^{-1}$, and the right action of $\krv$ on SolKV is given by $F\cdot \alpha=\alpha^{-1}\circ F$.

The Kashiwara-Vergne symmetry groups $\kv$ and $\krv$ can be identified with automorphisms of $\mathbb{Q}[\hatwf]$ and $\arrows$, respectively, which are both \emph{skeleton} and \emph{expansion preserving}. Explicitly, skeleton and expansion preserving automorphisms of $\arrows$ are maps $G: \A \to \A$ such that for any expansion $Z: \mathbb{Q}[\widehat{\wf}] \to \A$, the composite $G \circ Z$ is also an expansion. We denote the group of skeleton and expansion preserving automorphisms by $\Aut(\arrows)$.


\begin{prop}\label{prop:SimplifiedEqns} \cite[Proposition 5.6]{DHR21} For a skeleton-preserving map $G: \arrows \to \arrows$, $G\in \Aut(\calA)$ if, and only if, $G(\rightarrowdiagram)=R=e^{\rightarrowdiagram}\in\arrows(\uparrow_2)$ and the values $G(\bcap)=C=e^c $ and $G(\vertex)=N=e^\eta$ satisfy the following equations:
\begin{gather} 
(N^{12})^{-1}R^{(12)3}N^{12}=R^{(12)3} \quad \text{ in } \quad \arrows(\uparrow_3) \tag{R4'}  \label{R4'}\\
N\cdot A_1A_2 (N)=1 \quad \text{ in } \quad \arrows(\uparrow_2)  \tag{U'} \label{U'}\\
C^{12}(N^{12})^{-1} =C^1C^2  \quad \text{ in } \quad \arrows(\bcap_2) \tag{C'} \label{C'}.
\end{gather}
\end{prop}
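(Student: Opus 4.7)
The plan is to reduce the condition ``$G$ preserves expansions'' to the three displayed equations by leveraging the classification in Proposition~\ref{prop:puncturedexps}: a skeleton-preserving functor out of $\mathbb{Q}[\widehat{\wf}]$ is an expansion precisely when the values on $\ocrossing$, $\vertex$, $\bcap$ satisfy (R4), (U), (C). Since $\Aut(\arrows)$ is defined by the property that $G \circ Z$ is an expansion for every expansion $Z$, it suffices to analyze, for a fixed $Z$ (existence guaranteed by Theorem~\ref{expansions are solKV}), what (R4), (U), (C) applied to $G \circ Z$ say about $G$, given that $Z$ itself already satisfies these equations.

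For the forward direction, assume $G \in \Aut(\arrows)$ and pick a group-like $Z$. Then $\gr(G \circ Z) = \gr(G) \circ \gr(Z) = \gr(G)$, and since $G \circ Z$ is an expansion we must have $\gr(G) = \id_{\arrows}$. Because $R = e^{\rarrow}$ is group-like with primitive leading term a single arrow, this forces $G(R) = R$. Writing $N := G(\vertex)$ and $C := G(\bcap)$, I would then substitute $G(R) = R$ into the equations (R4), (U), (C) that $G \circ Z$ must satisfy, and cancel common factors against the equations that $Z$ already satisfies. In particular, (R4) for $G \circ Z$ reads $N^{12} R^{(12)3} = R^{23} R^{13} N^{12}$; using (R4) for $Z$ to replace $R^{23} R^{13}$ by $V^{12} R^{(12)3} (V^{12})^{-1}$ and carefully tracking the Hopf-algebra interaction via Lemma~\ref{lemma: VI} and Proposition~\ref{prop:milnor_moore} produces exactly the commutativity relation (R4'). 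Equations (U') and (C') appear directly because (U) and (C) for $G \circ Z$ are literally the statements (U'), (C') in the variables $N, C$.

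For the backward direction, suppose $G$ is skeleton-preserving with $G(R) = R$ and satisfies (R4'), (U'), (C'). I would verify that $G \circ Z$ is an expansion by checking (a) the $\gr$-condition, which holds because $G$ is filtered with $\gr(G) = \id$ (forced by $G(\rarrow) = \rarrow$), and (b) the equations (R4), (U), (C). Equations (U) and (C) for $G \circ Z$ are immediate from (U'), (C'). For (R4), I would combine (R4') (which lets $N^{12}$ pass through $R^{(12)3}$) with (R4) for $Z$ to obtain (R4) for $G \circ Z$. Compatibility of $G \circ Z$ with the auxiliary operations (orientation switches, unzips, punctures, caps, strand deletion) is inherited from $G$ and $Z$ individually, since $G$ is a map of wheeled props equipped with these operations.

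The main obstacle is the bookkeeping in the forward direction: unlike the case of $\PaCD$ where the generator $\Associator$ and its expansion value $\Phi$ live in the same morphism space so that $F$'s action on the generator directly encodes the image of $\Phi$, here the generator $\vertex$ of $\arrows$ and the expansion value $V = Z(\vertex)$ differ by a group-like element of $\arrows(\vertex) \cong \arrows(\uparrow_2) \cong \widehat{U}(\tder_2 \oplus \mathfrak{a}_2 \ltimes \cyc_2)$. Thus the step where the equations (R4), (U), (C) for $G \circ Z$ ``reduce modulo'' those for $Z$ requires the Milnor--Moore decomposition to compare $G(V)$ with $N$, and one must verify that the three residual equations are genuinely equivalent to (R4'), (U'), (C') rather than some weaker or stronger conditions. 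Once this identification is set up, the rest is algebraic manipulation.
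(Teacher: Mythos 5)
The paper does not prove this Proposition: it is imported verbatim from \cite[Proposition 5.6]{DHR21}, and the only argument the paper itself supplies is the remark that follows it, explaining how the cited result (proved there for $\overline{\arrows}=\gr\,\overline{\wf}$, i.e.\ without punctured strands) transfers to $\arrows$ by the restriction/unique-extension argument of Proposition~\ref{prop:puncturedexps}. Your strategy --- fix a group-like expansion $Z$, apply the classification of Proposition~\ref{prop:puncturedexps} to $G\circ Z$, and cancel against the equations that $Z$ already satisfies --- is the natural reconstruction and is in substance how the cited result is proved; by invoking the already-punctured Proposition~\ref{prop:puncturedexps} you also sidestep the restriction/extension step that the paper's remark addresses.

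However, several of your intermediate assertions are wrong as written, even though you correctly name the underlying difficulty. (i) The claim that $\gr(G)=\id$ ``forces $G(R)=R$'' only yields $G(R)=R$ modulo higher-degree corrections; the correct source is that Proposition~\ref{prop:puncturedexps} pins the crossing value of \emph{any} group-like expansion to be exactly $e^{\rightarrowdiagram}$, so $(G\circ Z)(\ocrossing)=G(R)$ must equal $R$ on the nose. (ii) Equation (R4) for $G\circ Z$ is an equation in $(G\circ Z)(\vertex)=G(V)$, which --- since $G$ is the identity on $\arrows(\uparrow_2)$ once $G(R)=R$ is known --- is the product of $V$ with $N$ under the identification of Lemma~\ref{lemma: VI}; it is not the equation $N^{12}R^{(12)3}=R^{23}R^{13}N^{12}$ that you display, which is false. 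One must substitute $G(V)$ and cancel the $V$-factor using $Z$'s own (R4) to arrive at \eqref{R4'}. (iii) Likewise (U) and (C) for $G\circ Z$ are not ``literally'' \eqref{U'} and \eqref{C'}: equation (U) reads $G(V)\cdot A_1A_2(G(V))=1$, and one needs that $A_1A_2$ reverses products together with $A_1A_2(V)=V^{-1}$ to strip the $V$'s and obtain $N\cdot A_1A_2(N)=1$; the cap equation requires the analogous cancellation, using that elements of $\arrows(\bcap_2)$ built from wheels commute. These are precisely the computations you defer to ``the Milnor--Moore decomposition,'' so the gap is one of execution rather than of strategy, but the argument is incomplete (and in places incorrect) until they are carried out; you should also note explicitly that the converse direction must be verified against \emph{every} expansion $Z$, which works because the reduction above is uniform in $Z$.
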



\begin{remark}
We note that \cite[Proposition 5.6]{DHR21} characterises automorphisms of $\overline{\arrows}=\gr \overline{\wf}$, that is, arrow diagrams on skeleta without punctured strands. Nonetheless, the Proposition stated here is equivalent by the same restriction and unique extension argument that we used in the proof of Proposition~\ref{prop:puncturedexps}.
\end{remark}

An automorphism $G\in \Aut(\arrows)$ is called \emph{$v$-small} if the $\mathfrak{a}_2$ component of $\log \Upsilon(G(\vertex))$ is zero. We denote the group of $v$-small automorphisms by $\Aut_v(\arrows)$.
In \cite[Theorem 5.12; Proposition 5.15]{DHR21}, two of the authors, with Halacheva, show:
\begin{theorem}\label{thm:kv and krv}
There are group isomorphisms \[\krv \stackrel{\Theta}{\cong} \Aut_v(\arrows) \quad \text{and} \quad  \kv\stackrel{\Theta_{Z}}{\cong} \Aut_v(\hatwf).\] 
\end{theorem}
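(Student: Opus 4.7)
The plan is to prove the two isomorphisms in turn: first the graded statement $\krv \stackrel{\Theta}{\cong} \Aut_v(\arrows)$, from which the topological isomorphism $\kv \stackrel{\Theta_Z}{\cong} \Aut_v(\hatwf)$ will follow by transport of structure through a fixed $v$-small homomorphic expansion.

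For the graded statement, I would parameterise $G \in \Aut_v(\arrows)$ using Proposition~\ref{prop:SimplifiedEqns}: $G$ is determined by the pair $(N,C) = (G(\vertex), G(\bcap))$ subject to (R4'), (U'), (C'). Writing $N = e^\eta$ and applying the isomorphism $\Upsilon$ of Lemma~\ref{lemma: VI}, the primitive element $\eta$ decomposes as $\eta = \ell(\alpha) + b$ with $\alpha \in \tder_2 \oplus \mathfrak{a}_2$ and $b \in \cyc_2$, and $v$-smallness forces $\alpha \in \tder_2$. By Lemma~\ref{cyclic_lemma}, $C = e^{s(\xi)}$ for some $s \in u^2\mathbb{Q}[[u]]$. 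Setting $\Theta(G) = (e^\alpha, s)$, the goal is then to show that (R4'), (U'), (C') translate precisely into the defining equations of $\krv$ and vice versa. Explicitly: equation (R4') becomes the first KV condition $e^\alpha(e^{x+y}) = e^{x+y}$, because $R^{(12)3}$ is the exponential of short arrows from strands 1 and 2 to strand 3, which under the tree-level projection $pr$ of Remark~\ref{remark:tree-like arrow diagrams} represents $x+y$ acting on strand 3, so the conjugation statement (R4') asserts that $e^\alpha$ fixes $e^{x+y}$. Equations (U') and (C'), combined with Proposition~\ref{prop:J} (which identifies $e^{\ell(\alpha)}(e^{\ell(\alpha)})^*$ with $e^{-J(e^\alpha)}$), pin down the wheel part $b$ in terms of $J(e^\alpha)$ and force $s$ to satisfy the second KV equation $J(e^\alpha) = \trace(s(x+y)-s(x)-s(y))$, read off in $\arrows(\bcap_2)$ via the map $\omega$ of Remark~\ref{remark:omega}. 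Bijectivity follows by reversing these translations, and $\Theta$ being a group homomorphism by unpacking composition in $\arrows(\vertex) \cong \hat{U}(\tder_2 \oplus \mathfrak{a}_2 \ltimes \cyc_2)$ and matching with the $\krv$ group law.

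For the topological statement, I would fix a $v$-small homomorphic expansion $Z: \widehat{\mathbb{Q}[\wf]} \to \arrows$. By Theorem~\ref{expansions are solKV}, $\solkv$ is in bijection with the set of $v$-small expansions; $\kv$ acts freely and transitively on $\solkv$, $\Aut_v(\hatwf)$ acts freely and transitively on expansions by post-composition, and these actions are compatible with those of $\krv$ and $\Aut_v(\arrows)$ on the same set via $Z$. Define $\Theta_Z$ by composing the graded isomorphism $\Theta$ with conjugation by $Z$, yielding a group isomorphism $\kv \cong \Aut_v(\hatwf)$; the verification that this is well-defined and independent of the choice of $Z$ up to canonical isomorphism reduces to the bi-torsor compatibility above.

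The main obstacle is the bookkeeping in the (U')+(C') step: matching the wheel equations from arrow diagram automorphisms with the Jacobian cocycle on $\TAut_2$. One must track the STU-ambiguity in the section $\ell$ (Remark~\ref{remark:l}, where the attachment position of tree-heads produces cyclic-word corrections) and verify the precise compatibility with the definition of $J$ from $\mathsf{TAut}_n$ to $\cyc_n$. A secondary subtlety is matching the non-abelian group law of $\krv$---which couples composition in $\TAut_2$ with addition of power series in $u^2\mathbb{Q}[[u]]$---with the semi-direct product structure $\tder_2 \ltimes \cyc_2$ governing composition in $\arrows(\vertex)$.
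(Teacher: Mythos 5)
The paper does not actually prove this statement: it is imported wholesale from \cite[Theorem 5.12; Proposition 5.15]{DHR21}, and the only content supplied here is the formula $\Theta(G)=(e^\eta,2c)$ with $\eta=pr(\log G(\vertex))$ and $G(\bcap)=e^c$. Your sketch is therefore a reconstruction rather than a comparison, but it follows the route that the cited proof (and the fragments of it reproduced elsewhere in this paper, e.g.\ Proposition~\ref{prop:SimplifiedEqns}, the sketch of Theorem~\ref{expansions are solKV}, and Lemmas~\ref{lem:rhopr4}--\ref{lem:rhopunitarity} in the appendix) actually takes: reduce $G$ to the pair $(N,C)$, read (R4') as the first KV equation for $e^{pr(\log N)}$, and combine (U'), (C') and Proposition~\ref{prop:J} to extract the second KV equation. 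That part of your plan is sound.

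Two corrections. First, your normalisation of the Duflo component is off by a factor of $2$: the cap equation (C') identifies the wheel part of $N$ as $W=C^{12}(C^1)^{-1}(C^2)^{-1}$, and unitarity (U') together with Proposition~\ref{prop:J} then forces $W^2=e^{J(e^\eta)}$, i.e.\ $J(e^\eta)=\trace\bigl(2c(x+y)-2c(x)-2c(y)\bigr)$. Hence the second component of $\Theta(G)$ must be $2c$, not $\log C=c$ as you propose; with your choice the second KV equation fails by that factor. This is exactly the bookkeeping you flagged as the main obstacle, and the paper's displayed formula for $\Theta$ resolves it. Second, for the topological half, ``composing the graded isomorphism $\Theta$ with conjugation by $Z$'' produces a map $\Aut_v(\hatwf)\to\krv$, not to $\kv$: conjugation by $Z$ identifies $\Aut_v(\hatwf)$ with $\Aut_v(\arrows)$, whose image under $\Theta$ is $\krv$. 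To land in $\kv$ you must use the bi-torsor structure as you hint --- the left action of $\Aut_v(\hatwf)$ on expansions by pre-composition matches the left $\kv$-action on $\text{SolKV}$ through the solution corresponding to the fixed $Z$ --- and the resulting $\Theta_Z$ genuinely depends on $Z$ (hence the subscript in the paper); it is not independent of the choice, even up to canonical isomorphism, only up to conjugation by the element of $\kv$ relating two chosen expansions.
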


To define the isomorphism 
$
\begin{tikzcd} \Theta: \Aut_v(\arrows) \arrow[r] & \krv,\end{tikzcd}
$
we write, for $G\in \Aut_v(\arrows)$,  
\[pr(\log G(\vertex))=\eta \in \tder_2, \quad 
\text{and} \quad 
G(\bcap) = e^c \quad \text{where} \quad c\in \Q[[\xi]]/\langle\xi\rangle.\] 
Then 
\begin{equation}\label{eq:Theta}
\Theta(G)=\left(e^\eta, 2c\right)\in \krv.
\end{equation}

\subsubsection{Duflo elements and the evaluation of automorphisms of vertices} While elements of $\kv$ and $\krv$ are defined above as pairs, in both 
cases the first component \emph{uniquely} determines the 
second (\cite[Section 6.1]{ATE10}). In particular,  for both $\kv$ and $\krv$ there are Duflo maps given by $a\mapsto \sigma$ and $\alpha \mapsto s$. These Duflo maps are group homomorphisms $\TAut_2 \to u^2\mathbb Q[[u]]$, where the group operation in $\TAut_2$ is composition, and $u^2\mathbb Q[[u]]$ is the additive group \cite[Proposition 34]{ATE10}. For example, if $(\alpha_1, s_1)\in \krv$ and $(\alpha_2,s_2)\in \krv$ then $(\alpha_2\circ \alpha_1, s_1+s_2)\in \krv$. The corresponding fact about automorphisms of $\arrows$ is given in the following Proposition and Corollary, which are proven by standard perturbative arguments:

\begin{prop}\label{vertex_auto}
Any $G\in\Aut_v(\A)$ is uniquely determined by the value $N=G(\vertex)\in\A(\uparrow_2)$. 
\end{prop}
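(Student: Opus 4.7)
The plan is to reduce via Proposition~\ref{prop:SimplifiedEqns} and then convert equation \eqref{C'} into a functional equation for $C$ using the map $\omega$ of Remark~\ref{remark:omega}.

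By Proposition~\ref{prop:SimplifiedEqns}, any $v$-small $G\in\Aut_v(\arrows)$ is determined by the values $R=G(\rarrow)$, $N=G(\vertex)$, and $C=G(\bcap)$, subject to equations \eqref{R4'}, \eqref{U'}, and \eqref{C'}. Since $G$ preserves expansions, $R=e^{\rarrow}$ is fixed, so it suffices to show that $C$ is determined by $N$ via \eqref{C'}, rewritten as $C^{12} = C^1 C^2 N^{12}$ in $\arrows(\bcap_2)$.

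By Lemma~\ref{cyclic_lemma}, $C$ corresponds to a power series $C(\xi) = 1 + \sum_{k\geq 2} C_k\xi^k$ in $\mathbb{Q}[[\xi]]/\langle\xi\rangle$. The map $\omega: \arrows(\bcap_2) \to \mathbb{Q}[x_1, x_2]$ of Remark~\ref{remark:omega} is a ring homomorphism for the disjoint-union product on wheel diagrams, and a combinatorial check, using that STU pushes each spoke of a wheel across a vertex independently onto either incoming strand, gives $\omega(C^{12}) = C(x_1+x_2)$ and $\omega(C^1C^2) = C(x_1)\,C(x_2)$. Applying $\omega$ to \eqref{C'} yields
\[
C(x_1+x_2) \;=\; C(x_1)\,C(x_2)\cdot\omega(N^{12}),
\]
and taking logarithms, with $f=\log C$, produces $f(x_1+x_2) - f(x_1) - f(x_2) = \log\omega(N^{12})$. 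The coboundary operator $\delta: f \mapsto f(x_1+x_2) - f(x_1) - f(x_2)$ has kernel equal to the additive power series $f(x)=ax$; since $C_1=0$ by RI, the same holds for $f_1$, so $\delta$ is injective on our subspace of power series with vanishing linear term. Hence $f$, and therefore $C$, is uniquely determined by $\omega(N^{12})$, which is itself determined by $N$.

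The main obstacle is verifying the key identity $\omega(C^{12}) = C(x_1+x_2)$: an $n$-wheel on the merged-then-capped strand, under the identification with $\arrows(\bcap_2)$, distributes its $n$ spokes across the two input strands in $\binom{n}{j}$ configurations having $j$ spokes on strand $1$ (by repeatedly applying STU to each spoke and TC to commute tails), which sum under $\omega$ to $(x_1+x_2)^n$. Summing over $n$ gives $\omega(C^{12})=C(x_1+x_2)$. Combined with the reduction via Proposition~\ref{prop:SimplifiedEqns}, this establishes that $G$ is uniquely determined by $N$.
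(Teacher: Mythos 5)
Your overall strategy is the same as the paper's: reduce via Proposition~\ref{prop:SimplifiedEqns} to showing that the cap equation determines $C$ from $N$, and then exploit the fact that the coboundary $\delta\colon f\mapsto f(x_1+x_2)-f(x_1)-f(x_2)$ is injective on power series with no linear term (in the paper this appears as ``$\lambda((x+y)^k-x^k-y^k)=0$ forces $k=1$''). The identities $\omega(C^{12})=C(x_1+x_2)$ and $\omega(C^1C^2)=C(x_1)C(x_2)$ are fine (and the first is implicitly used by the paper itself, e.g.\ in Lemma~\ref{lem:WNtr}). The genuine gap is the step
\[
\omega\bigl(C^1C^2\,N^{12}\bigr)\;=\;C(x_1)\,C(x_2)\,\omega\bigl(N^{12}\bigr).
\]
The map $\omega$ is multiplicative for disjoint unions of wheels, but $C^1C^2\,N^{12}$ is \emph{not} the disjoint union of the wheels-forms of its factors: in the right $\arrows(\uparrow_2)$-module structure on $\arrows(\bcap_2)$ the factor $N^{12}$ sits below the wheels of $C^1C^2$, and to express the product in wheels form every arrow head of $N^{12}$ must be pushed up past the tails of $C^1C^2$ before it can be killed at the cap. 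Heads do not commute with tails; each commutation costs an STU correction in which a piece of $N$ is grafted onto a wheel of $C$, and these connected cross terms survive $\omega$. (This is exactly the head-past-tail phenomenon that produces the divergence/Jacobian in Proposition~\ref{prop:J}; if such corrections vanished, $J$ would be trivial.) For instance, a single arrow $a^{12}$ is already zero in $\arrows(\bcap_2)$ by CP, so $\omega(a^{12})=0$, yet stacking it below a $2$-wheel on strand $2$ yields nonzero STU correction terms. So the clean functional equation $C(x_1+x_2)=C(x_1)C(x_2)\,\omega(N^{12})$ is not what the cap equation gives you.

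The conclusion survives, because in degree $k$ the offending cross terms involve only the components $C_j$ with $j<k$ together with $N$; isolating the top-degree contribution still gives $C_k\bigl((x_1+x_2)^k-x_1^k-x_2^k\bigr)=(\text{data determined by }N\text{ and }C_{<k})$, and induction on $k$ finishes the proof. The paper avoids the bookkeeping entirely by comparing two putative solutions $(N,C)$ and $(N,C')$ and subtracting their cap equations: in the lowest degree $k$ where $C\neq C'$, all cross terms cancel (they depend only on the lower-degree parts, which agree), leaving $\lambda\bigl((x+y)^k-x^k-y^k\bigr)=0$, hence $k=1$, contradicting $C,C'\in\Q[[\xi]]/\langle\xi\rangle$. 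You should either adopt that difference trick or recast your argument as an induction on degree rather than as a single exact functional equation.
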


\begin{proof}

Given $G \in \Aut_v(\A)$, we know that $G(\rightarrowdiagram)=\rightarrowdiagram$, so we only need to show that $N$ uniquely determines the value $G(\bcap)=C=e^c$, where $c\in \Q[[\xi]]/\langle\xi \rangle$. 
To this end, assume that there exist 
$G, G'\in \Aut_v(\A)$ 
given by 
\[(G(\vertex), G(\bcap))=(N,C) \quad \text{and} \quad (G'(\vertex), G'(\bcap))=(N,C')\] 
where $C \neq C'$. 

Let $\varepsilon$ denote the lowest degree term of the difference $C-C'$, and assume that $\varepsilon$ is in degree $k$. Note that $\varepsilon = \lambda x^k$, where $0\neq\lambda \in \mathbb{Q}$.
The Cap Equation, \eqref{C'} applies to both $C$ and $C'$. Writing the difference of the two cap equations in degree $k$ we obtain 
\begin{align}
    0 = \lambda((x+y)^r-x^r-y^r).
\end{align}
Since $\lambda \neq 0$, this implies that $r = 1$. But $C, C' \in \Q[[\xi]]/\langle\xi\rangle$ have no degree one terms, a contradiction. 
\end{proof}

Given a $G \in \Aut_v(\A)$ with $N=G(\vertex)$, we write $N^{tr}$ for $\apr(N)\in\A(\vertex)^{tr}$, as in Remark~\ref{remark:tree-like arrow diagrams}. 

\begin{cor} \label{auto_wheels}
Any $G\in\Aut_v(\A)$ is uniquely determined by $N^{tr}$. 
\end{cor}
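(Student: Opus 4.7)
The plan is to deduce the corollary directly from Proposition \ref{vertex_auto} combined with Theorem \ref{thm:kv and krv} and the Duflo-type uniqueness in $\krv$. Concretely, suppose $G, G' \in \Aut_v(\A)$ have $N^{tr} = (N')^{tr}$, where $N = G(\vertex)$ and $N' = G'(\vertex)$. By Proposition \ref{vertex_auto}, showing $G = G'$ is equivalent to showing $N = N'$, which in turn will follow once we know that both automorphisms induce the same element of $\krv$ under the isomorphism $\Theta$.

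First I would translate the hypothesis $N^{tr} = (N')^{tr}$ into Lie-algebraic data. By Remark \ref{remark:tree-like arrow diagrams}, the map $pr$ descends to an isomorphism $\arrows(\vertex)^{tr} \cong \hat{U}(\tder_2 \oplus \mathfrak{a}_2)$, so the hypothesis is equivalent to $pr(\log N) = pr(\log N')$ in $\tder_2 \oplus \mathfrak{a}_2$. Because $G$ and $G'$ are $v$-small in the sense of Definition \ref{def:vsmall}, the $\mathfrak{a}_2$-components of both sides vanish, and we obtain a common $\eta \in \tder_2$ with $pr(\log N) = pr(\log N') = \eta$.

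Next I would invoke the isomorphism $\Theta: \Aut_v(\A) \xrightarrow{\cong} \krv$ of Theorem \ref{thm:kv and krv}. By the explicit formula \eqref{eq:Theta}, we have $\Theta(G) = (e^\eta, 2c)$ and $\Theta(G') = (e^\eta, 2c')$, where $G(\bcap) = e^c$ and $G'(\bcap) = e^{c'}$. The key algebraic input, recorded in the discussion following Theorem \ref{thm:kv and krv} and originally due to Alekseev-Enriquez-Torossian \cite[Proposition 34]{ATE10}, is that the second component of an element of $\krv$ is uniquely determined by its first component via a Duflo-type homomorphism. Applied to $\Theta(G)$ and $\Theta(G')$, this forces $c = c'$, so $\Theta(G) = \Theta(G')$ and hence $G = G'$ by injectivity of $\Theta$.

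I expect the main obstacle to be mostly bookkeeping rather than genuinely hard: one must carefully track the identifications coming from Lemma \ref{lemma: VI}, Proposition \ref{prop:SES}, and Remark \ref{remark:tree-like arrow diagrams} in order to move cleanly between $N \in \arrows(\vertex)$, the primitive element $\log N \in \calP(\uparrow_2)$, its image $pr(\log N) \in \tder_2 \oplus \mathfrak{a}_2$, and the group element $e^\eta$ appearing in the definition of $\Theta$. Once these identifications are in place, the Duflo uniqueness in $\krv$ does essentially all of the work; an alternative, more hands-on route—substituting $N' = N\cdot e^w$ with $w \in \cyc_2$ into the cap equation \eqref{C'} and using that caps annihilate trees to force $w = 0$—would also work but is computationally heavier.
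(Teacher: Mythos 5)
Your argument is correct given Theorem~\ref{thm:kv and krv} as a black box, but it takes a genuinely different route from the paper. The paper's proof is a self-contained perturbative diagram argument: if $N\neq N'$ while $N^{tr}=(N')^{tr}$, the lowest-degree term $\varepsilon$ of $N-N'$ lies in the image of $\cyc_2$ and is hence a multiple of a wheel; subtracting the two unitarity equations \eqref{U'} in that degree yields $A_1A_2(\varepsilon)=-\varepsilon$, while the adjoint operation fixes wheels (they carry only tails), so $\varepsilon=0$. You instead pin down $\eta=pr(\log N)$ from $N^{tr}$ and $v$-smallness, invoke the Duflo uniqueness in $\krv$ to recover $c$, and conclude $\Theta(G)=\Theta(G')$, finishing by injectivity of $\Theta$; note that Proposition~\ref{vertex_auto} is then not actually used in your chain of reasoning. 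What your route buys is brevity; what it costs is that the injectivity of $\Theta$ is essentially equivalent to the combined content of Proposition~\ref{vertex_auto} and this corollary, and the paper explicitly frames these two statements as the \emph{diagrammatic counterparts} of the Duflo uniqueness, ``proven by standard perturbative arguments.'' Deriving them back from the Lie-theoretic Duflo property is therefore circular in spirit (though not formally circular within this paper's ordering, since Theorem~\ref{thm:kv and krv} is imported from elsewhere). One caution on your closing aside: the ``hands-on'' alternative via the cap equation \eqref{C'} alone does not obviously close, because without first knowing $C=C'$ the degree-$k$ difference of the two cap equations still involves the unknown cap values; it is \eqref{U'}, not \eqref{C'}, that isolates the wheel discrepancy, while \eqref{C'} is what the paper uses in Proposition~\ref{vertex_auto} to determine $C$ from $N$.
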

\begin{proof}
 We aim to recover $G(\vertex)=N$ from $N^{tr}$. 
Assume that $G, G' \in \Aut_v(\A)$ with $G(\vertex) = N$, $G'(\vertex) = N'$ such that $N \neq N'$ and $N^{tr}=(N')^{tr}$. Let $\varepsilon$ denote the lowest degree term of $N-N'$, and assume that $\varepsilon$ is of degree $k$.  Since $\varepsilon$ is in the kernel of $\apr$, it is in the image of $\cyc_n$, and thus it is a scalar multiple of a $k$-wheel. 

By Proposition~\ref{prop:puncturedexps}, $N$ and $N'$ satisfies the Unitarity equation \eqref{U'}: $A_1A_2(N) = N^{-1}$ and $A_1A_2(N') = N'^{-1}$. Modulo degree $k+1$ we have 
\begin{align}
    N = N' + \varepsilon \quad \text{ and } \quad N^{-1} = (N')^{-1}-\varepsilon.
\end{align}
And therefore, subtracting the two \eqref{U'} equations, up to degree $k$ we have
\begin{align}
   A_1A_2( \varepsilon) =A_1A_2(N-N') = N^{-1}-(N')^{-1}= -\varepsilon 
\end{align}

Now, recall that the adjoint operation $A_i$ reverses the direction of strand $i$ and multiplies each arrow diagram by $(-1)^{\# \ \text{heads on that strand}}$. Since $\varepsilon$ is a scalar multiple of a $k$-wheel, it only has arrow tails on each strand. It follows that $A_1A_2(\varepsilon) = \varepsilon$ and thus $\varepsilon = -\varepsilon$ so $\varepsilon = 0$, a contradiction. 
\end{proof}

\subsection{The relationship between chord diagrams and arrow diagrams}\label{sec:PaCDtoA}
For any $n\geq 2$, there is an inclusion of Lie algebras $\mathfrak{t}_n\hookrightarrow \tder_n$. Namely, $\mathfrak{t}_n$ is isomorphic to a Lie subalgebra of $\tder_n$ spanned by tangential derivations of the form $t^{i,j}=(0,\ldots, x_j,\ldots, x_i,\ldots 0)$(\cite[Proposition 3.11]{AT12} and see Example~\ref{example: t} above).  This inclusion of Lie algebras extends to a Hopf algebra homomorphism \[\begin{tikzcd}\mathsf{CD}(n)\cong\widehat{U}(\mathfrak{t}_n)\arrow[r, "\epsilon"] & \widehat{U}(\tder_n\oplus \mathfrak{a}_n \ltimes \cyc_n)  \arrow[r, "\cong"', "\Upsilon^{-1}"] &  \arrows(\uparrow_n).\end{tikzcd}\] Diagrammatically, this composition maps a chord between strands $i$ and $j$ to the sum of two arrows: one going from strand $j$ to strand $i$ (representing the $x_j$ in the $i$th component) and one going from strand $i$ to strand $j$ to (representing  $x_i$ in the $j$th component). For example, the arrow diagram on the left of Figure~\ref{fig: tder arrow diagrams} represents the image of the chord diagram $t^{12}=\chord$. (See also the proof of Lemma 6.3 in \cite{DHR21}). 

This map of Hopf algebras extends to an inclusion of tensor categories, $\begin{tikzcd}\PaCD\arrow[r, "\alpha"]& \arrows\end{tikzcd}$, defined ``one skeleton at a time''. Recall that $\PaCD(s)$ consists of the chord diagrams with underlying skeleton $s$. Equivalently, an element of $\PaCD(s)$ is a morphism in $\PaCD(n)$, where the data of the skeleton provides the underlying permutation and parenthesisation. The example on the right of Figure~\ref{fig:pacdexample} showed such a morphism from $p_1=((13)2)4)$ to $p_2=(12)(34)$. ``Closing up'' the parenthesised permutations with binary trees representing the parenthesisation produces a $\wf$ skeleton, as in Figure~\ref{fig:alphamap}. The map $\alpha$ sends a chord diagram $\bD$ to the arrow diagram $\Upsilon^{-1}\epsilon(\bD)$ on the closed skeleton, as shown in Figure~\ref{fig:alphamap}.

\begin{figure}[h]
    \centering

\tikzset{every picture/.style={line width=0.75pt}} 
\[\begin{tikzpicture}[x=0.75pt,y=0.75pt,yscale=-1,xscale=1]

\draw [line width=2.25]    (60,240) -- (60,84) ;
\draw [shift={(60,80)}, rotate = 90] [color={rgb, 255:red, 0; green, 0; blue, 0 }  ][line width=2.25]    (12.24,-5.49) .. controls (7.79,-2.58) and (3.71,-0.75) .. (0,0) .. controls (3.71,0.75) and (7.79,2.58) .. (12.24,5.49)   ;
\draw [line width=2.25]    (110,240) -- (110,84) ;
\draw [shift={(110,80)}, rotate = 90] [color={rgb, 255:red, 0; green, 0; blue, 0 }  ][line width=2.25]    (12.24,-5.49) .. controls (7.79,-2.58) and (3.71,-0.75) .. (0,0) .. controls (3.71,0.75) and (7.79,2.58) .. (12.24,5.49)   ;
\draw [line width=2.25]    (160,240) -- (160,84) ;
\draw [shift={(160,80)}, rotate = 90] [color={rgb, 255:red, 0; green, 0; blue, 0 }  ][line width=2.25]    (12.24,-5.49) .. controls (7.79,-2.58) and (3.71,-0.75) .. (0,0) .. controls (3.71,0.75) and (7.79,2.58) .. (12.24,5.49)   ;
\draw [color={rgb, 255:red, 0; green, 0; blue, 255 }  ,draw opacity=1 ] [dash pattern={on 0.84pt off 2.51pt}]  (60,160) -- (110,160) ;
\draw [color={rgb, 255:red, 0; green, 0; blue, 255 }  ,draw opacity=1 ] [dash pattern={on 0.84pt off 2.51pt}]  (110,180) -- (160,180) ;
\draw    (190,160) -- (288,160) ;
\draw [shift={(290,160)}, rotate = 180] [color={rgb, 255:red, 0; green, 0; blue, 0 }  ][line width=0.75]    (10.93,-3.29) .. controls (6.95,-1.4) and (3.31,-0.3) .. (0,0) .. controls (3.31,0.3) and (6.95,1.4) .. (10.93,3.29)   ;
\draw [shift={(190,160)}, rotate = 180] [color={rgb, 255:red, 0; green, 0; blue, 0 }  ][line width=0.75]    (0,5.59) -- (0,-5.59)   ;
\draw [line width=2.25]    (330,105.79) -- (330,53.21) ;
\draw [line width=2.25]    (359.47,105.79) -- (359.93,53.21) ;
\draw [line width=2.25]    (389.41,105.79) -- (389.87,54.21) ;
\draw [color={rgb, 255:red, 0; green, 0; blue, 255 }  ,draw opacity=1 ] [dash pattern={on 0.84pt off 2.51pt}]  (330,76.32) -- (357.93,76.32) ;
\draw [shift={(359.93,76.32)}, rotate = 180] [color={rgb, 255:red, 0; green, 0; blue, 255 }  ,draw opacity=1 ][line width=0.75]    (6.56,-2.94) .. controls (4.17,-1.38) and (1.99,-0.4) .. (0,0) .. controls (1.99,0.4) and (4.17,1.38) .. (6.56,2.94)   ;
\draw [color={rgb, 255:red, 0; green, 0; blue, 255 }  ,draw opacity=1 ] [dash pattern={on 0.84pt off 2.51pt}]  (359.47,83.68) -- (387.41,83.68) ;
\draw [shift={(389.41,83.68)}, rotate = 180] [color={rgb, 255:red, 0; green, 0; blue, 255 }  ,draw opacity=1 ][line width=0.75]    (6.56,-2.94) .. controls (4.17,-1.38) and (1.99,-0.4) .. (0,0) .. controls (1.99,0.4) and (4.17,1.38) .. (6.56,2.94)   ;
\draw [line width=2.25]    (330,54.21) -- (344.74,39.47) ;
\draw [line width=2.25]    (359.93,54.21) -- (347.74,42.47) ;
\draw [line width=2.25]    (344.74,39.47) -- (360.47,23.74) ;
\draw [line width=2.25]    (389.95,55.21) -- (362.47,27.74) ;
\draw [line width=2.25]    (359.47,24.74) -- (359.47,10) ;
\draw [line width=2.25]    (359.47,105.79) -- (375.21,121.53) ;
\draw [line width=2.25]    (377.21,117.53) -- (389.95,104.79) ;
\draw [line width=2.25]    (362.47,132.26) -- (374.21,120.53) ;
\draw [line width=2.25]    (359.47,135.26) -- (330,105.79) ;
\draw [line width=2.25]    (359.47,150) -- (359.47,135.26) ;
\draw [line width=2.25]    (451.05,105.79) -- (451.05,53.21) ;
\draw [line width=2.25]    (480.53,105.79) -- (480.99,53.21) ;
\draw [line width=2.25]    (510.46,105.79) -- (510.92,54.21) ;
\draw [color={rgb, 255:red, 0; green, 0; blue, 255 }  ,draw opacity=1 ] [dash pattern={on 0.84pt off 2.51pt}]  (451.05,76.32) -- (478.99,76.32) ;
\draw [shift={(480.99,76.32)}, rotate = 180] [color={rgb, 255:red, 0; green, 0; blue, 255 }  ,draw opacity=1 ][line width=0.75]    (6.56,-2.94) .. controls (4.17,-1.38) and (1.99,-0.4) .. (0,0) .. controls (1.99,0.4) and (4.17,1.38) .. (6.56,2.94)   ;
\draw [color={rgb, 255:red, 0; green, 0; blue, 255 }  ,draw opacity=1 ] [dash pattern={on 0.84pt off 2.51pt}]  (482.53,83.68) -- (510.46,83.68) ;
\draw [shift={(480.53,83.68)}, rotate = 0] [color={rgb, 255:red, 0; green, 0; blue, 255 }  ,draw opacity=1 ][line width=0.75]    (6.56,-2.94) .. controls (4.17,-1.38) and (1.99,-0.4) .. (0,0) .. controls (1.99,0.4) and (4.17,1.38) .. (6.56,2.94)   ;
\draw [line width=2.25]    (451.05,54.21) -- (465.79,39.47) ;
\draw [line width=2.25]    (480.99,54.21) -- (468.79,42.47) ;
\draw [line width=2.25]    (465.79,39.47) -- (481.53,23.74) ;
\draw [line width=2.25]    (511,55.21) -- (483.53,27.74) ;
\draw [line width=2.25]    (480.53,24.74) -- (480.53,10) ;
\draw [line width=2.25]    (480.53,105.79) -- (496.26,121.53) ;
\draw [line width=2.25]    (498.26,117.53) -- (511,104.79) ;
\draw [line width=2.25]    (483.53,132.26) -- (495.26,120.53) ;
\draw [line width=2.25]    (480.53,135.26) -- (451.05,105.79) ;
\draw [line width=2.25]    (480.53,150) -- (480.53,135.26) ;
\draw [line width=2.25]    (330,259.79) -- (330,207.21) ;
\draw [line width=2.25]    (359.47,259.79) -- (359.93,207.21) ;
\draw [line width=2.25]    (389.41,259.79) -- (389.87,208.21) ;
\draw [color={rgb, 255:red, 0; green, 0; blue, 255 }  ,draw opacity=1 ] [dash pattern={on 0.84pt off 2.51pt}]  (332,230.32) -- (359.93,230.32) ;
\draw [shift={(330,230.32)}, rotate = 0] [color={rgb, 255:red, 0; green, 0; blue, 255 }  ,draw opacity=1 ][line width=0.75]    (6.56,-2.94) .. controls (4.17,-1.38) and (1.99,-0.4) .. (0,0) .. controls (1.99,0.4) and (4.17,1.38) .. (6.56,2.94)   ;
\draw [color={rgb, 255:red, 0; green, 0; blue, 255 }  ,draw opacity=1 ] [dash pattern={on 0.84pt off 2.51pt}]  (359.47,237.68) -- (387.41,237.68) ;
\draw [shift={(389.41,237.68)}, rotate = 180] [color={rgb, 255:red, 0; green, 0; blue, 255 }  ,draw opacity=1 ][line width=0.75]    (6.56,-2.94) .. controls (4.17,-1.38) and (1.99,-0.4) .. (0,0) .. controls (1.99,0.4) and (4.17,1.38) .. (6.56,2.94)   ;
\draw [line width=2.25]    (330,208.21) -- (344.74,193.47) ;
\draw [line width=2.25]    (359.93,208.21) -- (347.74,196.47) ;
\draw [line width=2.25]    (344.74,193.47) -- (360.47,177.74) ;
\draw [line width=2.25]    (389.95,209.21) -- (362.47,181.74) ;
\draw [line width=2.25]    (359.47,178.74) -- (359.47,164) ;
\draw [line width=2.25]    (359.47,259.79) -- (375.21,275.53) ;
\draw [line width=2.25]    (377.21,271.53) -- (389.95,258.79) ;
\draw [line width=2.25]    (362.47,286.26) -- (374.21,274.53) ;
\draw [line width=2.25]    (359.47,289.26) -- (330,259.79) ;
\draw [line width=2.25]    (359.47,304) -- (359.47,289.26) ;
\draw [line width=2.25]    (451.05,259.79) -- (451.05,207.21) ;
\draw [line width=2.25]    (480.53,259.79) -- (480.99,207.21) ;
\draw [line width=2.25]    (510.46,259.79) -- (510.92,208.21) ;
\draw [color={rgb, 255:red, 0; green, 0; blue, 255 }  ,draw opacity=1 ] [dash pattern={on 0.84pt off 2.51pt}]  (453.05,230.32) -- (480.99,230.32) ;
\draw [shift={(451.05,230.32)}, rotate = 0] [color={rgb, 255:red, 0; green, 0; blue, 255 }  ,draw opacity=1 ][line width=0.75]    (6.56,-2.94) .. controls (4.17,-1.38) and (1.99,-0.4) .. (0,0) .. controls (1.99,0.4) and (4.17,1.38) .. (6.56,2.94)   ;
\draw [color={rgb, 255:red, 0; green, 0; blue, 255 }  ,draw opacity=1 ] [dash pattern={on 0.84pt off 2.51pt}]  (482.53,237.68) -- (510.46,237.68) ;
\draw [shift={(480.53,237.68)}, rotate = 0] [color={rgb, 255:red, 0; green, 0; blue, 255 }  ,draw opacity=1 ][line width=0.75]    (6.56,-2.94) .. controls (4.17,-1.38) and (1.99,-0.4) .. (0,0) .. controls (1.99,0.4) and (4.17,1.38) .. (6.56,2.94)   ;
\draw [line width=2.25]    (451.05,208.21) -- (465.79,193.47) ;
\draw [line width=2.25]    (480.99,208.21) -- (468.79,196.47) ;
\draw [line width=2.25]    (465.79,193.47) -- (481.53,177.74) ;
\draw [line width=2.25]    (511,209.21) -- (483.53,181.74) ;
\draw [line width=2.25]    (480.53,178.74) -- (480.53,164) ;
\draw [line width=2.25]    (480.53,259.79) -- (496.26,275.53) ;
\draw [line width=2.25]    (498.26,271.53) -- (511,258.79) ;
\draw [line width=2.25]    (483.53,286.26) -- (495.26,274.53) ;
\draw [line width=2.25]    (480.53,289.26) -- (451.05,259.79) ;
\draw [line width=2.25]    (480.53,304) -- (480.53,289.26) ;

\draw (59,67.6) node [anchor=south] [inner sep=0.75pt]    {$( 1$};
\draw (109,67.6) node [anchor=south] [inner sep=0.75pt]    {$2)$};
\draw (161,67.6) node [anchor=south] [inner sep=0.75pt]    {$3$};
\draw (60,243.4) node [anchor=north] [inner sep=0.75pt]    {$1$};
\draw (110,243.4) node [anchor=north] [inner sep=0.75pt]    {$( 2$};
\draw (160,243.4) node [anchor=north] [inner sep=0.75pt]    {$3)$};
\draw (417.5,80) node    {$+$};
\draw (537.37,80) node    {$+$};
\draw (417.5,234) node    {$+$};

\end{tikzpicture}\]
    \caption{The map $\alpha$ applied to an element of $\PaCD(3)$.}
    \label{fig:alphamap}
\end{figure}
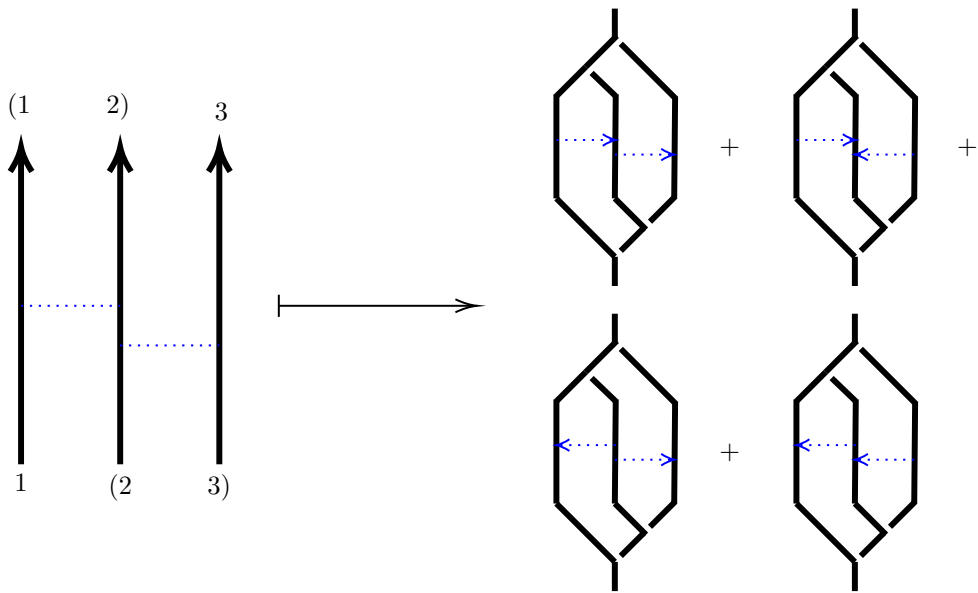

The unzip and stacking operations in arrow diagrams allow us to extend this inclusion of objects to a tensor functor embedding $\PaCD$ into $\arrows$ (Illustrated in Figure~\ref{fig:ClosureComp}, see Theorem 6.4 \cite{DHR21} for full details).

\begin{figure}
\tikzset{every picture/.style={line width=0.75pt}} 
\[\begin{tikzpicture}[x=0.75pt,y=0.75pt,yscale=-1,xscale=1]

\draw [line width=2.25]    (55,161) -- (55,55) ;
\draw [shift={(55,51)}, rotate = 90] [color={rgb, 255:red, 0; green, 0; blue, 0 }  ][line width=2.25]    (12.24,-5.49) .. controls (7.79,-2.58) and (3.71,-0.75) .. (0,0) .. controls (3.71,0.75) and (7.79,2.58) .. (12.24,5.49)   ;
\draw [line width=2.25]    (85,161) -- (85,55) ;
\draw [shift={(85,51)}, rotate = 90] [color={rgb, 255:red, 0; green, 0; blue, 0 }  ][line width=2.25]    (12.24,-5.49) .. controls (7.79,-2.58) and (3.71,-0.75) .. (0,0) .. controls (3.71,0.75) and (7.79,2.58) .. (12.24,5.49)   ;
\draw [line width=2.25]    (115,161) -- (115,55) ;
\draw [shift={(115,51)}, rotate = 90] [color={rgb, 255:red, 0; green, 0; blue, 0 }  ][line width=2.25]    (12.24,-5.49) .. controls (7.79,-2.58) and (3.71,-0.75) .. (0,0) .. controls (3.71,0.75) and (7.79,2.58) .. (12.24,5.49)   ;
\draw  [color={rgb, 255:red, 255; green, 0; blue, 0 }  ,draw opacity=1 ][fill={rgb, 255:red, 255; green, 255; blue, 255 }  ,fill opacity=1 ] (50,95) -- (120,95) -- (120,117) -- (50,117) -- cycle ;
\draw [line width=2.25]    (56,340) -- (56,234) ;
\draw [shift={(56,230)}, rotate = 90] [color={rgb, 255:red, 0; green, 0; blue, 0 }  ][line width=2.25]    (12.24,-5.49) .. controls (7.79,-2.58) and (3.71,-0.75) .. (0,0) .. controls (3.71,0.75) and (7.79,2.58) .. (12.24,5.49)   ;
\draw [line width=2.25]    (86,339) -- (86,233) ;
\draw [shift={(86,229)}, rotate = 90] [color={rgb, 255:red, 0; green, 0; blue, 0 }  ][line width=2.25]    (12.24,-5.49) .. controls (7.79,-2.58) and (3.71,-0.75) .. (0,0) .. controls (3.71,0.75) and (7.79,2.58) .. (12.24,5.49)   ;
\draw [line width=2.25]    (116,339) -- (116,233) ;
\draw [shift={(116,229)}, rotate = 90] [color={rgb, 255:red, 0; green, 0; blue, 0 }  ][line width=2.25]    (12.24,-5.49) .. controls (7.79,-2.58) and (3.71,-0.75) .. (0,0) .. controls (3.71,0.75) and (7.79,2.58) .. (12.24,5.49)   ;
\draw  [color={rgb, 255:red, 0; green, 0; blue, 255 }  ,draw opacity=1 ][fill={rgb, 255:red, 255; green, 255; blue, 255 }  ,fill opacity=1 ] (51,273) -- (121,273) -- (121,295) -- (51,295) -- cycle ;
\draw [line width=2.25]    (265.3,295.79) -- (265.3,243.21) ;
\draw [line width=2.25]    (294.77,295.79) -- (295.23,243.21) ;
\draw [line width=2.25]    (324.7,295.79) -- (325.16,244.21) ;
\draw [line width=2.25]    (265.3,244.21) -- (280.03,229.47) ;
\draw [line width=2.25]    (295.23,244.21) -- (283.03,232.47) ;
\draw [line width=2.25]    (280.03,229.47) -- (295.77,213.74) ;
\draw [line width=2.25]    (325.24,245.21) -- (297.77,217.74) ;
\draw [line width=2.25]    (294.77,214.74) -- (294.77,200) ;
\draw [line width=2.25]    (294.77,295.79) -- (310.51,311.53) ;
\draw [line width=2.25]    (312.51,307.53) -- (325.24,294.79) ;
\draw [line width=2.25]    (297.77,322.26) -- (309.51,310.53) ;
\draw [line width=2.25]    (294.77,325.26) -- (265.3,295.79) ;
\draw [line width=2.25]    (294.77,340) -- (294.77,325.26) ;
\draw  [color={rgb, 255:red, 0; green, 0; blue, 255 }  ,draw opacity=1 ][fill={rgb, 255:red, 255; green, 255; blue, 255 }  ,fill opacity=1 ] (260,258.5) -- (330,258.5) -- (330,280.5) -- (260,280.5) -- cycle ;
\draw [line width=2.25]    (324.73,135.75) -- (324.73,83.13) ;
\draw [line width=2.25]    (295.23,135.75) -- (294.77,83.13) ;
\draw [line width=2.25]    (265.27,135.75) -- (264.81,84.13) ;
\draw [line width=2.25]    (324.73,84.13) -- (309.98,69.38) ;
\draw [line width=2.25]    (294.77,84.13) -- (306.98,72.38) ;
\draw [line width=2.25]    (309.98,69.38) -- (294.23,53.63) ;
\draw [line width=2.25]    (264.73,85.13) -- (292.23,57.63) ;
\draw [line width=2.25]    (295.23,54.63) -- (295.23,39.88) ;
\draw [line width=2.25]    (295.23,135.75) -- (279.48,151.5) ;
\draw [line width=2.25]    (277.48,147.5) -- (264.73,134.75) ;
\draw [line width=2.25]    (292.23,162.25) -- (280.48,150.5) ;
\draw [line width=2.25]    (295.23,165.25) -- (324.73,135.75) ;
\draw [line width=2.25]    (295.23,180) -- (295.23,165.25) ;
\draw  [color={rgb, 255:red, 255; green, 0; blue, 0 }  ,draw opacity=1 ][fill={rgb, 255:red, 255; green, 255; blue, 255 }  ,fill opacity=1 ] (260,98.44) -- (330,98.44) -- (330,120.44) -- (260,120.44) -- cycle ;
\draw [line width=2.25]    (295.23,180) -- (294.77,200) ;
\draw    (350,190) -- (428,190) ;
\draw [shift={(430,190)}, rotate = 180] [color={rgb, 255:red, 0; green, 0; blue, 0 }  ][line width=0.75]    (10.93,-3.29) .. controls (6.95,-1.4) and (3.31,-0.3) .. (0,0) .. controls (3.31,0.3) and (6.95,1.4) .. (10.93,3.29)   ;
\draw    (160,190) -- (238,190) ;
\draw [shift={(240,190)}, rotate = 180] [color={rgb, 255:red, 0; green, 0; blue, 0 }  ][line width=0.75]    (10.93,-3.29) .. controls (6.95,-1.4) and (3.31,-0.3) .. (0,0) .. controls (3.31,0.3) and (6.95,1.4) .. (10.93,3.29)   ;
\draw [line width=2.25]    (524.73,190) -- (524.73,137.37) ;
\draw [line width=2.25]    (495.23,190) -- (494.77,137.37) ;
\draw [line width=2.25]    (465.27,190) -- (464.81,138.38) ;
\draw [line width=2.25]    (524.73,138.38) -- (509.98,123.63) ;
\draw [line width=2.25]    (494.77,138.38) -- (506.98,126.63) ;
\draw [line width=2.25]    (509.98,123.63) -- (494.23,107.88) ;
\draw [line width=2.25]    (464.73,139.38) -- (492.23,111.88) ;
\draw [line width=2.25]    (495.23,108.88) -- (495.23,94.13) ;
\draw  [color={rgb, 255:red, 255; green, 0; blue, 0 }  ,draw opacity=1 ][fill={rgb, 255:red, 255; green, 255; blue, 255 }  ,fill opacity=1 ] (460,152.69) -- (530,152.69) -- (530,174.69) -- (460,174.69) -- cycle ;
\draw [line width=2.25]    (465.3,235.79) -- (465.3,183.21) ;
\draw [line width=2.25]    (494.77,235.79) -- (495.23,183.21) ;
\draw [line width=2.25]    (524.7,235.79) -- (525.16,184.21) ;
\draw [line width=2.25]    (494.77,235.79) -- (510.51,251.53) ;
\draw [line width=2.25]    (512.51,247.53) -- (525.24,234.79) ;
\draw [line width=2.25]    (497.77,262.26) -- (509.51,250.53) ;
\draw [line width=2.25]    (494.77,265.26) -- (465.3,235.79) ;
\draw [line width=2.25]    (494.77,280) -- (494.77,265.26) ;
\draw  [color={rgb, 255:red, 0; green, 0; blue, 255 }  ,draw opacity=1 ][fill={rgb, 255:red, 255; green, 255; blue, 255 }  ,fill opacity=1 ] (460,198.5) -- (530,198.5) -- (530,220.5) -- (460,220.5) -- cycle ;

\draw (55,37.6) node [anchor=south] [inner sep=0.75pt]    {$1$};
\draw (85,37.6) node [anchor=south] [inner sep=0.75pt]    {$( 2$};
\draw (115,37.6) node [anchor=south] [inner sep=0.75pt]    {$3)$};
\draw (55,164.4) node [anchor=north] [inner sep=0.75pt]    {$( 1$};
\draw (85,164.4) node [anchor=north] [inner sep=0.75pt]    {$2)$};
\draw (115,164.4) node [anchor=north] [inner sep=0.75pt]    {$3$};
\draw (56,216.6) node [anchor=south] [inner sep=0.75pt]    {$( 1$};
\draw (86,215.6) node [anchor=south] [inner sep=0.75pt]    {$2)$};
\draw (116,215.6) node [anchor=south] [inner sep=0.75pt]    {$3$};
\draw (56,343.4) node [anchor=north] [inner sep=0.75pt]    {$1$};
\draw (86,342.4) node [anchor=north] [inner sep=0.75pt]    {$( 2$};
\draw (116,342.4) node [anchor=north] [inner sep=0.75pt]    {$3)$};
\draw (200,186.6) node [anchor=south] [inner sep=0.75pt]    {$m\circ \alpha ^{2}$};
\draw (390,186.6) node [anchor=south] [inner sep=0.75pt]    {$u^{2}$};

\end{tikzpicture}
\]
\caption{Closure and composition of chord diagrams, with $m$ denoting the stacking map.}\label{fig:ClosureComp}
\end{figure}
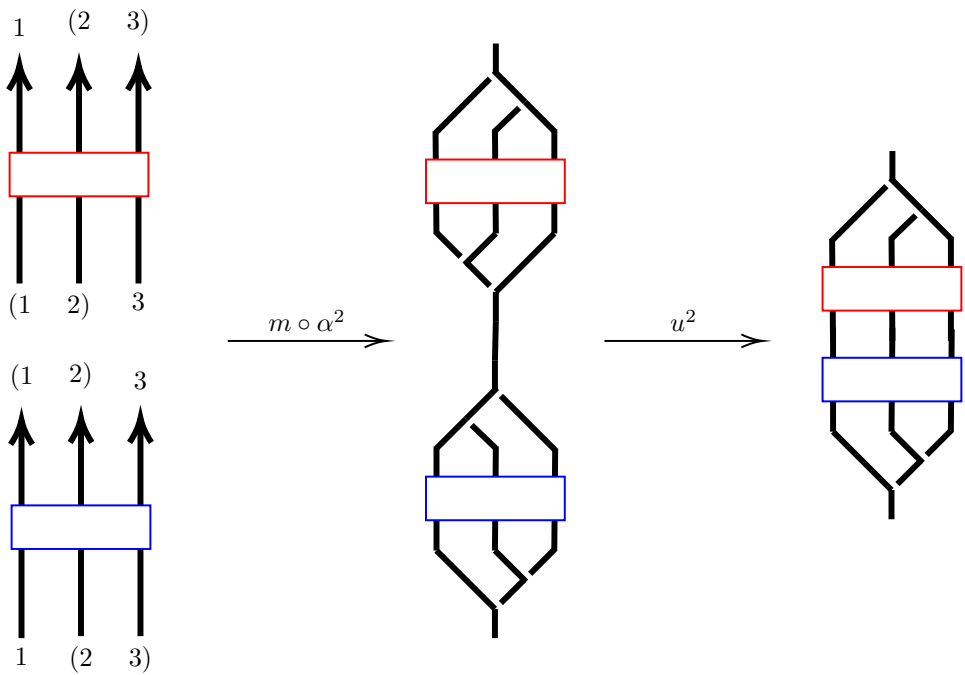

\begin{definition}\label{def: alpha}
The functor $\alpha:\PaCD\rightarrow \arrows$ is defined as follows. 

\begin{itemize}
    \item The map $\alpha$ sends a pair of parenthesised words (i.e. a skeleton as in Figure~\ref{fig:pacdexample}) to the $w$-foam skeleton that is the composition of two oriented binary, rooted trees representing the two parenthesisations. Here, each pair of parentheses corresponds to a vertex, and these are joined together to obtain each tree (see Figure~\ref{fig:alphamap}). 
    \item For a morphism $f\in\PaCD(s) \cong U(\mathfrak{t}_n)$, we define $$\alpha(f) := \epsilon(f)\in \hat{U}(\tder_n\oplus \mathfrak{a}_n \ltimes \cyc_n).$$
\end{itemize}
\end{definition}

The functor $\alpha:\PaCD\rightarrow \arrows$ is a morphism of props (or tensor categories) in that it picks out the prop structure of $\PaCD$ inside the morphism space of $\A$. The symmetric monoidal product on both $\PaCD$ and $\arrows$ is given by concatenation-- visually this is just placing chord (respectively, arrow) diagrams next to each other. In Theorem 6.4 \cite{DHR21}, the authors show that this functor actually has more structure, since the inclusion of morphisms $\PaCD(s)\hookrightarrow\arrows ((s)^{cl})$ preserves co-products. Combining these two facts makes $\alpha$ a morphism of props enriched in co-associative co-algebras. However, we emphasise that $\PaCD$ is not a sub-object of $\arrows$ as the image of $\alpha$ is not preserved under all operations of $\arrows$.

The closure map can also be applied to a braid in $\PaB$ in the same way, and the result can then be interpreted as an element of $\wf$: this defines a set map $a: \PaB \to \wf$, which can also be made into a tensor functor. We don't use this functor for any of the results in this paper, so we refer the reader to \cite[Section 1.2.2]{BND:WKO3}) for a topological description.

Using this relationship between $\PaB$ and $\wf$, it is possible to construct a homomorphic expansion for $\wf$ directly from a homomorphic expansion for $\PaB$, in parallel with the \cite{ATE10} formula for KV solutions in terms of Drinfel'd associators. This is the main result of \cite{BND:WKO3}:

\begin{theorem}\cite[Theorem 1.1]{BND:WKO3}
Every homomorphic expansion $Z^b:\mathbb{Q}[\widehat{\PaB}] \to \PaCD$ extends uniquely to a homomorphic expansion $Z: \mathbb{Q}[\widehat{\wf}] \to \arrows$, in the sense that the following diagram commutes:
\begin{equation}\label{eq:WKO3Main}
\begin{tikzcd}
	{\mathbb{Q}[\widehat{\PaB}]} && {\PaCD} \\
	\\
	{\mathbb{Q}[\widehat{\wf}]} && {\A}
	\arrow["Z^b", from=1-1, to=1-3]
	\arrow["Z"', dashed, from=3-1, to=3-3]
	\arrow["a"', from=1-1, to=3-1]
	\arrow["\alpha", from=1-3, to=3-3]
\end{tikzcd}
\end{equation}
\end{theorem}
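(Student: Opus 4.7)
The plan is to invoke the classification in Proposition~\ref{prop:puncturedexps} and reduce the construction of $Z$ to specifying the three values $R=Z(\ocrossing)$, $V=Z(\vertex)$, and $C=Z(\bcap)$ satisfying the equations R4, U, and C. The key observation is that the commutativity of diagram~\eqref{eq:WKO3Main} constrains $Z$ precisely on the image of the closure map $a$; unpacking these constraints will reveal how $Z^b$ determines the three generator values.

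First I would set $R:=e^{\rightarrowdiagram}\in\arrows(\uparrow_2)$, the standard single-arrow exponential, which is the canonical group-like choice at a crossing. Then, using the closure map $a$ applied to the generators of $\PaB$ and matching with the $\alpha$-image under $Z^b$, I would extract the required values of $V$ and $C$. Concretely, the closure $a(\overcrossing)\in\wf$ is a configuration involving a crossing together with closure vertices and caps, whose $Z$-value is a specific word in $R$, $V$, $C$; setting this equal to $\alpha(Z^b(\overcrossing))=\alpha(e^{t^{12}/2})$ produces one set of constraints. A parallel analysis for the closure of the associator generator $\Associator$, matched with $\alpha(\Phi(t^{12},t^{23}))$, produces further constraints from which $V$ and $C$ are then solved. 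This is essentially the diagrammatic incarnation of the Alekseev-Enriquez-Torossian formula expressing a KV solution in terms of an associator.

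Next I would verify that the values so defined satisfy R4, U, and C. The unitarity U should follow from the inversion symmetry~\eqref{inversion} of $\Phi$, transported via $\alpha$ and the adjoint operation, combined with the fact that reversing a braid crossing corresponds to inverting on the chord side. The mixed R4 equation should follow from the hexagon~\eqref{hexagon} relations in $U(\mathfrak{t}_3)$, translated via $\alpha$ and the identification of closed braid crossings with arrows. The pentagon~\eqref{pentagon} will serve as the consistency check ensuring that the various associator-based expressions for $V$ agree. Uniqueness then follows from Proposition~\ref{prop:puncturedexps} together with the observation that if $Z$ and $Z'$ both extend $Z^b$ then $Z\circ (Z')^{-1}\in\Aut_v(\arrows)$ acts trivially on $\alpha(\PaCD)$; by Corollary~\ref{auto_wheels} and the fact that the image of $\alpha$ contains enough tree-level arrow diagrams to pin down the tangential-derivation part of $V$, this forces $Z=Z'$.

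The main obstacle will be the Cap equation C: whereas R4 and U have clean braid-theoretic counterparts in the hexagon and inversion identities, the cap generator has no counterpart in $\PaB$, so the Duflo/Jacobian correction encoded by $C$ must be extracted indirectly from the closure structure. Matching the resulting $C\in\arrows(\bcap)\cong\mathbb{Q}[[\xi]]/\langle\xi\rangle$ with the expected Duflo-type power series is the subtle step, as it requires carefully tracking how the Jacobian (in the sense of Proposition~\ref{prop:J}) of the tangential automorphism produced by $\Phi$ interacts with the closure caps under the divergence cocycle machinery reviewed in Section~\ref{sec:tder}. The verification ultimately amounts to showing that the associator-induced $V$ automatically satisfies a Duflo identity compatible with an appropriately chosen $C$.
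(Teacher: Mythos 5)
A point of order first: the paper does not prove this statement. It is quoted verbatim from \cite{BND:WKO3} and explicitly not used in the sequel, so there is no internal proof to compare your argument against. The closest internal benchmark is the exactly parallel argument carried out at the level of automorphisms (Construction~\ref{cons:buckle}, Theorem~\ref{thm:GfromF}, and Appendix~\ref{sec:DirectProof}), and measured against that your plan points in the right direction: reduce to the generator values $(R,V,C)$ via Proposition~\ref{prop:puncturedexps}, extract them from the commutativity constraint on the image of the closure map, and verify R4, U and C from the associator identities.

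As written, though, the extraction step has a genuine gap. The closure $a(\beta)$ of a parenthesised braid contains vertices but \emph{no caps and no punctures}, so the constraint $Z(a(\beta))=\alpha(Z^b(\beta))$ is an identity in $\arrows(\beta^{cl})$ in which $C$ simply does not appear, and in which $V$ enters only through the closure trees decorating a fixed skeleton; it cannot be ``solved for'' by matching the two sides. The mechanism that actually isolates $V$ is the stripping move of Construction~\ref{cons:buckle}: pre-compose the closed diagram with a capped vertex $\cvtikz$, unzip twice, puncture twice, use $Z(\pv)=1$, and apply the isomorphism $\varphi$ of Lemma~\ref{lem:EK} --- and even then one recovers only $V^{tr}$, the tree part. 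The wheel part of $V$ and the cap value $C$ are then \emph{forced} rather than read off: one must show, as in Lemmas~\ref{lem:WNtr} and~\ref{lem:rhopunitarity}, that the unitarity equation rewritten via Proposition~\ref{prop:J} determines $C=e^{c}$ with $c=\tfrac12 s(\xi)$, where the Jacobian of the tangential automorphism attached to $V^{tr}$ has the form $\trace\bigl(s(x)+s(y)-s(x+y)\bigr)$, and that such an $s$ exists --- the Duflo-type identity you correctly flag as the crux but do not supply. Your uniqueness argument has the same lacuna: ``the image of $\alpha$ contains enough tree-level arrow diagrams'' is not by itself sufficient; one needs the stripping construction to show that an expansion-preserving automorphism fixing $\alpha(\PaCD)$ pointwise has trivial $N^{tr}$, and then Corollary~\ref{auto_wheels} and Proposition~\ref{vertex_auto} to upgrade this to triviality of the full vertex and cap values.
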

Theorem 1.1 of \cite{BND:WKO3} gives an explicit construction for $Z$ from $Z^b$, in particular a formula for $Z(V)$ and $Z(C)$ in terms of the Drinfel'd associator $\Phi=Z^b(\Associator)$. We are not using this result in the current paper, however, the construction used in Section \ref{sec:map} to construct a diagrammatic Alekseev-Torossian map follows the techniques used in \cite{BND:WKO3} to construct $Z$ from $Z^b$.

\section{Deriving a map of expansion-preserving automorphisms}\label{sec:map}

In \cite{AT12} and \cite{ATE10}, Alekseev, Enriquez and Torossian illustrated the deep relationship between Drinfel'd associators and solutions to the Kashiwara-Vergne conjecture.  In particular, they construct an injective group homomorphism between the graded Grothendieck-Teichm\"{u}ller and the graded Kashiwara-Vergne group. 
\begin{theorem}
\cite[Theorem 2.5]{ATE10} The map $\rho: \grt_1 \to \krv$ given by 
\begin{align}\label{eq:rhodef}
    \Psi(x,y) \mapsto (\Psi(-x-y,x),\Psi(-x-y,y))
\end{align}
is an injective group homomorphism. 
\end{theorem}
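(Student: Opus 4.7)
The plan is to verify, in this order, that (i) the candidate image lies in $\krv$, (ii) the map respects group multiplications, and (iii) the map is injective. Write $\Psi = e^\psi$ with $\psi \in \lie_2$ primitive, and set $a_1 := \psi(-x-y,x)$, $a_2 := \psi(-x-y,y)$, both well-defined in $\lie_2$ since the substitutions $(x,y)\mapsto(-x-y,x)$ and $(x,y)\mapsto(-x-y,y)$ are Lie endomorphisms of the free Lie algebra $\lie_2$. Then $u := (a_1,a_2) \in \tder_2$ and $\alpha := \exp u \in \TAut_2$ is the candidate first component of $\rho(\Psi)$.

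For Step (i), the first defining equation of $\krv$ reduces to $u(x+y)=0$, i.e.\ $[x,a_1]+[y,a_2]=0$. I would establish this by specialising the graded hexagon \eqref{eq:grthexagon} to $z=-x-y$, obtaining $\Psi(x,y)\Psi(y,-x-y)\Psi(-x-y,x)=1$, and comparing primitive parts degree-by-degree after using the inversion relation \eqref{eq:grtinversion}. The second $\krv$-component — a Duflo series $s\in u^2\mathbb{Q}[[u]]$ — is uniquely determined by $\alpha$ (see the Duflo discussion following \eqref{eq:krvdef}), so it suffices to \emph{define} $s$ via $J(\alpha)=\operatorname{tr}(s(x+y)-s(x)-s(y))$ and check existence. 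Existence is forced by the pentagon \eqref{eq:grtpentagon}: a direct computation of $J(\alpha)$ using the divergence cocycle formula together with the pentagon constraint on $\psi$ shows that $J(\alpha)$ lies in the image of the map $r\mapsto\operatorname{tr}(r(x+y)-r(x)-r(y))$.

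Step (iii), injectivity, is comparatively clean: the linear substitution $(x,y)\mapsto(-x-y,x)$ sends the basis $\{x,y\}$ of the degree-one component of $\lie_2$ to the basis $\{-x-y,x\}$, so it extends to a graded Lie automorphism $\sigma$ of $\lie_2$ by freeness. Hence $\Psi\mapsto\Psi(-x-y,x)$ is injective on $\widehat{U}(\lie_2)$, and $\rho(\Psi)$ trivial forces $\Psi(-x-y,x)=1$, whence $\Psi=1$.

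Step (ii), the homomorphism property, I expect to be the main obstacle, because the multiplication laws on both sides are intricate. Schematically, the $\grt_1$ product is
\[(\Psi_1 * \Psi_2)(x,y) \;=\; \Psi_1(x,y)\,\Psi_2\bigl(\operatorname{Ad}(\Psi_1)^{-1}x,\;\operatorname{Ad}(\Psi_1)^{-1}y\bigr),\]
while the $\krv$ product is composition of tangential automorphisms. The plan is to substitute $(x,y)\mapsto(-x-y,x)$ (respectively $(-x-y,y)$) into the $\grt_1$ product formula and then repeatedly use the fact, established in Step (i), that $\alpha_1$ preserves $x+y$, so that the substitution $(x,y)\mapsto(-x-y,\cdot)$ commutes with $\alpha_1$ in a controlled way. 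After a careful bookkeeping exercise the resulting expression should match $\rho(\Psi_1)\circ\rho(\Psi_2)$. A conceptually cleaner alternative aligned with the philosophy of the present paper would be to derive the homomorphism property from the diagrammatic lift $\rp\colon\Aut(\PaCD)\to\Aut_v(\arrows)$ together with the torsor identifications $\tau$ and $\Theta$, postponing the direct algebraic verification.
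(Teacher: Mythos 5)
First, a point of reference: the paper does not prove this statement. It is quoted verbatim from Alekseev--Enriquez--Torossian and used as a black box; the only ingredients of it that are re-derived internally are Drinfel'd's identities (Lemma~\ref{lem:Drinfeld}) and, implicitly, the multiplicativity computation inside Lemma~\ref{lem:ranti}. So your proposal must be judged against the source \cite{ATE10,AT12} rather than against anything in this paper.

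Your outline has the right shape, and steps (ii) and (iii) are essentially salvageable: the multiplicativity computation is exactly the one carried out in Lemma~\ref{lem:ranti} (conjugate $x$ and $y$ separately and use $\Psi_{1,x}^{-1}x\Psi_{1,x}+\Psi_{1,y}^{-1}y\Psi_{1,y}=x+y$, which is \eqref{eq:drin_grp}, to recombine the arguments), and injectivity does follow from invertibility of the substitution --- provided you first dispose of the central ambiguity, since $\rho(\Psi)=1$ in $\TAut_2$ only forces $\Psi(-x-y,x)=e^{\lambda x}$ and $\Psi(-x-y,y)=e^{\mu y}$ (elements of $\exp(\mathfrak{a}_2)$ act trivially), and you need the degree-one vanishing of $\psi\in\mathfrak{grt}_1$ to conclude $\lambda=\mu=0$. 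Two concrete problems remain. The serious one is the second $\krv$ condition: saying that ``a direct computation of $J(\alpha)$ together with the pentagon shows $J(\alpha)$ lies in the image of $r\mapsto\trace(r(x+y)-r(x)-r(y))$'' is not a proof sketch but a restatement of the theorem's hardest claim; this step is the bulk of the argument in \cite{AT12} (it needs the behaviour of the divergence cocycle under the simplicial maps into $\tder_3$ and the pentagon in $\mathfrak{t}_4$, not merely the defining cocycle formula), and your plan gives no indication of how it would go. Secondly, your displayed $\grt_1$ product is wrong: the twisted multiplication conjugates only the second argument, $(\Psi_1\ast\Psi_2)(x,y)=\Psi_1(x,y)\,\Psi_2\bigl(x,\Psi_1^{-1}y\Psi_1\bigr)$ (see the formula quoted in the proof of Lemma~\ref{lem:ranti}); with both arguments conjugated, the bookkeeping in step (ii) will not close. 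A smaller glossed-over point: identifying $\exp\bigl((a_1,a_2)\bigr)\in\TAut_2$ with the pair $\bigl(\Psi(-x-y,x),\Psi(-x-y,y)\bigr)$ is not automatic, since $\exp$ of a tangential derivation is not computed componentwise in general; here it works only because the derivation annihilates its own components, which is the content of Lemma~\ref{lem:eta_self} and itself rests on \eqref{eq:drin_lie}.
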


In other words, the map $\rho$ defines a subgroup of $\krv$ which is \emph{isomorphic} to $\grt_1$. Moreover, when restricted to the set of KV solutions constructed from Drinfel'd associators, the action of the subgroup of $\krv$ isomorphic to $\grt_1$ coincides with the canonical action of $\grt_1$ on Drinfel'd associators (\cite[Prop 9.13]{AT12}). 

 In this section, we construct a map $\rp: \Aut(\PaCD) \to \Aut_v(\A)$ from automorphisms of parenthesised chord diagrams to automorphisms of arrow diagrams which makes the following diagram commute: 
 \begin{equation}\label{eq:rp}
 \begin{tikzcd}
\Aut(\PaCD) \arrow[d, "\tau", swap]\arrow[r,"\rp", dashed] & \Aut_v(\arrows) \arrow[d, "\Theta"] \\
\grt_1 \arrow[r,"\rho"] & \krv
 \end{tikzcd}
 \end{equation}
 In particular, we identify a subgroup of arrow diagram automorphisms isomorphic to $\grt_1$.

\subsection{Comparing automorphisms of chord diagrams and arrow diagrams.} 
In Section~\ref{sec:PaCDtoA} we described a tensor functor $\alpha:\PaCD\rightarrow\arrows$ induced by the inclusion of Lie algebras $\mathfrak{t}_n\hookrightarrow \tder_n$ (Definition~\ref{def: alpha}). The goal for this section is to construct the map $\rp: \Aut(\PaCD) \to \Aut_v(\A)$. The plan is to show that, given $F\in \operatorname{Aut}(\PaCD)$, then we can construct $G \in \operatorname{Aut}_v(\A)$, such that the following diagram commutes:
\begin{equation}\label{eq:GFcommute}
\begin{tikzcd}
	{\PaCD} && {\A} \\
	\\
	{\PaCD} && {\A}.
	\arrow["\alpha", from=1-1, to=1-3]
	\arrow["\alpha"', from=3-1, to=3-3]
	\arrow["F"', from=1-1, to=3-1]
	\arrow["G", dashed, from=1-3, to=3-3]
\end{tikzcd}
\end{equation}
We will then show that setting $\rp(F)=G$ fits into the diagram \eqref{eq:rp}.
Before we give the construction, we need to establish some facts about arrow diagrams.

\begin{lemma}\label{lem:EK}
\cite[Lemma 2.4]{BND:WKO3} There is an isomorphism $$\varphi: \A(\pcv) \stackrel{\cong}{\longrightarrow} \A(\uparrow).$$ 
\end{lemma}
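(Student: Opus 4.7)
The plan is to exhibit mutually inverse linear maps $\varphi: \A(\pcv) \to \A(\uparrow)$ and $\psi: \A(\uparrow) \to \A(\pcv)$ that use the defining relations to concentrate all arrow endings on the single outgoing tube strand of $\pcv$. Recall that $\pcv$ is the skeleton consisting of a trivalent mixed vertex where a short capped tube segment and a string meet an outgoing tube; the goal is to show that neither the capped segment nor the string carries any independent diagrammatic information, so that the resulting vector space agrees with $\A(\uparrow)$.

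The inverse map $\psi$ is the evident inclusion $\A(\uparrow) \hookrightarrow \A(\pcv)$ that views an arrow diagram on a single tube as one on $\pcv$ with no arrow endings on the capped segment or the string. For the forward map $\varphi$, I would reduce an arbitrary arrow diagram on $\pcv$ to a canonical form supported entirely on the outgoing tube. On the capped segment, arrow heads are slid along the strand toward the cap via successive STU rewrites and then killed there by CP, at the cost of correction terms whose endings land either on the outgoing tube or, recursively, on strands that can themselves be reduced; any remaining arrow tails on the capped segment are slid across the trivalent vertex onto the outgoing tube via VI. On the string, arrow tails vanish immediately by TF, while arrow heads are pushed across the mixed vertex onto the outgoing tube using VI. After finitely many such moves, the diagram is supported entirely on the outgoing tube, defining $\varphi$.

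The main obstacle is verifying that $\varphi$ is \emph{well-defined}: the reduction involves an ordering of rewrite choices, and different orderings must yield the same element of $\A(\uparrow)$ modulo 4T, TC, VI, RI. The natural strategy is to induct on the total number of arrow endings off the outgoing tube and check that each STU/VI/CP/TF rewrite move commutes, modulo the relations in $\A(\uparrow)$, with any other pending move; this is a mild but genuine bookkeeping task, most cleanly handled by organising the reduction as a confluent rewrite system in the spirit of diamond lemmas. Once this is settled, the identities $\psi \circ \varphi = \mathrm{id}$ and $\varphi \circ \psi = \mathrm{id}$ are immediate: on the image of $\psi$, the forward reduction has nothing to do, and conversely $\varphi$ sends any diagram to the canonical representative of its equivalence class, which $\psi$ re-embeds into the same class.
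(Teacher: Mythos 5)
Your proposal takes essentially the same route as the paper: the paper also defines $\varphi$ by using STU and CP to put the capped strand into a tails-only form, VI to push those tails onto the outgoing tube, TF to kill tails on the punctured strand, and VI (with CP guaranteeing no extra summands once the capped strand is cleared) to push the remaining heads up, with the inverse being the obvious inclusion of $\A(\uparrow)$ as the top strand. The only difference is that you explicitly flag the well-definedness/confluence check, which the paper simply leaves to the reader.
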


Since familiarity with the construction of this isomorphism will be necessary, we give a sketch of the proof:
\begin{proof}
The isomorphism $\varphi$ is shown in Figure \ref{fig:phi_iso}. Given any arrow diagram $\bD \in \arrows(\pcv)$, by the TF relation, the punctured strand has only arrow heads ending on it. These arrow heads are collectively denoted by $h$. For any element of $\arrows(\pcv)$ there is a representative with only arrow tails on the right strand (achievable via STU and CP relations): denote these arrow endings by $t$. Finally, we denote arrow endings on the top strand (both heads and tails) by $x$. Then, by repeated applications of the VI relation, $t$ can be ``pushed'' to the top strand: VI in this case does not result in a sum, as the other strand is punctured. After clearing the capped strand this way, use repeated VI relations to push $h$ to the top strand as well. Once again, no sums appear by the CP relation, as the capped strand no longer has any arrow endings. The result of this process is $\varphi (\bD)$. 

We leave it to the reader to check that $\varphi$ is well-defined, and that its inverse is the natural inclusion of $\arrows(\uparrow)$ as the top strand in $\arrows(\pcv)$.
\end{proof}

\begin{figure}[h]
    \centering

\tikzset{every picture/.style={line width=0.75pt}} 

\[\begin{tikzpicture}[x=0.75pt,y=0.75pt,yscale=-1,xscale=1]

\draw [line width=2.25]    (94,200) -- (94,64) ;
\draw [shift={(94,60)}, rotate = 90] [color={rgb, 255:red, 0; green, 0; blue, 0 }  ][line width=2.25]    (12.24,-5.49) .. controls (7.79,-2.58) and (3.71,-0.75) .. (0,0) .. controls (3.71,0.75) and (7.79,2.58) .. (12.24,5.49)   ;
\draw [shift={(94,200)}, rotate = 270] [color={rgb, 255:red, 0; green, 0; blue, 0 }  ][fill={rgb, 255:red, 0; green, 0; blue, 0 }  ][line width=2.25]      (0, 0) circle [x radius= 3.75, y radius= 3.75]   ;
\draw [color={rgb, 255:red, 255; green, 0; blue, 0 }  ,draw opacity=1 ]   (44,200) .. controls (44,179.33) and (40,160.33) .. (94,130) ;
\draw [line width=2.25]    (210,200) -- (210,64) ;
\draw [shift={(210,60)}, rotate = 90] [color={rgb, 255:red, 0; green, 0; blue, 0 }  ][line width=2.25]    (12.24,-5.49) .. controls (7.79,-2.58) and (3.71,-0.75) .. (0,0) .. controls (3.71,0.75) and (7.79,2.58) .. (12.24,5.49)   ;
\draw    (120,130) -- (188,130) ;
\draw [shift={(190,130)}, rotate = 180] [color={rgb, 255:red, 0; green, 0; blue, 0 }  ][line width=0.75]    (10.93,-3.29) .. controls (6.95,-1.4) and (3.31,-0.3) .. (0,0) .. controls (3.31,0.3) and (6.95,1.4) .. (10.93,3.29)   ;

\draw (101,92.4) node [anchor=north west][inner sep=0.75pt]    {$x$};
\draw (41,142.4) node [anchor=north west][inner sep=0.75pt]    {$h$};
\draw (101,152.4) node [anchor=north west][inner sep=0.75pt]    {$t$};
\draw (155,126.6) node [anchor=south] [inner sep=0.75pt]    {$\varphi $};
\draw (217,92.4) node [anchor=north west][inner sep=0.75pt]    {$x$};
\draw (217,122.4) node [anchor=north west][inner sep=0.75pt]    {$t$};
\draw (217,152.4) node [anchor=north west][inner sep=0.75pt]    {$h$};

\end{tikzpicture}\]

    \caption{The isomorphism $\varphi$ applied to the arrow diagram $\bD$ on the left.}
    \label{fig:phi_iso}
\end{figure}
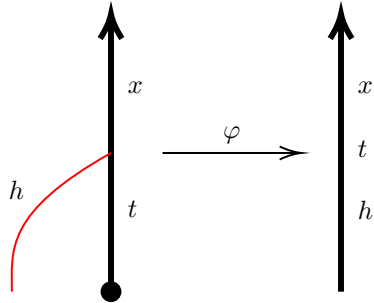

The next Lemma shows that puncturing and capping a vertex trivialises its $G$-value. 

\begin{lemma}\label{lem:Gpuncturecap}
For any $G\in \Aut_v(\arrows)$, $G(\pv) = 1$, that is, $G(\pv)=\pv$ with no arrows. 
\end{lemma}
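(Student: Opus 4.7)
The strategy is to use the compatibility of $G$ with the puncture operation. The generator $\pv$ is obtained from the tube vertex $\vertex$ by puncturing one of its incoming tube strands, turning it into a string: $\pv = p(\vertex)$. Since $G \in \Aut_v(\arrows)$ commutes with all operations of the wheeled prop — in particular with the puncture operation $p$ — we immediately obtain
\[
G(\pv) \;=\; G(p(\vertex)) \;=\; p(G(\vertex)) \;=\; p(N),
\]
where $N = G(\vertex) = e^\eta$. The lemma therefore reduces to showing that $p(N) = \pv$ in $\arrows(\pv)$, i.e.\ every nontrivial arrow term in $N$ is killed by the puncture or reduces to the trivial diagram.

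By the $v$-small assumption, $\eta \in \mathcal{P}(\vertex) \cong \mathcal{P}(\uparrow_2)$ has no $\mathfrak{a}_2$-component, so $\eta \in \tder_2 \ltimes \cyc_2$. I would next use the unitarity equation (U') from Proposition~\ref{prop:SimplifiedEqns}: writing it to all orders via $\mathrm{BCH}$, it forces $\eta$ to be anti-invariant under $A_1A_2$. Since a wheel $w \in \cyc_2$ has no arrow heads on either strand, $A_1A_2(w) = w$, and anti-invariance then forces $w = 0$. Thus $\eta$ is in fact a sum of trees, i.e.\ $\eta \in \tder_2$.

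The puncture operation is linear and sends to zero any basis diagram with an arrow tail on the punctured strand. To handle the surviving tree contributions in each $p(\eta^k)$, I would combine the TF relation (arrow tails on the string strand vanish) with repeated application of VI at the tube-string vertex of $\pv$. The key extra input is equation (R4'), which at the primitive level says $[\eta^{12},\, t^{13}+t^{23}]=0$ in $\tder_3$; this centralizer condition, together with the unitarity and the specific form of $\eta$, forces every surviving tree term in $p(\eta^k)$ to reduce to the trivial diagram in $\arrows(\pv)$.

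The main obstacle is precisely this last algebraic step: verifying that the combination of the centralizer condition from (R4'), the unitarity (U'), and the VI/TF relations genuinely kills all of $p(\eta^k)$ for $k \geq 1$. A cleaner alternative route, which I expect to be the cleanest presentation, is to first establish an isomorphism $\arrows(\pv) \cong \arrows(\uparrow)$ in the spirit of Lemma~\ref{lem:EK}: TF removes tails from the string, then VI at the tube-string vertex pushes all remaining arrow heads off the string and merges the two tube strands. Under this identification, $p(N)$ corresponds to a group-like element of $\arrows(\uparrow)$; since $\tder_1 = 0$ and only wheel contributions can occur, while the $v$-small condition combined with (U') has already eliminated wheels from $\eta$, one concludes $p(N) = 1$ directly.
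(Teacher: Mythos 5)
Your opening step is sound and is genuinely different from the paper's route: since $G$ commutes with the puncture operation, $G(\pv)=G(p(\vertex))=p(G(\vertex))=p(N)$, so the lemma reduces to showing $p(N)=1$. The paper does none of this: it simply cites \cite[Lemma 2.8]{BND:WKO3}, which asserts that $Z(\pv)=1$ for \emph{every} homomorphic expansion $Z$ of $\wf$, and then observes that since $G\circ Z$ is again an expansion, $G(\pv)=G(Z(\pv))=(G\circ Z)(\pv)=1$. Your approach is more ambitious in that it tries to extract the statement directly from the equations \eqref{R4'}, \eqref{U'}, \eqref{C'}, but as written it does not close.

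The first genuine gap is your claim that \eqref{U'} together with $v$-smallness forces the wheel part of $\eta=\log N$ to vanish. This is false. Unitarity does give $A_1A_2(\eta)=-\eta$, but $A_1A_2$ applied to a single tree equals minus that tree only \emph{modulo wheels}: reversing the strand orientations moves the head of the tree past its own tails, and restoring the standard ordering via STU produces wheel correction terms (this is exactly the point made in Remark~\ref{remark:l} and in the proof of Lemma~\ref{lem:rhopunitarity}, where the identity $A_1A_2(\ell(\eta))=-\ell(\eta)$ is only asserted in $\arrows(\uparrow_2)^{tr}$). So the tree and wheel parts of $\eta$ do not decouple under $A_1A_2$, and indeed Lemma~\ref{lem:WNtr} shows that in general $N=W\cdot N^{tr}$ with $W=C^{12}(C^1)^{-1}(C^2)^{-1}$ a nontrivial product of wheels. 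Both your main argument and your ``cleaner alternative'' rest on this false premise. The second gap is the one you flag yourself: even for the tree part, you do not show that the terms surviving the puncture vanish in $\arrows(\pv)$. For example, a degree-one component of $\eta$ proportional to the braid derivation $\mathbf{t}^{1,2}=(y,x)$ is not excluded by the $\krv$ equations (it is special and has vanishing divergence), and its image $\ell(\mathbf{t}^{1,2})=a^{12}+a^{21}$ survives the puncture as the single arrow $a^{21}$ from the remaining tube strand to the string; nothing in the TF, VI, or \eqref{R4'} manipulations you sketch obviously kills it. That residual analysis is precisely where the content of \cite[Lemma 2.8]{BND:WKO3} lies, so the proposal, as it stands, replaces the citation with an argument that is not yet a proof.
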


\begin{proof}
\cite[Lemma 2.8]{BND:WKO3}, states that for any homomorphic expansion $Z$ of $\wf$, $Z(\pv) = 1$. In other words, $Z(\pv)$ is the skeleton element $\pv \in \arrows$, with no arrows. 
Since automorphisms $G\in \Aut_{v}(\arrows)$ are expansion-preserving, the result is immediate.
\end{proof}

We are now ready to construct $G$ from $F$ as in \eqref{eq:GFcommute}. 

\begin{cons}\label{cons:buckle}
By Corollary~\ref{auto_wheels}, $G$ is uniquely determined by $G(\vertex)^{tr}$, and hence the idea is to compute $G(\vertex)^{tr}$ from $F(\beta)$ for a specific, suitably chosen $\beta\in \PaCD$, depicted on the left side of Figure~\ref{fig:beta}. Given and $F\in\Aut(\PaCD)$ with $F(\Associator)=\Psi(t^{12}, t^{23})$, we compute $F(\beta)$:
\begin{align*}
   F(\beta)= (\Psi^{(13)24})^{-1}\, \Psi^{132}\, \virtualcrossing^{23}\, (\Psi^{123})^{-1}\, \Psi^{(12)34}.
\end{align*} This is shown on the left side of Figure~\ref{fig:betap}.
Using $F(\beta)$, we determine $G(\vertex)^{tr}$ as follows: 
\begin{enumerate}
\item Applying $\alpha$ (Definition~\ref{def: alpha}) to $F(\beta)$ we obtain an arrow diagram in $\arrows$ with skeleton $\beta^{cl} = \alpha(\beta)$. Any $G\in\Aut_v(\A)$ fitting into the commutative diagram \eqref{eq:GFcommute} satisfies  $\alpha(F(\beta)) = G(\beta^{cl})$, and thus we know the value $G(\beta^{cl})$. See Figure~\ref{fig:betap}. 
\item To find the value of $G(\vertex)^{tr}$, we perform a series of operations on $G(\beta^{cl})$, as shown in Figure~\ref{fig:Stripping} and explained below.  First, we pre-compose with a capped vertex $\cvtikz$. Then we perform two unzip operations: the first on the top strand of the new vertex, and the second a disk unzip on the capped strand. Finally, we puncture the two bottom uncapped strands to obtain an arrow diagram on a skeleton that is a double capped, double punctured vertex denoted $W$. 
\item Since $G\in \Aut(\arrows)$, it commutes with all the prop and auxiliary operations of $\arrows$, and hence we can compute $G(W)$ from $G(\beta^{cl})$, as shown in the bottom line of Figure~\ref{fig:Stripping}. Due to the composition with $\cvtikz$ in the previous step, $G(W)$ contains involves copies of $N=G(\vertex)$ and $C=G(\bcap)$: this is shown in detail in Figure~\ref{fig:Stripping}; schematically,  $G(\cvtikz\beta^{cl})=G(\bcap)G(\vertex)G(\beta^{cl})$ and thus $G(W) = p^2 u^2\left( G(\bcap)G(\vertex)G(\beta^{cl})\right)$.
\item By Lemma~\ref{lem:Gpuncturecap} we have $G(\pv)$ = 1, and therefore the copy of $G(\vertex)$ in $G(W)$ from the composition with $\cvtikz$ cancels after the puncture operations. This is shown in the rightmost column of Figure~\ref{fig:Stripping}. Thus, $G(W) = u(C) p^2G(\beta^{cl})$. 
\item On the other hand, from the fact that $G$ is a prop automorphism, $G(W)$ is a composite of $G(\vertex)$, two copies of $G(\pv)$ and two copies of $G(\bcap)$. Again by Lemma~\ref{lem:Gpuncturecap}, $G(\pv)=1$, and therefore, schematically, $G(W)=G(\bcap) G(\bcap)G(\vertex) $. Since $G(\bcap)\in \arrows(\bcap)$, by Lemma~\ref{cyclic_lemma} $(G(\bcap))^{tr}=1$, and therefore $(G(W))^{tr}=(G(\vertex))^{tr}=N^{tr}$, which is the value we need to compute.
\item Combining (4) and (5) we have $N^{tr}=\left( u(C) p^2G(\beta^{cl})\right)^{tr}=(p^2G(\beta^{cl}))^{tr}$.
\item In order to express $N^{tr}$ in its standard form as an element of $\arrows(\uparrow_2)\cong \arrows(\vertex)$, rather than in $\arrows(W)$, we apply the isomorphism $\varphi$ of Lemma~\ref{lem:EK}: $\arrows(W)\stackrel{\varphi}{\cong} \arrows(\vertex)$.  Therefore,  $N^{tr}=\varphi(p^2G(\beta^{cl})$. 
\end{enumerate}
\end{cons}


\begin{figure}[h]
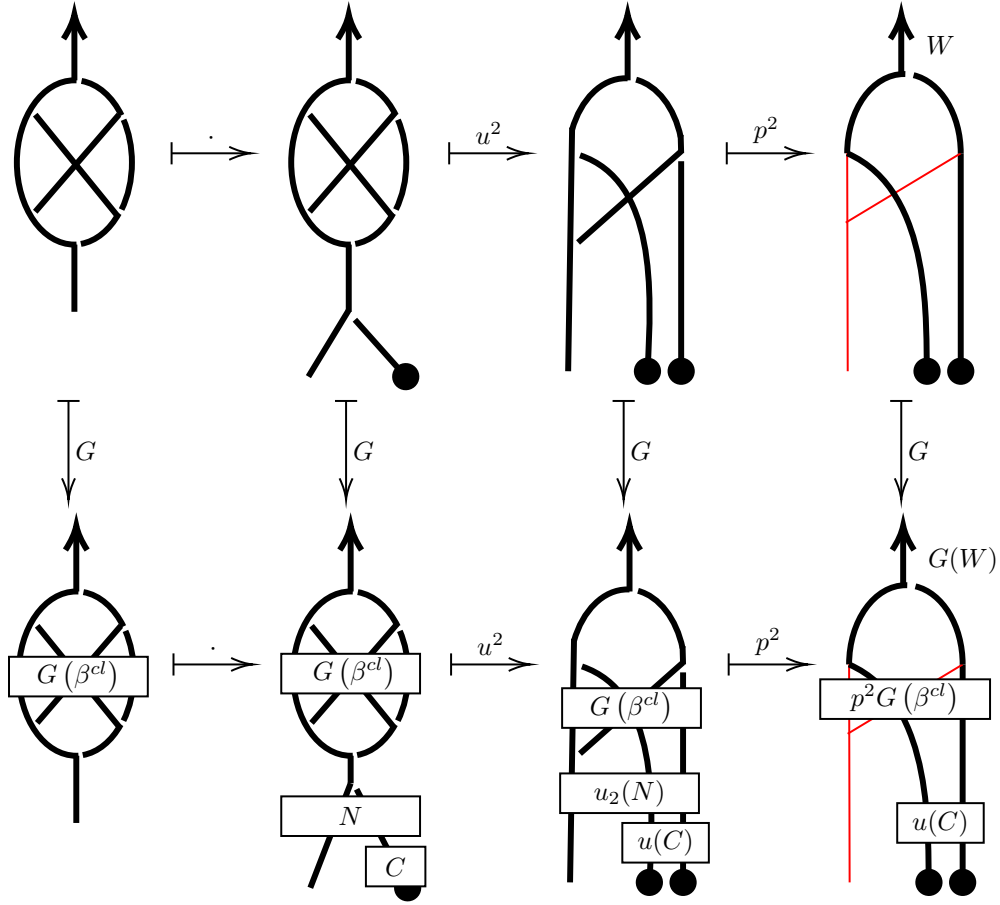

    \centering
    \begin{subfigure}[c]{\textwidth}
    \centering
\tikzset{every picture/.style={line width=0.75pt}} 
\[
\]
    \caption{The stripping process.}\label{fig:Stripping}
\end{figure}



Construction~\ref{cons:buckle} shows that given $F\in \Aut(\PaCD)$, if a $G\in \Aut(\arrows)$ fitting into the diagram~\eqref{eq:GFcommute} exists, then $(G(\vertex))^{tr}$ is explicitly computable from $F$. In the following theorem, we compute the complete value of $G(\vertex)$:

\begin{theorem}\label{thm:GfromF}
Given an automorphism $F\in \Aut(\PaCD)$ with $F(\Associator)=\Psi=\Psi(t^{12},t^{23})$, then if there exists $G\in \Aut_v(\arrows)$ which completes the commutative diagram \eqref{eq:GFcommute}, then $G$ is uniquely determined by $F$ and 
    $N=G(\vertex)\in \arrows(\uparrow_2)$ and $C=G(\bcap)\in \Q[[\xi]]/\langle\xi\rangle$ satisfy
    \begin{equation}\label{eq:GfromF}
        N=(C^1)^{-1}(C^2)^{-1}\varphi\left(\Psi^{-1}\left(a^{2(13)},-a^{2(13)}-a^{4(13)}\right)\cdot \Psi(a^{23},a^{43})\right)C^{12},
    \end{equation}
    where $\varphi$ is the isomorphism of Lemma~\ref{lem:EK}, and $a^{ij}$ denotes an arrow from strand $i$ to strand $j$, and the strand numbering is as shown in Figure~\ref{fig:betas}.
\end{theorem}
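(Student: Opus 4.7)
The plan has two components: uniqueness of $G$ given $F$, and derivation of the explicit formula \eqref{eq:GfromF}.

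Uniqueness is immediate from Proposition~\ref{vertex_auto}: any $G \in \Aut_v(\A)$ is uniquely determined by $N = G(\vertex)$, so once we pin down $N$, we pin down $G$.

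For the explicit formula, I would follow the blueprint laid out in Construction~\ref{cons:buckle}, but carry it all the way to the wheel level rather than stopping at the tree quotient. The key idea is to evaluate $G(W)$ in two distinct ways, where $W$ is the double-capped, double-punctured vertex skeleton from the construction. First, start with $\beta \in \PaCD$ and the known value $F(\beta) = (\Psi^{(13)24})^{-1}\Psi^{132}\virtualcrossing^{23}(\Psi^{123})^{-1}\Psi^{(12)34}$. Apply $\alpha$ to obtain $\alpha(F(\beta)) = G(\beta^{cl})$, using commutativity of \eqref{eq:GFcommute}. Pre-compose with $\cvtikz$ and apply $u^2 p^2$. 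Since $G$ intertwines all wheeled prop and auxiliary operations, one gets $G(W) = u^2 p^2\bigl(C \cdot N \cdot \alpha(F(\beta))\bigr)$, where $C = G(\bcap)$ and $N = G(\vertex)$. By Lemma~\ref{lem:Gpuncturecap} the copy of $N$ introduced through $\cvtikz$ simplifies after the punctures (it sits on strands that become $\pv$-like), leaving $G(W) = u(C) \cdot p^2(\alpha(F(\beta)))$ in $\arrows(W)$.

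The second computation treats $W$ directly as a composition of generators of $\arrows$ (two $\pv$, two $\bcap$, and one $\vertex$). The homomorphism property of $G$ then expresses $G(W)$ as a composition of $N$ with cap factors $C^1, C^2, C^{12}$ placed on the appropriate strands, since $G(\pv) = 1$. Equating the two expressions and applying the isomorphism $\varphi: \arrows(W) \stackrel{\cong}{\to} \arrows(\uparrow_2)$ from Lemma~\ref{lem:EK} transfers the equation to $\arrows(\vertex)$ and allows one to solve for $N$, producing exactly \eqref{eq:GfromF}. The surviving two $\Psi$-factors and their specific arguments $(a^{2(13)}, -a^{2(13)}-a^{4(13)})$ and $(a^{23}, a^{43})$ arise from applying $p^2$ to $\alpha(F(\beta))$: on the two strands that get punctured, the TF relation kills any arrows with tails there, and on the strands merging at a cap, the sum-of-arrows relations (CP combined with STU) force the linear combinations appearing in the formula; the remaining two $\Psi$-factors (those whose chord arguments lie between surviving non-punctured portions of the skeleton) translate directly under $\alpha$ with these modified arrow arguments.

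The main obstacle is the careful arrow-diagram bookkeeping after $p^2$: verifying that $\alpha\bigl((\Psi^{(13)24})^{-1}\bigr)$ and $\alpha(\Psi^{(12)34})$ truly become trivial on the punctured skeleton (via TF and the cancellation inherent in $\alpha$, which sends each chord to a pair of oppositely-oriented arrows whose tails on punctured strands vanish), and that the surviving $\alpha(\Psi^{132})$ and $\alpha((\Psi^{123})^{-1})$ reduce to the stated arrow arguments. Once this reduction is established, the derivation of \eqref{eq:GfromF} is purely formal: it is the unique solution for $N$ in the equation $G(W) = G(W)$ arising from the two computations.
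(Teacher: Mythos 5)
Your overall strategy --- computing $G(W)$ in two ways (once by applying $u^2p^2$ to $G(\beta^{cl})=\alpha(F(\beta))$ after pre-composing with a capped vertex, once via the homomorphism property on the generators composing $W$), then applying $\varphi$ and solving for $N$ --- is exactly the route the paper takes. Your uniqueness remark is essentially right, though it is cleaner to route it through Corollary~\ref{auto_wheels} (uniqueness from $N^{tr}$) rather than Proposition~\ref{vertex_auto}: your formula for $N$ still contains the as-yet-undetermined cap value $C$, whereas the tree projection $N^{tr}$ is determined by $\Psi$ alone.

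The computational core of your sketch, however, contains a concrete error: you have swapped which $\Psi$-factors die under the punctures and which survive. After $p_1p_3\circ\alpha$, it is $\Psi^{132}=\Psi(t^{13},t^{23})$ and $(\Psi^{123})^{-1}=\Psi^{-1}(t^{12},t^{23})$ that become trivial --- the first because $t^{13}\mapsto a^{13}+a^{31}$ has all tails on the punctured strands $1$ and $3$, so its image is $0$ and Lemma~\ref{lem:GRT}(1) applies; the second because the surviving arrows $a^{21},a^{23}$ have all their tails on strand $2$ and hence commute by TC, so Lemma~\ref{lem:GRT}(1) applies again. The factors that survive are $(\Psi^{(13)24})^{-1}$ and $\Psi^{(12)34}$, which produce the two factors in \eqref{eq:GfromF}. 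Relatedly, you are missing the one genuinely non-obvious step: applying $p_1p_3\alpha$ directly to $\Psi^{-1}(t^{(13)2},t^{24})$ yields $\Psi^{-1}(a^{2(13)},\,a^{24}+a^{42})$, because $a^{24}$ and $a^{42}$ run between the two capped strands and are not killed by TF. To obtain the stated argument $-a^{2(13)}-a^{4(13)}$ one must first invoke the $\grt_1$ identity $\Psi(t^{ij},t^{jk})=\Psi(t^{ij},-t^{ij}-t^{ik})$ of Lemma~\ref{lem:GRT}(2) (a consequence of the centrality of $t^{ij}+t^{ik}+t^{jk}$) to rewrite $t^{24}$ as $-t^{(13)2}-t^{(13)4}$ before applying $\alpha$ and the punctures; this is not a consequence of CP/STU bookkeeping as you suggest. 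With the factor identification corrected and this symmetry step inserted, your argument matches the paper's proof.
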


To prove Theorem~\ref{thm:GfromF}, we first establish a few basic facts about $\grt_1$. 
\begin{lemma}\label{lem:GRT}
For $\Psi=\Psi(x,y)\in \grt_1$, the following properties hold:
\begin{enumerate}
    \item If $[x,y]=0$ then $\Psi(x,y)=1$.
    \item $\Psi(t^{ij},t^{jk})=\Psi(t^{ij},-t^{ij}-t^{ik}),$ where $t^{ij}$ denotes the chord between the $i$-th and $j$-th strands.
\end{enumerate}
\end{lemma}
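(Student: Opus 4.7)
The strategy would be to exploit the fact that for $\Psi \in \grt_1$, writing $\Psi = e^{\psi}$ with $\psi \in \widehat{\mathfrak{lie}}_2$, one has $\psi$ lying in the derived subalgebra $[\widehat{\mathfrak{lie}}_2, \widehat{\mathfrak{lie}}_2]$; i.e.\ the linear part of $\psi$ vanishes. I would first establish this by comparing the degree-one parts of the pentagon equation~\eqref{eq:grtpentagon}: writing the linear part of $\psi(x,y)$ as $\alpha x + \beta y$, the degree-one piece of the pentagon reduces to $\alpha = 2\alpha$ and $\beta = 2\beta$, forcing $\alpha = \beta = 0$.

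For part (1), assume $[x,y]=0$ in the ambient Lie algebra. Then the Lie subalgebra generated by $x$ and $y$ is abelian, since every iterated bracket factors through $[x,y]$ at its innermost level. As $\psi \in [\widehat{\mathfrak{lie}}_2, \widehat{\mathfrak{lie}}_2]$ is a Lie series with every homogeneous component a $\mathbb{Q}$-linear combination of iterated brackets, evaluation yields $\psi(x,y) = 0$, hence $\Psi(x,y) = 1$.

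For part (2), set $c := -(t^{ij}+t^{ik}+t^{jk})$, so that $t^{jk}+c = -t^{ij}-t^{ik}$ and the claim becomes $\Psi(t^{ij}, t^{jk}+c) = \Psi(t^{ij}, t^{jk})$. A direct computation with the infinitesimal braid relations gives $[t^{ij}, c] = [t^{jk}, c] = 0$. The key step is then to introduce the auxiliary Lie algebra $\mathfrak h$, defined as the quotient of the free Lie algebra on generators $x, y, \gamma$ by the relations $[x,\gamma]=[y,\gamma]=0$. By Jacobi, $\gamma$ is then central in all of $\mathfrak h$, and the assignment $(x,y,\gamma) \mapsto (t^{ij}, t^{jk}, c)$ extends to a Lie algebra map $\mathfrak h \to \widehat{\mathfrak t}_n$. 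Within $\mathfrak h$, for any Lie word $w$ of degree $\geq 2$, the multilinear expansion of $w(x, y+\gamma)$ reduces to $w(x, y)$: each term in which $\gamma$ replaces at least one $y$ places $\gamma$ as a leaf of a nested bracket of positive depth, and the innermost bracket containing $\gamma$ vanishes by centrality. Applied to $\psi$ this gives $\psi(x, y+\gamma) = \psi(x, y)$ in $\mathfrak h$; pushing forward to $\widehat{\mathfrak t}_n$ and exponentiating yields the identity in part (2).

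The main obstacle is confirming that $\gamma$ is central in all of $\mathfrak h$ (not merely at generators) so that the substitution collapse really goes through; this is a short induction on bracket depth using Jacobi. Everything else is routine bookkeeping, and it is worth observing that this ``substitute by a central element'' trick is precisely the manipulation that converts $\Psi(t^{ij}, t^{jk})$ into the form $\Psi(-x-y, x)$ appearing in the Alekseev--Torossian map~\eqref{eq:rhodef}, so Lemma~\ref{lem:GRT}(2) is exactly the identity needed to make Construction~\ref{cons:buckle} produce an image in $\krv$.
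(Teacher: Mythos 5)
Your proof is correct, and for part (2) it is essentially the paper's argument: the paper also observes that $t^{ij}+t^{ik}+t^{jk}$ commutes with each of $t^{ij},t^{ik},t^{jk}$ and concludes that substituting $t^{jk}\mapsto -t^{ij}-t^{ik}$ leaves every iterated bracket, hence $\Psi$, unchanged. Your auxiliary Lie algebra $\mathfrak h$ is just a more formal packaging of that same substitution-by-a-central-element step; it buys a cleanly stated induction but no new content. The one genuine difference is in part (1). The paper asserts that (1) is ``a direct consequence of'' the inversion relation \eqref{eq:grtinversion}, but that relation alone only forces the linear part $\alpha x+\beta y$ of $\psi$ to satisfy $\alpha+\beta=0$, leaving $\Psi(x,y)=e^{\alpha(x-y)}$ when $[x,y]=0$; one really does need the vanishing of the degree-one part of $\psi$, which you correctly extract from the degree-one piece of the pentagon \eqref{eq:grtpentagon} (equivalently, it is Drinfel'd's observation, cited elsewhere in the paper, that $\psi$ vanishes in degree one). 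So on this point your argument is the more complete one. Your closing remark that (2) is exactly the manipulation feeding the Alekseev--Torossian normal form $\Psi(-x-y,\cdot)$ is also accurate and matches how the lemma is used in the proof of Theorem~\ref{thm:GfromF}.
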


\begin{proof}
The first property is a direct consequence of the defining relation (\ref{eq:grtinversion}). The second follows from the fact that $(t^{jk}+t^{ij}+t^{ik})$ is central in $\mathfrak{t}_n$ by 4T relation of chord diagrams $[t^{jk}, t^{ij}+t^{ik}]=0$ \cite[Definition 2.7]{BNGT}. Because of this, for any $X \in \mathfrak{t}_n$:
\begin{align*}
    [t^{jk}+t^{ij}+t^{ik}, X] &= 0\\
    \implies [t^{jk},X] &= -[t^{ij}+t^{ik},X]\\
    \implies [t^{jk},X] &= [-t^{ij}-t^{ik},X]
\end{align*}
and therefore, 
\begin{align}
    \Psi(t^{ij},t^{jk})=\Psi(t^{ij},-t^{ij}-t^{ik})
\end{align}
as required. 
\end{proof}

\begin{proof}[Proof of Theorem \ref{thm:GfromF}]
The fact that $F$ uniquely determines $G$ is almost immediate from Construction~\ref{cons:buckle}. By Lemma~\ref{auto_wheels}, $G\in \Aut_v(\arrows)$ is uniquely determined by $N^{tr}=\apr(G(\vertex))$. Step (6) of Construction~\ref{cons:buckle} shows how to compute $N^{tr}$ from $F(\beta)$.
Thus, $G$, is uniquely determined by $F$.

Given $F(\Associator)=\Psi(t^{12}, t^{23})$, we compute $F(\beta)$ as shown in the left-hand side of Figure~\ref{fig:betap}:
\begin{align}
   F(\beta)= (\Psi^{(13)24})^{-1}\, \Psi^{132}\, \virtualcrossing^{23}\, (\Psi^{123})^{-1}\, \Psi^{(12)34}.
\end{align}
From here on we will suppress the skeleton crossing $\virtualcrossing^{23}$, as it is part of the skeleton and commutes with chords. By diagram \eqref{eq:GFcommute}, we have 
\begin{align}\label{eq:Gbeta}
    G(\beta^{cl})=G(\alpha(\beta)) = \alpha(F(\beta)) = \alpha\left((\Psi^{(13)24})^{-1}\, \Psi^{132}\, (\Psi^{123})^{-1}\, \Psi^{(12)34}\right). 
\end{align}

We obtain the formula \eqref{eq:GfromF} from this by the sequence of operations detailed in Construction~\ref{cons:buckle}, and shown in the first row of Figure \ref{fig:Stripping} for the skeleton $\beta^{cl}$,  and in the second row for $G(\beta^{cl})$.
Recall that by step (6) of the construction,
$\arrows(W)\cong \arrows(\vertex)\cong \arrows(\uparrow_2).$

By step (5) of the construction, $G(W)$ is a circuit algebra composition of $G(\vertex)=N$ and two copies of $G(\bcap)=C$. Applying $\varphi: \arrows(W)\to \arrows(\uparrow_2)$, involves sliding the $C$ values onto the top two strands, we obtain $\varphi(G(W))=C^1C^2N$, and therefore,
\begin{equation}
N=(C^1)^{-1}(C^2)^{-1}\varphi(G(W)).
\end{equation}

Since $G$ commutes with all the operations performed throughout the `stripping' process, this allows us to compute $G(W)$, as shown in Figure~\ref{fig:Stripping} and in step (4) of the construction. In the strand numbering of Figure~\ref{fig:betap}, the punctured strands are 1 and 3 and the capped strands are 2 and 4, thus we make step (4) of the construction more precise:
\begin{equation}
    G(W)=C^{(24)}p_1p_3G(\beta^{cl})=C^{24}p_1p_3\left(\alpha(F(\beta))\right).
\end{equation}
Thus, using the formula \eqref{eq:Gbeta},
\begin{multline}\label{eq:GW}
G(W)=C^{24}p_1p_3\left(\alpha\left((\Psi^{(13)24})^{-1}\, \Psi^{132}\, (\Psi^{123})^{-1}\, \Psi^{(12)34}\right)\right)
\\
=C^{24}p_1p_3\left(\alpha\left(\Psi^{-1}(t^{(13)2},t^{24})\, \Psi(t^{13},t^{23})\, \Psi^{-1}(t^{12},t^{23})\, \Psi(t^{(12)3},t^{34})\right)\right),
\end{multline}
where $t^{ij}$ is the chord from $i$ to $j$ and $t^{(ij)k} = t^{ik} + t^{jk}$. 

By definition, $\alpha(t^{ij})=a^{ij}+a^{ji}$, where $a^{ij}$ denotes a horizontal arrow pointing from strand $i$ to strand $j$.
By the TF relation of arrow diagrams (Figure~\ref{fig:ArrowRelns}), arrow tails on a punctured strand are zero. In other words, $p_1p_3(a^{1m})=p_1p_3(a^{3m}))=0$ for any m. 

Then, using property (1) of Lemma~\ref{lem:GRT},
$$
    p_1p_2\alpha\left(\Psi(t^{13}, t^{23})\right) = \Psi(a^{13}+a^{31}, a^{23}+a^{32})=\Psi(0, a^{23}+a^{32})= 1.
$$

Similarly,
$$    p_1p_3\alpha\left(\Psi^{-1}(t^{12},t^{23})\right) = \Psi^{-1}(a^{21},a^{23})=1,
$$
since $[a^{21},a^{23}] = 0$ by the TC relation. 

For $\Psi(t^{(12)3},t^{34})$, we have
$$
    p_1p_3\alpha\left(\Psi(t^{(12)3},t^{34})\right) = \Psi(a^{23},a^{43}),
$$
since $a^{13},a^{31}, a^{32}$ and $a^{34}$ are all in the kernel of $p_1\circ p_3$.

For $\Psi(t^{(13)2},t^{24})^{-1}$, we first apply the symmetry property (2) from Lemma \ref{lem:GRT} to get 
$
    \Psi(t^{(13)2},t^{24})^{-1} = \Psi(t^{(13)2}, -t^{(13)2}-t^{(13)4})^{-1}.
$
Thus,
$$
    p_1p_3\alpha\left(\Psi^{-1}(t^{(13)2}, -t^{(13)2}-t^{(13)4})\right) = \Psi^{-1}(a^{2(13)},-a^{2(13)}-a^{4(13)}). 
$$

Substituting these results into equation~(\ref{eq:GW}), we obtain
\begin{equation}\label{eq:GWresult}
    G(W) = C^{24}\Psi^{-1}(a^{2(13)},-a^{2(13)}-a^{4(13)})\cdot\Psi(a^{23},a^{43}).
\end{equation}

To prove the formula \eqref{eq:GfromF}, it remains to show that
$$ \varphi(G(W))= \varphi\left(\Psi^{-1}\left(a^{2(12)},-a^{2(13)}-a^{4(13)}\right)\cdot \Psi(a^{23},a^{43})\right)C^{12}. $$
Note that in \eqref{eq:GWresult}, $G(W)$ is expressed in a form where there are only arrow tails on the capped strands 2 and 4 (and only arrow heads on the punctured strands 1 and 3). The isomorphism $\varphi$ acts by ``pushing'' all the arrow tails from strands 2 and 4 up across the $\pcv$ vertices. Due to the TC relation, it is possible to start with $C^{24}$, followed by the arrow tails from $\Psi^{-1}\left(a^{2(13)},-a^{2(13)}-a^{4(13)}\right)\cdot \Psi(a^{23},a^{43})$, then—with the capped stands cleared—the arrow heads from strands 1 and 3 are pushed up as well.
Thus, we have $\varphi\left(\Psi^{-1}\left(a^{2(12)},-a^{2(13)}-a^{4(13)}\right)\cdot \Psi(a^{23},a^{43})\right)$, as required. 
\end{proof}

\subsection{An embedding of automorphism groups}
Our goal is to prove the following theorem, the main result of this paper:
\begin{theorem}\label{thm:main} Construction~\ref{cons:buckle} defines a group homomorphism $\rp: \Aut(\PaCD) \to \Aut_v(\arrows)$, given by $\rp(F)=G$. This homomorphism is equivalent to the Alekseev-Torossian map $\rho$ in the sense that $\rp$ fits into the commutative diagram \eqref{eq:rp}.
\end{theorem}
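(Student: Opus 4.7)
The strategy is to define, for each $F\in\Aut(\PaCD)$, the automorphism
\[
\widetilde{G}(F) \ := \ \Theta^{-1}\bigl(\rho(\tau(F))\bigr) \ \in \ \Aut_v(\arrows).
\]
This assignment is manifestly well-defined (since $\rho$ lands in $\krv$ and $\Theta$ is an isomorphism by Theorem~\ref{thm:kv and krv}) and defines a group homomorphism $F\mapsto \widetilde{G}(F)$ making diagram \eqref{eq:rp} commute by construction. The content of the theorem is therefore to show that $\widetilde{G}(F)$ agrees with the automorphism $G=\rp(F)$ produced by Construction~\ref{cons:buckle}. By Proposition~\ref{vertex_auto} it is enough to check that $\widetilde{G}(F)(\vertex)=G(\vertex)$, and by Corollary~\ref{auto_wheels} it suffices to compare their tree-level images $\apr(\widetilde{G}(F)(\vertex))$ and $\apr(G(\vertex))$.

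To carry this out, note that any $G$ completing the commutative diagram \eqref{eq:GFcommute} is forced (by Theorem~\ref{thm:GfromF}) to have $N=G(\vertex)$ given by formula \eqref{eq:GfromF}. Applying the tree-level quotient $\apr$ to \eqref{eq:GfromF} annihilates all the cap factors $C^1,C^2,C^{12}$, which lie in the image of $\hat{U}(\cyc_n)$ by Lemma~\ref{cyclic_lemma}. Thus it suffices to establish
\begin{equation}\label{eq:treecomp}
\apr\Big(\varphi\bigl(\Psi^{-1}(a^{2(13)},-a^{2(13)}-a^{4(13)})\cdot \Psi(a^{23},a^{43})\bigr)\Big)
\ =\ \apr\bigl(\widetilde{G}(F)(\vertex)\bigr),
\end{equation}
where by the definition \eqref{eq:Theta} of $\Theta$, the right-hand side corresponds, under the isomorphism $\arrows(\uparrow_2)^{tr}\cong\hat{U}(\tder_2\oplus\mathfrak a_2)$, to the group-like element $\bigl(\Psi(-x-y,x),\Psi(-x-y,y)\bigr)\in\TAut_2$ produced by the Alekseev--Torossian formula \eqref{eq:rhodef}. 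The key observation is that, under the chain of identifications $\arrows(W)\stackrel{\varphi}{\cong}\arrows(\vertex)\stackrel{\Upsilon}{\cong}\hat{U}(\tder_2\oplus\mathfrak a_2\ltimes\cyc_2)$ followed by the projection $pr$ of Proposition~\ref{prop:SES}, the arrows $a^{ij}$ on the skeleton of $W$ project to generators of a copy of $\mathfrak{t}_n\hookrightarrow\tder_n$ acting on the two surviving variables $x,y\in\tder_2$. A careful bookkeeping of the stripping procedure (pre-composition with $\cvtikz$, unzip of the new vertex's top, disk unzip of the cap, and punctures of strands $1,3$) shows that the capped strands $2,4$ contribute the ``merged'' variable $-x-y$ via the unitarity relation forced by unzip, while the punctured strands $1,3$ retain the labels $x,y$. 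Under this identification, the arguments of $\Psi^{-1}$ and $\Psi$ in \eqref{eq:GfromF} become exactly the substitutions $(-x-y,x)$ and (after using $\Psi(u,v)=\Psi(u,-u-v)$ from Lemma~\ref{lem:GRT}(2)) $(-x-y,y)$, establishing \eqref{eq:treecomp}.

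Once \eqref{eq:treecomp} is established, $\widetilde{G}(F)(\vertex)^{tr}=G(\vertex)^{tr}$, hence $\widetilde{G}(F)=G=\rp(F)$. This simultaneously proves that $\rp$ is well-defined, that diagram \eqref{eq:rp} commutes, and that $\rp$ is a group homomorphism (as a composite of group (iso)morphisms). The main obstacle is the bookkeeping step: tracing through the sequence of operations (closure $\alpha$, composition with $\cvtikz$, double unzip, double puncture, double $\varphi$) and verifying that the resulting strand-to-variable assignment converts the $a^{i(jk)}$ notation into precisely the Alekseev--Torossian substitution pattern $(-x-y,x)$, $(-x-y,y)$. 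This is largely a perturbative/diagrammatic calculation, relying on the defining hexagon and pentagon relations of $\grt_1$ (Definition~\ref{def:grt1}) to rearrange terms into the form dictated by the centrality identity $[t^{jk},t^{ij}+t^{ik}]=0$ used in Lemma~\ref{lem:GRT}(2).
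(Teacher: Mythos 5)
Your proposal is correct and follows essentially the same route as the paper: the paper likewise defines $\rp=\Theta^{-1}\circ\rho\circ\tau$ (equivalently, proves $r=v\circ\Theta^{-1}\circ\rho\circ\tau$) and reduces everything, via Proposition~\ref{vertex_auto} and Corollary~\ref{auto_wheels}, to checking that the tree-level vertex value produced by Construction~\ref{cons:buckle} corresponds under $\gamma$ to the Alekseev--Torossian element $(\Psi(-x-y,x),\Psi(-x-y,y))$; that computation is the content of Lemma~\ref{lem:Nxy}. One caveat on your sketch of the ``bookkeeping'' step: the $-x-y$ arguments do not come from ``the unitarity relation forced by unzip,'' nor do the two $\Psi$-factors of \eqref{eq:GfromF} separately yield the two components of $\rho(\Psi)$ --- rather, after the punctures kill tails on strands $1,3$, the action on $x$ comes from part of the first factor rewritten via the inversion relation \eqref{eq:grtinversion}, while the action on $y$ combines contributions from \emph{both} factors and is simplified using the hexagon relation \eqref{eq:grthexagon}; your cited tools (Lemma~\ref{lem:GRT} and the hexagon) are the right ones, but the attribution of which relation produces which substitution needs correcting.
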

  
The proof of the Theorem comes down to analysing the diagram \eqref{eq:BigDiagram} below, where black arrows are group homomorphisms and red arrows are anti-homomorphisms. We begin with the definitions of the maps.
\begin{equation}\label{eq:BigDiagram}
\begin{tikzcd}
	{\Aut(\PaCD)} &&&& {\Aut_v(\A)} \\
	&&& {\calG(\uparrow_2)^{tr}} \\
	&&& {\TAut_2} \\
	{\grt_1} &&&& {\krv}
	\arrow["\rho"', from=4-1, to=4-5]
	\arrow["\cong","\tau"', from=1-1, to=4-1]
	\arrow["\Theta","\cong"', from=1-5, to=4-5]
	\arrow[""{name=0, anchor=center, inner sep=0}, "\rp", dashed, from=1-1, to=1-5]
	\arrow[""{name=1, anchor=center, inner sep=0},"i", hook', from=4-5, to=3-4]
	\arrow["\gamma", red,from=2-4, to=3-4]
	\arrow[""{name=2, anchor=center, inner sep=0},"v",red, hook, from=1-5, to=2-4]
	\arrow["r",red, from=1-1, to=2-4]
	\arrow["{(1)}"',phantom, from=2-4, to=4-1]
	\arrow["{(2)}"{description}, phantom,shorten <=5pt, Rightarrow, from=0, to=2-4]
	\arrow["{(3)}"{description},phantom, shorten <=9pt, shorten >=9pt, Rightarrow, from=2, to=1]
\end{tikzcd}
\end{equation}


\noindent{\em The map r.} 
From $F\in \Aut(\PaCD)$, set $r(F)=N^{tr}=(G(\vertex))^{tr}$. The map $r$ is a group anti-homomorphism: see Lemma~\ref{lem:ranti}.

\smallskip
\noindent{\em The map $v$.}
Given an automorphism $G\in \Aut_v(\arrows)$, $v(G):=\apr(G(\vertex))\in \calG(\uparrow_2)^{tr}$. $v$ is a group anti-homomorphism since if $v(G_1) = N_1$ and $v(G_2) = N_2$, then—using that expansion-preserving automorphisms of $\arrows$ fix the arrow—we have $v(G_1 \circ G_2) = N_2 \cdot N_1$.  

\smallskip
\noindent{\em The map $\rp$.}
In Theorem~\ref{thm:GfromF} we proved that if there is a $G\in \Aut_v(\arrows)$ fitting into the diagram \eqref{eq:GfromF}, then $\apr(G(\vertex))=r(F)$. In turn, $\apr(G(\vertex))$ uniquely determines $G$. Thus, if there exists a $G\in \Aut_v(\arrows)$ with $\apr(G(\vertex))=r(F)$, we set $\rp(F)=G$. The map $\rp$ exists if and only if $r$ factors through $\Aut_v(\arrows)$, that is, if given any $F\in \Aut(\PaCD)$, the value $r(F)$ is always a valid vertex value for some automorphism $G\in \Aut_v(\arrows)$. Follows from the proof of Theorem~\ref{thm:main} at the end of this section; we also give a more direct proof in Theorem~\ref{rhop_in_aut}. 

\smallskip
\noindent{\em The map $\tau$.}
The map $\tau$ is the isomorphism described in Theorem~\ref{thm: GRT is Aut}. It is given by $\tau(F)=\Psi$, where $F(\Associator)=\Psi(t^{12},t^{23})$.

\smallskip
\noindent{\em The map $\rho$.} The map $\rho$ is the Alekseev-Torossian map defined at the beginning of this section, given by the formula $\rho(\Psi)=(\Psi(-x-y,x),\Psi(-x-y,y))$.

\smallskip
\noindent{\em The map $\gamma$.} There is an action\footnote{This action is described in detail in \cite{BND:WKO2}, see the proof of Proposition 3.19, in particular the ``Conceptual argument''.} of $\calG(\uparrow_2)$ on $\lie_2$, coming from the exponentiation of the projection $\calP(\uparrow_2) \to \tder_2 $ (Proposition~\ref{prop:SES}). There is a bijection between Lie words $w$ in $x$ and $y$, and tree arrow diagrams on three strands whose tails lie on the first two strands and head lies on the third strand, as shown in Figure~\ref{fig:liewordarrows}. Specifically, label the first two strands $x$ and $y$, and represent the word $w$ as a binary tree with tails on the $x$ and $y$ strands, vertices representing the brackets, and head on the third strand. Call this tree $\bT_w\in \arrows(\uparrow_3)$. Given a group-like element $\bD\in \calG(\uparrow_3)$, the conjugate $(\bD^{12})^{-1}\bT_w \bD^{12}$ is an (infinite) sum of trees with tails on the first two strands and heads on the third strand \cite[Proposition 3.19]{BND:WKO2}, and thus corresponds to an element of $\lie_2$ via the same bijection. This defines the action of $\bD$ on $w$. Since wheels in $\calG(\uparrow_2)$ act trivially by the TC relation, the action descends to an action of $\calG(\uparrow_2)^{tr}$ on $\lie_2$.

Furthermore, it turns out that $\calG(\uparrow_2)^{tr}$ acts on $\lie_2$ by {\em tangential} automorphisms. Namely, if $\bD\in \calG(\uparrow_2)^{tr}$, then modulo wheels $\bD=e^\delta$, where $\delta$ is a linear combination of trees. Furthermore, $\delta=\delta_1+\delta_2$, where $\delta_1$ is a sum of the terms with heads ending on strand 1, and $\delta_2$ is a sum of terms with heads on the second strand. Each term of $\delta_1$ corresponds to a Lie word in $x$ and $y$, similarly to Figure~\ref{fig:liewordarrows} but now with head on strand 1, thus $\delta_1$ corresponds to a sum $u_1\in \lie_2$. Similarly, $\delta_2$ corresponds to $l_2\in \lie_2$. Then $\bD$ acts as $(e^{u_1}, e^{u_2})\in \TAut_2$. Note that where the head of each tree in $\bD$ is attached to the strand 1 or 2 in relation to its tails does not make a difference in $\gamma(\bD)$.

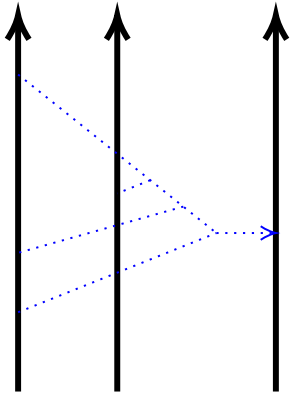
\begin{figure}[h]
    \centering
\tikzset{every picture/.style={line width=0.75pt}} 
\[\begin{tikzpicture}[x=0.75pt,y=0.75pt,yscale=-1,xscale=1]

\draw [line width=2.25]    (60,230) -- (60,44) ;
\draw [shift={(60,40)}, rotate = 90] [color={rgb, 255:red, 0; green, 0; blue, 0 }  ][line width=2.25]    (12.24,-5.49) .. controls (7.79,-2.58) and (3.71,-0.75) .. (0,0) .. controls (3.71,0.75) and (7.79,2.58) .. (12.24,5.49)   ;
\draw [line width=2.25]    (110,230) -- (110,44) ;
\draw [shift={(110,40)}, rotate = 90] [color={rgb, 255:red, 0; green, 0; blue, 0 }  ][line width=2.25]    (12.24,-5.49) .. controls (7.79,-2.58) and (3.71,-0.75) .. (0,0) .. controls (3.71,0.75) and (7.79,2.58) .. (12.24,5.49)   ;
\draw [line width=2.25]    (190,230) -- (190,44) ;
\draw [shift={(190,40)}, rotate = 90] [color={rgb, 255:red, 0; green, 0; blue, 0 }  ][line width=2.25]    (12.24,-5.49) .. controls (7.79,-2.58) and (3.71,-0.75) .. (0,0) .. controls (3.71,0.75) and (7.79,2.58) .. (12.24,5.49)   ;
\draw [color={rgb, 255:red, 0; green, 0; blue, 255 }  ,draw opacity=1 ] [dash pattern={on 0.84pt off 2.51pt}]  (160,150) -- (188,150) ;
\draw [shift={(190,150)}, rotate = 180] [color={rgb, 255:red, 0; green, 0; blue, 255 }  ,draw opacity=1 ][line width=0.75]    (7.65,-3.43) .. controls (4.86,-1.61) and (2.31,-0.47) .. (0,0) .. controls (2.31,0.47) and (4.86,1.61) .. (7.65,3.43)   ;
\draw [color={rgb, 255:red, 0; green, 0; blue, 255 }  ,draw opacity=1 ] [dash pattern={on 0.84pt off 2.51pt}]  (60,190) -- (160,150) ;
\draw [color={rgb, 255:red, 0; green, 0; blue, 255 }  ,draw opacity=1 ] [dash pattern={on 0.84pt off 2.51pt}]  (60,70) -- (76.67,83.33) -- (93.33,96.67) -- (110,110) -- (126.67,123.33) -- (143.33,136.67) -- (160,150) ;
\draw [color={rgb, 255:red, 0; green, 0; blue, 255 }  ,draw opacity=1 ] [dash pattern={on 0.84pt off 2.51pt}]  (126.67,123.33) -- (110,130) ;
\draw [color={rgb, 255:red, 0; green, 0; blue, 255 }  ,draw opacity=1 ] [dash pattern={on 0.84pt off 2.51pt}]  (143.33,136.67) -- (60,160) ;

\end{tikzpicture}\]
    \caption{The lie word $[x,[x,[y,x]]]$ represented as an arrow diagram on 3 strands.}
    \label{fig:liewordarrows}
\end{figure}

The map $\gamma$ is a group anti-homomorphism since if $\gamma(N_1) = a$ and $\gamma(N_2) = b$, then $\gamma(N_1 \cdot N_2) = b \circ a = \gamma(N_2) \circ \gamma(N_1)$ in $\TAut_2$.  

\smallskip
\noindent{\em The map $i$.} By definition, $\krv$ is a subgroup of $\TAut_2$, this is the inclusion $i$.

\smallskip
\noindent{\em The map $\Theta$.} The map $\Theta$ is the isomorphism of \cite[Theorem 5.12]{DHR21}, recalled as Theorem~\ref{thm:kv and krv}. Namely, for $G\in \Aut_v(\arrows)$, let $G(\vertex)=N\in \calG(\uparrow_2)$. Then, modulo wheels, we have $\apr(N)=N^{tr}\in \calG(\uparrow_2)^{tr}$. By \cite[Theorem 5.12]{DHR21}, $\gamma(N^{tr})$ then satisfies the $\krv$ equations and $\Theta(G):=\gamma(N^{tr})\in \krv$.

To prove Theorem~\ref{thm:main}, we need to show that the map $\rp$ exists, and that the diagram \eqref{eq:BigDiagram} commutes. A significant step is to describe the composite $\gamma\circ r$, and prove that $r$ is an anti-homomorphism.

\begin{lemma}\label{lem:Nxy} Given $F\in \Aut(\PaCD)$ with $F(\Associator)=\Psi(t^{12},t^{13})$, we have
\begin{align}
    \left(\gamma \circ r(F)\right)(x) &= \Psi(-x-y,x)^{-1}\; x\;\Psi(-x-y,x),\\
    \left(\gamma \circ r(F)\right)(y) &= \Psi(-x-y,y)^{-1}\; y \;\Psi(-x-y,y). 
\end{align}
\end{lemma}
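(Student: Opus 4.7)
My plan is to substitute the explicit formula from Theorem~\ref{thm:GfromF} into $r(F)=N^{tr}$ and then track everything through $\gamma$. By Theorem~\ref{thm:GfromF},
\[
N = (C^1)^{-1}(C^2)^{-1}\,\varphi\!\left(\Psi^{-1}(a^{2(13)},-a^{2(13)}-a^{4(13)}) \cdot \Psi(a^{23},a^{43})\right)\,C^{12}.
\]
The cap values $C$ lie in the image of $\hat U(\cyc)\hookrightarrow\arrows$ and are therefore annihilated by $\apr$ (see Lemma~\ref{cyclic_lemma} and Remark~\ref{remark:tree-like arrow diagrams}), so
\[
N^{tr} = \apr\,\varphi\!\left(\Psi^{-1}(a^{2(13)},-a^{2(13)}-a^{4(13)}) \cdot \Psi(a^{23},a^{43})\right).
\]

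Next I would apply $\gamma$, which, by the discussion preceding diagram \eqref{eq:BigDiagram}, corresponds to the projection $\calP(\uparrow_2)\to\tder_2$ (killing both wheels and the $\mathfrak{a}_2$-summand) followed by exponentiation. Computing the image of each $\varphi(a^{ij})$ in $\tder_2$: after $\varphi$ pushes arrow endings onto the two output strands of $W$, those arrows whose tail and head land on the same strand (here $\varphi(a^{21})$ and $\varphi(a^{43})$) become short arrows in $\arrows(\uparrow_2)$, hence lie in $\mathfrak{a}_2$ and project to $0$; the remaining arrows give $\varphi(a^{23})\mapsto (0,x)$ and $\varphi(a^{41})\mapsto (y,0)$ in $\tder_2$. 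Since $\Psi\in\grt_1$ has no linear terms in its Lie exponent, $\Psi(u,0)=1$ for any $u$, so the second factor simplifies: $\Psi(a^{23},a^{43})\mapsto \Psi((0,x),0)=1$. Consequently
\[
\gamma(N^{tr}) = \Psi^{-1}\!\big((0,x),\,(-y,-x)\big)\in\hat U(\tder_2).
\]

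The final step is to identify this element, viewed as a tangential automorphism, with $\rho(\Psi)=(\Psi(-x-y,x),\Psi(-x-y,y))$. I would use the intrinsic description of $\gamma$ as the conjugation action $w\mapsto(\bD^{12})^{-1}\bT_w\bD^{12}$, computing the action on $w=x$ and on $w=y$ separately through STU/VI manipulations on the 3-strand skeleton. The observation $-(0,x)-(y,0)=-\mathbf{t}^{12}$, together with the symmetry of Lemma~\ref{lem:GRT}(2) lifted to the level of arrow diagrams, allows the substituted arguments inside $\Psi$ to be rewritten in the $(-x-y,x)$ and $(-x-y,y)$ forms required by the AT formula.

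The principal obstacle lies in this last identification: converting the $\tder_2$-valued expression $\Psi^{-1}((0,x),(-y,-x))$ into the componentwise conjugation formulas in the lemma statement. This step essentially re-derives the Alekseev-Torossian map $\rho$ at the level of arrow diagrams, paralleling the derivation in the proof of \cite[Theorem 2.5]{ATE10}.
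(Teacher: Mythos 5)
There is a genuine error in the middle of your argument, at the step where you evaluate $\gamma$ by substituting the image of each generator $a^{ij}$ into the power series $\Psi$. The composite ``stripping'' map (puncture, cap, $\varphi$, then $pr$ onto $\tder_2$) is \emph{not} an algebra or Lie homomorphism, so you cannot compute $\gamma$ of $\Psi^{-1}(\cdot,\cdot)\cdot\Psi(\cdot,\cdot)$ by replacing $a^{21},a^{43}$ with their degree-one images in $\mathfrak{a}_2$ and setting them to zero. Concretely, $[a^{23},a^{43}]$ is the tree with tails on strands $2$ and $4$ and head on strand $3$ (in $\tder_4$ it is $(0,0,[x_2,x_4],0)$); it survives the punctures and caps, and $\varphi$ sends it to the tree with tails on the two output strands and head on output strand $2$, i.e.\ to $(0,[x,y])\in\tder_2$, which is nonzero. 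Your substitution instead gives $[(0,x),0]=0$. Hence $\Psi(a^{23},a^{43})$ does \emph{not} map to $1$: its degree-two part already contributes. The underlying problem is that merging strands ($2,4\mapsto$ the tail letters $x,y$; $1,3\mapsto$ the two heads) does not intertwine the $\tder_4$ bracket with the $\tder_2$ bracket, because in $\tder_2$ the letters being bracketed coincide with the strands being acted upon, producing the extra terms $\mathbf{u}(v_k)-\mathbf{v}(u_k)$ that are absent upstairs.

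This is not a harmless simplification: in the paper's proof the factor $\Psi(a^{23},a^{43})$ is exactly what combines with the heads-on-strand-$3$ part of $\Psi^{-1}(a^{2(13)},-a^{2(13)}-a^{4(13)})$ to give $\Psi^{-1}(a^{23},-a^{23}-a^{43})\cdot\Psi(a^{23},a^{43})$, which the hexagon relation \eqref{eq:grthexagon} converts into $\Psi(-a^{23}-a^{43},a^{43})$, i.e.\ into conjugation of $y$ by $\Psi(-x-y,y)$. If you discard that factor you obtain conjugation of $y$ by $\Psi(-x-y,x)$ instead, which is the wrong answer. The correct route (the one the paper takes, and the one you gesture at in your last paragraph but defer as ``the principal obstacle'') is to work with the conjugation action directly: use TC to isolate, for the action on $x$, only the trees whose head lands on output strand $1$ (these come solely from the $a^{21},a^{41}$-part of the first factor, and simplify by antisymmetry to $\Psi(-x-y,x)$), and for the action on $y$ the trees with head on output strand $2$ (which require the hexagon identity as above). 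That deferred step is where essentially all of the content of the lemma lives, so as written the proposal does not constitute a proof.
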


\begin{proof}Recall that, by definition of $r$ and $\gamma$, 
\[
\left(\gamma\circ r(F)\right)(x)= (N^{tr})^{-1}\;x\;N^{tr}, \quad  \text{and} \quad \left(\gamma\circ r(F)\right)(y)= (N^{tr})^{-1}\;y\;N^{tr},\] 
where $N=G(\vertex)$ is the value obtained by the ``stripping'' process and $N^{tr}$ is the tree projection $\apr(N)$ of $N$. 
Specifically, by Theorem~\ref{thm:GfromF},
\begin{align}
    N^{tr} = \varphi\left(\Psi^{-1}\left(a^{2(12)},-a^{2(13)}-a^{4(13)}\right)\cdot \Psi(a^{23},a^{43})\right). 
\end{align}

The main step still missing is to compute the value of the isomorphism $\varphi$ of Lemma~\ref{lem:EK}. To do this, we introduce a diagrammatic {\em placement notation}, as shown in Figures \ref{fig:placementa} and \ref{fig:placementb}. In this notation, the two $\Psi$-factors are named $\zeta$ and $\chi$ for short, and their arrow endings on each strand are numbered by the strand numbers: $\zeta_1,\zeta_2,$ etc, for arrow endings of $\zeta$ on the first, second, etc strand. The value of $\varphi\left(\Psi^{-1}\left(a^{2(12)},-a^{2(13)}-a^{4(13)}\right)\cdot \Psi(a^{23},a^{43})\right)$ in $\arrows(\uparrow_2)$ is shown in Figure~\ref{fig:placement_notation2}.
\begin{figure}
    \centering
    \begin{subfigure}[c]{0.45\textwidth}
    \centering\captionsetup{width=.8\linewidth}
\[\begin{tikzpicture}[x=0.75pt,y=0.75pt,yscale=-1,xscale=1]

\draw [line width=2.25]    (90,44) -- (90,80) ;
\draw [shift={(90,40)}, rotate = 90] [color={rgb, 255:red, 0; green, 0; blue, 0 }  ][line width=2.25]    (12.24,-5.49) .. controls (7.79,-2.58) and (3.71,-0.75) .. (0,0) .. controls (3.71,0.75) and (7.79,2.58) .. (12.24,5.49)   ;
\draw [color={rgb, 255:red, 255; green, 0; blue, 30 }  ,draw opacity=1 ]   (90,80) .. controls (51,81.33) and (51,155.33) .. (50,260) ;
\draw [line width=2.25]    (90,80) .. controls (130,80.33) and (131,155.33) .. (130,260) ;
\draw [shift={(130,260)}, rotate = 90.55] [color={rgb, 255:red, 0; green, 0; blue, 0 }  ][fill={rgb, 255:red, 0; green, 0; blue, 0 }  ][line width=2.25]      (0, 0) circle [x radius= 5.36, y radius= 5.36]   ;
\draw [line width=2.25]    (199.79,45) -- (199.79,81) ;
\draw [shift={(199.79,41)}, rotate = 90] [color={rgb, 255:red, 0; green, 0; blue, 0 }  ][line width=2.25]    (12.24,-5.49) .. controls (7.79,-2.58) and (3.71,-0.75) .. (0,0) .. controls (3.71,0.75) and (7.79,2.58) .. (12.24,5.49)   ;
\draw [color={rgb, 255:red, 255; green, 0; blue, 30 }  ,draw opacity=1 ]   (199.79,81) .. controls (160.79,82.33) and (160.79,156.33) .. (159.79,261) ;
\draw [line width=2.25]    (199.79,81) .. controls (239.79,81.33) and (240.79,156.33) .. (239.79,261) ;
\draw [shift={(239.79,261)}, rotate = 90.55] [color={rgb, 255:red, 0; green, 0; blue, 0 }  ][fill={rgb, 255:red, 0; green, 0; blue, 0 }  ][line width=2.25]      (0, 0) circle [x radius= 5.36, y radius= 5.36]   ;
\draw  [fill={rgb, 255:red, 255; green, 255; blue, 255 }  ,fill opacity=1 ] (100,130) -- (260,130) -- (260,160) -- (100,160) -- cycle ;
\draw  [fill={rgb, 255:red, 255; green, 255; blue, 255 }  ,fill opacity=1 ] (30,190) -- (260,190) -- (260,220) -- (30,220) -- cycle ;

\draw (90,26.6) node [anchor=south] [inner sep=0.75pt]    {$1$};
\draw (199.79,27.6) node [anchor=south] [inner sep=0.75pt]    {$2$};
\draw (180,145) node    {$\eta ( a_{23} ,a_{43})$};
\draw (145,205) node    {$\eta ^{-1}( a_{2( 13)} ,-a_{2( 13)} -a_{4( 13)})$};
\draw (262,133.4) node [anchor=north west][inner sep=0.75pt]    {$=\chi $};
\draw (262,193.4) node [anchor=north west][inner sep=0.75pt]    {$=\zeta $};

\end{tikzpicture}\]
    \caption{$N^{tr}$ shown in the usual box notation, with names $\chi$ and $\zeta$ assigned to each arrow diagram component.}\label{fig:placementa}
    \end{subfigure}
        \begin{subfigure}[c]{0.45\textwidth}
    \centering\captionsetup{width=.8\linewidth}
    \[\begin{tikzpicture}[x=0.75pt,y=0.75pt,yscale=-1,xscale=1]

\draw [line width=2.25]    (110,64) -- (110,100) ;
\draw [shift={(110,60)}, rotate = 90] [color={rgb, 255:red, 0; green, 0; blue, 0 }  ][line width=2.25]    (12.24,-5.49) .. controls (7.79,-2.58) and (3.71,-0.75) .. (0,0) .. controls (3.71,0.75) and (7.79,2.58) .. (12.24,5.49)   ;
\draw [color={rgb, 255:red, 255; green, 0; blue, 30 }  ,draw opacity=1 ]   (110,100) .. controls (71,101.33) and (71,175.33) .. (70,280) ;
\draw [line width=2.25]    (110,100) .. controls (150,100.33) and (151,175.33) .. (150,280) ;
\draw [shift={(150,280)}, rotate = 90.55] [color={rgb, 255:red, 0; green, 0; blue, 0 }  ][fill={rgb, 255:red, 0; green, 0; blue, 0 }  ][line width=2.25]      (0, 0) circle [x radius= 5.36, y radius= 5.36]   ;
\draw [line width=2.25]    (219.79,65) -- (219.79,101) ;
\draw [shift={(219.79,61)}, rotate = 90] [color={rgb, 255:red, 0; green, 0; blue, 0 }  ][line width=2.25]    (12.24,-5.49) .. controls (7.79,-2.58) and (3.71,-0.75) .. (0,0) .. controls (3.71,0.75) and (7.79,2.58) .. (12.24,5.49)   ;
\draw [color={rgb, 255:red, 255; green, 0; blue, 30 }  ,draw opacity=1 ]   (219.79,101) .. controls (180.79,102.33) and (180.79,176.33) .. (179.79,281) ;
\draw [line width=2.25]    (219.79,101) .. controls (259.79,101.33) and (260.79,176.33) .. (259.79,281) ;
\draw [shift={(259.79,281)}, rotate = 90.55] [color={rgb, 255:red, 0; green, 0; blue, 0 }  ][fill={rgb, 255:red, 0; green, 0; blue, 0 }  ][line width=2.25]      (0, 0) circle [x radius= 5.36, y radius= 5.36]   ;
\draw  [color={rgb, 255:red, 0; green, 0; blue, 0 }  ,draw opacity=1 ][fill={rgb, 255:red, 255; green, 255; blue, 255 }  ,fill opacity=1 ] (130,154) -- (160,154) -- (160,180) -- (130,180) -- cycle ;
\draw  [color={rgb, 255:red, 0; green, 0; blue, 0 }  ,draw opacity=1 ][fill={rgb, 255:red, 255; green, 255; blue, 255 }  ,fill opacity=1 ] (170,154) -- (200,154) -- (200,180) -- (170,180) -- cycle ;
\draw  [color={rgb, 255:red, 0; green, 0; blue, 0 }  ,draw opacity=1 ][fill={rgb, 255:red, 255; green, 255; blue, 255 }  ,fill opacity=1 ] (240,154) -- (270,154) -- (270,180) -- (240,180) -- cycle ;
\draw  [color={rgb, 255:red, 0; green, 0; blue, 0 }  ,draw opacity=1 ][fill={rgb, 255:red, 255; green, 255; blue, 255 }  ,fill opacity=1 ] (130,220) -- (160,220) -- (160,246) -- (130,246) -- cycle ;
\draw  [color={rgb, 255:red, 0; green, 0; blue, 0 }  ,draw opacity=1 ][fill={rgb, 255:red, 255; green, 255; blue, 255 }  ,fill opacity=1 ] (240,220) -- (270,220) -- (270,246) -- (240,246) -- cycle ;
\draw  [color={rgb, 255:red, 0; green, 0; blue, 0 }  ,draw opacity=1 ][fill={rgb, 255:red, 255; green, 255; blue, 255 }  ,fill opacity=1 ] (170,220) -- (200,220) -- (200,246) -- (170,246) -- cycle ;
\draw  [color={rgb, 255:red, 0; green, 0; blue, 0 }  ,draw opacity=1 ][fill={rgb, 255:red, 255; green, 255; blue, 255 }  ,fill opacity=1 ] (60,220) -- (90,220) -- (90,246) -- (60,246) -- cycle ;

\draw (110,46.6) node [anchor=south] [inner sep=0.75pt]    {$1$};
\draw (219.79,47.6) node [anchor=south] [inner sep=0.75pt]    {$2$};
\draw (145,167) node    {$\chi _{1}$};
\draw (185,167) node    {$\chi _{2}$};
\draw (255,167) node    {$\chi _{3}$};
\draw (75,233) node    {$\zeta _{1}$};
\draw (145,233) node    {$\zeta _{2}$};
\draw (185,233) node    {$\zeta _{3}$};
\draw (255,233) node    {$\zeta _{4}$};

\end{tikzpicture}
\]
    \caption{$N^{tr}$ shown in placement notation, with arrow endings numbered according to the strand numbering.}\label{fig:placementb}
    \end{subfigure}
    
    \begin{subfigure}[c]{\textwidth}
    \centering      
\[\begin{tikzpicture}[x=0.75pt,y=0.75pt,yscale=-1,xscale=1]

\draw [line width=2.25]    (90,44) -- (90,80) ;
\draw [shift={(90,40)}, rotate = 90] [color={rgb, 255:red, 0; green, 0; blue, 0 }  ][line width=2.25]    (12.24,-5.49) .. controls (7.79,-2.58) and (3.71,-0.75) .. (0,0) .. controls (3.71,0.75) and (7.79,2.58) .. (12.24,5.49)   ;
\draw [color={rgb, 255:red, 255; green, 0; blue, 30 }  ,draw opacity=1 ]   (90,80) .. controls (51,81.33) and (51,155.33) .. (50,260) ;
\draw [line width=2.25]    (90,80) .. controls (130,80.33) and (131,155.33) .. (130,260) ;
\draw [shift={(130,260)}, rotate = 90.55] [color={rgb, 255:red, 0; green, 0; blue, 0 }  ][fill={rgb, 255:red, 0; green, 0; blue, 0 }  ][line width=2.25]      (0, 0) circle [x radius= 5.36, y radius= 5.36]   ;
\draw [line width=2.25]    (199.79,45) -- (199.79,81) ;
\draw [shift={(199.79,41)}, rotate = 90] [color={rgb, 255:red, 0; green, 0; blue, 0 }  ][line width=2.25]    (12.24,-5.49) .. controls (7.79,-2.58) and (3.71,-0.75) .. (0,0) .. controls (3.71,0.75) and (7.79,2.58) .. (12.24,5.49)   ;
\draw [color={rgb, 255:red, 255; green, 0; blue, 30 }  ,draw opacity=1 ]   (199.79,81) .. controls (160.79,82.33) and (160.79,156.33) .. (159.79,261) ;
\draw [line width=2.25]    (199.79,81) .. controls (239.79,81.33) and (240.79,156.33) .. (239.79,261) ;
\draw [shift={(239.79,261)}, rotate = 90.55] [color={rgb, 255:red, 0; green, 0; blue, 0 }  ][fill={rgb, 255:red, 0; green, 0; blue, 0 }  ][line width=2.25]      (0, 0) circle [x radius= 5.36, y radius= 5.36]   ;
\draw  [color={rgb, 255:red, 245; green, 166; blue, 35 }  ,draw opacity=1 ][fill={rgb, 255:red, 255; green, 255; blue, 255 }  ,fill opacity=1 ] (110,134) -- (140,134) -- (140,160) -- (110,160) -- cycle ;
\draw  [color={rgb, 255:red, 74; green, 144; blue, 226 }  ,draw opacity=1 ][fill={rgb, 255:red, 255; green, 255; blue, 255 }  ,fill opacity=1 ] (150,134) -- (180,134) -- (180,160) -- (150,160) -- cycle ;
\draw  [color={rgb, 255:red, 245; green, 166; blue, 35 }  ,draw opacity=1 ][fill={rgb, 255:red, 255; green, 255; blue, 255 }  ,fill opacity=1 ] (220,134) -- (250,134) -- (250,160) -- (220,160) -- cycle ;
\draw  [color={rgb, 255:red, 245; green, 166; blue, 35 }  ,draw opacity=1 ][fill={rgb, 255:red, 255; green, 255; blue, 255 }  ,fill opacity=1 ] (110,200) -- (140,200) -- (140,226) -- (110,226) -- cycle ;
\draw  [color={rgb, 255:red, 245; green, 166; blue, 35 }  ,draw opacity=1 ][fill={rgb, 255:red, 255; green, 255; blue, 255 }  ,fill opacity=1 ] (220,200) -- (250,200) -- (250,226) -- (220,226) -- cycle ;
\draw  [color={rgb, 255:red, 74; green, 144; blue, 226 }  ,draw opacity=1 ][fill={rgb, 255:red, 255; green, 255; blue, 255 }  ,fill opacity=1 ] (150,200) -- (180,200) -- (180,226) -- (150,226) -- cycle ;
\draw  [color={rgb, 255:red, 74; green, 144; blue, 226 }  ,draw opacity=1 ][fill={rgb, 255:red, 255; green, 255; blue, 255 }  ,fill opacity=1 ] (40,200) -- (70,200) -- (70,226) -- (40,226) -- cycle ;
\draw [line width=2.25]    (390,260) -- (390,54) ;
\draw [shift={(390,50)}, rotate = 90] [color={rgb, 255:red, 0; green, 0; blue, 0 }  ][line width=2.25]    (12.24,-5.49) .. controls (7.79,-2.58) and (3.71,-0.75) .. (0,0) .. controls (3.71,0.75) and (7.79,2.58) .. (12.24,5.49)   ;
\draw [line width=2.25]    (490,260) -- (490,54) ;
\draw [shift={(490,50)}, rotate = 90] [color={rgb, 255:red, 0; green, 0; blue, 0 }  ][line width=2.25]    (12.24,-5.49) .. controls (7.79,-2.58) and (3.71,-0.75) .. (0,0) .. controls (3.71,0.75) and (7.79,2.58) .. (12.24,5.49)   ;
\draw  [dash pattern={on 0.84pt off 2.51pt}]  (370,160) -- (510,160) ;
\draw  [color={rgb, 255:red, 245; green, 166; blue, 35 }  ,draw opacity=1 ][fill={rgb, 255:red, 255; green, 255; blue, 255 }  ,fill opacity=1 ] (375,84) -- (405,84) -- (405,110) -- (375,110) -- cycle ;
\draw  [color={rgb, 255:red, 245; green, 166; blue, 35 }  ,draw opacity=1 ][fill={rgb, 255:red, 255; green, 255; blue, 255 }  ,fill opacity=1 ] (475,84) -- (505,84) -- (505,110) -- (475,110) -- cycle ;
\draw  [color={rgb, 255:red, 245; green, 166; blue, 35 }  ,draw opacity=1 ][fill={rgb, 255:red, 255; green, 255; blue, 255 }  ,fill opacity=1 ] (375,120) -- (405,120) -- (405,146) -- (375,146) -- cycle ;
\draw  [color={rgb, 255:red, 245; green, 166; blue, 35 }  ,draw opacity=1 ][fill={rgb, 255:red, 255; green, 255; blue, 255 }  ,fill opacity=1 ] (475,120) -- (505,120) -- (505,146) -- (475,146) -- cycle ;
\draw  [color={rgb, 255:red, 74; green, 144; blue, 226 }  ,draw opacity=1 ][fill={rgb, 255:red, 255; green, 255; blue, 255 }  ,fill opacity=1 ] (375,174) -- (405,174) -- (405,200) -- (375,200) -- cycle ;
\draw  [color={rgb, 255:red, 74; green, 144; blue, 226 }  ,draw opacity=1 ][fill={rgb, 255:red, 255; green, 255; blue, 255 }  ,fill opacity=1 ] (475,174) -- (505,174) -- (505,200) -- (475,200) -- cycle ;
\draw  [color={rgb, 255:red, 74; green, 144; blue, 226 }  ,draw opacity=1 ][fill={rgb, 255:red, 255; green, 255; blue, 255 }  ,fill opacity=1 ] (475,210) -- (505,210) -- (505,236) -- (475,236) -- cycle ;
\draw    (270,150) -- (348,150) ;
\draw [shift={(350,150)}, rotate = 180] [color={rgb, 255:red, 0; green, 0; blue, 0 }  ][line width=0.75]    (10.93,-4.9) .. controls (6.95,-2.3) and (3.31,-0.67) .. (0,0) .. controls (3.31,0.67) and (6.95,2.3) .. (10.93,4.9)   ;

\draw (90,26.6) node [anchor=south] [inner sep=0.75pt]    {$1$};
\draw (199.79,27.6) node [anchor=south] [inner sep=0.75pt]    {$2$};
\draw (125,147) node    {$\chi _{1}$};
\draw (165,147) node    {$\chi _{2}$};
\draw (235,147) node    {$\chi _{3}$};
\draw (55,213) node    {$\zeta _{1}$};
\draw (125,213) node    {$\zeta _{2}$};
\draw (165,213) node    {$\zeta _{3}$};
\draw (235,213) node    {$\zeta _{4}$};
\draw (390,36.6) node [anchor=south] [inner sep=0.75pt]    {$1$};
\draw (490,36.6) node [anchor=south] [inner sep=0.75pt]    {$2$};
\draw (390,97) node    {$\chi _{1}$};
\draw (490,97) node    {$\chi _{3}$};
\draw (390,133) node    {$\zeta _{2}$};
\draw (490,133) node    {$\zeta _{4}$};
\draw (390,187) node    {$\zeta _{1}$};
\draw (490,187) node    {$\chi _{2}$};
\draw (490,223) node    {$\zeta _{3}$};
\draw (310,146.6) node [anchor=south] [inner sep=0.75pt]    {$\varphi $};
\draw (539,102.4) node [anchor=north west][inner sep=0.75pt]    {$tails$};
\draw (541,182.4) node [anchor=north west][inner sep=0.75pt]    {$heads$};

\end{tikzpicture}
\]
    \caption{Applying the isomorphism $\varphi$ using the placement notation. Arrow tails are shown in blue, and heads in orange.}\label{fig:placement_notation2}
    \end{subfigure}
    \caption{}
\end{figure}

Due to the TC relation on arrow diagrams, only the tree components of $N^{tr}$ which have heads on the first strand (after applying $\varphi$) act non-trivially on $x$. The only such component -- written in the placement notation -- is $\zeta_1$, coming from $$\zeta=\Psi^{-1}(a^{2(13)},-a^{2(13)}-a^{4(13)})=\Psi^{-1}(a^{21}+a^{23}, -a^{21}-a^{23}-a^{41}-a^{43}).$$ The arrows $\varphi(a^{23})$ and $\varphi(a^{43})$ have heads lying on strand 2: this is where $\zeta_3$ arises from in Figure~\ref{fig:placement_notation2}. Therefore, these arrows do not act on $x$, so $x$ is only conjugated by
$
 \varphi(\Psi^{-1}(a^{21},-a^{21}-a^{41})), 
$
 which, by the anti-symmetry relation of $\grt_1$ equals
$
 \varphi(\Psi(-a^{21}-a^{41},a^{21})). 
$

In summary 
\begin{align}
    \left(\gamma \circ r (F)\right)(x) &= \varphi(\Psi(-a^{21}-a^{41},a^{21}))^{-1}\; x\; \varphi(\Psi(-a^{21}-a^{41},a^{21})).
\end{align}

Now recall that the value of $\gamma$—that is, the conjugation action—only depends on the lie words represented by the tree components of $\varphi(\Psi(-a^{21}-a^{41},a^{21}))$, not the specific placement of arrow heads and tails.
Let $\Psi=e^\psi$ for some Lie element $\psi\in \mathfrak{grt}_1$, and let $w=w(-a^{21}-a^{41}, a^{21})$ be a Lie word component of $\psi$. Since $\varphi(a^{21})=a^{11}$ and $\varphi(a^{41})=a^{21}$, the Lie word represented by  $\varphi(w)$ is\footnote{Note that $\varphi(w(-a^{21}-a^{41},a^{21})) \neq w(-a^{11}-a^{21}, a^{11})$: at the level of diagrams the tail/head placements matter, and in fact a short arrow $a^{11}$ (with no other arrow endings between its tail and head) is central in $\arrows(\uparrow_2)$. However, the Lie word represented by the tree $\varphi(w(-a^{21}-a^{41},a^{21}))$ does have an $x$ for each $a^{21}$ and a $y$ for each $a^{41}$.} $w(-x-y,y)$. Exponentiating, we obtain
\begin{align}
   \left(\gamma \circ r (F)\right)(x) = \Psi(-x-y,x)^{-1} \; x \; \Psi(-x-y,x).
\end{align}

We proceed similarly to compute the action on $y$.
Due to the TC relation, only components of $N^{tr}$ with heads on the second strand act non-trivially on $y$. There are two such components: $\chi_3$ and $\zeta_3$ in placement notation, where 
$\chi_3$ comes from the term $\Psi(a^{23},a^{43})$, and $\zeta_3$ comes from $\Psi^{-1}(a^{2(13)},-a^{2(13)}-a^{4(13)})$. Cancelling arrows which act trivially on $y$ (i.e. those with heads on strand 1), we get that $y$ is conjugated by 
\begin{align}\label{eq:yaction}
    \varphi\left(\Psi^{-1}(a^{23},-a^{23}-a^{43}) \cdot \Psi(a^{23},a^{43})\right). 
\end{align}
To simplify, we apply the hexagon relation (\ref{eq:grthexagon}) of $GRT_1$ with $x = -a^{23}-a^{43}, y = a^{23}, z = a^{43}$ and obtain that $y$ is conjugated by 
\begin{align}
    \varphi(\Psi(-a^{23}-a^{43},a^{43})).
\end{align} 
And therefore we have 
\begin{align}
    (N^{12})^{-1}\; y\; N^{12} &= \varphi(\Psi(-a^{23}-a^{43},a^{43}))^{-1}\; y\; \varphi(\Psi(-a^{23}-a^{43},a^{43})).
\end{align}
Then, computing the value of $\gamma$ as before, we have 
\begin{align}
     \left(\gamma \circ r (F)\right)(y) = \Psi(-x-y,y)^{-1} \; y \; \Psi(-x-y,y),
\end{align}
as required.
\end{proof}

\begin{lemma}\label{lem:ShortArrows}
The map $\gamma$ is injective\footnote{Equivalently, $r(F)$ is free of short arrows for any $F\in \Aut(\PaCD)$. That is, $r(F)=e^\zeta$ and $\zeta$ is an (infinite) sum of tree arrow diagrams with no short arrow summands.} on the image of $r$.  
\end{lemma}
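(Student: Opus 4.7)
My plan is to derive this lemma directly from Lemma~\ref{lem:Nxy} and the injectivity of the Alekseev--Torossian map $\rho$ recalled at the start of Section~\ref{sec:map}. Lemma~\ref{lem:Nxy} asserts that, writing $F(\Associator)=\Psi$, the tangential automorphism $\gamma(r(F))\in\TAut_2$ acts on $x$ and $y$ by conjugation with $\Psi(-x-y,x)$ and $\Psi(-x-y,y)$ respectively; but this is literally the definition of $\rho(\Psi)=\rho(\tau(F))$. Hence Lemma~\ref{lem:Nxy} should be reread as the identity of set maps $\gamma\circ r=\rho\circ\tau\colon \Aut(\PaCD)\to \TAut_2$.

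From here the argument is essentially formal: $\tau$ is a bijection by Theorem~\ref{thm: GRT is Aut}, $\rho$ is injective by the Alekseev--Enriquez--Torossian theorem, so the composite $\rho\circ\tau = \gamma\circ r$ is injective, and one deduces immediately that $\gamma$ restricted to $\mathrm{image}(r)$ is injective. Indeed, if $r(F_1)$ and $r(F_2)$ lie in $\mathrm{image}(r)$ and satisfy $\gamma(r(F_1))=\gamma(r(F_2))$, then injectivity of $\gamma\circ r$ forces $F_1=F_2$ and therefore $r(F_1)=r(F_2)$. The equivalent statement in the footnote---that $r(F)$ contains no short arrows---corresponds to the observation, via Proposition~\ref{prop:SES} and the triviality of the action of $\mathfrak{a}_2$ on $\lie_2$, that $\ker\bigl(\gamma\colon \calG(\uparrow_2)^{tr}\to \TAut_2\bigr)$ is exactly the subgroup of short-arrow exponentials $\exp(\mathfrak{a}_2)$.

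The ``hard part'' of this lemma was in fact already carried out in the proof of Lemma~\ref{lem:Nxy}: namely, the identification of the precise conjugating elements, using the hexagon relation~\eqref{eq:grthexagon} together with the centrality fact in Lemma~\ref{lem:GRT}(2) to simplify the factor $\Psi^{-1}(a^{23},-a^{23}-a^{43})\cdot\Psi(a^{23},a^{43})$ to $\Psi(-a^{23}-a^{43},a^{43})$. Once that identification is in hand, the present lemma is a one-line consequence of the injectivity of $\rho\circ \tau$ and I anticipate no further obstacle.
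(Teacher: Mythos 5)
Your proof is correct for the statement as literally written, but it takes a genuinely different route from the paper's. You deduce injectivity of $\gamma\circ r$ from the identity $\gamma\circ r=\rho\circ\tau$ (which is indeed immediate from Lemma~\ref{lem:Nxy}, and is how the paper later proves Lemma~\ref{rhoequiv}) together with the injectivity of the Alekseev--Torossian map $\rho$ and the bijectivity of $\tau$; injectivity of $\gamma$ on the image of $r$ then follows formally, with no circularity. The paper argues more locally and more elementarily: the kernel of $\gamma$ is $\exp(\mathfrak{a}_2)$, the exponentials of short arrows, and since any $\Psi=e^{\psi}\in\grt_1$ has $\psi$ vanishing in degree one, the formula for $r(F)$ produces no degree-one --- hence no short-arrow --- terms, so two elements of the image differing by a kernel element must coincide. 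What your approach buys is brevity; what it costs is a dependency on the (nontrivial, external) injectivity of $\rho$, which the paper reserves for Corollary~\ref{rp_grouphomo}, where it is combined with this very lemma to prove injectivity of $r$. One substantive caveat: the footnote's version of the lemma --- that $r(F)$ is literally free of short arrows --- is the form actually needed downstream; for instance, in Lemma~\ref{lem:ranti} one compares the $\gamma$-values of $r(F_1\cdot F_2)$ and of the product $r(F_2)\cdot r(F_1)$, and the latter is not a priori in the image of $r$, so bare injectivity on the image does not suffice there. Your argument establishes the literal statement but only gestures at short-arrow-freeness: identifying $\ker\gamma=\exp(\mathfrak{a}_2)$ does not by itself show that the image of $r$ meets $\exp(\mathfrak{a}_2)$ trivially. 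The paper's degree-one argument delivers that stronger form directly, and you should supply it (or an equivalent) if you intend your proof to support the later lemmas.
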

\begin{proof}
To see that $\gamma$ is injective on the image of $r$, we recall that the kernel of $\gamma$ is $\exp(\mathfrak{a_2})$, where $\mathfrak{a_2}$ is the abelian lie algebra spanned by single {\em short arrows} $a^{11}$ and $a^{22}$ \cite[Proposition 3.19]{BND:WKO2}. On the other hand, $F\in \Aut(\PaCD)$ is given by $F(\Associator)=\Psi=e^\psi\in\grt_1$, and $\psi$ vanishes in degree one \cite[Remark after Proposition 6.2]{Drin90}. Hence, $r(F)$ does not include short arrows. 
\end{proof}

The following Lemma, due to Drinfel'd, will be useful several times through the rest of the argument:
\begin{lemma}\label{lem:Drinfeld}\cite[Equations 5.14 and 5.19]{Drin90}
For $\Psi = e^{\psi} \in \grt_1$, if $X + Y + Z = 0$ then 
\begin{align}
    [X,\psi(Z,X)] + [Y,\psi(Z,Y)] = 0 \label{eq:drin_lie}
\end{align}
and 
\begin{align}
    X + \Psi(X,Y)^{-1} \; Y \; \Psi(X,Y) + \Psi(X,Z)^{-1} \; Z \; \Psi(X,Z) = 0.\label{eq:drin_grp}  
\end{align}
\end{lemma}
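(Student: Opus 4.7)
The plan is to derive both identities from the defining hexagon relation \eqref{eq:grthexagon} of $\grt_1$, namely $\Psi(X,Y)\Psi(Y,Z)\Psi(Z,X) = 1$ whenever $X+Y+Z = 0$, together with the antisymmetry relation \eqref{eq:grtinversion}. These are the classical Drinfel'd identities, and the approach follows the spirit of the original derivation in \cite{Drin90}.

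For \eqref{eq:drin_grp}, I would first use antisymmetry together with the hexagon to rewrite $\Psi(X,Z) = \Psi(X,Y)\Psi(Y,Z)$, and substitute this into the middle conjugation to eliminate the direct dependence on $\Psi(X,Z)$. The strategy is then to conjugate the whole expression by $\Psi(X,Y)$ and invoke the hypothesis $X+Y+Z=0$ literally to collapse one of the three terms, reducing the problem to a cleaner adjoint-action identity that can be verified by a second, cyclically rotated application of the hexagon.

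For \eqref{eq:drin_lie}, I would obtain the identity as the first nontrivial order of the $\psi$-expansion of \eqref{eq:drin_grp}. Writing $\Psi=e^\psi$ and expanding
\begin{align*}
    \Psi(X,Y)^{-1}\, Y\, \Psi(X,Y) = Y + [Y,\psi(X,Y)] + O(\psi^2),
\end{align*}
the degree-zero part in $\psi$ reduces to $X+Y+Z = 0$, which holds by hypothesis, while the term linear in $\psi$ reads $[Y,\psi(X,Y)] + [Z,\psi(X,Z)] = 0$. Relabelling $(X,Y,Z)\mapsto(Z,X,Y)$—which preserves the constraint $X+Y+Z=0$—and using the antisymmetry of the bracket then yields the stated identity.

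The main obstacle I anticipate is the algebraic bookkeeping for the group-level identity \eqref{eq:drin_grp}. Several forms of the hexagon—related by cyclic rotation and antisymmetry—must be combined in sequence, and one must choose the right form at each stage so that the adjoint-action terms collapse cleanly. Because $X+Y+Z=0$ holds as a literal equality rather than as centrality of $X+Y+Z$, it will be convenient to think of the computation as happening in the quotient free Lie algebra where one of the three generators can be freely substituted for minus the sum of the other two. Once this bookkeeping is performed, \eqref{eq:drin_lie} follows immediately from the expansion outlined above.
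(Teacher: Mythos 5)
The paper offers no proof of this lemma at all: both identities are imported from Drinfel'd, where \eqref{eq:drin_lie} and \eqref{eq:drin_grp} are his equations (5.14) and (5.19), i.e.\ they belong to the package of relations with which $\mathfrak{grt}_1$ and $\grt_1$ are introduced in \cite{Drin90}, not to the list of consequences of the hexagon. (Definition~\ref{def:grt1} in this paper records only three of Drinfel'd's relations, and this lemma is precisely where the remaining one is recalled.) Your plan is to derive both identities from \eqref{eq:grtinversion} and \eqref{eq:grthexagon} alone, and that is where the gap sits. The preliminary manipulation is fine: antisymmetry plus the hexagon do give $\Psi(X,Z)=\Psi(X,Y)\Psi(Y,Z)$. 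But the hexagon is a relation about the \emph{product} $\Psi(X,Y)\Psi(Y,Z)\Psi(Z,X)$ inside $\exp(\widehat{\mathfrak{lie}}_2)$; it says nothing about how the adjoint actions of these elements move the generators $X,Y,Z$, which is exactly the content of \eqref{eq:drin_grp}. After substituting and conjugating by $\Psi(X,Y)$ no term ``collapses'', and a ``cyclically rotated hexagon'' is just a conjugate of the original one (if $abc=1$ then $bca=1$), so your second application of the hexagon supplies no new information; likewise the reversed hexagon already follows from \eqref{eq:grtinversion} and \eqref{eq:grthexagon}. To the extent that these identities are redundant given the other axioms, the known derivations route through the pentagon \eqref{eq:grtpentagon} in $U(\mathfrak{t}_4)$, which your proposal never invokes. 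So the central step of your argument is missing, not merely unpolished.

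Two smaller points. Your passage from \eqref{eq:drin_grp} to \eqref{eq:drin_lie} by ``extracting the term linear in $\psi$'' is not meaningful for a single fixed $\Psi=e^{\psi}$: you must either note that $\Psi^{t}=e^{t\psi}\in\grt_1$ for all $t$ (true by prounipotence) and differentiate at $t=0$, or establish the Lie-algebra identity directly; otherwise you only control the lowest-degree part of $\psi$. The final relabelling $(X,Y,Z)\mapsto(Z,X,Y)$ is correct and needs no antisymmetry of the bracket. For the purposes of this paper the honest proof is the citation; a self-contained argument would have to either include \eqref{eq:drin_grp} among the defining relations of $\grt_1$, as Drinfel'd does, or genuinely bring the pentagon into play.
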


\begin{lemma}\label{lem:ranti}
The map $r: \Aut(\PaCD) \to \calG(\uparrow_2)^{tr}$ is a group anti-homomorphism. 
\end{lemma}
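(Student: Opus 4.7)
The strategy is to combine Lemma~\ref{lem:Nxy} with the fact that the Alekseev-Torossian map $\rho$ is a group homomorphism, thereby reducing the anti-homomorphism property of $r$ to a short injectivity argument.

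First, I would reinterpret Lemma~\ref{lem:Nxy} as the identity $\gamma \circ r = \rho \circ \tau$ of set maps $\Aut(\PaCD) \to \TAut_2$. Indeed, for $F\in\Aut(\PaCD)$ with $\tau(F)=\Psi$, the lemma computes $(\gamma \circ r(F))(x) = \Psi(-x-y,x)^{-1}x\,\Psi(-x-y,x)$ and analogously for $y$, which is exactly the action of the tangential automorphism $\rho(\Psi) = (\Psi(-x-y,x), \Psi(-x-y,y))$ under the convention that $(A,B)\in \TAut_2$ acts by $x \mapsto A^{-1}xA$, $y \mapsto B^{-1}yB$.

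Since $\tau$ is a group isomorphism (Theorem~\ref{thm: GRT is Aut}) and $\rho$ is a group homomorphism (the Alekseev-Torossian-Enriquez result recalled at the start of this section), the composite $\rho \circ \tau: \Aut(\PaCD) \to \TAut_2$ is a group homomorphism, while $\gamma$ is a group anti-homomorphism. Consequently, for any $F_1, F_2 \in \Aut(\PaCD)$,
\begin{align*}
\gamma(r(F_1 \circ F_2)) &= (\rho\circ\tau)(F_1 \circ F_2) = (\rho\circ\tau)(F_1) \circ (\rho\circ\tau)(F_2) \\
&= \gamma(r(F_1)) \circ \gamma(r(F_2)) = \gamma(r(F_2) \cdot r(F_1)).
\end{align*}

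Finally, to cancel $\gamma$ and conclude $r(F_1\circ F_2) = r(F_2)\cdot r(F_1)$, I would appeal to the kernel computation $\ker(\gamma) = \exp(\mathfrak{a}_2)$, which consists of exponentials of short arrows. Lemma~\ref{lem:ShortArrows} ensures that $r(F_1 \circ F_2)$ is short-arrow-free. A brief degree argument shows that $r(F_2)\cdot r(F_1)$ is as well: the logarithms $\log r(F_i)$ lie in degrees $\geq 2$ (the short arrows being precisely the degree-one tree diagrams), and since the stacking product respects the grading, all terms in the Baker-Campbell-Hausdorff combination $\log(r(F_2)\cdot r(F_1))$ also lie in degrees $\geq 2$. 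As the short-arrow-free subset of $\calG(\uparrow_2)^{tr}$ meets $\exp(\mathfrak{a}_2)$ trivially, two short-arrow-free elements with equal image under $\gamma$ must coincide, and the anti-homomorphism property follows. The main subtlety is this last injectivity step; with Lemma~\ref{lem:Nxy} and Lemma~\ref{lem:ShortArrows} already in hand, the rest is formal.
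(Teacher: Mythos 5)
Your proposal is correct, and it reaches the conclusion by a genuinely different route in the central step. The paper also reduces the claim to showing that $\gamma\circ r$ is a group homomorphism, but it then \emph{verifies this by hand}: it invokes the composition formula for $\Aut(\PaCD)$ from \cite[Proposition 5.1]{BNGT}, computes the action of $(\gamma\circ r(F_1))\circ(\gamma\circ r(F_2))$ on $x$ and $y$ using Lemma~\ref{lem:Nxy}, and uses Drinfel'd's identity (Lemma~\ref{lem:Drinfeld}) to show the result agrees with $\gamma\circ r(F_1\cdot F_2)$ --- in effect re-deriving, inside the diagrammatic setting, the fact that $\rho$ is a group homomorphism. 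You instead observe that Lemma~\ref{lem:Nxy} literally says $\gamma\circ r=\rho\circ\tau$ and outsource the homomorphism property to the Alekseev--Torossian--Enriquez theorem quoted at the start of the section. This is legitimate and shorter; its only cost is that it silently assumes the group law on $\grt_1$ induced by $\tau$ from composition in $\Aut(\PaCD)$ (the twisted multiplication of \cite[Proposition 5.1]{BNGT}) is the one for which ATE prove $\rho$ is a homomorphism --- a convention-matching point the paper's explicit computation sidesteps. On the other hand, your treatment of the cancellation of $\gamma$ is actually \emph{more} careful than the paper's: the paper appeals to injectivity of $\gamma$ ``on the image of $r$,'' but the element $r(F_2)\cdot r(F_1)$ is not a priori in that image, so one really does need your observation that a product of two short-arrow-free group-like elements is again short-arrow-free (the degree-one part of the BCH series vanishes), after which two short-arrow-free elements with the same $\gamma$-image differ by an element of $\exp(\mathfrak{a}_2)$ whose degree-one logarithm must vanish. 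That small supplementary argument closes a gap the paper leaves implicit.
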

\begin{proof}
Since $\gamma$ is a group anti-homomorphism and injective on the image of $r$, we can prove that $r$ is an anti-homomorphism by showing that $\gamma \circ r$, as computed in Lemma~\ref{lem:Nxy}, is a group homomorphism.

First, recall that composition in $\Aut(\PaCD)$ does not translate to multiplication in $\PaCD$, but is given by the following formula \cite[Proposition 5.1]{BNGT}: if $F_1(\Associator)=\Psi_1(t^{12},t^{23})$ and $F_2(\Associator)=\Psi_2(t^{12}, t^{23})$, then 
\[
(F_1 \cdot F_2)(\Associator)= \Psi_1(t^{12},t^{23})\Psi_2\left(t^{12}\, , \, \Psi_1^{-1}(t^{12},t^{23})\, t^{23}\, \Psi_1(t^{12},t^{23})\right).
\]

By Lemma~\ref{lem:Nxy}$, \gamma \circ r(F_1)$ conjugates $x$ by 
$\Psi_1(-x-y,x)=:\Psi_{1,x}$ and $y$ by $\Psi_1(-x-y,y)=:\Psi_{1,y}$. Similarly, $\gamma \circ r(F_2)$ conjugates $x$ by $\Psi_2(-x-y,x)=:\Psi_{2,x}$ and $y$ by $\Psi_2(-x-y,y)=:\Psi_{2,y}$. 

When acting with $\left(\gamma \circ r(F_1)\right) \circ \left(\gamma \circ r (F_2)\right)$ on $x$, this is first a conjugation by $\Psi_2(-x-y,x)$, followed by the action of $\gamma \circ r(F_1)$, which acts on each letter of the result. Thus, the composite action on $x$ is conjugation by
\begin{equation}
     \Psi_{1,x} \cdot \Psi_2(-\Psi_{1,x}^{-1}\, x\, \Psi_{1,x}\, -\, \Psi_{1,y}^{-1}\, y\, \Psi_{1,y} \, , \,
    \Psi_{1,x}^{-1}\, x\, \Psi_{1,x}). 
\end{equation}
Applying Lemma~\ref{lem:Drinfeld} with the substitution $X=-x-y, Y=x, Z=y$, we obtain
$$-\Psi_{1,x}^{-1} x \Psi_{1,x} - \Psi_{1,y}^{-1} y \Psi_{1,y} =-x-y$$
Thus,  $\left(\gamma \circ r(F_1)\right) \circ \left(\gamma \circ r (F_2)\right)$ conjugates $x$ by 
\begin{align}
    \Psi_1(-x-y,x)\cdot \Psi_2(-x-y,\Psi_1^{-1}(-x-y,x)x\Psi_1(-x-y,x)).
\end{align}

On the other hand, by Lemma~\ref{lem:Nxy} we also have that $ \gamma \circ r(F_1 \cdot F_2)$ conjugates $x$ by
\begin{align}
    \Psi_1(-x-y,x)\cdot \Psi_2(-x-y,\Psi_1^{-1}(-x-y,x)x\Psi_1(-x-y,x)),
\end{align}
therefore 
$$\left(\gamma \circ r(F_1)\right) \circ \left(\gamma \circ r (F_2)\right)(x)=\left( \gamma \circ r(F_1 \cdot F_2)\right)(x).$$

After an analogous computation on $y$, we have that $\left( \gamma \circ r(F_1 \cdot F_2)\right) = \left(\gamma \circ r(F_1)\right) \cdot \left(\gamma \circ r (F_2)\right)$. Hence, $\gamma \circ r$ is a group homomorphism, and it follows from Lemma~\ref{lem:ShortArrows} that $r$ is a group anti-homomorphism as required. 
\end{proof}

Now, given that we know the value of $\gamma \circ r(F)$ and that $r$ is an anti-homomorphism, we can prove the following:

\begin{lemma}\label{rhoequiv}
The pentagon (1) in diagram~\ref{eq:BigDiagram} commutes:
\begin{equation}
\begin{tikzcd}
	{\Aut(\PaCD)} && {\calG(\uparrow_2)^{tr}} \\
	&& {\TAut_2} \\
	{\grt_1} && {\krv_2}
	\arrow["\rho"', from=3-1, to=3-3]
	\arrow["{\tau}", from=1-1, to=3-1]
	\arrow[hook', from=3-3, to=2-3]
	\arrow["\gamma", red,from=1-3, to=2-3]
	\arrow["r",red, from=1-1, to=1-3]
\end{tikzcd}
\end{equation}
\end{lemma}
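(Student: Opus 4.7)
The proof should be an essentially direct unwinding of Lemma~\ref{lem:Nxy} together with the definitions of $\rho$, $\tau$, and the inclusion $i: \krv \hookrightarrow \TAut_2$. My plan is to pick an arbitrary $F \in \Aut(\PaCD)$, set $\Psi = \tau(F)$ (so that $F(\Associator) = \Psi(t^{12}, t^{23})$), and then chase $F$ around the pentagon in two directions, verifying that both produce the same tangential automorphism of $\lie_2$.

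First, I would go down-and-across: $\tau(F) = \Psi$ by definition, and then $\rho(\Psi) = (\Psi(-x-y,x), \Psi(-x-y,y)) \in \krv$ by the formula \eqref{eq:rhodef}. The point to make explicit is the convention by which $\krv$ sits inside $\TAut_2$: a pair of group-like elements $(g_1,g_2)$ is identified with the tangential automorphism $x \mapsto g_1^{-1}\, x\, g_1,\ y \mapsto g_2^{-1}\, y\, g_2$. Applied here, this gives
\begin{align*}
i(\rho(\tau(F)))(x) &= \Psi(-x-y,x)^{-1}\, x\, \Psi(-x-y,x), \\
i(\rho(\tau(F)))(y) &= \Psi(-x-y,y)^{-1}\, y\, \Psi(-x-y,y).
\end{align*}

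Second, I would go across-and-up: by Lemma~\ref{lem:Nxy}, the tangential automorphism $\gamma \circ r(F)$ has exactly these same values on $x$ and $y$. Since any element of $\TAut_2$ is uniquely determined by its action on the generators of $\lie_2$, this yields $\gamma \circ r(F) = i \circ \rho \circ \tau(F)$, establishing the commutativity of pentagon~(1). The whole argument is a definitional identification; there is no real obstacle once Lemma~\ref{lem:Nxy} is in hand, and the only subtlety worth flagging in passing is the mixture of homomorphisms (sides $\tau$, $\rho$, $i$) with anti-homomorphisms (sides $r$, $\gamma$): the composite $\gamma \circ r$ is a homomorphism because two anti-homomorphisms compose to a homomorphism, so there is no sign/order mismatch when comparing it with the homomorphism $i \circ \rho \circ \tau$.
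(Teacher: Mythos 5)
Your proposal is correct and is essentially the paper's own proof: both arguments compute $\rho(\tau(F)) = (\Psi(-x-y,x),\Psi(-x-y,y))$ and then invoke Lemma~\ref{lem:Nxy} to see that $\gamma\circ r(F)$ is exactly the conjugation action by this pair. Your extra remarks on the $\krv\hookrightarrow\TAut_2$ convention and on the composition of the two anti-homomorphisms are harmless clarifications rather than a different route.
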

\begin{proof}
Let $F$ be an element of $\Aut(\PaCD)$ such that $F(\Associator) = \Psi(t^{12},t^{23})$. Then $\tau(F) = \Psi(x,y)$, and $\rho(\tau(F)) = (\Psi(-x-y,x),\Psi(-x-y,y))$. 

In Lemma~\ref{lem:Nxy}, we showed that $\gamma(r(F))$ acts on $x$ and $y$ by conjugation with $(\Psi(-x-y,x)$, $\Psi(-x-y,y))$, which indeed agrees with $\rho(\Psi)$.  
\end{proof}

\begin{lemma}\label{lem:3commutes}
The square (3) in the diagram~\ref{eq:BigDiagram} commutes: 
\begin{equation}
\begin{tikzcd}
	{\calG(\uparrow_2)^{tr}} && {\Aut_v(\A)} \\
	\\
	{\TAut_2} && {\krv_2}
	\arrow["\gamma", red,from=1-1, to=3-1]
	\arrow["v", red, hook', swap, from=1-3, to=1-1]
	\arrow["\Theta", from=1-3, to=3-3]
	\arrow[hook', from=3-3, to=3-1]
\end{tikzcd}
\end{equation}
\end{lemma}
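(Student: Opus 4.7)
The plan is to unwind both composites of the square and reduce the statement to a single identity relating the conjugation-action definition of $\gamma$ to the infinitesimal definition of $\Theta$ via $pr$. Fix $G\in\Aut_v(\A)$ and write $N=G(\vertex)=e^\nu$ with $\nu\in\calP(\uparrow_2)$ primitive. Going clockwise, \eqref{eq:Theta} gives $\Theta(G)=(e^\eta,2c)$ with $\eta=pr(\nu)\in\tder_2$ (the $\mathfrak a_2$-component of $pr(\nu)$ vanishes by $v$-smallness), so $i\circ\Theta$ returns $e^\eta\in\TAut_2$. Going counter-clockwise, $v(G)=\apr(N)=e^{\apr(\nu)}\in\calG(\uparrow_2)^{tr}$, so one must show that $\gamma(e^{\apr(\nu)})$ equals the same tangential automorphism $e^\eta=e^{pr(\nu)}$.

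This reduces the lemma to the following assertion, independent of $G$: for every primitive $\delta\in\calP(\uparrow_2)^{tr}$,
\[
\gamma(e^\delta)\;=\;e^{pr(\delta)}\qquad\text{in}\qquad\TAut_2,
\]
where on the right $pr(\delta)\in\tder_2\oplus\mathfrak a_2$ acts on $\lie_2$ as a tangential derivation in the usual way. I would verify this at the Lie algebra level first. By definition, $\gamma(e^\delta)$ sends a Lie word $w\in\lie_2$ to the Lie word whose representing tree (in the sense of Figure~\ref{fig:liewordarrows}) is $e^{-\delta^{12}}\,\bT_w\,e^{\delta^{12}}\in\arrows(\uparrow_3)$. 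Differentiating at $\delta=0$ gives the first-order action $\bT_w\mapsto[\bT_w,\delta^{12}]$. A direct STU-calculation, inducting on the depth of the binary tree $\bT_w$, shows that this bracket is exactly the tree $\bT_{pr(\delta)\cdot w}$, where $pr(\delta)\cdot w$ denotes the tangential derivation action described after Proposition~\ref{prop:SES}. The base case is the letters $x$ and $y$, which is immediate from the description of $pr$ on a single tree in Remark~\ref{remark:l}, and the inductive step is a single application of STU at the top vertex of $\bT_w$, reproducing the Leibniz rule.

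Once the infinitesimal match $\delta\mapsto pr(\delta)$ is established, the exponentiated identity $\gamma(e^\delta)=e^{pr(\delta)}$ follows because both sides define group homomorphisms $\calG(\uparrow_2)^{tr}\to\TAut_2$ extending the same Lie algebra homomorphism, via the Milnor--Moore identification of Proposition~\ref{prop:milnor_moore} and the fact that $pr$ is a Lie algebra map (Proposition~\ref{prop:SES}). Substituting $\delta=\apr(\nu)$ and using $pr(\apr(\nu))=pr(\nu)=\eta$ yields $\gamma(v(G))=e^\eta=i(\Theta(G))$.

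The main obstacle is the infinitesimal STU calculation identifying $[\bT_w,\delta^{12}]$ with $\bT_{pr(\delta)\cdot w}$. This is a careful but routine diagrammatic induction; the rest of the argument is formal, relying on the Milnor--Moore theorem and the Lie-algebraic structure of $pr$ already recorded in Section~\ref{sec:arrow diagrams}.
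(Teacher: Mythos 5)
Your proof is correct, but it proves more than the paper does and reaches the conclusion by a genuinely different route. The paper's own proof is essentially definitional: in the list of maps immediately preceding the lemma, $\Theta(G)$ is (re)described as the conjugation action of $N^{tr}=\apr(G(\vertex))$ on $\lie_2$, i.e.\ literally as $\gamma(v(G))$, with membership in $\krv$ supplied by \cite{DHR21}; the square then commutes tautologically. You instead start from the formula \eqref{eq:Theta}, $\Theta(G)=(e^{pr(\log N)},2c)$, and supply the nontrivial compatibility $\gamma(e^{\delta})=e^{pr(\delta)}$ between the diagrammatic conjugation action and the exponentiated projection to $\tder_2$ --- precisely the fact that the paper absorbs into the definition of $\gamma$ by asserting that the action ``comes from the exponentiation of the projection $\calP(\uparrow_2)\to\tder_2$'' and citing the conceptual argument in the proof of \cite[Proposition 3.19]{BND:WKO2}. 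Your infinitesimal STU/Jacobi computation $[\bT_w,\delta^{12}]=\bT_{pr(\delta)\cdot w}$ is exactly that argument; note that you do not actually need Milnor--Moore for the exponentiation step, since $e^{-\delta^{12}}\,\bT_w\,e^{\delta^{12}}=\sum_{n\geq 0}\tfrac{1}{n!}\bigl(\mathrm{ad}_{-\delta^{12}}\bigr)^{n}(\bT_w)$ already yields $\bT_{e^{pr(\delta)}\cdot w}$ term by term once the degree-one identity is known. What your version buys is an explicit reconciliation of the two descriptions of $\Theta$ that the paper uses interchangeably (the formula \eqref{eq:Theta} versus $\Theta=\gamma\circ v$); what the paper's version buys is brevity, at the cost of outsourcing that reconciliation to the cited references.
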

\begin{proof}
For $G\in \Aut_v(\arrows)$ given by $G(\vertex)=N$,  $\Theta(G)$ is the conjugation action by $N^{tr}$ on $\lie_2$, which is a priori an element of $\TAut_2$, but proven in \cite[Theorem 5.12]{DHR21} to lie in $\krv$. This agrees with $\gamma(v(G))$ by definition.
\end{proof}

This implies the Main Theorem:

\begin{proof}[Proof of Theorem~\ref{thm:main}]
Given Lemmas~\ref{rhoequiv}~and~\ref{lem:3commutes}, the only statement we need to show is that $r$ factors through $\Aut_v(\arrows)$. Indeed, by the commutativity of the pentagon (1) and the square (3), $r=v\circ(\Theta)^{-1}\circ\rho\circ\tau$. Thus, for any $F\in \Aut(\PaCD)$, the value $r(F)$ is the tree projection of a valid vertex value\footnote{Readers of \cite{DHR21} may note that $N^{tr}$ in this paper does not look the same as a vertex value of an automorphism in the image of $\Theta^{-1}$. However, by Lemma~\ref{lem:Ntr_elleta}, $N^{tr}$ is equal to a vertex value of the correct form.} for an expansion preserving, v-small automorphism of arrow diagrams, namely $G=\Theta^{-1}\rho\tau(F)$.
\end{proof}

In Appendix~\ref{sec:DirectProof} we present an alternative, more direct proof that $r$ factors through $\Aut_v(\arrows)$, which relies less on the Alekseev-Torossian map, and in Remark~\ref{rmk:direct} we explain an avenue for making the proof entirely self-contained.

\begin{cor}\label{rp_grouphomo}
$\rp: \operatorname{Aut}(\PaCD) \to \operatorname{Aut}_v(\A)$ is an injective group homomorphism.
\end{cor}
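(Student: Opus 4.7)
The plan is to derive the Corollary as a direct consequence of Theorem~\ref{thm:main}. The theorem establishes that the square
\[
\begin{tikzcd}
\Aut(\PaCD) \arrow[d, "\tau", swap]\arrow[r,"\rp"] & \Aut_v(\arrows) \arrow[d, "\Theta"] \\
\grt_1 \arrow[r,"\rho"] & \krv
\end{tikzcd}
\]
commutes, where $\tau$ is the isomorphism of Theorem~\ref{thm: GRT is Aut}, $\Theta$ is the isomorphism of Theorem~\ref{thm:kv and krv}, and $\rho$ is the Alekseev-Torossian map. Since $\Theta$ is an isomorphism, the commutativity allows us to solve explicitly: $\rp = \Theta^{-1}\circ \rho \circ \tau$.

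From this identification, both claims follow by routine arguments. For the group homomorphism property, I observe that $\tau$ and $\Theta^{-1}$ are group isomorphisms, and $\rho: \grt_1 \to \krv$ is a group homomorphism by \cite[Theorem 2.5]{ATE10}; the composition of group homomorphisms is a group homomorphism. For injectivity, $\tau$ is a bijection, $\rho$ is injective (again by \cite[Theorem 2.5]{ATE10}), and $\Theta^{-1}$ is a bijection, so the composite is injective.

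There is no real obstacle here — the corollary is essentially a repackaging of Theorem~\ref{thm:main} together with the known properties of $\rho$, $\tau$, and $\Theta$. The only point worth noting is that the map $\rp$ was constructed diagrammatically (via Construction~\ref{cons:buckle} and Theorem~\ref{thm:GfromF}) without \emph{a priori} reference to $\rho$, so the fact that $\rp$ is a group homomorphism is not obvious from the construction alone — it genuinely uses that the diagram of Theorem~\ref{thm:main} commutes. In particular, the intermediate map $r: \Aut(\PaCD) \to \calG(\uparrow_2)^{tr}$ from which $\rp$ is built is only a group \emph{anti}-homomorphism (Lemma~\ref{lem:ranti}); the ``flip'' to a genuine homomorphism occurs because $v: \Aut_v(\arrows) \to \calG(\uparrow_2)^{tr}$ is also an anti-homomorphism, so the two anti-homomorphism properties cancel when we factor $r = v\circ \rp$. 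Thus the proof should note the factorisation $\rp = \Theta^{-1}\circ \rho \circ \tau$ provided by Theorem~\ref{thm:main} and invoke the corresponding properties of the three factors.
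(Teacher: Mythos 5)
Your proof is correct and follows essentially the same route as the paper: both reduce the statement to the commutativity of the square in Theorem~\ref{thm:main} together with the injectivity of the Alekseev--Torossian map $\rho$ and the fact that $\tau$ and $\Theta$ are isomorphisms. The paper packages the injectivity argument slightly differently—showing $r=v\circ\rp$ is injective via $\gamma\circ r$ lying in the image of $\rho$ (Lemmas~\ref{rhoequiv} and~\ref{lem:ShortArrows})—but this is the same underlying argument as your direct factorisation $\rp=\Theta^{-1}\circ\rho\circ\tau$.
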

\begin{proof}
It's enough to show that both $v$ and $r$ are injective. 

The map $v$ is injective since by Corollary~\ref{auto_wheels} an automorphism is uniquely determined by its vertex value modulo wheels. 

The injectivity of $r$ follows from the in injectivity of $\gamma \circ r$. If we have $\gamma \circ r(F_1) = \gamma \circ r(F_2)$ with $F_1(\Associator) = \Psi_1(t^{12},t^{23})$ and $F_2(\Associator) = \Psi_2(t^{12},t^{23})$ then we must have, 
\begin{align}\label{eq:psi1psi2}
    (\Psi_1(-x-y,x),\Psi_1(-x-y,y)) = (\Psi_2(-x-y,x),\Psi_2(-x-y,y)).
\end{align} 
By Lemma~\ref{rhoequiv}, $\gamma\circ r$ is in the image of $\rho$. Then, by \cite[Theorem 4.6]{AT12}, $\rho$ is injective, and therefore Equation~\ref{eq:psi1psi2} is only possible if $\Psi_1 = \Psi_2$ which gives $F_1 = F_2$. By Lemma~\ref{lem:ShortArrows} it follows that $r$ is injective, and therefore so is $\rp$.
\end{proof}

Corollary~\ref{rp_grouphomo} states that there exists a subgroup of arrow diagram automorphisms isomorphic to $\grt_1$. As each homomorphic expansion of $w$-foams $Z:\widehat{\mathbb{Q}[\wf]}\rightarrow \arrows$ is an isomorphism of (completed) wheeled props, each one induces an isomorphism \[\Aut_v(\widehat{\mathbb{Q}[\wf]})\overset{Z}{\cong}\Aut_v(\arrows).\] 
\begin{cor}
There exists a subgroup of $\Aut_v(\widehat{\mathbb{Q}[\wf]})$ which is (non-canonically) isomorphic to $\gt_1$.
\end{cor}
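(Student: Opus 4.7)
The plan is to chain three facts: the isomorphism $\gt_1\cong\grt_1$ that arises from any Drinfel'd associator, the injection $\rp:\Aut(\PaCD)\hookrightarrow\Aut_v(\arrows)$ of Corollary~\ref{rp_grouphomo}, and the transport of automorphism groups along a homomorphic expansion of $w$-foams. None of these steps should require fresh technical input; the content of the corollary is entirely bookkeeping of the non-canonical choices involved.

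First, fix a homomorphic expansion $Z^b:\widehat{\mathbb{Q}[\PaB]}\to\PaCD$, which exists by Theorem~\ref{thm:BN_Ass}. By the corollary immediately following Theorem~\ref{thm: GRT is Aut}, conjugation by $Z^b$ gives an isomorphism of pro-unipotent groups $\gt_1\overset{Z^b}{\cong}\Aut(\widehat{\mathbb{Q}[\PaB]})$, and since $Z^b$ is a filtered isomorphism of tensor categories, conjugation by $Z^b$ also identifies $\Aut(\widehat{\mathbb{Q}[\PaB]})$ with $\Aut(\PaCD)\stackrel{\tau}{\cong}\grt_1$. Composing, we obtain an isomorphism $\gt_1\cong\Aut(\PaCD)$ depending only on the choice of $Z^b$.

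Next, by \cite[Theorem 1.1]{BND:WKO3} the expansion $Z^b$ extends uniquely to a homomorphic expansion $Z:\widehat{\mathbb{Q}[\wf]}\to\arrows$. Since $Z$ is a filtered isomorphism of wheeled props compatible with the auxiliary operations (orientation switches, unzips, punctures), conjugation by $Z$ restricts to an isomorphism of pro-unipotent groups $\Aut_v(\widehat{\mathbb{Q}[\wf]})\overset{Z}{\cong}\Aut_v(\arrows)$, where $v$-smallness is preserved because $Z$ sends $\vertex$ to a group-like vertex value and the $\mathfrak{a}_2$-component condition of Definition~\ref{def:vsmall} is intrinsic to $\arrows$.

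Finally, apply Corollary~\ref{rp_grouphomo}: the image $\rp(\Aut(\PaCD))$ is a subgroup of $\Aut_v(\arrows)$ isomorphic to $\Aut(\PaCD)$. Pulling this subgroup back through the isomorphism induced by $Z$ produces
\[
\mathcal{H}:=Z^{-1}\circ\rp\bigl(\Aut(\PaCD)\bigr)\circ Z\ \subseteq\ \Aut_v(\widehat{\mathbb{Q}[\wf]}),
\]
which is isomorphic to $\Aut(\PaCD)\cong\grt_1\cong\gt_1$ via the composition of isomorphisms established above. The non-canonicity in the statement is precisely the dependence of $\mathcal{H}$ and of the identification $\gt_1\cong\grt_1$ on the chosen expansion $Z^b$ (equivalently, on a Drinfel'd associator). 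No serious obstacle arises: all substantive work has been done in Corollary~\ref{rp_grouphomo} and in the classical identifications of $\gt_1$ and $\grt_1$ with automorphism groups of $\PaB$ and $\PaCD$.
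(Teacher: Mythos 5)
Your proposal is correct and follows essentially the same route as the paper: transport the subgroup $\rp(\Aut(\PaCD))\subseteq\Aut_v(\arrows)$ of Corollary~\ref{rp_grouphomo} back along the isomorphism $\Aut_v(\widehat{\mathbb{Q}[\wf]})\overset{Z}{\cong}\Aut_v(\arrows)$ induced by a homomorphic expansion $Z$, with the non-canonicity residing in the choice of expansion (equivalently, of associator) that also identifies $\gt_1$ with $\grt_1$. Your extra step of constructing $Z$ from $Z^b$ via \cite[Theorem 1.1]{BND:WKO3} is not needed—any homomorphic expansion of $w$-foams suffices—but it is a harmless and tidy way to organise the choices.
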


Ideally, one would construct a topological map $\tilde{\rho}:\Aut(\widehat{\mathbb{Q}[\PaB]})\hookrightarrow \Aut_{v}(\widehat{\mathbb{Q}[\wf]})$ which agrees with the embedding of $\gt_1\hookrightarrow\kv$ from \cite{ATE10}. This construction is more subtle due to the completions, and is the topic of ongoing work of the second author.

\appendix

\section{Alternative proof of the Main Theorem}\label{sec:DirectProof}

 To prove directly that $r$ factors through $\Aut_v(\arrows)$, we need to show that given $F \in \Aut(\PaCD)$, the value $N^{tr}=r(F)$ lifts to a pair $(N,C)\in \arrows(\uparrow_2)\times \arrows(\bcap)$ satisfying the equations $R4', C'$ and $U'$ of Proposition~\ref{prop:SimplifiedEqns}. The value $N=G(\vertex)$ is computed in Theorem~\ref{thm:GfromF}, and given in terms of $N^{tr}$ and a yet-to-be-determined cap value $C=G(\bcap)$ in equation \eqref{eq:GfromF}, which we recall here for convenience:
 $$N=(C^1)^{-1}(C^2)^{-1}N^{tr}C^{12}.$$

\begin{lemma}\label{lem:rhopr4}
The value $N$ of Theorem~\ref{thm:GfromF} satisfies the R4' equation: 
\begin{align}\tag{R4'}
    N^{12}R^{(12)3}(N^{12})^{-1} &= R^{(12)3}  \quad \in \A(\uparrow_3)
\end{align}
where $R^{(12)3} = e^{a^{13}+a^{23}}$. 
\end{lemma}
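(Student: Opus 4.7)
The plan is to peel off the cap factors from $N$ first, reducing R4' to a purely tree-level commutation, and then translate that commutation into a statement about the tangential automorphism $\gamma(N^{tr})$, which can be verified via Drinfel'd's identity~\eqref{eq:drin_grp}.

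\emph{Step 1 (Reduction via caps).} Recall from Theorem~\ref{thm:GfromF} that $N = (C^1)^{-1}(C^2)^{-1}\,N^{tr}\,C^{12}$, where $C \in \A(\bcap)$. By Lemma~\ref{cyclic_lemma}, each of the factors $C^1$, $C^2$, $C^{12}$ lies in the image of $\cyc_n \hookrightarrow \A$ on strands $1,2$: that is, they are sums of wheels whose only skeleton endings are arrow tails on strands $1$ and $2$, with nothing on strand $3$. Placed in $\A(\uparrow_3)$, such wheels commute with the tails of $R^{(12)3} = e^{a^{13}+a^{23}}$ on strands $1,2$ by the TC relation, and trivially commute with the heads of $R^{(12)3}$ on strand $3$. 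Hence each cap factor commutes with $R^{(12)3}$, and R4' reduces to showing $N^{tr}\,R^{(12)3}\,(N^{tr})^{-1} = R^{(12)3}$ in $\A(\uparrow_3)$.

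\emph{Step 2 (Translation to $\tder_2$).} Write $N^{tr} = e^{\delta}$ with $\delta \in \calP(\uparrow_2)^{tr}$ a sum of trees; by Lemma~\ref{lem:ShortArrows} the $\mathfrak{a}_2$ component vanishes, so $pr(\delta) = (u_1,u_2) \in \tder_2$ under Proposition~\ref{prop:SES}. Commuting a tree on strands $1,2$ past the through-arrows $a^{i3}$ in $\A(\uparrow_3)$ is governed by STU applied at the heads of $\delta$ against the tails of $a^{i3}$: no wheels are produced (all interactions are head-meets-tail between distinct components), and the result is precisely the tree-arrow on strands $1,2$ with head on strand $3$ representing the Lie element $[x_i,u_i] \in \mathfrak{lie}_2$. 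Thus, identifying $a^{13}+a^{23}$ on strand $3$ with $x+y \in \mathfrak{lie}_2$ (via the correspondence of Remark~\ref{remark:l}), conjugation by $N^{tr}$ sends $R^{(12)3}$ to $\exp(\gamma(N^{tr})(x+y))$, viewed as a tree-arrow on strand $3$. The R4' equation is therefore equivalent to $\gamma(N^{tr})(x+y) = x+y$.

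\emph{Step 3 (Drinfel'd's identity).} By Lemma~\ref{lem:Nxy}, with $\Psi = \tau(F)$, we have
\[
\gamma(N^{tr})(x) = \Psi(-x-y,x)^{-1}\, x\, \Psi(-x-y,x), \qquad \gamma(N^{tr})(y) = \Psi(-x-y,y)^{-1}\, y\, \Psi(-x-y,y).
\]
Apply Lemma~\ref{lem:Drinfeld}\eqref{eq:drin_grp} with $X = -x-y$, $Y = x$, $Z = y$ (so that $X+Y+Z = 0$) to obtain
\[
(-x-y) + \Psi(-x-y,x)^{-1}\, x\, \Psi(-x-y,x) + \Psi(-x-y,y)^{-1}\, y\, \Psi(-x-y,y) = 0,
\]
which rearranges to $\gamma(N^{tr})(x) + \gamma(N^{tr})(y) = x+y$, as required.

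The main technical point is the identification in Step~2: that tree-level conjugation in $\A(\uparrow_3)$ exactly implements the tangential-derivation action on Lie words read off strand~$3$. This is a diagrammatic bookkeeping exercise built into the setup of Section~\ref{section:PACD} and Proposition~\ref{prop:SES}, so it may be invoked rather than re-derived. Once granted, the lemma is a one-line consequence of \eqref{eq:drin_grp}.
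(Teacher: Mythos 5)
Your proposal is correct and follows essentially the same route as the paper's proof: reduce to the tree projection $N^{tr}$ (the paper asserts this in one line; you justify it via TC on the wheel factors), identify the diagrammatic conjugation of $e^{a^{13}+a^{23}}$ with the $\gamma$-action on $e^{x+y}$, and conclude via Lemma~\ref{lem:Nxy} together with Drinfel'd's identity \eqref{eq:drin_grp} under the substitution $X=-x-y$, $Y=x$, $Z=y$. The extra detail you supply in Steps 1 and 2 is consistent with what the paper leaves implicit.
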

\begin{proof}
Note that for the conjugation action of $N$ on $R^{(12)3}$, only the tree-projection of $N$ matters. That is, $N$ satisfies $R4'$ if and only if $N^{tr}$ does. Note that $(N^{tr})^{-1} e^{a^{13}+a^{23}}N^{tr}$ maps to $(N^{tr})\cdot e^{x+y}\in \lie_2$ via the map $\gamma$, where $\cdot$ denotes the conjugation action.
Thus, the proof that $N$ satisfies the R4' equation reduces to the following computation, where we denote $\Psi_x = \Psi(-x-y,x)$ and $\Psi_y = \Psi(-x-y,y)$, and explain the steps below.
\begin{align*}
    (N^{12})^{-1}\exp(a^{13} + a^{23}) N^{12}
    &= \exp\left((N^{12})^{-1}a^{13}N^{12} + (N^{12})^{-1}a^{23}N^{12}\right) \\
    &= \exp\left(\Psi_x^{-1} \; x \; \Psi_x + \Psi_y^{-1}\; y\; \Psi_y\right)\\
    &= \exp\left(x + y\right)\\
    &= e^{a^{13} + a^{23}}
\end{align*}
Here the first equality is by the definition of the exponential, the second is by Lemma~\ref{lem:Nxy}, and the third is by Lemma~\ref{lem:Drinfeld} with the substitution $X=-x-y, Y=x, Z=y$, and the fourth is by definition of the map $\gamma$. 
\end{proof}

\begin{lemma}\label{lem:rhopcap}
For any choice of $C\in \arrows(\bcap)$ the value $N$ of \eqref{eq:GfromF} satisfies the cap equation, 
\begin{equation}\tag{C'}
     C^{12}(N^{12})^{-1} = C^1C^2 \quad \text{in} \quad \A(\bcap_2).
\end{equation}
\end{lemma}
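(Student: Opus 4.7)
The plan is to substitute the formula \eqref{eq:GfromF} for $N$ directly into the cap equation and reduce the problem to showing that the middle factor $\varphi\bigl(\Psi^{-1}(a^{2(13)},-a^{2(13)}-a^{4(13)})\cdot\Psi(a^{23},a^{43})\bigr)$ becomes the identity in $\arrows(\bcap_2)$. Concretely, direct computation using \eqref{eq:GfromF} yields
\[
C^{12}(N^{12})^{-1} = \varphi(\Psi^{-1}(\cdots)\Psi(\cdots))^{-1} \cdot C^2 C^1 \quad \text{in } \arrows(\bcap_2),
\]
since the adjacent factors $C^{12}$ and $(C^{12})^{-1}$ cancel. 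Because wheels on distinct strands commute, $C^2 C^1 = C^1 C^2$, and it suffices to show $\varphi(\Psi^{-1}(\cdots)\Psi(\cdots)) = 1$ in $\arrows(\bcap_2)$.

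The key observation is that $\varphi(\Psi^{-1}(\cdots)\Psi(\cdots))$ has the form $e^\delta$ with $\delta \in \calP(\uparrow_2)$ a sum of \emph{trees}. The arrows appearing in the $\Psi$-factors all lie in $\{a^{21},a^{23},a^{41},a^{43}\}$, whose tails occupy the strand set $\{2,4\}$ while whose heads occupy the disjoint strand set $\{1,3\}$. I would verify that the Lie subalgebra of $\calP(\uparrow_4)$ generated by these arrows consists entirely of trees: any iterated commutator of these generators either vanishes (when endpoints lie on mutually distinct strands, or when tails coincide on a common strand, by TC) or produces a tree via an STU relation at a shared head-strand. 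No wheel can arise, since the disjointness of head-strands and tail-strands precludes any cycle from forming. Consequently, since $\Psi = e^\psi$ with $\psi \in \lie_2$, both $\Psi$-factors and their product (via BCH) are exponentials of tree-valued Lie elements, and the isomorphism $\varphi$ of Lemma~\ref{lem:EK} acts by VI moves that do not introduce wheels.

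Finally, by Lemma~\ref{cyclic_lemma} the space $\arrows(\bcap_2)$ is spanned by wheels alone, so every tree arrow diagram vanishes there (its unique root-head can be moved adjacent to a cap by STU and then killed by CP). Expanding $e^\delta = 1 + \delta + \tfrac12\delta^2 + \cdots$, each higher-order term is a disjoint union of trees and likewise vanishes, so $\varphi(\Psi^{-1}(\cdots)\Psi(\cdots)) = 1$ in $\arrows(\bcap_2)$, completing the proof. The main obstacle is the combinatorial verification that no bracket of the four chosen generators produces a wheel; the remaining steps are straightforward consequences of the CP relation and the commutativity of wheels on disjoint strands.
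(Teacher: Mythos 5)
Your reduction is the same as the paper's: substitute \eqref{eq:GfromF}, cancel $C^{12}(C^{12})^{-1}$, commute $C^1$ and $C^2$, and reduce to showing that the middle factor $\varphi\bigl(\Psi^{-1}(\cdots)\Psi(\cdots)\bigr)=N^{tr}$ equals $1$ in $\arrows(\bcap_2)$. The problem is in your final step. The inference ``$\arrows(\bcap_2)$ is spanned by wheels, so every tree arrow diagram vanishes there'' is false, and your parenthetical justification is exactly where it breaks: moving a tree's head down past a tail by STU is not free — each STU move produces a correction term in which the head and tail are fused, and after the head is eliminated these corrections are precisely wheels, i.e.\ precisely the elements that \emph{survive} in $\arrows(\bcap_2)\cong\cyc_2/\cyc_2^1$. (Compare Remark~\ref{remark:l}: changing the attachment position of a tree's head relative to its tails changes the diagram by an element in the image of $\cyc_n$.) So a tree whose head sits above some of its tails is generically a nonzero wheel in $\arrows(\bcap_2)$, and your argument as written does not close. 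Your preceding combinatorial analysis showing that no wheels arise among the brackets of $a^{21},a^{23},a^{41},a^{43}$ is correct but does not rescue this step, since the issue is the \emph{position} of the heads, not the presence of wheels in $\delta$.

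The gap is repairable with an observation you already have in hand but do not use: the element in question is in the image of $\varphi$, and by construction of $\varphi$ (Lemma~\ref{lem:EK}) every summand has \emph{all} arrow heads below \emph{all} arrow tails. Hence in any positive-degree summand the bottommost ending on some strand is a head adjacent to the cap, and the summand vanishes by CP directly — no STU moves, and therefore no wheel corrections, are needed. This is the paper's entire argument for this step, and it renders the trees-versus-wheels discussion unnecessary.
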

\begin{proof}
Substituting the value of $N$ \eqref{eq:GfromF} into the Cap equation \eqref{C'} we obtain:
\begin{align}
    C^{12}(C^{12})^{-1}(N^{tr})^{-1}C^2C^1 = C^1C^2.
\end{align}
We cancel $C^{12}(C^{12})^{-1}$ on the left. Since $\arrows(\bcap_2)$ is a right $\arrows(\uparrow_2)$-module by stacking, we multiply on the right by $(C^1)^{-1}(C^2)^{-1}$. Note that $C^1$ and $C^2$ commute. 

Thus, we have reduced the equation to 
$$N^{tr} = 1 \quad \text{in} \quad \A(\bcap_2).$$ 
Then, observe that $N^{tr}$ is in the image of $\varphi$, and as such, all arrow heads in every summand in $N^{tr}$ are below all arrow tails. Thus, all positive degree components of $N^{tr}$ vanish in $\arrows(\bcap)$ by the CP relation, and this completes the proof.
\end{proof}

In the proof of the unitarity equation U', the following three Lemmas are useful. 
\begin{lemma}\label{lem:WNtr}
$N=W \cdot N^{tr}$, where $W = C^{12}(C^1)^{-1}(C^2)^{-1}$. 
\end{lemma}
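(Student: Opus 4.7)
The plan is to derive the claim directly from the explicit formula for $N$ established in Theorem~\ref{thm:GfromF}, namely
\[
N \;=\; (C^1)^{-1}(C^2)^{-1}\, N^{tr}\, C^{12},
\]
by transporting the factor $C^{12}$ from the right past $N^{tr}$ and past the cap factors, so as to produce $N = C^{12}(C^1)^{-1}(C^2)^{-1}\, N^{tr} = W \cdot N^{tr}$. Thus the lemma reduces to two commutation statements inside $\arrows(\uparrow_2)$: (a) $C^{12}$ commutes with $(C^1)^{-1}(C^2)^{-1}$, and (b) $C^{12}$ commutes with $N^{tr}$.

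For (a), I would observe that $C^1, C^2, C^{12}$ all lie in the image of $\hat{U}(\cyc_2) \subset \arrows(\uparrow_2)$ coming from the Milnor--Moore decomposition $\arrows(\uparrow_2) \cong \hat{U}(\cyc_2 \rtimes (\tder_2 \oplus \mathfrak{a}_2))$ of Proposition~\ref{prop:milnor_moore}, and since $\cyc_2$ is abelian, $\hat{U}(\cyc_2)$ is commutative. Equivalently, diagrammatically each of these elements is a sum of wheels—arrow diagrams whose only strand-attachments are tails—so the TC relation already implies they commute pairwise.

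For (b), which is the substantive step, I would use the same Milnor--Moore identification to translate the commutator $[N^{tr}, C^{12}]$ in $\arrows(\uparrow_2)$ into the natural action of the tangential automorphism $\gamma(N^{tr})$ on the cyclic word $\log C^{12} = c(x+y) \in \cyc_2$. By Lemma~\ref{rhoequiv} (or equivalently the direct computation of Lemma~\ref{lem:Nxy}), $\gamma(N^{tr}) = \rho(\tau(F)) \in \krv$, and the defining condition of $\krv$ in \eqref{eq:krvdef} says this automorphism fixes $e^{x+y}$, so it fixes $x+y \in \lie_2$. Consequently it fixes every cyclic word of the form $c(x+y)$, so $\gamma(N^{tr})$ acts trivially on $C^{12}$, yielding $N^{tr} C^{12} (N^{tr})^{-1} = C^{12}$.

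The only subtle point is the first step in (b)—that conjugation in the full algebra $\arrows(\uparrow_2)$ by a group-like element covering a given tangential automorphism really does reduce to that automorphism's action on cyclic words. This is a standard consequence of the semidirect-product structure, but is where the Hopf-algebra bookkeeping must be handled carefully. Once both (a) and (b) are in hand, the proof concludes by a one-line rearrangement:
\[
(C^1)^{-1}(C^2)^{-1} N^{tr} C^{12} \;=\; (C^1)^{-1}(C^2)^{-1} C^{12} N^{tr} \;=\; C^{12} (C^1)^{-1}(C^2)^{-1} N^{tr} \;=\; W \cdot N^{tr}.
\]
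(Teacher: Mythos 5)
Your proof is correct, but it takes a genuinely different route from the paper's. The paper first argues only that commuting $C^{12}$ leftward past $N^{tr}$ via STU relations can at worst produce wheels, so that $N=W\cdot N^{tr}$ for \emph{some} product of wheels $W$, and then \emph{solves} for $W$ by combining the cap equation (Lemma~\ref{lem:rhopcap}) with the trace map $\sigma:\arrows(\uparrow_3)\to\arrows(\bcap_2)$ applied to \eqref{R4'}; the identity $N^{tr}C^{12}(N^{tr})^{-1}=C^{12}$ is obtained there in $\arrows(\bcap_2)$ and lifted back to $\arrows(\uparrow_2)$ using that wheels embed in $\arrows(\bcap_2)$. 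You instead prove the commutation of $C^{12}$ with $N^{tr}$ directly in $\arrows(\uparrow_2)$, via the semidirect-product decomposition and the fact that $\gamma(N^{tr})$ fixes $x+y$, so that $W$ is literally the product of cap factors with no correction terms; notably you never need the cap equation. This is shorter and makes the structural reason transparent, at the cost of the step you yourself flag: that conjugation by a group-like element acts on the wheel part $\hat{U}(\cyc_2)$ through the trace action of its image in $\TAut_2$. That step is genuinely the crux --- the paper's $\sigma$-argument is precisely a hands-on diagrammatic implementation of it --- but it does follow from the $\cyc_2\rtimes(\tder_2\oplus\mathfrak{a}_2)$ structure of Propositions~\ref{prop:SES} and~\ref{prop:milnor_moore} (the bracket of a tree with a wheel is the trace action, with no divergence correction; the remaining ambiguity of $N^{tr}$ by wheel factors is harmless since wheels commute with wheels), so your argument closes. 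One small improvement: to keep the appendix independent of the Alekseev--Torossian theorem, you can replace the appeal to $\krv$-membership by Lemma~\ref{lem:Drinfeld}, equation \eqref{eq:drin_grp} with $X=-x-y$, $Y=x$, $Z=y$, which gives $\gamma(N^{tr})(x)+\gamma(N^{tr})(y)=x+y$ directly.
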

\begin{proof}
By repeated applications of the STU relation, the $C^{12}$ factor at the right-hand side of $N$ can be commuted to the left-hand side, at the possible "cost" of additional wheels.  Thus, $N$ can be written in the form $W \cdot N^{tr}$ for some product of wheels $W$. 
To calculate $W$, we apply that the pair $(N,C)$ satisfies the cap equation by Lemma~\ref{lem:rhopcap} and hence in $\A(\bcap_2)$ we can deduce (explained below):
\begin{align*}
    C^{12}N^{-1} &= C^1C^2\\
    C^{12}(N^{tr})^{-1}W^{-1} &= C^1C^2\\
    C^{12}(N^{tr})^{-1} &= C^1C^2W\\
    N^{tr}C^{12}(N^{tr})^{-1} &= C^1C^2W. 
\end{align*}
Here the first step is substituting $N=WN^{tr}$; the second step is right multiplication by $W$ using the right $\arrows(\uparrow_2)$-module structure, and the last step inserts $N^{tr}$ at the bottom caps, using that $N^{tr}=1\in\arrows(\bcap_2)$ by the CP relation.

Now recall that $C=e^{c(x)}$, where $c\in \Q[[\xi]]/\langle\xi\rangle$ (Lemma~\ref{cyclic_lemma}), and thus $C^{12}=e^{c(x+y)}$. By the \eqref{R4'} equation,
$N^{tr}(x+y)(N^{tr})^{-1}=x+y$. It is tempting to use this to deduce that $N^{tr}C^{12}(N^{tr})^{-1}=C^{12}$, but unfortunately in this case $e^{c(x+y)}$ denotes a power series in wheels, rather than a tree element corresponding to $\lie_2$ as in the \eqref{R4'} equation.

To overcome this issue, we use a linear trace map $\sigma: \A(\uparrow_3) \to \A(\bcap_2)$ defined in \cite[Section 4.4]{BND:WKO2} and illustrated in Figure~\ref{fig:sigmamap}.
The trace $\sigma$ ``closes up'' the third strand and converts it from a skeletal to an internal strand, then caps the first two strands at the bottom, as shown.  This map is well-defined (it kills almost all relations, and converts one STU relation into an IHX relation). If $f(x,y)$ is a word in $\hat{U}(\mathfrak{lie}_2)$ represented by an arrow diagram where $x=a^{13} $ and $y=a^{23}$, then $\sigma(f(x,y))$ is the wheel arrow diagram representing the cyclic word $\operatorname{tr}(f(x,y))$. In particular, for a power series $c\in \Q[[\xi]]/\langle\xi\rangle$, we have that $\sigma(c(x+y))=c(x+y)$.

\begin{figure}[h]
    \centering
\tikzset{every picture/.style={line width=0.75pt}} 
\[\begin{tikzpicture}[x=0.75pt,y=0.75pt,yscale=-1,xscale=1]

\draw [line width=2.25]    (60,220) -- (60,54) ;
\draw [shift={(60,50)}, rotate = 90] [color={rgb, 255:red, 0; green, 0; blue, 0 }  ][line width=2.25]    (12.24,-5.49) .. controls (7.79,-2.58) and (3.71,-0.75) .. (0,0) .. controls (3.71,0.75) and (7.79,2.58) .. (12.24,5.49)   ;
\draw [line width=2.25]    (110,220) -- (110,54) ;
\draw [shift={(110,50)}, rotate = 90] [color={rgb, 255:red, 0; green, 0; blue, 0 }  ][line width=2.25]    (12.24,-5.49) .. controls (7.79,-2.58) and (3.71,-0.75) .. (0,0) .. controls (3.71,0.75) and (7.79,2.58) .. (12.24,5.49)   ;
\draw [line width=2.25]    (160,220) -- (160,54) ;
\draw [shift={(160,50)}, rotate = 90] [color={rgb, 255:red, 0; green, 0; blue, 0 }  ][line width=2.25]    (12.24,-5.49) .. controls (7.79,-2.58) and (3.71,-0.75) .. (0,0) .. controls (3.71,0.75) and (7.79,2.58) .. (12.24,5.49)   ;
\draw [line width=2.25]    (280,220) -- (280,54) ;
\draw [shift={(280,50)}, rotate = 90] [color={rgb, 255:red, 0; green, 0; blue, 0 }  ][line width=2.25]    (12.24,-5.49) .. controls (7.79,-2.58) and (3.71,-0.75) .. (0,0) .. controls (3.71,0.75) and (7.79,2.58) .. (12.24,5.49)   ;
\draw [shift={(280,220)}, rotate = 270] [color={rgb, 255:red, 0; green, 0; blue, 0 }  ][fill={rgb, 255:red, 0; green, 0; blue, 0 }  ][line width=2.25]      (0, 0) circle [x radius= 3.75, y radius= 3.75]   ;
\draw [line width=2.25]    (330,220) -- (330,54) ;
\draw [shift={(330,50)}, rotate = 90] [color={rgb, 255:red, 0; green, 0; blue, 0 }  ][line width=2.25]    (12.24,-5.49) .. controls (7.79,-2.58) and (3.71,-0.75) .. (0,0) .. controls (3.71,0.75) and (7.79,2.58) .. (12.24,5.49)   ;
\draw [shift={(330,220)}, rotate = 270] [color={rgb, 255:red, 0; green, 0; blue, 0 }  ][fill={rgb, 255:red, 0; green, 0; blue, 0 }  ][line width=2.25]      (0, 0) circle [x radius= 3.75, y radius= 3.75]   ;
\draw [color={rgb, 255:red, 0; green, 0; blue, 255 }  ,draw opacity=1 ] [dash pattern={on 0.84pt off 2.51pt}]  (60,180) -- (158,180) ;
\draw [shift={(160,180)}, rotate = 180] [color={rgb, 255:red, 0; green, 0; blue, 255 }  ,draw opacity=1 ][line width=0.75]    (7.65,-3.43) .. controls (4.86,-1.61) and (2.31,-0.47) .. (0,0) .. controls (2.31,0.47) and (4.86,1.61) .. (7.65,3.43)   ;
\draw    (180,130) -- (258,130) ;
\draw [shift={(260,130)}, rotate = 180] [color={rgb, 255:red, 0; green, 0; blue, 0 }  ][line width=0.75]    (10.93,-4.9) .. controls (6.95,-2.3) and (3.31,-0.67) .. (0,0) .. controls (3.31,0.67) and (6.95,2.3) .. (10.93,4.9)   ;
\draw [shift={(180,130)}, rotate = 180] [color={rgb, 255:red, 0; green, 0; blue, 0 }  ][line width=0.75]    (0,5.59) -- (0,-5.59)   ;
\draw  [color={rgb, 255:red, 0; green, 0; blue, 255 }  ,draw opacity=1 ][dash pattern={on 0.84pt off 2.51pt}] (401.43,68.57) .. controls (413.26,68.57) and (422.86,78.17) .. (422.86,90) -- (422.86,188.57) .. controls (422.86,200.41) and (413.26,210) .. (401.43,210) -- (401.43,210) .. controls (389.59,210) and (380,200.41) .. (380,188.57) -- (380,90) .. controls (380,78.17) and (389.59,68.57) .. (401.43,68.57) -- cycle ;
\draw [color={rgb, 255:red, 0; green, 0; blue, 255 }  ,draw opacity=1 ] [dash pattern={on 0.84pt off 2.51pt}]  (110,160) -- (158,160) ;
\draw [shift={(160,160)}, rotate = 180] [color={rgb, 255:red, 0; green, 0; blue, 255 }  ,draw opacity=1 ][line width=0.75]    (7.65,-3.43) .. controls (4.86,-1.61) and (2.31,-0.47) .. (0,0) .. controls (2.31,0.47) and (4.86,1.61) .. (7.65,3.43)   ;
\draw [color={rgb, 255:red, 0; green, 0; blue, 255 }  ,draw opacity=1 ] [dash pattern={on 0.84pt off 2.51pt}]  (110,140) -- (158,140) ;
\draw [shift={(160,140)}, rotate = 180] [color={rgb, 255:red, 0; green, 0; blue, 255 }  ,draw opacity=1 ][line width=0.75]    (7.65,-3.43) .. controls (4.86,-1.61) and (2.31,-0.47) .. (0,0) .. controls (2.31,0.47) and (4.86,1.61) .. (7.65,3.43)   ;
\draw [color={rgb, 255:red, 0; green, 0; blue, 255 }  ,draw opacity=1 ] [dash pattern={on 0.84pt off 2.51pt}]  (60,120) -- (84,120) -- (158,120) ;
\draw [shift={(160,120)}, rotate = 180] [color={rgb, 255:red, 0; green, 0; blue, 255 }  ,draw opacity=1 ][line width=0.75]    (7.65,-3.43) .. controls (4.86,-1.61) and (2.31,-0.47) .. (0,0) .. controls (2.31,0.47) and (4.86,1.61) .. (7.65,3.43)   ;
\draw [color={rgb, 255:red, 0; green, 0; blue, 255 }  ,draw opacity=1 ] [dash pattern={on 0.84pt off 2.51pt}]  (60,100) -- (158,100) ;
\draw [shift={(160,100)}, rotate = 180] [color={rgb, 255:red, 0; green, 0; blue, 255 }  ,draw opacity=1 ][line width=0.75]    (7.65,-3.43) .. controls (4.86,-1.61) and (2.31,-0.47) .. (0,0) .. controls (2.31,0.47) and (4.86,1.61) .. (7.65,3.43)   ;
\draw [color={rgb, 255:red, 0; green, 0; blue, 255 }  ,draw opacity=1 ] [dash pattern={on 0.84pt off 2.51pt}]  (280,100) -- (378,100) ;
\draw [shift={(380,100)}, rotate = 180] [color={rgb, 255:red, 0; green, 0; blue, 255 }  ,draw opacity=1 ][line width=0.75]    (7.65,-3.43) .. controls (4.86,-1.61) and (2.31,-0.47) .. (0,0) .. controls (2.31,0.47) and (4.86,1.61) .. (7.65,3.43)   ;
\draw [color={rgb, 255:red, 0; green, 0; blue, 255 }  ,draw opacity=1 ] [dash pattern={on 0.84pt off 2.51pt}]  (280,120) -- (378,120) ;
\draw [shift={(380,120)}, rotate = 180] [color={rgb, 255:red, 0; green, 0; blue, 255 }  ,draw opacity=1 ][line width=0.75]    (7.65,-3.43) .. controls (4.86,-1.61) and (2.31,-0.47) .. (0,0) .. controls (2.31,0.47) and (4.86,1.61) .. (7.65,3.43)   ;
\draw [color={rgb, 255:red, 0; green, 0; blue, 255 }  ,draw opacity=1 ] [dash pattern={on 0.84pt off 2.51pt}]  (280,180) -- (378,180) ;
\draw [shift={(380,180)}, rotate = 180] [color={rgb, 255:red, 0; green, 0; blue, 255 }  ,draw opacity=1 ][line width=0.75]    (7.65,-3.43) .. controls (4.86,-1.61) and (2.31,-0.47) .. (0,0) .. controls (2.31,0.47) and (4.86,1.61) .. (7.65,3.43)   ;
\draw [color={rgb, 255:red, 0; green, 0; blue, 255 }  ,draw opacity=1 ] [dash pattern={on 0.84pt off 2.51pt}]  (330,140) -- (378,140) ;
\draw [shift={(380,140)}, rotate = 180] [color={rgb, 255:red, 0; green, 0; blue, 255 }  ,draw opacity=1 ][line width=0.75]    (7.65,-3.43) .. controls (4.86,-1.61) and (2.31,-0.47) .. (0,0) .. controls (2.31,0.47) and (4.86,1.61) .. (7.65,3.43)   ;
\draw [color={rgb, 255:red, 0; green, 0; blue, 255 }  ,draw opacity=1 ] [dash pattern={on 0.84pt off 2.51pt}]  (330,160) -- (378,160) ;
\draw [shift={(380,160)}, rotate = 180] [color={rgb, 255:red, 0; green, 0; blue, 255 }  ,draw opacity=1 ][line width=0.75]    (7.65,-3.43) .. controls (4.86,-1.61) and (2.31,-0.47) .. (0,0) .. controls (2.31,0.47) and (4.86,1.61) .. (7.65,3.43)   ;

\draw (220,126.6) node [anchor=south] [inner sep=0.75pt]    {$\sigma $};

\end{tikzpicture}\]
    \caption{The map $\sigma$ applied to the of $\A(\uparrow_3)$ representing $xy^2x^2$.}
    \label{fig:sigmamap}
\end{figure}
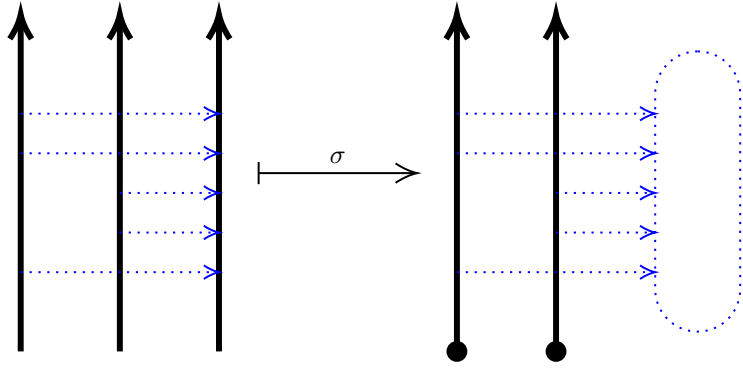

Thus, $\sigma(R4')$ states that $N^{12}C^{12}(N^{12})^{-1}=C^{12}$, and thus we have $C^{12}=C^{1}C^{2}W$.
Since all factors on both sides of this equation are now wheel arrow diagrams, they all commute, and thus we can use the right $\arrows(\uparrow_2)$-module structure to obtain
\begin{align}
    W = C^{12}(C^{1})^{-1}(C^{2})^{-1}=e^{c^{12}-c^{1}-c^2}. 
\end{align}
We have identified the value of $W$ in $\arrows(\bcap_2)$, rather than in $\arrows(\uparrow_2)$. Although $\arrows(\bcap_2)$ is a factor of $\arrows(\uparrow_2)$, wheels embed in $\arrows(\bcap_2)$, and therefore if $ W = C^{12}(C^{1})^{-1}(C^{2})^{-1}$ in $\arrows(\bcap_2)$, then the same is true in $\arrows(\uparrow_2)$. 
This completes the proof.
\end{proof}

\begin{lemma}\label{lem:eta_self}
Let $\psi \in \mathfrak{grt}_1$ and $\eta = (\psi(-x-y,x),\psi(-x-y,y)) \in \mathfrak{tder}_2$. Then, the action of $\eta$ (as a tangential derivation) on itself (as an element of $\lie_2^{\oplus 2}$) is trivial.
\end{lemma}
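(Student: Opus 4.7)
The approach is to exploit the Alekseev--Torossian theorem that $\rho:\grt_1\to\krv$ is a group homomorphism with differential $\psi\mapsto\eta$. In the pro-unipotent category this forces the one-parameter subgroups to match, so $e^{t\eta}=\rho(e^{t\psi})$ in $\krv\subset\TAut_2$ for all $t$. Writing $c=\psi(-x-y,x)$ and $d=\psi(-x-y,y)$, the formula \eqref{eq:rhodef} gives the explicit action
\[
    e^{t\eta}(x)=e^{-tc}xe^{tc},\qquad e^{t\eta}(y)=e^{-td}ye^{td}.
\]

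The key step is to compare Taylor coefficients in $t$. Let $F(t)=e^{-tc}xe^{tc}$; a direct calculation yields $F'(t)=[F(t),c]$, hence $F''(0)=[[x,c],c]$. Since $F(t)=e^{t\eta}(x)=\sum_{n}\frac{t^n}{n!}\eta^n(x)$, this identifies $\eta^2(x)=[[x,c],c]$. On the other hand, applying the Leibniz rule for the derivation $\eta$ gives $\eta^2(x)=\eta([x,c])=[[x,c],c]+[x,\eta(c)]$. Comparing, $[x,\eta(c)]=0$, so $\eta(c)$ commutes with $x$ inside $\lie_2$.

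The centraliser of a free generator in the pro-nilpotent free Lie algebra $\lie_2$ is the one-dimensional line $\Q\cdot x$, so $\eta(c)=\lambda x$ for some $\lambda\in\Q$. A degree count closes the argument: $\mathfrak{grt}_1$ is concentrated in Lie degrees $\geq 3$, hence $c$ has degree $\geq 3$ and the derivation $\eta$ raises Lie degree by $\geq 3$ (as $\eta(x)=[x,c]$). Consequently every graded component of $\eta(c)$ lives in degree $\geq 6$, whereas $\Q\cdot x$ is concentrated in degree $1$, forcing $\lambda=0$ and $\eta(c)=0$. Running the same argument with $(y,d)$ in place of $(x,c)$ yields $\eta(d)=0$, which is the desired triviality of the action of $\eta$ on itself as an element of $\lie_2^{\oplus 2}$.

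The main conceptual subtlety is justifying the identification $e^{t\eta}=\rho(e^{t\psi})$; this is automatic once one knows that $\rho$ is a homomorphism of pro-unipotent groups with differential $\eta$, as recorded in the Alekseev--Torossian theorem. A more self-contained alternative, avoiding appeal to that result, would be to derive $[x,\eta(c)]=0$ directly by expanding $\eta(\psi(-x-y,x))$ via the Leibniz rule letter by letter and exploiting $\eta(-x-y)=0$ from Lemma~\ref{lem:Drinfeld}, reducing the assertion to an identity for $\psi\in\mathfrak{grt}_1$ that can be extracted from \eqref{eq:drin_lie}.
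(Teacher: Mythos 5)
Your argument breaks at the very first step, the identification $e^{t\eta}=\rho(e^{t\psi})$. For $\psi\in\mathfrak{grt}_1$ the naive exponential $e^{t\psi}$, taken in $\widehat{U}(\lie_2)$, is \emph{not} an element of $\grt_1$, so $\rho$ cannot even be applied to it: writing $P=\psi(x,y)$, $Q=\psi(y,z)$, $R=\psi(z,x)$ with $x+y+z=0$, Baker--Campbell--Hausdorff together with $P+Q+R=0$ gives $e^{tP}e^{tQ}e^{tR}=e^{\frac{t^2}{2}[P,Q]+O(t^3)}$, and $[P,Q]=[\psi(x,y),\psi(y,z)]$ is already nonzero for the degree-three generator $\sigma_3=[x,[x,y]]-[y,[y,x]]$; thus $e^{t\psi}$ violates the hexagon \eqref{eq:grthexagon} at order $t^2$. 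The group $\grt_1$ is not a subgroup of the group-like elements under multiplication in $\widehat{U}(\lie_2)$ (its group law is the twisted composition), so its intrinsic exponential is not the restriction of $\exp_{\widehat{U}}$, and the actual one-parameter subgroup is $\exp_{\grt_1}(t\psi)=e^{t\psi+t^2\epsilon+O(t^3)}$ with a nonzero correction $\epsilon$. Carrying that correction through your Taylor comparison, the $t^2$ coefficient of $\rho(\exp_{\grt_1}(t\psi))(x)$ is $[x,\epsilon(-x-y,x)]+\tfrac12[[x,c],c]$, so you only obtain $[x,\eta(c)]=2[x,\epsilon(-x-y,x)]$, not $[x,\eta(c)]=0$. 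Note also that the identity you extract from step one, $e^{t\eta}(x)=e^{-tc}xe^{tc}$, is essentially \emph{equivalent} to the conclusion: the exponential in $\TAut_2$ of the derivation with tuple $(c,d)$ is conjugation by the tuple $(e^{tc},e^{td})$ precisely when that derivation annihilates its own tuple. So without an independent justification of step one the argument is circular, and the justification you offer (matching of one-parameter subgroups) fails for the reason above.

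The paper's proof takes a completely different route, essentially the ``self-contained alternative'' you sketch at the end: it expands $\eta(\psi(-x-y,x))$ by the Leibniz rule over the two slots of $\psi$ and kills the first-slot contribution using Drinfel'd's identity \eqref{eq:drin_lie}, which gives $\eta(-x-y)=-\bigl([x,\psi_x]+[y,\psi_y]\bigr)=0$. If you pursue that route, be aware that it is not yet a complete argument either as you have described it: the Leibniz expansion also contains the second-slot contribution $D_2\psi(-x-y,x)\bigl[\eta(x)\bigr]$ with $\eta(x)=[x,\psi_x]\neq 0$, and $\eta(-x-y)=0$ says nothing about that term; it must be controlled by a separate use of the $\grt_1$ relations. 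Your steps two and three (the Leibniz identity $\eta^2(x)=[[x,c],c]+[x,\eta(c)]$, the fact that the centraliser of $x$ in $\lie_2$ is $\Q x$, and the degree count using that $\mathfrak{grt}_1$ vanishes in degrees $1$ and $2$) are all fine, but they rest on the unjustified first step.
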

\begin{proof}
Let $\psi_x = \psi(-x-y,x)$ and $\psi_y = \psi(-x-y,y)$. The action of $\eta$ on the Lie series $\psi_x$ and $\psi_y$ is -- by definition -- given by:
\begin{equation}\label{eq:etaeta}
\eta(\psi_x)=\psi([\psi_x,-x]+[\psi_y,-y],[\psi_x,x]) \quad \text{and} \quad \eta(\psi_y)=\psi([\psi_x,-x]+[\psi_y,-y],[\psi_y,y])
\end{equation}
Now applying Equation~\ref{eq:drin_lie} of Lemma~\ref{lem:Drinfeld} we have, 
$$[\psi(-x-y,x),x] + [\psi(-x-y,y),y] = 0,$$ 
 and thus the first component of both $\eta(\psi_x)$ and $\eta(\psi_y)$ given in \eqref{eq:etaeta} is zero.
Thus, $\eta(\psi_x)=\eta(\psi_y)=0$, as stated.
\end{proof}

\begin{lemma}\label{lem:Ntr_elleta}Given $F\in \Aut(\PaCD)$ with $F(\Associator)=\Psi(t^{12},t^{13})=e^{\psi(t^{12},t^{23})}$, we have
    \[G(\vertex)^{tr}=N^{tr} = e^{\ell(\eta)} \in \A(\uparrow_2)\]
where $\eta = (\psi(-x-y,x),\psi(-x-y,y)) \in \tder_2$, and $\ell: \tder_n \to \calP(\uparrow_n)$ is the map described in Remark~\ref{remark:l}. 
\end{lemma}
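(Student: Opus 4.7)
The plan is to identify the unique $\xi \in \tder_2$ for which $N^{tr} = e^{\ell(\xi)}$ and then show $\xi = \eta$. Since $\psi \in \mathfrak{grt}_1$ vanishes in degree one, Lemma~\ref{lem:ShortArrows} ensures $N^{tr}$ has no short-arrow summands; combined with the splitting of Proposition~\ref{prop:SES}, this gives $N^{tr} = e^{\ell(\xi)}$ for a unique $\xi \in \tder_2$. By construction of the action map $\gamma$, one has $\gamma(N^{tr}) = e^\xi \in \TAut_2$, and since exponentiation $\tder_2 \to \TAut_2$ is injective in the completed filtered setting, it suffices to prove $e^\xi = e^\eta$ as automorphisms of $\lie_2$.

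By Lemma~\ref{lem:Nxy}, $e^\xi$ acts as $x \mapsto e^{-\psi_x} x e^{\psi_x}$ and $y \mapsto e^{-\psi_y} y e^{\psi_y}$, where $\psi_x := \psi(-x-y,x)$ and $\psi_y := \psi(-x-y,y)$. The main step is to show that $e^\eta$ acts by the same conjugation. This is where Lemma~\ref{lem:eta_self} comes in: it states $\eta(\psi_x) = \eta(\psi_y) = 0$. Given this, a short induction shows $\eta^k(x) = (\mathrm{ad}(-\psi_x))^k(x)$ for all $k \geq 0$. Indeed, the inductive step expands $\eta$ applied to the nested bracket $[\ldots[[x,\psi_x],\psi_x],\ldots,\psi_x]$ by the Leibniz rule; every term in which $\eta$ lands on a copy of $\psi_x$ vanishes, and the single remaining term replaces the innermost $x$ by $[x,\psi_x]$, adding one more bracket. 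Summing over $k$ yields $e^\eta(x) = e^{\mathrm{ad}(-\psi_x)}(x) = e^{-\psi_x} x e^{\psi_x}$, and the computation for $y$ is identical. Therefore $e^\xi = e^\eta$, so $\xi = \eta$ and $N^{tr} = e^{\ell(\eta)}$.

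The only mild subtlety requiring care is the interpretation of the identity ``$N^{tr} = e^{\ell(\eta)} \in \A(\uparrow_2)$'': $N^{tr}$ was originally defined in $\A(\uparrow_2)^{tr}$, so the statement implicitly uses the canonical lift via $\ell$ to realize both sides as elements of $\A(\uparrow_2)$. Once this convention is fixed, the whole argument reduces to the two ingredients above, namely the explicit conjugation formula of Lemma~\ref{lem:Nxy} and the self-vanishing Lemma~\ref{lem:eta_self}.
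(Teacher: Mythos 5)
There is a genuine gap: your argument only establishes the identity \emph{modulo wheels}, i.e.\ in $\A(\uparrow_2)^{tr}$, whereas the lemma asserts an equality of two specific elements of $\A(\uparrow_2)$ and is used at that strength. Every step of your proof is routed through $\gamma$, whose kernel contains all of $\cyc_2$ (and the short arrows), so nothing you do can see the wheel content of $N^{tr}$. Moreover, $N^{tr}$ here is not ``the canonical $\ell$-lift of $\apr(N)$'': it is the concrete representative $\varphi\bigl(\Psi^{-1}(\cdots)\cdot\Psi(\cdots)\bigr)$ produced by Theorem~\ref{thm:GfromF} — the same element that appears in the factorisation $N=W\cdot N^{tr}$ of Lemma~\ref{lem:WNtr} — in which \emph{all} arrow heads sit below \emph{all} tails, while in $e^{\ell(\eta)}$ the trees are stacked with each head below only its own tails. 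Your first step, ``no short arrows plus the splitting of Proposition~\ref{prop:SES} gives $N^{tr}=e^{\ell(\xi)}$,'' already presupposes that $\log N^{tr}$ has vanishing wheel part and that its tree part lies in the image of $\ell$; that is precisely the content to be proved, since the bracket of trees in nonstandard position can differ from $\ell$ of the corresponding bracket by wheels. Your closing paragraph notices the ambiguity but resolves it by weakening the statement, and the weakened statement does not suffice downstream: in Lemma~\ref{lem:rhopunitarity} the identity $N^{tr}=e^{\ell(\eta)}$ is substituted into Proposition~\ref{prop:J} to produce the wheel-valued Jacobian $e^{-J(e^\eta)}$ that pins down the cap value $C$; an uncontrolled wheel discrepancy between $N^{tr}$ and $e^{\ell(\eta)}$ would invalidate that computation.

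What is missing is the diagram-level bookkeeping that the paper supplies. Passing from the $\varphi$-arrangement to the $\ell$-arrangement means sliding each tree's head past the tails of the \emph{other} trees; each such STU move grafts one tree onto a leaf of another, and the sum of these correction terms in each multi-tree component is exactly $\ell$ applied to ``$\eta$ acting on its own components,'' which vanishes by Lemma~\ref{lem:eta_self}. Since no head ever crosses a tail of its own tree, no wheels are created in the process — this is why the identity holds on the nose in $\A(\uparrow_2)$ and not merely in the quotient. Your induction showing $\eta^k(x)=\operatorname{ad}(-\psi_x)^k(x)$, hence $e^\eta=(e^{\psi_x},e^{\psi_y})$, is correct and is the Lie-algebraic shadow of this cancellation, but it lives on the far side of $\gamma$ and cannot recover the statement about arrow diagrams themselves.
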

\begin{proof}
Even though $N^{tr}$ and $e^{\ell(\eta)}$ look different as arrow diagrams, $\gamma(N^{tr}) = \gamma(e^{\ell(\eta)})$ (Lemma~\ref{lem:Nxy}). Therefore, the difference $N^{tr}-e^{\ell(\eta)}$ is in the kernel of $\gamma$, and is hence given by some number of short arrows. The point of this proof is to show that these short arrows cancel.

The distinction between $N^{tr}$ and $e^{\ell(\eta)}$ is that each term of $N^{tr}$ is a combination of several trees with {\em all} heads attached below {\em all} tails, as on the left side of Figure~\ref{fig:treeseparate}. On the other hand, terms of $e^{\ell(\eta)}$ are products of trees, where the head of each individual tree is attached below its tails, but tree factors are separated, as the first term on the right-hand side of Figure~\ref{fig:treeseparate}. Converting from the $N^{tr}$ form to the $e^{\ell(\eta)}$ form requires separating the trees using STU relations. This results in extra tree summands, as shown in Figure~\ref{fig:treeseparate}. In fact, performing each STU relation amounts to a higher tree acting on a leaf of a lower one by the adjoint action (equivalently, as a tangential derivation). 

Consider the sum of terms of $N^{tr}$ with $2$ trees. Separating the upper tree from the lower in each term is equivalent to $\eta$ acting on itself. 
By Lemma~\ref{lem:eta_self}, the action of $\eta$ on itself is trivial. Therefore, the sum of the resulting additional tree summands vanish. 

Iterating this process for the sums of terms with $n$ trees thus completes the proof that $N^{tr} = e^{\ell(\eta)}$. 
\end{proof}

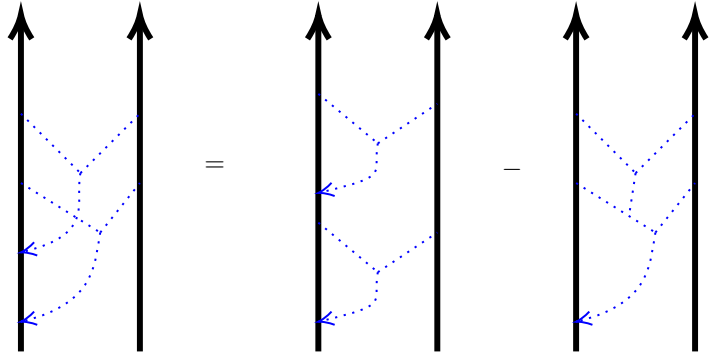
\begin{figure}[h]
    \centering
\tikzset{every picture/.style={line width=0.75pt}} 
\[\begin{tikzpicture}[x=0.75pt,y=0.75pt,yscale=-1,xscale=1]

\draw [line width=2.25]    (70,230) -- (70,64) ;
\draw [shift={(70,60)}, rotate = 90] [color={rgb, 255:red, 0; green, 0; blue, 0 }  ][line width=2.25]    (12.24,-5.49) .. controls (7.79,-2.58) and (3.71,-0.75) .. (0,0) .. controls (3.71,0.75) and (7.79,2.58) .. (12.24,5.49)   ;
\draw [line width=2.25]    (130,230) -- (130,64) ;
\draw [shift={(130,60)}, rotate = 90] [color={rgb, 255:red, 0; green, 0; blue, 0 }  ][line width=2.25]    (12.24,-5.49) .. controls (7.79,-2.58) and (3.71,-0.75) .. (0,0) .. controls (3.71,0.75) and (7.79,2.58) .. (12.24,5.49)   ;
\draw [color={rgb, 255:red, 0; green, 0; blue, 255 }  ,draw opacity=1 ] [dash pattern={on 0.84pt off 2.51pt}]  (70,110) -- (100,140) ;
\draw [color={rgb, 255:red, 0; green, 0; blue, 255 }  ,draw opacity=1 ] [dash pattern={on 0.84pt off 2.51pt}]  (100,140) -- (130,110) ;
\draw [color={rgb, 255:red, 0; green, 0; blue, 255 }  ,draw opacity=1 ] [dash pattern={on 0.84pt off 2.51pt}]  (100,140) .. controls (97.05,155.76) and (107.67,170.55) .. (71.68,179.59) ;
\draw [shift={(70,180)}, rotate = 346.68] [color={rgb, 255:red, 0; green, 0; blue, 255 }  ,draw opacity=1 ][line width=0.75]    (7.65,-3.43) .. controls (4.86,-1.61) and (2.31,-0.47) .. (0,0) .. controls (2.31,0.47) and (4.86,1.61) .. (7.65,3.43)   ;
\draw [color={rgb, 255:red, 0; green, 0; blue, 255 }  ,draw opacity=1 ] [dash pattern={on 0.84pt off 2.51pt}]  (70,145) -- (110,170) ;
\draw [color={rgb, 255:red, 0; green, 0; blue, 255 }  ,draw opacity=1 ] [dash pattern={on 0.84pt off 2.51pt}]  (110,170) -- (130,145) ;
\draw [color={rgb, 255:red, 0; green, 0; blue, 255 }  ,draw opacity=1 ] [dash pattern={on 0.84pt off 2.51pt}]  (110,170) .. controls (107.05,185.76) and (107.97,205.4) .. (71.68,214.59) ;
\draw [shift={(70,215)}, rotate = 346.68] [color={rgb, 255:red, 0; green, 0; blue, 255 }  ,draw opacity=1 ][line width=0.75]    (7.65,-3.43) .. controls (4.86,-1.61) and (2.31,-0.47) .. (0,0) .. controls (2.31,0.47) and (4.86,1.61) .. (7.65,3.43)   ;
\draw [line width=2.25]    (220,230) -- (220,64) ;
\draw [shift={(220,60)}, rotate = 90] [color={rgb, 255:red, 0; green, 0; blue, 0 }  ][line width=2.25]    (12.24,-5.49) .. controls (7.79,-2.58) and (3.71,-0.75) .. (0,0) .. controls (3.71,0.75) and (7.79,2.58) .. (12.24,5.49)   ;
\draw [line width=2.25]    (280,230) -- (280,64) ;
\draw [shift={(280,60)}, rotate = 90] [color={rgb, 255:red, 0; green, 0; blue, 0 }  ][line width=2.25]    (12.24,-5.49) .. controls (7.79,-2.58) and (3.71,-0.75) .. (0,0) .. controls (3.71,0.75) and (7.79,2.58) .. (12.24,5.49)   ;
\draw [color={rgb, 255:red, 0; green, 0; blue, 255 }  ,draw opacity=1 ] [dash pattern={on 0.84pt off 2.51pt}]  (220,165) -- (250,190) ;
\draw [color={rgb, 255:red, 0; green, 0; blue, 255 }  ,draw opacity=1 ] [dash pattern={on 0.84pt off 2.51pt}]  (250,190) -- (280,170) ;
\draw [color={rgb, 255:red, 0; green, 0; blue, 255 }  ,draw opacity=1 ] [dash pattern={on 0.84pt off 2.51pt}]  (250,190) .. controls (247.05,205.76) and (257.67,206) .. (221.68,214.6) ;
\draw [shift={(220,215)}, rotate = 346.68] [color={rgb, 255:red, 0; green, 0; blue, 255 }  ,draw opacity=1 ][line width=0.75]    (7.65,-3.43) .. controls (4.86,-1.61) and (2.31,-0.47) .. (0,0) .. controls (2.31,0.47) and (4.86,1.61) .. (7.65,3.43)   ;
\draw [color={rgb, 255:red, 0; green, 0; blue, 255 }  ,draw opacity=1 ] [dash pattern={on 0.84pt off 2.51pt}]  (220,100) -- (250,125) ;
\draw [color={rgb, 255:red, 0; green, 0; blue, 255 }  ,draw opacity=1 ] [dash pattern={on 0.84pt off 2.51pt}]  (250,125) -- (280,105) ;
\draw [color={rgb, 255:red, 0; green, 0; blue, 255 }  ,draw opacity=1 ] [dash pattern={on 0.84pt off 2.51pt}]  (250,125) .. controls (247.05,140.76) and (257.67,141) .. (221.68,149.6) ;
\draw [shift={(220,150)}, rotate = 346.68] [color={rgb, 255:red, 0; green, 0; blue, 255 }  ,draw opacity=1 ][line width=0.75]    (7.65,-3.43) .. controls (4.86,-1.61) and (2.31,-0.47) .. (0,0) .. controls (2.31,0.47) and (4.86,1.61) .. (7.65,3.43)   ;
\draw [line width=2.25]    (350,230) -- (350,64) ;
\draw [shift={(350,60)}, rotate = 90] [color={rgb, 255:red, 0; green, 0; blue, 0 }  ][line width=2.25]    (12.24,-5.49) .. controls (7.79,-2.58) and (3.71,-0.75) .. (0,0) .. controls (3.71,0.75) and (7.79,2.58) .. (12.24,5.49)   ;
\draw [line width=2.25]    (410,230) -- (410,64) ;
\draw [shift={(410,60)}, rotate = 90] [color={rgb, 255:red, 0; green, 0; blue, 0 }  ][line width=2.25]    (12.24,-5.49) .. controls (7.79,-2.58) and (3.71,-0.75) .. (0,0) .. controls (3.71,0.75) and (7.79,2.58) .. (12.24,5.49)   ;
\draw [color={rgb, 255:red, 0; green, 0; blue, 255 }  ,draw opacity=1 ] [dash pattern={on 0.84pt off 2.51pt}]  (350,110) -- (380,140) ;
\draw [color={rgb, 255:red, 0; green, 0; blue, 255 }  ,draw opacity=1 ] [dash pattern={on 0.84pt off 2.51pt}]  (380,140) -- (410,110) ;
\draw [color={rgb, 255:red, 0; green, 0; blue, 255 }  ,draw opacity=1 ] [dash pattern={on 0.84pt off 2.51pt}]  (350,145) -- (356.67,149.17) -- (363.33,153.33) -- (370,157.5) -- (376.67,161.67) -- (383.33,165.83) -- (390,170) ;
\draw [color={rgb, 255:red, 0; green, 0; blue, 255 }  ,draw opacity=1 ] [dash pattern={on 0.84pt off 2.51pt}]  (390,170) -- (410,145) ;
\draw [color={rgb, 255:red, 0; green, 0; blue, 255 }  ,draw opacity=1 ] [dash pattern={on 0.84pt off 2.51pt}]  (390,170) .. controls (387.05,185.76) and (387.97,205.4) .. (351.68,214.59) ;
\draw [shift={(350,215)}, rotate = 346.68] [color={rgb, 255:red, 0; green, 0; blue, 255 }  ,draw opacity=1 ][line width=0.75]    (7.65,-3.43) .. controls (4.86,-1.61) and (2.31,-0.47) .. (0,0) .. controls (2.31,0.47) and (4.86,1.61) .. (7.65,3.43)   ;
\draw [color={rgb, 255:red, 0; green, 0; blue, 255 }  ,draw opacity=1 ] [dash pattern={on 0.84pt off 2.51pt}]  (380,140) -- (376.67,161.67) ;

\draw (161,132.4) node [anchor=north west][inner sep=0.75pt]    {$=$};
\draw (311,132.4) node [anchor=north west][inner sep=0.75pt]    {$-$};

\end{tikzpicture}\]
    \caption{A typical term of $N^{tr}$ is shown on the left. We perform an STU relation to separate the trees, producing a term of $e^{\ell(\eta)}$, and a term which cancels in the sum.}
    \label{fig:treeseparate}
\end{figure}

We are now ready to prove the unitarity equation \eqref{U'}: 

\begin{lemma}\label{lem:rhopunitarity}
There is an appropriate choice of $C\in \Q[[\xi]]/\langle\xi\rangle$, so that $N=C^{12}C^1C^2N^{tr}$ satisfies the unitarity equation \eqref{U'}:
\begin{align}
     N \cdot A_2A_2(N) &= 1 \quad \text{in} \quad \A(\uparrow_2)
\end{align}
\end{lemma}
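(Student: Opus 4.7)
The plan is to isolate the wheel part of $N$ and then choose $C$ so that it exactly cancels the Jacobian obstruction to unitarity coming from the tree part $N^{tr}$. By Lemma~\ref{lem:WNtr} we can write $N = W \cdot N^{tr}$ where $W = C^{12}(C^1)^{-1}(C^2)^{-1}$ is a product of wheels, hence central in $\arrows(\uparrow_2)$ by the TC relation. By Lemma~\ref{lem:Ntr_elleta} we have $N^{tr} = e^{\ell(\eta)}$ with $\eta = (\psi(-x-y,x), \psi(-x-y,y)) \in \tder_2$. Using centrality of wheels, the left-hand side of \eqref{U'} splits as
$$N \cdot A_1A_2(N) \;=\; W \cdot A_1A_2(W) \cdot N^{tr} \cdot A_1A_2(N^{tr}).$$

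First I would verify that $A_1A_2(W) = W$: the adjoint operation multiplies by $(-1)^{\#\textnormal{heads}}$ on the reversed strand, and wheels in $\arrows(\uparrow_2)$ have only tails, so no signs appear; reversing strand orientations does not change the wheel as a cyclic word. Next, Proposition~\ref{prop:J} applied to $\mathbf{u} = \eta$ yields
$$N^{tr} \cdot A_1A_2(N^{tr}) \;=\; e^{\ell(\eta)}\cdot(e^{\ell(\eta)})^* \;=\; e^{-J(e^\eta)},$$
so the unitarity equation reduces to the wheel identity $W^2 = e^{J(e^\eta)}$ in $\arrows(\uparrow_2)$.

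The final step is to choose $c$ to realise this identity. By the Alekseev-Torossian theorem (Lemma~\ref{rhoequiv} together with Theorem~2.5 of \cite{ATE10}), the tangential automorphism $e^\eta = \rho(\Psi)$ lies in $\krv$, so there exists $s \in u^2\Q[[u]]$ with $J(e^\eta) = \trace\bigl(s(x+y) - s(x) - s(y)\bigr)$. Under the identification of Lemma~\ref{cyclic_lemma}, this cyclic word corresponds to the wheel element $s^{12} - s^1 - s^2$ in $\arrows(\uparrow_2)$. Setting $c = s/2 \in \Q[[\xi]]/\langle\xi\rangle$ and $C = e^c$ gives $W = e^{c^{12} - c^1 - c^2}$, hence
$$W^2 \;=\; e^{2(c^{12} - c^1 - c^2)} \;=\; e^{s^{12} - s^1 - s^2} \;=\; e^{J(e^\eta)},$$
which is precisely what is needed.

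The main obstacle I anticipate is less a single difficult step than the bookkeeping required to match conventions: one must be careful that the Lie-theoretic trace formula defining $J$ on $\krv$ and its diagrammatic avatar as a sum of wheels in $\arrows(\uparrow_2)$ really do agree under Lemma~\ref{cyclic_lemma}, and that the adjoint operation behaves on $W$ as described. Both are conceptually routine consequences of the definitions, and together with the preceding lemmas they deliver $\eqref{U'}$ as advertised.
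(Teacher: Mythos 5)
Your proof is correct and follows essentially the same route as the paper: decompose $N=W\cdot N^{tr}$ via Lemma~\ref{lem:WNtr}, apply Proposition~\ref{prop:J} to reduce \eqref{U'} to the wheel identity $W^2=e^{J(e^\eta)}$, and then use that $e^\eta=\rho(\Psi)\in\krv$ to choose $c=s/2$. One caveat: your justification that $W$ is ``central in $\arrows(\uparrow_2)$ by the TC relation'' is not true as stated --- the semidirect product structure $\cyc_2\rtimes(\tder_2\oplus\mathfrak{a}_2)$ in Proposition~\ref{prop:SES} shows that tree elements act nontrivially on wheels, so wheels only commute with other wheels --- but this does not damage your argument, since the only commutation you actually need is moving $A_1A_2(W)$ past $N^{tr}\cdot A_1A_2(N^{tr})=e^{-J(e^\eta)}$, which is itself a wheel element.
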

\begin{proof}
First, we prove that unitarity is satisfied modulo wheels, that is,
\begin{align}
    N^{tr}\cdot A_1A_2(N^{tr})=1 \quad \text{in} \quad \apr(\arrows(\uparrow_2)).
\end{align}
Since by Lemma~\ref{lem:Ntr_elleta}, $N^{tr} = e^{\ell(\eta)}$, the left-hand side of the equation reads as 
\begin{align}
    e^{\ell(\eta)} \cdot A_1A_2(e^{\ell(\eta)}) = e^{\ell(\eta)} \cdot e^{A_1A_2(\ell(\eta))}.
\end{align}
By definition, $A_1A_2$ acts on a single tree by reversing the order of the arrow endings along the strands, and multiplying with (-1), as each tree has a single arrow head. Reversing the order of leaves means that the head of the tree is now attached above its tails. Using STU relations, the head can be commuted back to its original position below its tails, at the ``cost'' of wheels. Thus,  $A_1A_2(\ell(\eta))=-\ell(\eta)$ in $\arrows(\uparrow_2)^{tr}$, and therefore
\begin{align}
      e^{\ell(\eta)} \cdot e^{A_1A_2(\ell(\eta))} =  e^{\ell(\eta)} \cdot e^{-\ell(\eta)} = 1 \quad \text{in} \quad \arrows(\uparrow_2)^{tr}
\end{align}
Therefore, $N$ satisfies \eqref{U'} modulo wheels. 
Showing that $N$ satisfies \eqref{U'} in general is slightly more complex. 

By applying Lemma~\ref{lem:WNtr} and Lemma~\ref{lem:Ntr_elleta}, the left-hand side of the unitarity equation reads
\begin{align}
    W\cdot e^{\ell(\eta)}\cdot A_1A_2(W\cdot e^{\ell(\eta)}) =  W\cdot e^{\ell(\eta)} \cdot A_1A_2(e^{\ell(\eta)})\cdot W,
\end{align}
using that $A_1A_2$ reverses multiplication order, and acts trivially on wheels.

By Proposition~\ref{prop:J}, 
$$e^{\ell(\eta)} \cdot A_1A_2(e^{\ell(\eta)})=e^{-J(e^\eta)}.$$

Thus, in order to satisfy the \eqref{U'} equation, we must be able to choose a cap value $C = e^c$ such that 
\begin{align}
   e^{J(e^\eta)}=W^2=e^{2(c^{12}-c^1-c^2)}. 
\end{align}
$e^\eta$ is equal to $\rho(\Psi)$, and hence $e^{\eta} \in \krv$ \cite[Proposition 4.8]{AT12}. Thus, by definition (see~\eqref{eq:krvdef}), $J(e^\eta)$ is of the form $\operatorname{tr}(s(x) + s(y) - s(x+y))$ for some $s \in x^2\mathbb{Q}[[x]]$. Choose $C = e^c$ with $c=\frac{1}{2}s(x)$, and with this choice $N$ satisfies the \eqref{U'} equation.
\end{proof}

The following corollary concludes the alternative proof on the Main Theorem~\ref{thm:main}:

\begin{cor}\label{rhop_in_aut}
The map $r: \Aut(\PaCD) \to \calG(\uparrow_2)^{tr}$ factors through $\Aut_v(\A)$. That is, there exists a map $\rp: \Aut(\PaCD) \to \Aut_v(\A)$ such that the triangle (2) in diagram~\ref{eq:BigDiagram} commutes:
\begin{equation}
\begin{tikzcd}
	{\Aut(\PaCD)} && {\Aut_v(\A)} \\
	&& {\calG(\uparrow_2)^{tr}}
	\arrow["\rp", dashed, from=1-1, to=1-3]
	\arrow["v",red,hook, from=1-3, to=2-3]
	\arrow["r",red, from=1-1, to=2-3]
\end{tikzcd}
\end{equation}
\end{cor}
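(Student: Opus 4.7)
The plan is to assemble the lemmas proved earlier in Section~\ref{sec:DirectProof} into a direct construction of $\rp$. Given $F \in \Aut(\PaCD)$ with $F(\Associator) = \Psi(t^{12}, t^{23})$, the first step is to produce a candidate automorphism $G \in \Aut_v(\A)$ by specifying its values on the generators, since $G$ will be uniquely determined by Proposition~\ref{vertex_auto}. For the vertex value I will take the explicit formula from Theorem~\ref{thm:GfromF},
\[
N \;=\; (C^1)^{-1}(C^2)^{-1}\,N^{tr}\,C^{12},
\]
where $N^{tr} = r(F)$. The cap value $C = e^{c}$ still has to be chosen, and the key observation is that only the unitarity equation is sensitive to this choice.

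The second step is to verify the three defining equations of Proposition~\ref{prop:SimplifiedEqns}. The equation \eqref{R4'} holds for $N$ regardless of $C$ by Lemma~\ref{lem:rhopr4}, since conjugation of $R^{(12)3}$ by $N$ only sees $N^{tr}$ and the required identity reduces via $\gamma$ to Drinfel'd's relation \eqref{eq:drin_grp}. The cap equation \eqref{C'} holds for any $C$ by Lemma~\ref{lem:rhopcap}, because $N^{tr}$ is in the image of $\varphi$ and therefore becomes trivial in $\A(\bcap_2)$ by the CP relation. The step that constrains $C$ is the unitarity equation \eqref{U'}; here I will invoke Lemma~\ref{lem:rhopunitarity}, which, using Proposition~\ref{prop:J}, reduces \eqref{U'} to finding $c$ with $e^{2(c^{12}-c^1-c^2)} = e^{J(e^\eta)}$, and uses the Alekseev--Torossian fact that $e^\eta \in \krv$ to ensure that $J(e^\eta)$ has the right shape $\operatorname{tr}(s(x)+s(y)-s(x+y))$, permitting the choice $c = \tfrac{1}{2}s(x)$.

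The third step is to define $\rp(F) := G$, where $G$ is the unique element of $\Aut_v(\A)$ with $G(\vertex) = N$ and $G(\bcap) = C$, and then confirm that the triangle (2) commutes: by construction $\apr(G(\vertex)) = \apr(N) = N^{tr} = r(F)$, which is precisely $v(\rp(F)) = r(F)$. Functoriality of the construction (in particular, the fact that the formula for $N$ is $F$-equivariant in the right way) then shows that $\rp$ is well-defined as a map of sets, which is all the Corollary asserts; the group-homomorphism property is handled separately in Corollary~\ref{rp_grouphomo}.

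The only genuine difficulty is the unitarity step, and essentially all of it is packaged into the appeal to the Alekseev--Torossian result that $\rho(\Psi) \in \krv$. Without this input, one only obtains unitarity modulo wheels (the easy half of Lemma~\ref{lem:rhopunitarity}), and there is no a priori reason that the wheel correction $J(e^\eta)$ should be expressible as $2(c^{12}-c^1-c^2)$ for some single-variable power series $c$. Once this obstacle is cleared by citing \cite{AT12}, the rest of the argument is a bookkeeping exercise in Proposition~\ref{prop:SimplifiedEqns} and Proposition~\ref{vertex_auto}.
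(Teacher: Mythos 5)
Your proposal follows the paper's own proof essentially verbatim: the same decomposition into the three equations of Proposition~\ref{prop:SimplifiedEqns}, verified by the same Lemmas~\ref{lem:rhopr4}, \ref{lem:rhopcap} and \ref{lem:rhopunitarity}, with the same choice of cap value $c=\tfrac{1}{2}s(x)$ and the same appeal to the Alekseev--Torossian fact that $\rho(\Psi)\in\krv$ for the unitarity step. The only detail the paper includes that you omit is an explicit citation of Lemma~\ref{lem:ShortArrows} to confirm that $N$ is free of short arrows, which is what places $G$ in $\Aut_v(\A)$ (the $v$-small automorphisms) rather than merely in $\Aut(\A)$.
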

\begin{proof}
Define $\rp(F)$ by sending $F$ to the automorphism $G$ defined by the pair $(N,C)$ where $N$ is as in Theorem~\ref{thm:GfromF} and $C$ is as defined in the proof of Lemma~\ref{lem:rhopunitarity}. 

By Lemmas~\ref{lem:rhopr4},~\ref{lem:rhopcap}, and~\ref{lem:rhopunitarity} the pair $(N,C)$ satisfies the equations of Proposition~\ref{prop:SimplifiedEqns}, and by Lemma~\ref{lem:ShortArrows} $N$ is free of short arrows. Therefore, $G \in \Aut_v(\A)$. 
It is immediate that $r(F) = v(\rp(F))$ since $v(G) = N^{tr} = r(F)$. 
\end{proof}

\begin{remark}\label{rmk:direct}
The fact that the image of the Alekseev-Torossian map $\rho$ lies in $\krv$ was used in order to prove that the \eqref{U'} equation is satisfied in general (with wheels taken into account). It would be possible to prove this directly, by replacing the ``stripping'' construction with a more involved ``double tree'' construction along the lines of \cite{BND:WKO3}. This produces a planar algebra map compatible with unzips, and thus makes the unitarity equation almost immediate. However, introducing the construction and proving its necessary properties adds a large amount of complexity and length, which is covered in \cite{BND:WKO3} and thus we chose not to duplicate it here.
\end{remark}

\section{Circuit Algebras}\label{sec: circuit algebras}
An essential ingredient in the topological interpretation of the Kashiwara-Vergne problem is the construction of homomorphic expansions for a class 
of $w$-foams (\cite{BND:WKO2}, \cite{DHR21}). This construction relies on the fact that $w$-foams—and, in general, classes of knotted objects—can be 
described as finitely presented algebraic structures called (oriented) \emph{circuit algebras}. Circuit algebras are a generalisation of Jones' planar algebras, where one drops the condition of ``planarity''. In this appendix, we give a brief overview of circuit algebras and recommend \cite[Section 2]{DHR1} for full details.

In a circuit algebra, operations are parameterised by {\em wiring diagrams}. 

\begin{definition}\label{def: wiring diagram}
An \emph{oriented wiring diagram} is a triple $D=(\calA,M,f)$ consisting of:
	\begin{enumerate}
		\item A set $\calA=\{A_0^{\text{out}}, A_0^{\text{in}},A_1^{\text{out}},A_1^{\text{in}},\hdots,A_r^{\text{out}},A_r^{\text{in}}\}$ of  sets of labels, for some non-negative integer $r$. The elements of the sets $A^{\text{out}}_i$ are referred to as \emph{outgoing labels} and the elements of $A^{\text{in}}_i$ are \emph{incoming labels}. The sets $A_0^{\tout}$ and $A_{0}^{\tin}$ play a distinguished role: their elements are called the \emph{output labels} of the diagram, while the sets $A_1^{{\text{out}}}, A_{1}^{\text{in}},...,A_r^{\text{out}},  A_r^{\text{in}}$ contain {\em input labels} of the diagram. 
		
		\item An oriented compact $1$-manifold $M$, with boundary $\partial M$, regarded up to orientation-preserving homeomorphism. The connected components of $M$ are homeomorphic to either an oriented circle (with no boundary) or an oriented interval with one beginning and one ending point. We write $\partial M^{\tout}$ for the set of {\em beginning} boundary points of $M$, and  $\partial M^{\text{in}}$ for the set of {\em ending}  boundary points, so $\partial M = \partial M^{\text{out}} \sqcup \partial M^{\tin}$.
		
		\item Bijections\footnote{If the sets $\{A_i^{\text{out}/\text{in}}\}$ are not pairwise disjoint, replace the unions $(\cup_{i=0}^r A_i^{\text{out}})$ and  $(\cup_{i=0}^r A_i^{\text{in}})$ by the set of triples $\{(a,i,\text{out}/\text{in})\, | \, a\in A_i^{\text{out}/\text{in}}, 0\leq i \leq r\}$.} \label{fn:Triples}
		$$\partial M^{\text{out}} \xrightarrow{f} \cup^r_{i=0} {A^{\text{out}}_i} \quad \textrm{and}\quad  \partial M^{\text{in}} \xrightarrow{f} \cup^r_{i=0} {A^{\text{in}}_i}.$$
	\end{enumerate}

\end{definition}

Wiring diagrams can be pictorially represented as ``disks with $r$ holes,'' as in Figure~\ref{fig:WD}. A disc with $r$ holes is a closed disc with the interiors of $r$ disjoint small discs removed $D_0\setminus (\mathring{D}_1\cup \ldots \cup \mathring{D}_r)$. A set of marked points on the disc boundaries $\mathcal{L}=\{\mathcal{L}_0, \mathcal{L}_1,\ldots, \mathcal{L}_r\}$ are arranged so that $\mathcal{L}_i$ lies on $\partial D_i$, and $\mathcal{L}_i$ is in bijection with $A_i^{\tout} \cup A_{i}^{\tin}$. The manifold is represented as an immersion $M \subset D$, where $\partial M$ is mapped to the set of marked points on the boundary of $D$, so that interval components of $M$ connect the points paired by the bijection $f$. The set $\partial M$ is partitioned to incoming and outgoing boundary points: incoming points are mapped to marked points corresponding to $\bigcup_i A_i^\tin$ and outgoing points to $\bigcup_i A_i^\tout$. 

Note that the choice of a particular immersion of $M$ is only for notational convenience, it is not part of the data. Different immersions of $M$ represent the same wiring diagram as long as they induce the same bijection between incoming and outgoing marked points. See \cite[Lemma 5.3]{DHR1} for further details. 

\begin{example}

Figure~\ref{fig:WD} depicts the wiring diagram represented by the triple
\begin{multline*}
    \WD=\{\{\{l_1,l_3\},\{l_2,l_4,l_5\},\{a_1,a_3,a_4\},\{a_2\},\{b_4,b_5\},\{b_1,b_2,b_3\} \},\sqcup_{i=1}^7 I,\\
    \{(l_1,l_2),(l_3,b_2),(a_1,l_4),(a_3,b_1),(a_4,l_5),(b_4,b_3),(b_5,a_2)\} \}
\end{multline*}

\begin{figure}[h]
\[\begin{tikzpicture}[x=0.75pt,y=0.75pt,yscale=-1,xscale=1]

\draw  [draw opacity=0] (238.33,138.17) .. controls (238.33,138.17) and (238.33,138.17) .. (238.33,138.17) .. controls (238.33,185.35) and (207.56,225.34) .. (164.98,239.16) -- (132.17,138.17) -- cycle ; \draw    (238.33,138.17) .. controls (238.33,185.35) and (207.56,225.34) .. (164.98,239.16) ;  
\draw  [draw opacity=0] (130.33,111.17) .. controls (130.33,111.17) and (130.33,111.17) .. (130.33,111.17) .. controls (130.33,122.86) and (120.86,132.33) .. (109.17,132.33) -- (109.17,111.17) -- cycle ; \draw   (130.33,111.17) .. controls (130.33,111.17) and (130.33,111.17) .. (130.33,111.17) .. controls (130.33,122.86) and (120.86,132.33) .. (109.17,132.33) ;  
\draw  [draw opacity=0] (182.33,173.17) .. controls (182.33,173.17) and (182.33,173.17) .. (182.33,173.17) .. controls (182.33,173.17) and (182.33,173.17) .. (182.33,173.17) .. controls (182.33,182.57) and (176.2,190.55) .. (167.71,193.3) -- (161.17,173.17) -- cycle ; \draw   (182.33,173.17) .. controls (182.33,173.17) and (182.33,173.17) .. (182.33,173.17) .. controls (182.33,173.17) and (182.33,173.17) .. (182.33,173.17) .. controls (182.33,182.57) and (176.2,190.55) .. (167.71,193.3) ;  
\draw  [draw opacity=0] (164.98,239.16) .. controls (154.65,242.52) and (143.62,244.33) .. (132.17,244.33) .. controls (96.86,244.33) and (65.57,227.09) .. (46.27,200.57) -- (132.17,138.17) -- cycle ; \draw    (164.98,239.16) .. controls (154.65,242.52) and (143.62,244.33) .. (132.17,244.33) .. controls (96.86,244.33) and (65.57,227.09) .. (46.27,200.57) ;  
\draw  [draw opacity=0] (46.27,200.57) .. controls (33.52,183.06) and (26,161.49) .. (26,138.17) .. controls (26,114.84) and (33.52,93.28) .. (46.27,75.76) -- (132.17,138.17) -- cycle ; \draw    (46.27,200.57) .. controls (33.52,183.06) and (26,161.49) .. (26,138.17) .. controls (26,114.84) and (33.52,93.28) .. (46.27,75.76) ;  
\draw  [draw opacity=0] (46.27,75.76) .. controls (65.57,49.24) and (96.86,32) .. (132.17,32) .. controls (143.62,32) and (154.65,33.81) .. (164.98,37.17) -- (132.17,138.17) -- cycle ; \draw    (46.27,75.76) .. controls (65.57,49.24) and (96.86,32) .. (132.17,32) .. controls (143.62,32) and (154.65,33.81) .. (164.98,37.17) ;  
\draw  [draw opacity=0] (164.98,37.17) .. controls (207.56,50.99) and (238.33,90.99) .. (238.33,138.17) -- (132.17,138.17) -- cycle ; \draw    (164.98,37.17) .. controls (207.56,50.99) and (238.33,90.99) .. (238.33,138.17) ;  
\draw  [draw opacity=0] (109.17,132.33) .. controls (109.17,132.33) and (109.17,132.33) .. (109.17,132.33) .. controls (109.17,132.33) and (109.17,132.33) .. (109.17,132.33) .. controls (97.48,132.33) and (88,122.86) .. (88,111.17) -- (109.17,111.17) -- cycle ; \draw   (109.17,132.33) .. controls (109.17,132.33) and (109.17,132.33) .. (109.17,132.33) .. controls (109.17,132.33) and (109.17,132.33) .. (109.17,132.33) .. controls (97.48,132.33) and (88,122.86) .. (88,111.17) ;  
\draw  [draw opacity=0] (88,111.17) .. controls (88,111.17) and (88,111.17) .. (88,111.17) .. controls (88,99.48) and (97.48,90) .. (109.17,90) -- (109.17,111.17) -- cycle ; \draw   (88,111.17) .. controls (88,111.17) and (88,111.17) .. (88,111.17) .. controls (88,99.48) and (97.48,90) .. (109.17,90) ;  
\draw  [draw opacity=0] (109.17,90) .. controls (109.17,90) and (109.17,90) .. (109.17,90) .. controls (109.17,90) and (109.17,90) .. (109.17,90) .. controls (120.86,90) and (130.33,99.48) .. (130.33,111.17) -- (109.17,111.17) -- cycle ; \draw   (109.17,90) .. controls (109.17,90) and (109.17,90) .. (109.17,90) .. controls (109.17,90) and (109.17,90) .. (109.17,90) .. controls (120.86,90) and (130.33,99.48) .. (130.33,111.17) ;  
\draw [color={rgb, 255:red, 0; green, 0; blue, 255 }  ,draw opacity=1 ]   (109.17,90) .. controls (104.36,62.14) and (28.1,80.81) .. (45.99,198.79) ;
\draw [shift={(46.27,200.57)}, rotate = 261] [color={rgb, 255:red, 0; green, 0; blue, 255 }  ,draw opacity=1 ][line width=0.75]    (10.93,-3.29) .. controls (6.95,-1.4) and (3.31,-0.3) .. (0,0) .. controls (3.31,0.3) and (6.95,1.4) .. (10.93,3.29)   ;
\draw [color={rgb, 255:red, 0; green, 0; blue, 255 }  ,draw opacity=1 ]   (88,111.17) .. controls (68.82,116.85) and (60.74,97.98) .. (47.31,77.35) ;
\draw [shift={(46.27,75.76)}, rotate = 56.49] [color={rgb, 255:red, 0; green, 0; blue, 255 }  ,draw opacity=1 ][line width=0.75]    (10.93,-3.29) .. controls (6.95,-1.4) and (3.31,-0.3) .. (0,0) .. controls (3.31,0.3) and (6.95,1.4) .. (10.93,3.29)   ;
\draw  [draw opacity=0] (167.71,193.3) .. controls (165.65,193.97) and (163.45,194.33) .. (161.17,194.33) .. controls (154.13,194.33) and (147.89,190.9) .. (144.04,185.61) -- (161.17,173.17) -- cycle ; \draw   (167.71,193.3) .. controls (165.65,193.97) and (163.45,194.33) .. (161.17,194.33) .. controls (154.13,194.33) and (147.89,190.9) .. (144.04,185.61) ;  
\draw  [draw opacity=0] (144.04,185.61) .. controls (141.5,182.12) and (140,177.82) .. (140,173.17) .. controls (140,168.52) and (141.5,164.22) .. (144.04,160.72) -- (161.17,173.17) -- cycle ; \draw   (144.04,185.61) .. controls (141.5,182.12) and (140,177.82) .. (140,173.17) .. controls (140,168.52) and (141.5,164.22) .. (144.04,160.72) ;  
\draw  [draw opacity=0] (144.04,160.72) .. controls (147.89,155.44) and (154.13,152) .. (161.17,152) .. controls (162.17,152) and (163.15,152.07) .. (164.11,152.2) -- (161.17,173.17) -- cycle ; \draw   (144.04,160.72) .. controls (147.89,155.44) and (154.13,152) .. (161.17,152) .. controls (162.17,152) and (163.15,152.07) .. (164.11,152.2) ;  
\draw  [draw opacity=0] (164.11,152.2) .. controls (174.41,153.64) and (182.33,162.48) .. (182.33,173.17) .. controls (182.33,173.17) and (182.33,173.17) .. (182.33,173.17) -- (161.17,173.17) -- cycle ; \draw   (164.11,152.2) .. controls (174.41,153.64) and (182.33,162.48) .. (182.33,173.17) .. controls (182.33,173.17) and (182.33,173.17) .. (182.33,173.17) ;  
\draw [color={rgb, 255:red, 0; green, 0; blue, 255 }  ,draw opacity=1 ]   (131.52,112.89) .. controls (140.03,127.13) and (125.12,152.35) .. (144.04,160.72) ;
\draw [shift={(130.33,111.17)}, rotate = 69.48] [color={rgb, 255:red, 0; green, 0; blue, 255 }  ,draw opacity=1 ][line width=0.75]    (10.93,-3.29) .. controls (6.95,-1.4) and (3.31,-0.3) .. (0,0) .. controls (3.31,0.3) and (6.95,1.4) .. (10.93,3.29)   ;
\draw [color={rgb, 255:red, 0; green, 0; blue, 255 }  ,draw opacity=1 ]   (109.17,132.33) .. controls (119.13,153.57) and (144.3,129.98) .. (162.98,150.87) ;
\draw [shift={(164.11,152.2)}, rotate = 221.47] [color={rgb, 255:red, 0; green, 0; blue, 255 }  ,draw opacity=1 ][line width=0.75]    (10.93,-3.29) .. controls (6.95,-1.4) and (3.31,-0.3) .. (0,0) .. controls (3.31,0.3) and (6.95,1.4) .. (10.93,3.29)   ;
\draw [color={rgb, 255:red, 0; green, 0; blue, 255 }  ,draw opacity=1 ]   (144.04,185.61) .. controls (110.84,206.68) and (169.52,235.13) .. (167.83,195.18) ;
\draw [shift={(167.71,193.3)}, rotate = 88.42] [color={rgb, 255:red, 0; green, 0; blue, 255 }  ,draw opacity=1 ][line width=0.75]    (10.93,-3.29) .. controls (6.95,-1.4) and (3.31,-0.3) .. (0,0) .. controls (3.31,0.3) and (6.95,1.4) .. (10.93,3.29)   ;
\draw [color={rgb, 255:red, 0; green, 0; blue, 255 }  ,draw opacity=1 ]   (164.98,239.16) .. controls (187.75,214.63) and (195.92,192.15) .. (183.34,174.51) ;
\draw [shift={(182.33,173.17)}, rotate = 57.21] [color={rgb, 255:red, 0; green, 0; blue, 255 }  ,draw opacity=1 ][line width=0.75]    (10.93,-3.29) .. controls (6.95,-1.4) and (3.31,-0.3) .. (0,0) .. controls (3.31,0.3) and (6.95,1.4) .. (10.93,3.29)   ;
\draw [color={rgb, 255:red, 0; green, 0; blue, 255 }  ,draw opacity=1 ]   (164.98,37.17) .. controls (157.45,51.78) and (200.02,129.61) .. (236.66,137.84) ;
\draw [shift={(238.33,138.17)}, rotate = 197.55] [color={rgb, 255:red, 0; green, 0; blue, 255 }  ,draw opacity=1 ][line width=0.75]    (10.93,-3.29) .. controls (6.95,-1.4) and (3.31,-0.3) .. (0,0) .. controls (3.31,0.3) and (6.95,1.4) .. (10.93,3.29)   ;

\draw (109.17,111.17) node    {$1$};
\draw (161.17,173.17) node    {$2$};
\draw (166.98,33.77) node [anchor=south west] [inner sep=0.75pt]  [font=\scriptsize]  {$l_{1}$};
\draw (240.33,138.17) node [anchor=west] [inner sep=0.75pt]  [font=\scriptsize]  {$l_{2}$};
\draw (166.98,242.56) node [anchor=north west][inner sep=0.75pt]  [font=\scriptsize]  {$l_{3}$};
\draw (44.27,203.97) node [anchor=north east] [inner sep=0.75pt]  [font=\scriptsize]  {$l_{4}$};
\draw (44.27,72.36) node [anchor=south east] [inner sep=0.75pt]  [font=\scriptsize]  {$l_{5}$};
\draw (111,79.4) node [anchor=north west][inner sep=0.75pt]  [font=\scriptsize]  {$a_{1}$};
\draw (131,99.4) node [anchor=north west][inner sep=0.75pt]  [font=\scriptsize]  {$a_{2}$};
\draw (96,135.4) node [anchor=north west][inner sep=0.75pt]  [font=\scriptsize]  {$a_{3}$};
\draw (74,117.4) node [anchor=north west][inner sep=0.75pt]  [font=\scriptsize]  {$a_{4}$};
\draw (165,141.4) node [anchor=north west][inner sep=0.75pt]  [font=\scriptsize]  {$b_{1}$};
\draw (185,165.4) node [anchor=north west][inner sep=0.75pt]  [font=\scriptsize]  {$b_{2}$};
\draw (170.71,194.7) node [anchor=north west][inner sep=0.75pt]  [font=\scriptsize]  {$b_{3}$};
\draw (141,190.4) node [anchor=north west][inner sep=0.75pt]  [font=\scriptsize]  {$b_{4}$};
\draw (140.04,158.62) node [anchor=north east] [inner sep=0.75pt]  [font=\scriptsize]  {$b_{5}$};

\end{tikzpicture}
\]
    \caption{An example wiring diagram}
    \label{fig:WD}
\end{figure}
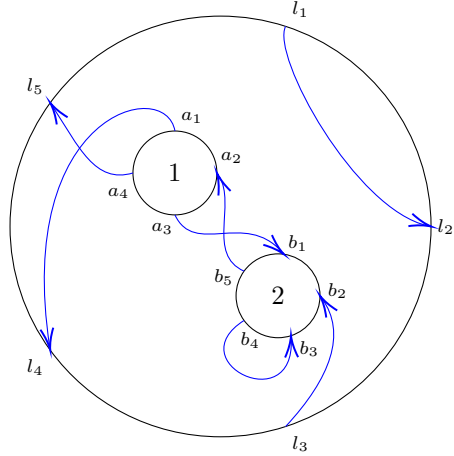
\end{example}

\begin{definition} Given a set of colours $\mathfrak{C}$, a \emph{$\mathfrak{C}$-coloured operad} $\mathsf{P}=\{\mathsf{P}(c_1,\ldots,c_r;c_0)\}$ consists of a collection of sets (or vector spaces) $\mathsf{P}(c_1,\ldots,c_r;c_0)$, one for each sequence $(c_1,\ldots,c_r;c_0) \in\mathfrak{C}$, which are equipped with a right action of the symmetric group $\calS_r$ which permutes the ``inputs'' $c_1,\ldots,c_r$. Moreover, $\mathsf{P}$ is equipped with a family of equivariant, associative and unital partial compositions:
\[\begin{tikzcd}\circ_i:\mathsf{P}(c_1,\ldots,c_r;c_0)\times\mathsf{P}(d_1,\ldots,d_s;d_0)\arrow[r]& \mathsf{P}(c_1,\ldots,c_{i-1}, d_1,\ldots, d_s, c_{i+1},\ldots, c_r;c_0), \end{tikzcd}\] whenever $d_0=c_i$. \end{definition}

For more details on coloured operads see, for example, \cite[Definition 1.1]{bm_resolutions}. An \emph{algebra} over an $\mathfrak{C}$-coloured operad $\mathsf{P}$ is a $\mathfrak{C}$-indexed family of vector spaces $\mathsf{A}=\{\mathsf{A}(c)\}_{c\in\mathfrak{C}}$ together with an action by $\mathsf{P}$ (\cite[Definition 1.2]{bm_resolutions}). 
 
\begin{definition}\label{def:operad_WD}
The collection of all wiring diagrams $\mathsf{WD} =\{\mathsf{WD}(A^{\tin/\tout}_0; A^{\tin / \tout}_1,\ldots,A^{\tin/\tout}_r)\}$ forms a discrete operad called the \emph{operad of oriented wiring diagrams}. For $\WD=(\calA,M,f)$ and $\WD'=(\mathcal B,N,g)$, if $A_i^{\tin}=B_0^{\tout}$ and  $A_i^{\tout}=B_0^{\tin}$ then the partial composition $\WD\circ_i \WD'$ is defined by the label set \linebreak $\{A_1^{\tin/\tout},...,A_{i-1}^{\tin/\tout}, B_1^{\tin/\tout},...,B_s^{\tin/\tout},A_{i+1}^{\tin/\tout}...A_r^{\tin/\tout}\}$; the manifold $M\sqcup N/ \sim $ obtained from $M$ and $N$ by gluing along the boundary identification $A_i^{\tin}=B_0^{\tout}$ and  $A_i^{\tout}=B_0^{\tin}$; and the set bijection $f \sqcup g /\sim $ induced by $f$ and $g$ in the natural way. 
\end{definition}

Pictorially, wiring diagram composition shrinks the wiring diagram $\WD'$ and glues it into the $i$th input circle of $\WD$ in such a way that the labels match and the boundary points of the $1$-manifolds are identified: see Figure~\ref{fig:composition}. Note that composition in the operad $\mathsf{WD}$ is similar to the operad of planar tangles in \cite{Jones:PA} and \cite[Definition  2.1]{HPT16}. 

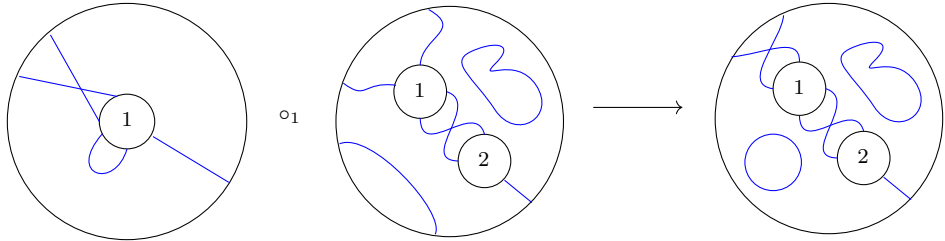
\begin{figure}
\[\begin{tikzpicture}[x=0.75pt,y=0.75pt,yscale=-.75,xscale=.75]

\draw   (9,139.5) .. controls (9,95.59) and (45.04,60) .. (89.5,60) .. controls (133.96,60) and (170,95.59) .. (170,139.5) .. controls (170,183.41) and (133.96,219) .. (89.5,219) .. controls (45.04,219) and (9,183.41) .. (9,139.5) -- cycle ;
\draw   (70.87,139.5) .. controls (70.87,129.34) and (79.21,121.1) .. (89.5,121.1) .. controls (99.79,121.1) and (108.13,129.34) .. (108.13,139.5) .. controls (108.13,149.66) and (99.79,157.9) .. (89.5,157.9) .. controls (79.21,157.9) and (70.87,149.66) .. (70.87,139.5) -- cycle ;
\draw [color={rgb, 255:red, 0; green, 0; blue, 255 }  ,draw opacity=1 ]   (107,149.62) -- (158,180.45) ;
\draw   (230,139.5) .. controls (230,96.7) and (264.25,62) .. (306.5,62) .. controls (348.75,62) and (383,96.7) .. (383,139.5) .. controls (383,182.3) and (348.75,217) .. (306.5,217) .. controls (264.25,217) and (230,182.3) .. (230,139.5) -- cycle ;
\draw   (312.17,166.05) .. controls (312.17,156.14) and (320.09,148.11) .. (329.88,148.11) .. controls (339.66,148.11) and (347.58,156.14) .. (347.58,166.05) .. controls (347.58,175.96) and (339.66,183.99) .. (329.88,183.99) .. controls (320.09,183.99) and (312.17,175.96) .. (312.17,166.05) -- cycle ;
\draw   (268.96,119.41) .. controls (268.96,109.5) and (276.89,101.47) .. (286.67,101.47) .. controls (296.45,101.47) and (304.38,109.5) .. (304.38,119.41) .. controls (304.38,129.32) and (296.45,137.35) .. (286.67,137.35) .. controls (276.89,137.35) and (268.96,129.32) .. (268.96,119.41) -- cycle ;
\draw [color={rgb, 255:red, 0; green, 0; blue, 255 }  ,draw opacity=1 ]   (304.38,119.41) .. controls (328.46,127.3) and (284.54,165.33) .. (312.17,166.05) ;
\draw [color={rgb, 255:red, 0; green, 0; blue, 255 }  ,draw opacity=1 ]   (286.67,137.35) .. controls (287.38,166.05) and (323.5,124.43) .. (329.88,148.11) ;
\draw [color={rgb, 255:red, 0; green, 0; blue, 255 }  ,draw opacity=1 ]   (234.96,113.67) .. controls (251.25,125.87) and (247.71,112.23) .. (270.37,115.1) ;
\draw [color={rgb, 255:red, 0; green, 0; blue, 255 }  ,draw opacity=1 ]   (286.67,101.47) .. controls (287.38,82.81) and (316.42,77.79) .. (292.33,64.15) ;
\draw [color={rgb, 255:red, 0; green, 0; blue, 255 }  ,draw opacity=1 ]   (343.33,178.97) -- (361.75,194.04) ;
\draw  [color={rgb, 255:red, 0; green, 0; blue, 255 }  ,draw opacity=1 ] (319.96,92.86) .. controls (334.13,85.68) and (351.83,85.68) .. (337.67,100.03) .. controls (323.5,114.38) and (350.42,93.57) .. (364.58,115.1) .. controls (378.75,136.63) and (351.13,153.13) .. (336.96,131.61) .. controls (322.79,110.08) and (305.79,100.03) .. (319.96,92.86) -- cycle ;
\draw [color={rgb, 255:red, 0; green, 0; blue, 255 }  ,draw opacity=1 ]   (232.44,154.51) .. controls (253.6,145.59) and (304.06,200.77) .. (296.73,215.38) ;
\draw [color={rgb, 255:red, 0; green, 0; blue, 255 }  ,draw opacity=1 ]   (17,109.15) -- (83,122.64) ;
\draw [color={rgb, 255:red, 0; green, 0; blue, 255 }  ,draw opacity=1 ]   (38,81.2) -- (70.87,139.5) ;
\draw [color={rgb, 255:red, 0; green, 0; blue, 255 }  ,draw opacity=1 ]   (73,147.69) .. controls (48,176) and (81.5,185.85) .. (89.5,157.9) ;
\draw   (485,137.5) .. controls (485,94.7) and (519.25,60) .. (561.5,60) .. controls (603.75,60) and (638,94.7) .. (638,137.5) .. controls (638,180.3) and (603.75,215) .. (561.5,215) .. controls (519.25,215) and (485,180.3) .. (485,137.5) -- cycle ;
\draw   (567.17,164.05) .. controls (567.17,154.14) and (575.09,146.11) .. (584.88,146.11) .. controls (594.66,146.11) and (602.58,154.14) .. (602.58,164.05) .. controls (602.58,173.96) and (594.66,181.99) .. (584.88,181.99) .. controls (575.09,181.99) and (567.17,173.96) .. (567.17,164.05) -- cycle ;
\draw   (523.96,117.41) .. controls (523.96,107.5) and (531.89,99.47) .. (541.67,99.47) .. controls (551.45,99.47) and (559.38,107.5) .. (559.38,117.41) .. controls (559.38,127.32) and (551.45,135.35) .. (541.67,135.35) .. controls (531.89,135.35) and (523.96,127.32) .. (523.96,117.41) -- cycle ;
\draw [color={rgb, 255:red, 0; green, 0; blue, 255 }  ,draw opacity=1 ]   (559.38,117.41) .. controls (583.46,125.3) and (539.54,163.33) .. (567.17,164.05) ;
\draw [color={rgb, 255:red, 0; green, 0; blue, 255 }  ,draw opacity=1 ]   (541.67,135.35) .. controls (542.38,164.05) and (578.5,122.43) .. (584.88,146.11) ;
\draw [color={rgb, 255:red, 0; green, 0; blue, 255 }  ,draw opacity=1 ]   (531,68) .. controls (530,84) and (501.29,114.54) .. (523.96,117.41) ;
\draw [color={rgb, 255:red, 0; green, 0; blue, 255 }  ,draw opacity=1 ]   (541.67,99.47) .. controls (542.38,80.81) and (518,94) .. (496,96) ;
\draw [color={rgb, 255:red, 0; green, 0; blue, 255 }  ,draw opacity=1 ]  (598.33,176.97) -- (616.75,192.04) ;
\draw   [color={rgb, 255:red, 0; green, 0; blue, 255 }  ,draw opacity=1 ]  (574.96,90.86) .. controls (589.13,83.68) and (606.83,83.68) .. (592.67,98.03) .. controls (578.5,112.38) and (605.42,91.57) .. (619.58,113.1) .. controls (633.75,134.63) and (606.13,151.13) .. (591.96,129.61) .. controls (577.79,108.08) and (560.79,98.03) .. (574.96,90.86) -- cycle ;
\draw   [color={rgb, 255:red, 0; green, 0; blue, 255 }  ,draw opacity=1 ]  (505,167) .. controls (505,156.51) and (513.51,148) .. (524,148) .. controls (534.49,148) and (543,156.51) .. (543,167) .. controls (543,177.49) and (534.49,186) .. (524,186) .. controls (513.51,186) and (505,177.49) .. (505,167) -- cycle ;
\draw[->]    (403,130) -- (463,130) ;

\draw (84,131.4) node [anchor=north west][inner sep=0.75pt] [font=\footnotesize]   {$1$};
\draw (281,112.4) node [anchor=north west][inner sep=0.75pt]   [font=\footnotesize] {$1$};
\draw (324,158.4) node [anchor=north west][inner sep=0.75pt] [font=\footnotesize]   {$2$};
\draw (536,110.4) node [anchor=north west][inner sep=0.75pt]  [font=\footnotesize]  {$1$};
\draw (579,156.4) node [anchor=north west][inner sep=0.75pt]  [font=\footnotesize]  {$2$};
\draw (190,130) node [anchor=north west][inner sep=0.75pt]  [font=\footnotesize]  {$\circ _{1}$};

\end{tikzpicture}\]
    \caption{The operadic composition of two wiring diagrams.}
    \label{fig:composition}
\end{figure}

\begin{definition}\label{def: circut algebra}
An oriented circuit algebra is an algebra over the operad of wiring diagrams $\mathsf{WD}$. 
\end{definition}

We briefly unravel the definition of an algebra over an operad (see also \cite[Definition 2.4]{DHR1}). An oriented circuit algebra is a collection of sets $\V=\{\V[A^{\tin};A^{\tout}]\}$, where the pairs $(A^{\tin}, A^{\tout})$ run over all pairs of label sets, together with a family of multiplication operations parameterised by oriented wiring diagrams. This means that, for each wiring diagram $\WD=(\mathcal{A},M,f)$, there is a corresponding function 
		$$F_\WD:\V[A^{\tin}_1;A^{\tout}_1] \times \hdots \times \V[A^{\tin}_r;A^{\tout}_{r}] \rightarrow \V[A^{\tout}_0;A^{\tin}_0].$$ 
Moreover, the assignment $\WD \mapsto F_\WD$ must be compatible with the operadic composition of wiring diagrams. 

\begin{definition}A \emph{homomorphism} of circuit algebras is a map of algebras over the operad of wiring diagrams. In other words, a homomorphism $\Phi:\mathsf{V}\rightarrow\mathsf{W}$ is a family of maps 
	$\{\Phi_{A^{\tin};A^\tout}:\mathsf{V}[A^\tin;A^\tout]\rightarrow \mathsf{W}[A^\tin;A^\tout]\}_{(A^{\tin}, A^{\tout})}$,
	which commutes with the action of wiring diagrams. For details, see \cite[Definition 2.5]{DHR1}.
\end{definition}

Wiring diagrams and circuit algebras can be naturally upgraded to a {\em coloured} context:

\begin{definition}\label{def:colouredCA}
    Given a finite set $\mathcal C$ of {\em colours}, an oriented {\em $\mathcal C$-coloured} wiring diagram is a wiring diagram $\WD=(\calA, M, f)$, along with a map
    $c: \pi_0(M) \to \mathcal C$. Here $\pi_0(M)$ is the set of connected components of $M$. Composition of $\mathcal C$-coloured wiring diagrams $$(\calA, M, f, c)\circ_i (\calB, N, g, d)$$ is only defined where the gluing of $M$ and $N$ respects colours. In other words, only connected components of the same colour may be glued.  An {\em oriented coloured circuit algebra} is an algebra over the coloured operad of oriented coloured wiring diagrams. 
\end{definition}

Elements of a circuit algebra can be viewed as decorations of wiring diagrams by elements of $\mathsf{V}$, where the operations are ``wiring elements together'': this is illustrated in the important example of the circuit algebra of $w$-foams described in Section~\ref{sec:foams} (e.g. Figure~\ref{fig:wfoam}). 

Circuit algebras can be given by a presentation $\mathsf{V} =\left<g_1,..,g_k\mid r_1,...,r_l\right>$ where the free circuit algebra on the generators $\{g_1,...,g_l\}$ is factored by the ideal generated by the relations $\{r_1,...,r_1\}$. The notions of {\em free} and {\em ideal} are the standard definitions for algebras over and operad (e.g.: \cite{algebraic_operads}). Examples of circuit algebras in this paper will also be equipped with {\em auxiliary operations}. These are additional operations which are not parameterised by wiring diagrams. 

\subsection{Circuit algebras as wheeled props}

Circuit algebras also possess an equivalent description as wheeled props \cite{DHR1}. A (coloured) \emph{prop}\footnote{Some authors prefer to capitalise the word PROP to emphasise that prop refers to ``PROduct and Permutation category."} is a strict symmetric tensor category in which the {\em monoid of objects} is freely generated (\cite[Chapter V]{maclane1965categorical}, \cite{hr1} or \cite{yj15}). This means that a one-coloured prop is a symmetric tensor category equipped with a distinguished object $x$, such that every object is a tensor power $x^{\otimes n}$, for some $n\geq 0$. Hence, morphisms in a prop are of the form $f:x^{\otimes n}\rightarrow x^{\otimes m}$. We picture these morphisms as graphs with $n$-inputs and $m$-outputs.  Composition of morphisms, also called {\em vertical composition}, is modelled diagrammatically by stacking the outputs of one graph to the inputs of another. The tensor product of the prop is realised by taking disjoint unions of graphs (stacking them next to each other), and is called {\em horizontal composition}.

\begin{example}
The collection of all braid groups $\mathsf{B}=\coprod \mathsf{B}_n$ forms a one-coloured prop (e.g. \cite[Example 1, page 55]{Joyal_Street}). Similarly, parenthesised braids $\PaB=\coprod_{n}\PaB(n)$ form a prop (\cite{BNGT}). 
\end{example}

A \emph{wheeled prop} is a prop in which every object has a dual. This gives rise to a family of linear ``trace'' or ``contraction'' maps \[\begin{tikzcd}\trace^{j}_{i}: x^{* \otimes m}\otimes x^{\otimes n}\arrow[r]& x^{*\otimes m-1}\otimes x^{\otimes n-1}.\end{tikzcd}\]  They arise, for example, in the Batalin-Vilkovisky quantization formalism, where formal germs of SP-manifolds are identified as representations of a certain wheeled prop \cite[Theorem 3.4.3]{mms}.  We note that in the literature, a strict symmetric tensor category with duals is also called a \emph{rigid} symmetric\footnote{Rigid symmetric tensor categories are also called compact closed categories.} tensor category. 

\begin{definition}\label{def:baised wheeled prop} Let $\calI$ denote a fixed countable alphabet. A \emph{wheeled prop} $\mathsf{E}:=\left(\mathsf{E}, \ast,\trace_{i}^{j}\right)$ consists of:
	\begin{enumerate}
		
		\item a collection of vector spaces $\mathsf{E}=\{\sE[I ;J]\}$, $I,J\subset \calI$;
		
		\item a \emph{horizontal composition}\[\begin{tikzcd}\ast:\sE[I;J] \otimes \sE[K;L] \arrow[r]& \sE[I\cup K;J\cup L], \end{tikzcd}\]where $I\cap K =\emptyset$ and $J\cap L =\emptyset$;
		
		\item a linear map $1_{\emptyset}:\Bbbk \to \sE[\emptyset;\emptyset]$ called the \emph{empty unit};
		
		\item a \emph{contraction} operation\[\label{note:xiij} \begin{tikzcd}\sE[I;J] \arrow[r, "\trace^{j}_{i}"] & \sE[I\setminus\{i\};J\setminus\{j\}],\end{tikzcd}\] for every pair $i\in I$ and $j\in J$;
		
		\item a linear map $$1_i: \Bbbk \to \sE[\{i\};\{i\}],$$ for every $i\in \calI$, called the {\em unit}.
		
	\end{enumerate} Moreover, the horizontal composition and contractions commute with each other and are associative, equivariant and unital (see \cite[Definition 6.1]{DHR1} for a full list of axioms). 
\end{definition} 

\begin{example}
The tensor category of tangles described by Turaev \cite{MR1027006} (see also the expository description in \cite[Section 11]{Joyal_Street}) is a wheeled prop. Similarly, the categories of virtual and welded tangles, as described in \cite[Section 3.4]{DHR1}. 
\end{example}

\begin{thm}\cite[Theorem 5.5]{DHR1}
There exists an equivalence of categories between the category of ($\mathfrak{C}$-coloured) circuit algebras and the category of ($\mathfrak{C}$-coloured) wheeled props. 
\end{thm}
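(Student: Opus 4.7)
The plan is to construct explicit functors in both directions between the category of $\mathfrak{C}$-coloured circuit algebras and the category of $\mathfrak{C}$-coloured wheeled props, and to verify that they are mutually quasi-inverse equivalences. The forward functor is essentially tautological: given a circuit algebra $\sV$, set $\sE[I;J] := \sV[I;J]$, and read off each piece of wheeled prop structure from the action of a distinguished wiring diagram. Horizontal composition $\ast$ comes from the wiring diagram that places two input disks side-by-side inside an output disk with straight-through strands. Each contraction $\trace_i^j$ comes from the one-holed wiring diagram whose only nontrivial feature is a single arc connecting input $j$ to output $i$, with all other strands straight. The empty and single-strand unit diagrams give the unit maps $1_{\emptyset}$ and $1_i$. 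The associativity, unitality and equivariance of the operad $\mathsf{WD}$ then translate directly into the defining axioms of a wheeled prop.

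For the reverse functor one must assign to every oriented $\mathfrak{C}$-coloured wiring diagram $\WD$ a multilinear operation $F_\WD$ on the morphism spaces of a given wheeled prop $\sE$. The approach is to fix a combinatorial normal form that decomposes an arbitrary $\WD$ as an operadic composite of elementary pieces, namely horizontal juxtapositions, straight strands, symmetries (transpositions of adjacent strands), and ``cap--cup'' pieces realising a single contraction. Once such a decomposition is fixed, $F_\WD$ is defined by composing the corresponding horizontal products, identities, symmetry isomorphisms and trace maps from the wheeled prop structure of $\sE$. The crucial step is to show that $F_\WD$ does not depend on the chosen decomposition: any two decompositions must be shown to be related by a sequence of local moves, each of which is exactly one of the wheeled prop axioms (associativity/unitality of $\ast$, compatibility of $\ast$ with $\trace$, the snake/zig-zag identities for trace, and symmetric group equivariance).

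The main obstacle is precisely this coherence result: proving that the operad $\mathsf{WD}$ is freely generated as a coloured symmetric operad by these elementary pieces, modulo exactly the relations that hold in every wheeled prop. I would handle this with a Morse-theoretic normal form: fix a generic height function on the ambient disk and sweep the embedded $1$-manifold $M$, breaking $M$ into a finite sequence of ``horizontal slices'' each containing at most one critical point (a maximum, a minimum, or a crossing of two strands). This gives an explicit factorisation of $\WD$ into elementary pieces, and any two such factorisations differ by generic-path moves (sliding critical points past each other, birth/death of canceling extrema, Reidemeister-II-like moves for crossings) which are precisely the wheeled prop axioms. Once the generators-and-relations description of $\mathsf{WD}$ is in hand, checking that the two functors are mutually quasi-inverse reduces to observing that the reverse functor assigns to each generating wiring diagram exactly the structural operation that the forward functor extracts from it, so the roundtrips are the identity on both generators and relations.
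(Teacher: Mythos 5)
First, note that this paper does not prove the statement at all: it is quoted verbatim from \cite[Theorem 5.5]{DHR1}, so there is no internal proof to compare against, only the strategy of the cited source. At that level your outline has the right shape and matches the cited proof's architecture: the forward functor is tautological (read off $\ast$, $\trace_i^j$ and the units from distinguished wiring diagrams, and let the operad axioms of $\mathsf{WD}$ deliver the wheeled prop axioms), and the entire content is the reverse direction, i.e.\ a presentation of the operad $\mathsf{WD}$ by elementary generators modulo exactly the wheeled prop relations. Where you differ is in how that coherence statement is established: \cite{DHR1} does not run a Morse-theoretic normal form on an embedded $1$-manifold, but instead identifies $\mathsf{WD}$ with an operad built from the Yau--Johnson graph-substitution formalism for wheeled pr(oper)ads and imports the known biased-versus-unbiased comparison. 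Your route is more hands-on and self-contained; theirs outsources the combinatorial bookkeeping to an established coherence theorem.

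Two wrinkles in your sketch deserve attention before it would count as a proof. First, a wiring diagram is \emph{not} an embedded or immersed curve system: by Definition~\ref{def: wiring diagram} the data is an abstract oriented $1$-manifold $M$ together with label bijections, and the paper explicitly warns that the immersion is only notational. So ``fix a generic height function on the ambient disk and sweep $M$'' presupposes a choice of immersion, and your independence argument must cover both the choice of Morse function \emph{and} the choice of immersion; the ``Reidemeister-II'' and crossing-slide moves are then really just the symmetric group relations, since there is no over/under data. Second, the biased structure in Definition~\ref{def:baised wheeled prop} has the contraction $\trace_i^j$ as a single primitive operation, not separate cups and caps; a generic Morse slicing will separate the maximum and minimum of a contraction arc (and of each closed circle component of $M$, which must land on $\trace_i^i(1_i)\in\sE[\emptyset;\emptyset]$) into different slices that do not individually correspond to wheeled prop operations. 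You therefore need either to regroup critical points into contraction-shaped generators before defining $F_{\WD}$, or to work with the rigid-tensor-category presentation (where coevaluation and evaluation do exist) and then check that presentation agrees with the biased one. Neither point is fatal, but both are exactly where a ``normal form'' argument of this kind usually leaks, so they should be made explicit.
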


\bibliographystyle{amsalpha}
\bibliography{kv}
\end{document}